\newtheorem{theorem}{Theorem}[section]
\newtheorem{thm}[theorem]{Theorem}
\newtheorem{prop}[theorem]{Proposition}
\newtheorem{lem}[theorem]{Lemma}
\newtheorem{claim}[theorem]{Claim}
\newtheorem{cor}[theorem]{Corollary}
\newtheorem{rem}[theorem]{Remark}
\newtheorem{obs}[theorem]{Observation}
\newtheorem{defn}[theorem]{Definition}
\newtheorem{conjecture}[theorem]{Conjecture}
\newtheorem{example}[theorem]{Example}
\begin{document}
\global\long\def\f{\mathcal{F}}
\global\long\def\l{\mathcal{L}}
\global\long\def\pn{\mathcal{P}\left(\left[n\right]\right)}
\global\long\def\g{\mathcal{G}}
\global\long\def\s{\mathcal{S}}
\global\long\def\m{\mathcal{M}}
\global\long\def\mp{\mu_{p}}
\global\long\def\j{\mathcal{J}}
\global\long\def\d{\mathcal{D}}
\global\long\def\Inf{}
\global\long\def\p{\mathcal{P}}
\global\long\def\mpo{\mu_{p_{0}}}
\global\long\def\h{\mathcal{H}}
\global\long\def\n{\mathbb{N}}
\global\long\def\a{\mathcal{A}}
\global\long\def\b{\mathcal{B}}
\global\long\def\ex{\mathrm{ex}}
\global\long\def\c{\mathcal{C}}
\global\long\def\sor{Sor}
\global\long\def\fo{Fo}
\global\long\def\i{\mathcal{I}}
\global\long\def\e{\mathbb{E}}
\global\long\def\t{\mathcal{T}}
\global\long\def\u{\mathcal{U}}

\newcommand{\bn}[0]{\bigskip\noindent}
\newcommand{\mn}[0]{\medskip\noindent}
\newcommand{\nin}[0]{\noindent}

\oddsidemargin=0.15in \evensidemargin=0.15in \topmargin=-.5in
\textheight=9in \textwidth=6.25in

\setcounter{tocdepth}{1}

\title{The Junta Method for Hypergraphs and the Erd\H{o}s-Chv\'{a}tal Simplex Conjecture}

%\author{Nathan Keller}
%\address{Department of Mathematics, Bar Ilan University, Ramat Gan, Israel}
%\email{nkeller@math.biu.ac.il}
%\thanks{The research of the first author was supported by the Israel Science Foundation (grants no.
%402/13 and 1612/17), the Binational US-Israel Science Foundation (grant no. 2014290), and the Alon Fellowship.}
%
%\author{Noam Lifshitz}
%\address{Department of Mathematics, Bar Ilan University, Ramat Gan, Israel}
%\email{noamlifshitz@gmail.com}

\author{
Nathan Keller\thanks{Department of Mathematics, Bar Ilan University, Ramat Gan, Israel.
{\tt Nathan.Keller@biu.ac.il}. Research supported by the Israel Science Foundation (grants no.
402/13 and 1612/17) and by the Binational US-Israel Science Foundation (grant no. 2014290).}
\mbox{ } and Noam Lifshitz\thanks{Einstein Institute of Mathematics, Hebrew University of Jerusalem, Israel.
{\tt noamlifshitz@gmail.com}. Research supported by the Adams Fellowship Program of the Israel Academy
of Sciences and Humanities.}
}

\date{}

\maketitle

\begin{abstract}
Numerous problems in extremal hypergraph theory ask to determine the maximal size of a $k$-uniform hypergraph on $n$ vertices
that does not contain an `enlarged' copy $H^+$ of a fixed hypergraph $H$. These include well-known problems such as the
Erd\H{o}s-S\'{o}s `forbidding one intersection' problem and the Frankl-F\"{u}redi `special simplex' problem.

We present a general approach to such problems, using a `junta approximation method' that originates from analysis of Boolean functions. We prove that any $H^+$-free hypergraph is essentially contained in a `junta' -- a hypergraph determined by a small number of vertices -- that is also $H^+$-free, which effectively reduces the extremal problem to an easier problem on juntas. Using this approach, we obtain, for all $C<k<n/C$, a solution of the extremal problem for a large class of $H$'s, which includes the aforementioned problems, and solves them for a large new set of parameters.

We apply our method also to the 1974 Erd\H{o}s-Chv\'{a}tal simplex conjecture, which asserts that for any  $d < k \leq \frac{d}{d+1}n$, the maximal size of a $k$-uniform family that does not contain a $d$-simplex (i.e., $d+1$ sets with empty intersection such that any $d$ of them intersect) is ${{n-1}\choose{k-1}}$. We prove the conjecture for all $d$ and $k$, provided $n>n_0(d)$.
\end{abstract}
%\maketitle

%\newpage

\renewcommand{\baselinestretch}{0.65}\normalsize
\tableofcontents
\renewcommand{\baselinestretch}{1.0}\normalsize

%\tableofcontents

%\newpage

\section{Introduction and Review}
\label{sec:intro}

\subsection{Background}
\label{sec:sub:intro:background}

Extremal combinatorics concerns the problem of determining the maximal or the minimal size of a combinatorial
object that has certain properties. In the last decades, extremal combinatorics has grown tremendously,
partly due to strong connections with other fields, such as number theory, harmonic analysis, geometry, and
theoretical computer science.

A specific class of problems that played a central role in this development are Tur\'{a}n-type questions,
which concern the maximal size of a graph or a hypergraph that does not contain a specific `forbidden'
sub-structure.
%Essentially, Turan-type theorems are 'local-to-global' results which assert that when the edge
%density of a graph (or a hypergraph) exceeds some threshold, the graph must contain certain specific
%configurations.

One of the first results of this class was obtained in 1907 by Mantel: he showed that any graph on $n$ vertices
with more than $\left\lfloor \frac{n}{2}\right\rfloor \cdot \left\lceil \frac{n}{2}\right\rceil $ edges must contain
a triangle. In 1941,  Tur\'{a}n~\cite{turan1941basic} posed the general question of determining
the maximal number of edges $ex(n,H)$ in a graph (or a hypergraph) on $n$ vertices that does not contain a copy of
a `forbidden' graph (or hypergraph, respectively) $H$. Tur\'{a}n himself solved the problem in the case where $H$ is
a complete graph, and Erd\H{o}s, Stone, and Simonovits~\cite{erdos1966limit,erdos1946structure} essentially solved
the problem for all forbidden graphs except bipartite graphs, for which it has been the subject of intensive research in
the last decades (see the ICM'2010 talk of Sudakov~\cite{sudakov2010ICM} for an excellent survey of these topics).
In recent years, much progress was made also in Tur\'{a}n-type theorems in random structures, which assert that
if a random graph $G$ on $n$ vertices is constructed by choosing each edge with a
probability $p$ that is beyond a certain threshold, then $G$ contains a copy of the forbidden graph $H$ `almost
surely' (see~\cite{conlon2016combinatorial,schacht2016extremal}).

Unlike the case of graphs, the Tur\'{a}n problem for $k$-uniform hypergraphs (i.e., hypergraphs in which each edge
contains exactly $k$ vertices, or in other words, families of $k$-element subsets of $[n]=\{1,2,\ldots,n\}$)
turned out to be much more difficult. Here, even for the most basic
extension of Mantel's theorem -- $3$-uniform hypergraphs without a complete $3$-uniform hypergraph on 4 vertices --
$ex(n,H)$ is not known, not even approximately (see the survey~\cite{keevash2011hypergraph}). One well-known problem
of this class is the $(6,3)$-problem, which asks for an upper bound on the size of a $3$-uniform hypergraph with no
three edges whose union contains at most 6 vertices. The Ruzsa-Szemer\'{e}di theorem~\cite{ruzsa1978triple}, which
asserts that any such hypergraph has $o(n^2)$ edges, yields immediately a short proof of the classical Roth's
theorem~\cite{roth1953certain} asserting that any subset of $[n]$ of positive density contains a $3$-element
arithmetic progression.

Another extensively-studied class of extremal problems is that of \emph{intersection problems} (see~\cite{frankl2016survey}),
in which the restriction on the set family concerns the sizes of intersections between its members. This field was
initiated in 1961 by Erd\H{o}s, Ko, and Rado~\cite{erdos1961intersection}, who showed that for all $k \leq n/2$,
the maximal size of a family $\f \subseteq {{[n]}\choose{k}}$ (i.e., a family of $k$-element subsets of $[n]$)
in which every two sets have a non-empty intersection
is ${n-1}\choose{k-1}$. Numerous intersection theorems were proved, and some of them were applied in other fields. For
example,
%The most prominent result of this class is the
%Ahlswede-Khachatrian theorem~\cite{ahlswede1997complete} from 1997 that gives a full characterization of the extremal families $\f \subseteq {{[n]}\choose{k}}$
%in which every two members intersect in at least $t$ elements.
the Ahlswede-Khachatrian
theorem~\cite{ahlswede1997complete} which finds the maximal size of $\f \subseteq {{[n]}\choose{k}}$ in which any two sets
intersect in at least $t$ elements, is a crucial component in the hardness-of-approximation theorem for the `vertex
cover' problem, proved by Dinur and Safra~\cite{dinur2005hardness}.

A setting that includes many of the Tur\'{a}n-type problems, as well as many of the intersection problems, is that of
\emph{Tur\'{a}n problems for expansion}~\cite{mubayi2016survey}: For any hypergraph $\h$ with edges of size at most $k$,
the \emph{$k$-expansion of $\h$}, denoted by $\h^{+}$, is the $k$-uniform hypergraph obtained from $\h$ by adding to
each of its edges distinct new vertices. The Tur\'{a}n problem for expansion asks to determine the
maximal size of a $k$-uniform hypergraph on $n$ vertices that does not contain a copy of $\h^{+}$, for some fixed
hypergraph $\h$.

The simplest example of such a problem is the aforementioned Erd\H{o}s-Ko-Rado theorem, which corresponds to $\h$
that consists of two disjoint edges. Well-known open problems that fall into this framework include, among others:
\begin{itemize}
\item The 1965 Erd\H{o}s \emph{matching conjecture}~\cite{erdHos1965problem}, which asks for the maximal size of a $k$-uniform
hypergraph that does not contain $t$ pairwise disjoint edges;

\item The 1975 Erd\H{o}s-S\'{o}s \emph{forbidding one intersection} problem~\cite{erdHos1975problems}, which asks for the
maximal size of a $k$-uniform hypergraph that does not contain two edges whose intersection is of size exactly $t-1$;

\item The 1987 Frankl-F\"{u}redi \emph{special simplex} problem~\cite{frankl1987exact}, in which the forbidden configuration is $d+1$ edges $E_1,\ldots,E_{d+1}$
such that there exists a set $S=\{v_1,\ldots,v_{d+1}\}$ for which $E_i \cap S = S \setminus \{v_i\}$ for any $i$ and the
sets $\{E_i \setminus S\}$ are pairwise disjoint.
\end{itemize}

%While many Tur\'{a}n problems for expansion were studied thoroughly in the last decades and various partial results
%were obtained, our understanding in this field is still quite narrow (see~\cite{mubayi2016survey}). In particular, except
%for the simplest case where $\h$ consists of $t$ pairwise disjoint edges and a few very special cases of other problems, all known results hold only where
%$k$ and $\h$ are fixed and $n$ is very large with respect to $k$. Moreover, even for a fixed $k$ and a sufficiently large $n$, there
%are various scattered exact results that solve specific Tur\'{a}n problems but almost no general exact results are known.
%Furthermore, there are no structural results describing `large' $\h^{+}$-free families, except for the most basic cases, like
%intersecting families where such results are known ever since the 1967 Hilton-Milner theorem~\cite{hilton1967some} and a very satisfactory
%characterization was obtained by Frankl~\cite{frankl1987erdos} already in 1987.

Tur\'{a}n problems for expansion were studied intensively in the last decades. Besides results for specific problems, general results on such
problems were obtained using various methods, most notably the \emph{delta-system}, the \emph{stability}, and the recently proposed \emph{random sampling from the shadow} methods, which we briefly describe in Section~\ref{sec:sub:intro:techniques} below.
% and besides various special results, several general techniques for solving them were proposed. The best-known of these is the Deza-Erd\H{o}s-Frankl \emph{delta-system method}~\cite{deza1978intersection}, based on dividing an $\h^+$-free family into several parts that have a rich structure and a `remainder' that contains only a few edges. This method was used to derive an asymptotic upper bound on the size of $\h^+$-free families for any fixed forbidden hypergraph $\h$ (\cite{frankl1987exact}, see Theorem~\ref{thm:Frankl-Furedi} below), and to find exact solutions to numerous specific problems (see, e.g.,~\cite{frankl1987exact,furedi1983kernel,furedi2014linear,furedi2014exact,furedi2014hypergraph,furedi2015turan}).

Despite these advances, our understanding in this field is still very limited (see~\cite{furedi2014exact,mubayi2016survey}). In particular, except
for the simplest case where $\h$ consists of $t$ pairwise disjoint edges and a few very special cases of other problems, all known results (including those obtained by the aforementioned delta-system method) hold only where $\h$ and $k$ are fixed and $n$ is very large with respect to $k$. Moreover, even for a fixed $k$ and a sufficiently large $n$ where general techniques are available, these techniques only yield scattered exact results that solve specific Tur\'{a}n problems but not general exact results for large families of forbidden graphs. Furthermore, there are no structural results describing `large' $\h^{+}$-free families, except for the most basic cases, like intersecting families where such results are known ever since the 1967 Hilton-Milner theorem~\cite{hilton1967some} and a very satisfactory characterization was obtained by Frankl~\cite{frankl1987erdos} already in 1987.

\subsection{Our results}
\label{sec:sub:intro:results}

In this paper we present a new approach to Tur\'{a}n problems for expansion that applies for `large' values of $k$ (specifically,
$C <k<n/C$, where $C$ depends only on $\h$) and allows obtaining general exact results for large classes of forbidden hypergraphs,
along with structural results that characterize the almost-extremal families (i.e., the families whose size is close to the maximum
size). We obtain our results using a method we
call \emph{the junta method}, which takes its origin from the work of Dinur and Friedgut~\cite{dinur2009intersecting} and
is motivated by results in analysis of Boolean functions. We describe the method in Section~\ref{sec:sub:intro:techniques},
after the description of our results.

\subsubsection{A general structure theorem for large $\h^+$-free families}

Our most general result is a structure theorem which asserts that for any $\h$ and any $C<k<n/C$, every `large' $\h^+$-free family can be
approximated by a simpler family called `junta' that is also $\h^+$-free. Before we present the result, a few words on juntas and their
origin are due.
\begin{defn}
Let $n,k\in\mathbb{N}$, and let $J\subseteq\left[n\right]$ satisfy $|J|<k$. A hypergraph $\j \subseteq {{\left[n\right]}\choose{k}}$
is said to be a $J$-junta if the answer to the question of whether $A$ belongs to $\j$ depends only on $A\cap J$. (Formally: If $A,B\subseteq {{\left[n\right]}\choose{k}}$
are sets such that $A\cap J=B\cap J$, then $A \in \j \Leftrightarrow B \in \j$). A $j$-junta is a $J$-junta for some set $J$ of size $j$. Informally, a hypergraph is called a {}`junta' if it is a $j$-junta for some `constant' $j$.
\end{defn}
While the definition of `junta' may look unnatural in the context of hypergraphs, it is natural in the Boolean functions context it comes from. A Boolean function $f:\{0,1\}^n \rightarrow \{0,1\}$ is called a `junta' if it depends on a small number of variables. Being a natural `simple' structure of a Boolean function, juntas were studied extensively in the last 20 years and numerous `approximation-by-junta' theorems were obtained.
The first (and probably the best-known) of these is the 1998 Junta theorem of Friedgut~\cite{friedgut1998boolean} which states that any function with a small `total influence' (which is the same as a subset of the vertices of the discrete cube graph $\{0,1\}^n$ that has a small edge-boundary) can be approximated by a constant-sized junta. Approximation by juntas in the spirit of Friedgut's theorem or by structures alike (such as `pseudo-juntas')
plays an important role in some of the most prominent results in analysis of Boolean functions (e.g.,~\cite{friedgut1999sharp,hatami2012structure}), as well as in the central applications of Boolean functions analysis to theoretical computer science (e.g., to machine learning~\cite{odonnell2007learning} and hardness of approximation~\cite{dinur2005hardness}). The proofs of these results rely on tools from harmonic analysis and functional analysis (see, e.g.,~\cite{beckner1975inequalities,bonami1970etude,gross1975logarithmic}).

Tools from analysis of Boolean functions have already been applied to extremal combinatorics in a number of works (e.g.,~\cite{ellis2012triangle,ellis2011intersecting}). In particular, juntas were used by Friedgut~\cite{friedgut2008measure} for obtaining a `stability version' of the aforementioned Ahlswede-Khachatrian theorem~\cite{ahlswede1996complete}, that characterizes the `almost-extremal' families in which every two sets intersect in at least $t$ elements. The closest to this paper is a result of Dinur and Friedgut~\cite{dinur2009intersecting} which proves that any intersecting family is essentially contained in an intersecting junta. In the terminology of Tur\'{a}n problems for expansion, the result of~\cite{dinur2009intersecting} asserts that in the case where $\h$ consists of two disjoint edges, any $\h^+$-free family can be approximated by an $\h^+$-free junta.

Our general theorem asserts that the same holds for \emph{any} constant-size forbidden hypergraph $\h$, provided that $k=k(n)$ is `not too large'.
%For a hypergraph $\h$, the \emph{kernel} of $\h$ (denoted $K(\h)$) is the intersection of all edges of $H$.
\begin{thm}
\label{thm:Junta-approx-theorem}For any fixed hypergraph $\h$, there exist constants $C,j$ such that the following holds. Let $n \in \mathbb{N}$ and let $C<k < n/C$. Suppose that $\f \subseteq {{[n]}\choose{k}}$ is free of $\h^+$. Then there exists an $\h^+$-free junta
$\j\subseteq  {{\left[n\right]}\choose{k}}$ which depends on at most $j$ coordinates, such that
\[
\left|\f\backslash\j\right|\le\max\left( e^{-k/C},C\frac{k}{n}\right) \cdot |\j|.
\]
\end{thm}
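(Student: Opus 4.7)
The plan is to extend the junta approximation technique of Dinur--Friedgut for intersecting families (the case where $\h$ is two disjoint edges) to arbitrary forbidden expansions. Let $r = |V(\h)|$ and $t = |E(\h)|$; these are constants depending only on $\h$.

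First I would identify the ``popular cores'' of $\f$. For each $S \subseteq [n]$ with $|S| \leq r$, set $\f_S = \{A \in \f : S \subseteq A\}$ and define the conditional density $d_\f(S) = |\f_S|/\binom{n-|S|}{k-|S|}$. Fix a constant $\alpha = \alpha(\h) > 0$ and let $\s$ denote the collection of $\alpha$-popular sets, i.e.\ those with $d_\f(S) \geq \alpha$. The central structural claim is that $\s$ contains no ``$\h$-shaped configuration'': no tuple $(S_1, \ldots, S_t) \in \s^t$ arising as $(\phi(e_1), \ldots, \phi(e_t))$ for some injection $\phi : V(\h) \hookrightarrow [n]$ and some enumeration of the edges of $\h$. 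This is proved by contradiction: given such a configuration, greedily choose $A_1 \in \f_{S_1}, A_2 \in \f_{S_2}, \ldots$ so that each $A_i \setminus \phi(V(\h))$ is disjoint from previously chosen extensions. The popularity threshold $\alpha$, combined with $k < n/C$, guarantees that enough choices remain at each step: after removing the at most $tk$ ``forbidden'' vertices, the conditional density of $\f_{S_i}$ is still positive. The resulting $A_1, \ldots, A_t$ form a copy of $\h^+$ in $\f$, contradicting $\h^+$-freeness.

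Next I would construct the junta. Let $\s^* \subseteq \s$ be a maximal sub-collection avoiding $\h$-shaped configurations, set $J = \bigcup_{S \in \s^*} S$, and define
\[
\j = \left\{A \in \binom{[n]}{k} : A \supseteq S \text{ for some } S \in \s^*\right\}.
\]
A configuration-avoidance argument on $\s^*$ bounds $|J| \leq j$ for a constant $j = j(\h, \alpha)$, so $\j$ is a $j$-junta, and by construction any copy of $\h^+$ in $\j$ would induce an $\h$-shaped configuration in $\s^*$ --- impossible. To bound $|\f \setminus \j|$, decompose it according to its $J$-trace: each edge $A \in \f \setminus \j$ has trace $T = A \cap J$ containing no member of $\s^*$, hence the conditional density of $\f$ above any such $T$ is small; summing across traces yields the bound. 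The two branches in the maximum correspond to two regimes: when $k/n$ is not too small, the $Ck/n$ term dominates and reflects the $p$-biased cost of specifying a small sub-set contained in a random $k$-edge; when $n$ is exponentially larger than $k$, concentration of $|A \cap J|$ around its mean (via Chernoff-type bounds on hypergeometric variables) yields the exponential term $e^{-k/C}$.

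The main obstacle is the structural claim underlying both the boundedness of $|J|$ and the $\h^+$-freeness of $\j$. The difficulty is calibrating the threshold $\alpha$: it must be small enough that $\s^*$ captures nearly all of $\f$, and large enough that the greedy extension step survives removing up to $tk$ vertices. I expect this balance to require an iterative refinement --- extract a junta at a generous $\alpha$, restrict to $\f \setminus \j$ and iterate --- or a more delicate two-scale argument. A secondary subtlety is transferring the argument between the $\mu_p$-biased world (where ``popularity'' is most natural) and the uniform setting on $\binom{[n]}{k}$ in which the theorem is stated; the parameter $p = k/n$ should line up, but care is needed with the quantitative constants.
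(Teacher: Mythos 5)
There is a genuine gap, and it is at the heart of your argument: the ``central structural claim'' is false as stated. Constant conditional density is too weak a notion of popularity. Take $\h$ to be two edges sharing $t-1$ vertices (the forbidding-one-intersection problem), $k=n/3$, and $\f=\{A: T\subseteq A\}$ for a fixed $t$-set $T$. This family is $\h^{+}$-free and has constant density $\approx 3^{-t}$. Any $(t-1)$-subset $S\subsetneq T$ satisfies $d_{\f}(S)=\Pr[T\subseteq A\mid S\subseteq A]\approx k/n=1/3$, so $S$ is $\alpha$-popular for any $\alpha<1/3$, and the pair $(S,S)$ is an $\h$-shaped configuration; yet $\f$ contains no copy of $\h^{+}$, and the junta you would build, $\{A: S\subseteq A\}$, is \emph{not} $\h^{+}$-free. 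The point is that $\f_S$ here, though dense, is entirely ``captured'' by the single extra vertex of $T\setminus S$. The correct notion is precisely Dinur--Friedgut \emph{uncapturability} (no small set of vertices covers almost all of the slice), which is what the paper's decomposition (Proposition~\ref{lem:associated junta lemma}) extracts, with a threshold that moreover must scale like $\epsilon(n/k)^{|B|}$ with the size of the core $B$ rather than being a single constant $\alpha$ --- otherwise the unpopular traces contribute a constant fraction of $|\j|$ instead of a $\max(e^{-k/C},Ck/n)$ fraction.

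Even after replacing popularity by uncapturability, your greedy extension step does not survive the stated range of $k$. To complete a configuration into a copy of $\h^{+}$ you must find a member of $\f_{S_i}$ avoiding a prescribed set of up to $tk$ vertices, but the proportion of $k$-sets containing $S_i$ that avoid such a set is roughly $e^{-\Theta(tk^{2}/n)}$, which for $k\gg\sqrt{n}$ is far smaller than any constant (or even than $e^{-k/C}$ when $k=\Theta(n)$); so a dense, or even uncapturable, slice need not contain the extension you want by a counting argument alone. This is exactly why the paper's proof of the statement ``uncapturable families (cross-)contain any fixed expansion'' occupies Sections~\ref{sec:fairness}--\ref{sec:uncap-contains}: it needs the fairness proposition, the quasiregularity upgrade, the Dinur--Friedgut lemma relating capturability to biased measure (to find cross-matchings), and --- for the regime $C<k<C\log n$, where none of the above suffices --- the random-sampling-from-the-shadow method with rainbow copies. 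None of this machinery is present or replaceable by the greedy/Chernoff considerations you sketch, and the ``iterative refinement'' you flag as a possible fix addresses the calibration of $\alpha$ but not either of these two obstructions.
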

In particular, for $k$ sub-linear in $n$ that tends to infinity with $n$, Theorem~\ref{thm:Junta-approx-theorem} implies that the asymptotically largest $\h$-free families are juntas.

Theorem~\ref{thm:Junta-approx-theorem} may be viewed in light of the classical theorem of Frankl and F\"{u}redi~\cite{frankl1987exact} from 1987 that determines the asymptotic size of the extremal $\h^+$-free families $\f \subseteq {{[n]}\choose{k}}$, for any forbidden hypergraph $\h$ (Theorem~\ref{thm:Frankl-Furedi} below). While the result of Frankl and F\"{u}redi holds only for $n$ very large with respect to $k$ (specifically, for $k \leq O(\log \log n)$) and specifies only the asymptotic size of the family, our result holds for $C < k < n/C$ and specifies the family's approximate structure (see Section~\ref{sec:sub:proof:junta} for a detailed comparison).

A few words regarding the tightness of Theorem~\ref{thm:Junta-approx-theorem} are due. For $k=O(\log n)$, the dominant term in the r.h.s.~is $e^{-k/C} \cdot |\j|$. This term is sharp, as can be seen in the following example, that concerns the aforementioned `forbidding one intersection' problem. Let $\h$ consist of two edges that intersect in a single vertex; thus, an $\h^+$-free hypergraph is a family that does not contain two sets that intersect in a single element. Define $\f \subseteq {{[n]}\choose{k}}$ as
\[
\f = \{A \subseteq {{\{1,2,\ldots,n/2\}}\choose{k}}: \{1,2\} \subset A\} \bigcup \{A' \subseteq {{\{n/2+1,\ldots,n\}}\choose{k}}: \{n-1,n\} \subset A'\}.
\]
It is easy to see that $\f$ is $\h^+$-free, while for any constant-sized $\h^+$-free junta $\j$ approximating $\f$ we have $|\f \setminus \j| \geq e^{-O(k)}{{n}\choose{k}}$ (since each such junta must miss an $\Omega(1)$ fraction of one of the `halves' of $\f$).

For $k = \Omega(\log n)$, the dominant term in the r.h.s.~is $(k/n)\cdot |\j|$. We conjecture that this term can be replaced by $(k/n)^r \cdot |\j|$ for any constant $r$, assuming $n \geq n_0(r)$. See Proposition~\ref{prop:uncap cross k large} for a step in this direction.

\subsubsection{Forbidden hypergraphs for which the extremal family is a star}

Using our general structure theorem, we solve the Tur\'{a}n problems for expansion for several families of forbidden hypergraphs. Here, instead of considering each problem (i.e., each forbidden hypergraph) separately as was done in previous works, we focus on the \emph{structure of the extremal solutions}, and obtain a characterization of all forbidden hypergraphs $\h$ for which the extremal example is a specific family, in terms of intrinsic properties of the hypergraph $\h$. The extremal examples we chose to consider are those which appear in many of the extensively studied Tur\'{a}n problems for expansion -- in particular, the problems mentioned above.
\begin{defn}
For $t,s \in \mathbb{N}$ such that $s \leq t \leq k$, the \emph{$(t,s)$-star} is the family $\f= \{A \in {{[n]}\choose{k}}: |A \cap T| \geq s\}$, for some
specific $T \in {{[n]}\choose{t}}$. In particular, the $(t,t)$-star is the family $\s_T= \{A \in {{[n]}\choose{k}}: T \subseteq A\}$ and the $(t,1)$-star is
the family $\s'_T= \{A \in {{[n]}\choose{k}}: T \cap A \neq \emptyset\}$.
\end{defn}

Our first exact result is a complete characterization of the forbidden hypergraphs $\h$ for which the extremal family is a $(t,t)$-star. It is clear that
in order for the $(t,t)$-star to be extremal, it is necessary that the $(t,t)$-star is free of $\h^{+}$ and that any family that properly contains a $(t,t)$-star contains a copy of $\h^{+}$. It is easy to show (see Lemma~\ref{lem:inclusion maximal-> intrinsic}) that these necessary conditions are equivalent to the following property of $\h$: The kernel $K(\h)$ (i.e., the intersection of all edges of $\h$) is of size $t-1$, and there exists a set $S$ of size $2t-1$ that is included in all edges of $\h$ but one. Our theorem shows that these necessary
conditions are in fact sufficient.

Denote the number of edges of $\h$ by $|E(\h)|$ and the number of vertices that belong to at least two edges of $\h$ (the \emph{center} of $\h$) by $|C(\h)|$.
\begin{thm}\label{thm:exact solution for t-stars}
For any $m \in \mathbb{N}$ there exists a constant $C(m)$, such that the following holds. Let $n,k \in \mathbb{N}$ be such that $C< k< n/C$. Then for any hypergraph $\h$ with $\max(|E(\h)|,|C(\h)|) \leq m$, the following conditions are equivalent:
\begin{enumerate}
\item The maximal size of an $\h^+$-free family $\f \subseteq {{[n]}\choose{k}}$ is ${{n-t}\choose{k-t}}$.

\item The extremal $\h^+$-free families are all the $(t,t)$-stars, and only them.

\item The kernel $K(\h)$ is of size $t-1$, and there exists $S$ of size $2t-1$ that is included in all edges of $\h$ but one.
\end{enumerate}
\end{thm}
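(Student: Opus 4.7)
The implication $(2)\Rightarrow(1)$ is immediate since every $(t,t)$-star $\s_T$ has size $\binom{n-t}{k-t}$. For $(1)\Rightarrow(3)$ I would argue the contrapositive via Lemma~\ref{lem:inclusion maximal-> intrinsic}: if (3) fails, then either no $(t,t)$-star is $\h^+$-free, or some $(t,t)$-star is strictly contained in a larger $\h^+$-free family. The latter subcase immediately violates (1). In the former subcase, one applies $(3)\Rightarrow(2)$ to the value $t'\ne t$ for which the intrinsic condition does hold (its existence follows from the necessary-conditions analysis preceding the theorem), concluding that the extremal size equals $\binom{n-t'}{k-t'}\ne\binom{n-t}{k-t}$, again contradicting (1).

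The heart of the theorem is $(3)\Rightarrow(2)$, which I would attack in three steps. First, apply Theorem~\ref{thm:Junta-approx-theorem} to an $\h^+$-free family $\f\subseteq\binom{[n]}{k}$ of maximum size, producing an $\h^+$-free $j$-junta $\j$ with $|\f\setminus\j|\le\eta|\j|$ for $\eta=\max(e^{-k/C},Ck/n)$ and some $j=j(m)$. Second, classify all $\h^+$-free $j$-juntas: since there are only boundedly many junta shapes on $j$ coordinates, this is a finite combinatorial task, and condition (3) should force the densest $\h^+$-free $j$-junta to be a $(t,t)$-star. The point is that any junta of density exceeding $\binom{n-t}{k-t}/\binom{n}{k}$ contains enough flexibility to realise a copy of $\h^+$ internally by choosing expansion vertices freely from $[n]\setminus J$, using $k<n/C$; the intrinsic conditions (kernel of size $t-1$, plus the distinguished $S$ of size $2t-1$) are exactly what pinpoint the extremal junta shape.

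The third step is the bootstrap from the approximate containment $\f\approx\s_T$ to the exact equality $\f=\s_T$. For any $A\in\f$ with $T\not\subseteq A$, I would construct many copies of $\h^+$ in which $A$ plays the role of the unique edge avoiding $S$, while the other $|E(\h)|-1$ edges are $k$-supersets of $T$ with pairwise disjoint expansion tails drawn from $[n]\setminus(A\cup T)$. Each such copy forces at least one specific superset of $T$ to be absent from $\f$. A careful double count, exploiting the assumption $k<n/C$ to guarantee many available expansion vertices, should show that every rogue $A\in\f\setminus\s_T$ is charged to strictly more than one missing element of $\s_T\cap\f^c$, yielding $|\f|<\binom{n-t}{k-t}$ unless $\f\subseteq\s_T$; equality of sizes then forces $\f=\s_T$.

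The principal obstacle is the bootstrap in step three in the regime where $k$ is close to $n/C$: there $\eta$ is of constant order, so the junta approximation is qualitatively weak and one cannot simply treat $\f\setminus\j$ as a negligible error term. A secondary difficulty in step two is that condition (3) is a two-part structural assertion, and the junta classification must track kernel size and the distinguished set $S$ simultaneously; the hypothesis $\max(|E(\h)|,|C(\h)|)\le m$ is precisely what keeps this bookkeeping finite and uniform in $n,k$.
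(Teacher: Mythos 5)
Your architecture for $(3)\Rightarrow(2)$ --- junta approximation, classification of the extremal $\h^+$-free juntas, then a bootstrap from approximate to exact containment in a star --- is exactly the paper's. The genuine gap is in your third step, which is where nearly all of the paper's work lies. Your double count uses only that the $h$ families $\f_T^T,\dots,\f_T^T,\f_T^{A\cap T}$ are cross free of a fixed expanded hypergraph, together with the largeness of $\f_T^T$. The paper shows (see the discussion in Section~\ref{sec:bootstrapping} preceding Proposition~\ref{prop: unbalnced cross free of hypergraph k small}) that this information alone does not force $\f_T^{A\cap T}$ to be small enough when $C<k=O(\log n)$: there one must also exploit that the slice $\f_T^{A\cap T}$ is \emph{itself} free of the hypergraph obtained from $\h$ by deleting $|A\cap T|$ kernel vertices, and bound it through its $t$-shadow via Propositions~\ref{prop:t-shadow} and~\ref{cor:t-shadows}. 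Quantitatively, beating the star requires $\mu(\f_T^{A\cap T})\lesssim \epsilon_1 (k/n)^{t-|A\cap T|}$ where $1-\epsilon_1=\mu(\f_T^T)$; after reweighting by $\binom{n-t}{k-|A\cap T|}/\binom{n-t}{k-t}\approx(n/k)^{t-|A\cap T|}$ this is precisely your ``more than one missing set per rogue set,'' but you give no mechanism preventing the charges assigned to different rogue sets from colliding, and the paper needs three separate arguments (the propositions of Section~\ref{sec:bootstrapping}, resting on uncapturability for large $k$ and on Kruskal--Katona plus the Kostochka--Mubayi--Verstra\"ete shadow lemma for small $k$) to establish the exchange rate. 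Relatedly, you locate the principal difficulty at $k\approx n/C$; in fact that regime is handled by making the residual error a small constant (take $C$ large) and invoking the uncapturability machinery, whereas the regime $k=O(\log n)$ is the one your sketch cannot reach.

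There is also a flaw in your $(1)\Rightarrow(3)$ argument. In the subcase where the $(t,t)$-star is not $\h^+$-free (equivalently $|K(\h)|\ge t$), you assert that condition (3) holds for some $t'\ne t$ and apply $(3)\Rightarrow(2)$ with that $t'$. No such $t'$ need exist: the only candidate is $t_0=|K(\h)|+1$, and the second half of (3) can fail for it --- e.g.\ for a sunflower with core of size $t_0-1$ and three singleton petals, no set of size $2t_0-1$ lies in two of the three edges, and the extremal family is not a $(t',t')$-star for any $t'$. The correct way to dispose of this subcase is to observe that the approximating junta in Theorem~\ref{thm:Junta-approx-theorem} is generated by a $t_0$-uniform family, so every $\h^+$-free family has measure $O((k/n)^{t_0})\ll (k/n)^{t}$, which contradicts (1) directly.
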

%In words, the theorem characterizes all forbidden hypergraphs $\h$ for which the extremal examples are the $t$-stars, in terms of the structure of $\h$. It is clear that necessary conditions for a $t$-star $\s_t$ to be an extremal example are that $\s_t$ is $\h^+$-free and that any addition of an edge to it destroys the $\h^+$-freeness property. Theorem~\ref{thm:exact solution for t-stars} shows that these conditions are sufficient.
%Note that the constant $C$ in the theorem depends only on $m$, and thus should be viewed as a universal constant for the usually-studied case of fixed forbidden hypergraphs.

Theorem~\ref{thm:exact solution for t-stars} applies to various previously-studied problems, and in particular, provides a complete solution for the aforementioned `special simplex' and `forbidding one intersection' problems in the range $C < k < n/C$. Previous results on these problems apply only for a fixed $k$ and $n$ very large with respect to $k$ (see below).
%improving significantly the state-of-the-art for them significantly, as described in Section~\ref{sec:app}.

In the cases where Theorem~\ref{thm:exact solution for t-stars} applies, we also obtain a stability result which describes the structure of families whose size is close to the maximum possible (as is common in many recent results in extremal combinatorics, e.g.,~\cite{keevash2008shadows,mubayi2007structure}). The result asserts that any `large' $\h^+$-free family is essentially contained in a $(t,t)$-star.
\begin{thm}[Stability for Theorem~\ref{thm:exact solution for t-stars}]
\label{thm:Stab-for-Stars}
For any constants $m,r$, there exists a constant $C(m,r)$ such that the following holds. Let $n,k \in \mathbb{N}$ be such that $C< k< n/C$ and let $\h$ be a hypergraph that satisfies the conditions of Theorem~\ref{thm:exact solution for t-stars}.

For any $0<\epsilon<1/C$ and for any $\h^+$-free family $\f \subseteq {{[n]}\choose{k}}$ with $\left|\f\right|\ge \left(1-\epsilon\right){{n-t}\choose{k-t}}$, there exists a $(t,t)$-star $\s_T$ such that
\[
\left|\f\backslash\s_T\right|\le \epsilon^{r} {{n-t}\choose{k-t}}.
\]
\end{thm}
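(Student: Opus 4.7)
The plan is to combine the junta approximation (Theorem~\ref{thm:Junta-approx-theorem}) with the exact characterization (Theorem~\ref{thm:exact solution for t-stars}) to first obtain a weak linear stability, and then bootstrap it to the polynomial bound $\epsilon^{r}$ by a supersaturation argument.

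Apply Theorem~\ref{thm:Junta-approx-theorem} to $\f$ to obtain an $\h^+$-free $j$-junta $\j$ with $|\f\setminus\j|\le\eta\,|\j|$, where $\eta:=\max(e^{-k/C_1},C_1k/n)$ can be made as small as desired by taking the constant $C(m,r)$ large. Since $\j$ is itself $\h^+$-free, Theorem~\ref{thm:exact solution for t-stars} gives $|\j|\le\binom{n-t}{k-t}$, which combined with $|\f|\ge(1-\epsilon)\binom{n-t}{k-t}$ forces $|\j|\ge(1-O(\epsilon+\eta))\binom{n-t}{k-t}$. A \emph{junta-level} stability lemma — a stability version of the uniqueness part of Theorem~\ref{thm:exact solution for t-stars} — then shows that every such near-extremal $\h^+$-free junta agrees with some $(t,t)$-star $\s_T$ (with $T\subseteq J$) outside atoms of total mass $O(\epsilon+\eta)\binom{n-t}{k-t}$. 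Combining the two estimates yields the weak stability
\[
|\f\setminus\s_T|\le c_1(\epsilon+\eta)\binom{n-t}{k-t}.
\]

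To upgrade the bound to $\epsilon^{r}$, set $\g:=\f\setminus\s_T$ and argue by supersaturation. For each $A\in\g$ we have $T\not\subseteq A$, and we use the intrinsic characterization~(3) of Theorem~\ref{thm:exact solution for t-stars} — that the kernel of $\h$ has size $t-1$ and a set $S$ of size $2t-1$ is contained in all but one edge of $\h$ — to count tuples of sets in $\f$ that form a copy of $\h^+$ with $A$ playing the role of the ``special'' edge and the remaining edges drawn from $\f\cap\s_T$. Since $\f\cap\s_T$ has density at least $1-O(\epsilon)$ inside $\s_T$, the count is at least $c_2\,|\g|\cdot\binom{n-t}{k-t}^{|E(\h)|-1}$, which is positive unless $|\g|$ is very small. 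Iterating the density estimate (each round uses the improved bound on $|\g|$ to sharpen the density of $\f\cap\s_T$ and hence the supersaturation count) reduces $|\g|$ by a factor of $\epsilon$ per round, giving $|\g|\le\epsilon^{r}\binom{n-t}{k-t}$ after $r$ rounds, at the cost of letting $C(m,r)$ depend on $r$.

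The hardest step is the junta-level stability lemma: in the regime $k\asymp n$ all atoms of $\j$ have comparable size, so one cannot separate ``extremal'' from ``deviation'' atoms by crude volume counting. Instead one must use the specific combinatorial constraints from characterization~(3) to show that every deviation atom either forces a hidden copy of $\h^+$ inside $\j$, or blocks a matching atom belonging to the star $\s_T$ — so that its contribution is paid for by a lost atom of the star and hence accounts for the missing mass. Once this structural result is in hand, the supersaturation bootstrap in Step~3 is a routine if somewhat delicate counting argument.
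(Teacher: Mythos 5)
Your overall architecture (junta approximation $\to$ identify the extremal junta $\to$ bootstrap) matches the paper's, but two of your three steps have genuine gaps.

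\textbf{The junta-level stability lemma.} You assert, but do not prove, that every near-extremal $\h^+$-free junta agrees with a $(t,t)$-star outside atoms of mass $O(\epsilon+\eta)$, and you correctly flag this as hard in the regime $k\asymp n$. The paper avoids this lemma entirely: in Lemma~\ref{lem:rough stability t stars} it uses the \emph{specific} junta produced by the proof of Theorem~\ref{thm:main junta approximation theorem} --- which is generated by a $t$-uniform family $\j'\subseteq\binom{J}{t}$ --- and shows that $\j'$ cannot contain two distinct $t$-sets $B_1\neq B_2$, since the slices $\langle\j\rangle_J^{B_1},\ldots,\langle\j\rangle_J^{B_1},\langle\j\rangle_J^{B_2}$ are full universes and hence cross contain the hypergraph $(F_1,\ldots,F_{h-1},E_h)$ built from Condition~(*), producing a copy of $\h^+$ inside the junta. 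So the approximating junta is already contained in a single $(t,t)$-star; no stability analysis of general juntas is needed. Without this (or a proof of your lemma), your Step~2 is incomplete.

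\textbf{The supersaturation bootstrap.} This step does not work as described. First, the count of completions of a fixed $A\in\g$ to a copy of $\h^+$ with the other $h-1$ edges in $\s_T$ requires those edges to be pairwise disjoint outside a bounded core; when $k=\Theta(n)$ (e.g.\ $(h-1)(k-t)+k>n$) there are \emph{no} such completions, and even when there are some, they form a highly structured, exponentially sparse subset of $\s_T^{h-1}$, so removing an $O(\epsilon)$-fraction of $\s_T$ can block all of them --- the union bound you implicitly rely on fails. (Indeed, if your count were positive you would conclude $\g=\emptyset$ outright, contradicting $\h^+$-freeness of $\f$; the correct theorem only gives $|\g|\le\epsilon^r\binom{n-t}{k-t}$.) Second, the iteration cannot gain a factor of $\epsilon$ per round: the density deficit of $\f\cap\s_T$ inside $\s_T$ is at least $1-|\f|/|\s_T|$, which is bottlenecked at $\epsilon$ and does not improve as the bound on $|\g|$ improves, so each round yields the same conclusion. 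The paper obtains the exponent $r$ by a different mechanism (Lemma~\ref{lem:small-k  Bootstrapping}): for each $A\subsetneq[t]$ the families $\f_{[t]}^{[t]},\ldots,\f_{[t]}^{[t]},\f_{[t]}^{A}$ are cross free of a $d$-expanded ordered hypergraph, and the ``very large implies very small'' Propositions~\ref{prop:unbalanced cross free of an hypergraph}--\ref{prop: unbalnced cross free of hypergraph k small} --- whose power-$r$ gain ultimately comes from Kruskal--Katona shadow inequalities and, for small $k$, from the shadow machinery of Section~\ref{sec:Random-sampling} --- give $\mu(\f_{[t]}^A)\le O(\epsilon_1^r\cdot(k/n)^{\,t-|A|})$ with $\epsilon_1=1-\mu(\f_{[t]}^{[t]})=O(\epsilon)$. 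Your counting argument would need to be replaced by something of this kind.
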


\medskip Our second exact result concerns the case where the extremal family is a $(t,1)$-star. This case is somewhat more complex than the case of $(t,t)$-stars since in this case, the obviously necessary conditions for the $(t,1)$-stars to be extremal turn out to be insufficient, as demonstrated by the following example.

\begin{example}
\label{or s extremal_intro} Let $C$ be a sufficiently large constant, let $C<k<n/C$, and let $\h^+$ be the $k$-expansion of the
hypergraph $\h=\left\{ \left\{ 1,2\right\}, \left\{ 1,4\right\}, \left\{ 1,5\right\} ,\left\{ 2,6\right\} , \left\{ 2,7\right\}, \left\{ 3\right\} \right\}$.
It is easy to verify (see Section~\ref{sec:sub:proof:porcupine}) that the $\left(2,1\right)$-star $\s'_{\{1,2\}}$ is $\h^+$-free and is maximal under inclusion among the $\h^+$-free families. However, the family $\f=\left\{ A\in {{[n]}\choose{k}}\,:\,\left|A\cap\left\{ a,b,c\right\} \right|=1\right\}$ (for arbitrary distinct $a,b,c \in [n]$), which is larger than $\s'_{\{1,2\}}$, is $\h^+$-free as well.
\end{example}

To avoid Example~\ref{or s extremal_intro} and its relatives, we bound our discussion to hypergraphs $\h$ that satisfy a stronger condition: instead of requiring that no family that properly contains the $(t,1)$-star $\s'_{T}$ is $\h^+$-free, we require that no family that contains $\left\{ A\,:\,\left|A\cap T \right|=1\right\} $ and is not contained in $\s'_T$, is $\h^+$-free. (Note that this condition is closely related to the notion of \emph{cross-cuts}, introduced by Frankl and F\"{u}redi in~\cite[Sec.~5]{frankl1987exact}.)
It is easy to show (see Lemma~\ref{lem:inclusion maximal-> intrinsic-1}) that these conditions are equivalent to the following property of $\h$: There exists a set of size $t$ that intersects all the edges of $\h$ except for one in a single element, while there is no set of size $t$ that intersects all edges of $\h$. Our theorem shows that these
conditions are sufficient.

For a hypergraph $H$, and a set of vertices $S \subseteq V(H)$, we denote $\mathrm{span}_H(S)= \{E \in E(H):E \cap S \neq \emptyset\}$, and $\mathrm{span}^1_H(S)= \{E \in E(H):|E \cap S|=1 \}$.
\begin{thm}\label{thm:OR}
For any $m \in \mathbb{N}$, there exists a constant $C(m)$, such that the following holds. Let $n,k \in \mathbb{N}$ be such that $C< k< n/C$.

Suppose that for some hypergraph $\h$ with $\max(|E(\h)|,|C(\h)|) \leq m$ and for some $t \in \mathbb{N}$, there exists $T \subseteq V(\h)$ with $|T|=t$, such that $\left|\mathrm{span}_{\h}^{1}\left(T\right)\right|=\left|E(\h)\right|-1$ and that there is no set of vertices $T'$ of size $t$,
such that $\mathrm{span}_{\h}\left(T'\right)=\h$.

Then the maximal size of a family $\f \subseteq {{[n]}\choose{k}}$ that is free of $\h^+$ is ${{n}\choose{k}}-{{n-t}\choose{k}}$, and the unique extremal examples are the $(t,1)$-stars.
\end{thm}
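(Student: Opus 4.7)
The plan is to combine the general junta approximation theorem (Theorem~\ref{thm:Junta-approx-theorem}) with a structural classification of the $\h^+$-free juntas under the two hypotheses, and to close the remaining gap by a bootstrapping argument. I first verify that $\s'_T$ is itself $\h^+$-free, so the bound $\binom{n}{k}-\binom{n-t}{k}$ is attained. Given an injective map $\phi\colon V(\h^+)\to[n]$, each image $\phi(e_i\cup N_i)$ would have to meet $T$, so $T^*:=\phi^{-1}(T)$ meets every edge of $\h^+$. Writing $T^*_V=T^*\cap V(\h)$ and $I=\{i:e_i\cap T^*_V=\emptyset\}$, each $i\in I$ forces a vertex of $T^*$ to lie in the private set $N_i$; since the $N_i$ are pairwise disjoint and disjoint from $V(\h)$, one gets $|T^*_V|+|I|\le|T^*|\le t$. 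Adjoining to $T^*_V$ one vertex of $e_i$ for each $i\in I$ produces a vertex set of size at most $t$ meeting every edge of $\h$, contradicting the second hypothesis. In particular, the first hypothesis combined with the second forces a unique edge $e_0\in E(\h)$ with $T\cap e_0=\emptyset$, a fact I will use below.

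For the upper bound, apply Theorem~\ref{thm:Junta-approx-theorem} to an $\h^+$-free $\f$ and obtain an $\h^+$-free $J$-junta $\j$ with $|J|\le j$ and $|\f\setminus\j|\le\max(e^{-k/C},Ck/n)\cdot|\j|$. The central combinatorial step is a classification of the $\h^+$-free $J$-juntas: encoding $\j$ via its trace family $\mathcal{B}\subseteq 2^J$, I would show that if $\mathcal{B}$ is not essentially of the form $\{B\subseteq J:B\cap T_J\ne\emptyset\}$ for some $t$-subset $T_J\subseteq J$, then one may embed $\h^+$ inside $\j$ by placing $V(\h)$ within $J$ to realise prescribed trace patterns and placing the private vertices outside $J$. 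The scale gap $n\gg k\gg |J|$ and the finiteness $\max(|E(\h)|,|C(\h)|)\le m$ make the resulting case analysis tractable. The second hypothesis -- no $t$-set meets every edge of $\h$ -- is used precisely at this point to rule out the exotic trace families of the type illustrated in Example~\ref{or s extremal_intro}. The output is $\f\subseteq\s'_T\cup\mathcal{E}$ for some $t$-set $T\subseteq[n]$ with $|\mathcal{E}|\le\max(e^{-k/C},Ck/n)\cdot|\s'_T|$.

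The final step bootstraps approximate containment into exact containment. Assume for contradiction that some $A_0\in\mathcal{E}$ has $A_0\cap T=\emptyset$. Set $\phi(e_0^+)=A_0$ and extend $\phi$ to the remaining edges of $\h^+$ using members of $\f\cap\s'_T$. Since $|\f\cap\s'_T|\ge(1-o(1))|\s'_T|$ and each remaining role prescribes only a bounded pattern of shared vertices from a bounded set, a supersaturation count (in the spirit of Dinur--Friedgut~\cite{dinur2009intersecting}) provides enough choices to align the shared vertices according to $\h$ and to keep all private vertices distinct from one another and from $A_0$. The resulting copy of $\h^+$ inside $\f$ contradicts $\h^+$-freeness, so $\mathcal{E}=\emptyset$. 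Hence $\f\subseteq\s'_T$ and $|\f|\le|\s'_T|$, with equality iff $\f$ is a $(t,1)$-star.

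The principal obstacle is the classification of $\h^+$-free juntas. Example~\ref{or s extremal_intro} shows that without the second hypothesis the obvious $(t,1)$-star trace family is not the only inclusion-maximal $\h^+$-free pattern; alternative trace families may be strictly larger. Translating the second hypothesis into a rigorous obstruction to all such alternatives -- and quantifying the deficit of each non-standard family below $|\s'_T|$ so that the junta approximation becomes genuinely informative -- is where the real work lies. The lower bound and the bootstrapping step are comparatively routine.
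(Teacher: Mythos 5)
Your high-level architecture (junta approximation, then a description of the extremal juntas, then bootstrapping) is the paper's, but the two steps you defer are exactly where the content lies, and neither is carried out. First, the junta step: you propose to classify \emph{all} $\h^+$-free trace families $\mathcal{B}\subseteq 2^J$ and to rule out everything other than a $(t,1)$-star pattern, and you yourself flag this classification as the ``principal obstacle.'' The paper never performs such a classification. It instead exploits the specific shape of the junta produced by the proof of Theorem~\ref{thm:main junta approximation theorem}: under your second hypothesis $\h$ has empty kernel, so the generating family $\j'$ is forced to be $1$-uniform, whence $\langle\j'\rangle$ is automatically contained in the $(|\j'|,1)$-star on $\bigcup\j'$; all that remains is to show $|\j'|\le t$, which is done by explicitly exhibiting a copy of $\h^+$ inside $\langle\{\{1\},\ldots,\{t+1\}\}\rangle$ using the decomposition of $\h$ supplied by the first hypothesis (Lemma~\ref{lem:rough stability t porcupines}). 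Without this route you are left with a case analysis over arbitrary subfamilies of $2^J$ that you have not supplied and that Example~\ref{or s extremal_intro} shows to be genuinely delicate; asserting that ``the case analysis is tractable'' does not discharge it.

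Second, the bootstrapping step as you state it does not work. You claim that $\h^+$-freeness together with $|\f\cap\s'_T|\ge(1-o(1))|\s'_T|$ forces $\mathcal{E}=\f\setminus\s'_T=\emptyset$ via a supersaturation count. This is a strictly stronger statement than what the paper proves, and naive supersaturation fails in the regime $k=O(\log n)$: the deleted part $\s'_T\setminus\f$ may have measure comparable to, or larger than, the slices of $\s'_T$ that must be populated to complete a copy of $\h^+$ through $A_0$ (the slices prescribing a $d$-set of shared vertices have relative measure $\Theta((k/n)^{d})$ inside $\s'_T$, against a deletion budget of order $k/n$). This is precisely why the paper's Section~\ref{sec:bootstrapping} proves only the quantitative bound $\mu(\f_{[t]}^{\emptyset})\le C(k^{2t}/n^{t})\epsilon^{r}$ in terms of the loss $\epsilon=\max_i(1-\mu(\f_{[t]}^{\{i\}}))$, via Proposition~\ref{prop: unbalnced cross free of hypergraph k small} --- whose proof crucially uses that $\f_{[t]}^{\emptyset}$ is \emph{itself} free of a fixed expanded hypergraph and passes through the shadow/Kruskal--Katona machinery of Section~\ref{sec:Random-sampling} --- and then closes with the gain-versus-loss computation of Theorem~\ref{thm:Solution for t-porcupines}, showing that any nonempty $\mathcal{E}$ costs more in deletions than it gains. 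Your ``comparatively routine'' final step therefore conceals both a missing idea (the use of shadows and of the self-freeness of the outside slice) and a false intermediate claim ($\mathcal{E}=\emptyset$ for merely almost-extremal families).
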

Theorem~\ref{thm:OR} is not a complete characterization of the forbidden hypergraphs for which the $(t,1)$-star is an extremal family. Nevertheless, it provides a sufficient condition that holds for many well-known problems. In particular, Theorem~\ref{thm:OR} provides a complete solution (in the range $C<k<n/C)$ for the aforementioned `Erd\H{o}s matching conjecture' and for the `forbidden paths/cycles' problems
%(see Section~\ref{sec:app})
studied by Kostochka, Mubayi, and Verstra\"{e}te~\cite{kostochka2015turan} (see below). In the case of the Erd\H{o}s matching conjecture, a stronger result was obtained
by Frankl~\cite{frankl2013improved} in 2013. In the case of forbidden paths and cycles, the best previous results on these problems apply only for a fixed $k$ and $n$ very large with respect to $k$ (as we describe below).

A stability result for Theorem~\ref{thm:OR} holds as well (see Theorem~\ref{thm:Solution for t-porcupines}).

\subsubsection{The Erd\H{o}s-Chv\'{a}tal simplex conjecture}

We apply our approach to a well-known conjecture of Erd\H{o}s~\cite{erdos1971topics} and Chv\'{a}tal~\cite{chvatal1974extremal} from 1974 which generalizes the Erd\H{o}s-Ko-Rado theorem~\cite{erdos1961intersection}.

\medskip A \emph{$d$-simplex} is a family of $d+1$ sets that have empty intersection, such that the intersection of any $d$ of them is nonempty.
\begin{conjecture}[Chv\'{a}tal, 1974]
Let $d<k\le\frac{d}{d+1}n$ , and let $\f \subseteq {{[n]}\choose{k}}$ be a family that does not contain a $d$-simplex.
Then $\left|\f\right|\le {{n-1}\choose{k-1}}$, with equality if and only if $\f$ is a $(1,1)$-star.
\end{conjecture}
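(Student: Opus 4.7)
The plan is to realize the $d$-simplex as a $k$-expansion, invoke Theorem~\ref{thm:OR} with $t=1$ in the bulk range of $k$, and handle the two extreme regimes in $k$ by separate arguments.

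First I would let $\h$ be the hypergraph on vertex set $[d+1]$ whose edges are all $d$-subsets of $[d+1]$. Then the $k$-expansion $\h^{+}$ is precisely the family of $d$-simplices: its edges are $d+1$ sets of size $k$ obtained from the $d$-subsets of $[d+1]$ by attaching pairwise disjoint tails, so the overall intersection is empty while every $d$-wise intersection contains the unique vertex of $[d+1]$ missing from the excluded edge. One has $|E(\h)|=|C(\h)|=d+1$, so $\h$ is a constant-size forbidden hypergraph. Taking $T=\{1\}$, the vertex $1$ lies in exactly $d$ edges of $\h$ and meets each in one element, so $|\mathrm{span}_{\h}^{1}(T)|=d=|E(\h)|-1$; and no single vertex meets every edge of $\h$ since each vertex is missing from exactly one edge. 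Thus the hypotheses of Theorem~\ref{thm:OR} are satisfied with $t=1$, and it yields, for all $C(d)<k<n/C(d)$, that the largest $d$-simplex-free family in $\binom{[n]}{k}$ has size $\binom{n}{k}-\binom{n-1}{k}=\binom{n-1}{k-1}$, with the $(1,1)$-stars being the unique extremal families.

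It remains to cover the two extreme regimes in $k$. For $d<k\leq C(d)$, I would appeal to the classical Frankl-F\"uredi theorem~\cite{frankl1987exact}, which determines the asymptotic extremal for $\h^{+}$-free families when $k$ is fixed and $n$ is large enough; since only finitely many values of $k$ lie below $C(d)$, choosing $n_0(d)$ sufficiently large settles them all uniformly, and the exact bound and uniqueness for these finitely many small $k$ then follow from known exact solutions in the fixed-$k$ regime.

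The genuine obstacle is the large-$k$ regime $n/C(d)\leq k\leq \frac{d}{d+1}n$, where $k/n$ is bounded below by a positive constant and the junta approximation of Theorem~\ref{thm:Junta-approx-theorem} becomes too weak. My plan here is to first shift $\f$ left, reducing to the shifted case, and then to peel off the coordinate $1$: write $|\f|=|\f_1|+|\f^{1}|$ with $\f_1=\{A\setminus\{1\}:1\in A\in\f\}\subseteq\binom{[n-1]}{k-1}$ and $\f^{1}=\f\cap\binom{[n-1]}{k}$, and argue by induction on $n$ that $\f^{1}=\emptyset$ whenever $|\f|\geq \binom{n-1}{k-1}$. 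The inductive step requires a tailored stability statement showing that any $d$-simplex-free family on $[n-1]$ with too many members disjoint from $1$ can be combined with members of $\f_1$ to produce a forbidden $d$-simplex; carrying the \emph{exact} bound through this reduction without loss, and preserving the uniqueness of the $(1,1)$-star, is the hard part, and will presumably rely on a stability version of Theorem~\ref{thm:OR} at every step of the induction.
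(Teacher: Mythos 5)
Your treatment of the middle range $C(d)<k<n/C(d)$ is essentially the paper's: the paper likewise reduces the problem to the \emph{special} simplex $\left(\binom{[d+1]}{d}\right)^{+}$, although it invokes Theorem~\ref{thm:exact solution for t-stars} with $t=1$ rather than Theorem~\ref{thm:OR}; for $t=1$ the two theorems give the same bound $\binom{n-1}{k-1}$ and the same extremal family, and your verification of the hypotheses of Theorem~\ref{thm:OR} is correct. One imprecision: $\h^{+}$ is \emph{not} ``precisely the family of $d$-simplices'' --- a general $d$-simplex need not have pairwise disjoint tails or singleton $d$-wise intersections --- but since every copy of $\h^{+}$ \emph{is} a $d$-simplex, any $d$-simplex-free family is $\h^{+}$-free, so the upper bound and the uniqueness statement transfer in the direction you need. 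Your handling of the small-$k$ range $d<k\le C(d)$ via Frankl--F\"{u}redi is also exactly what the paper does.

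The genuine gap is the large-$k$ regime $n/C(d)\le k\le\frac{d}{d+1}n$, which is precisely the new content of the paper's Section~\ref{sec:Chvatal} and which your proposal does not prove. Your plan --- shift, peel off coordinate $1$, induct on $n$, and invoke ``a stability version of Theorem~\ref{thm:OR} at every step'' --- cannot work as stated, because Theorem~\ref{thm:OR}, its stability version (Theorem~\ref{thm:Solution for t-porcupines}), and the entire junta machinery they rest on are valid only for $k<n/C$; in the regime you are trying to handle, this hypothesis fails by assumption, so the key lemma you plan to apply is unavailable. Moreover, an induction removing one coordinate at a time would require linearly many steps before reaching a base case covered by Frankl's theorem (valid for $k\ge\frac{d-1}{d}n$), with no mechanism for controlling the accumulated losses or preserving uniqueness. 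The paper instead proves Theorem~\ref{thm:main-simplex} by a problem-specific argument with no junta input: it uses the fairness proposition to locate a fair $(d+1)$-set whose slices are $(d+1)$-wise cross-intersecting, proves a stability version of the Frankl--Tokushige theorem to place all these slices in a common $(1,1)$-star, establishes a bootstrapping proposition asserting that if $d$ of $d+1$ families cross free of a $d$-simplex have measure close to $1$ then the last is \emph{empty}, and finishes with a combinatorial elimination (the ``Sudoku'' step) of every slice avoiding the star's center. Without an argument of this kind, or an appeal to a separate known result covering $k$ up to $\frac{d-1}{d}n$ (such as Keevash--Mubayi for $k\le n/2-O_d(1)$ plus something beyond), your proof is incomplete exactly where the theorem's novelty lies.
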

To be precise, Erd\H{o}s~\cite{erdos1971topics} raised the conjecture in 1971 in the specific case $d=3$, and three years later, Chv\'{a}tal~\cite{chvatal1974extremal} raised the conjecture for any $d$.
The conjecture has a long history of partial results. In 1976, Frankl~\cite{frankl1976sperner} showed that the Erd\H{o}s-Chv\'{a}tal conjecture holds for $k\ge\frac{d-1}{d}n$. In 1987, Frankl and F\"{u}redi~\cite{frankl1987exact} showed that that it holds for all $n\ge n_{0}\left(k,d\right)$. In 2005, Mubayi and Verstra\"{e}te~\cite{mubayi2005proof} settled completely the case $d=3$, improving over several previous
results~\cite{bermond1977chvatal,chvatal1974extremal,frankl1981chvatal} and resolving the original conjecture of Erd\H{o}s. In 2010, Keevash and Mubayi~\cite{keevash2010set} showed that Chv\'{a}tal's conjecture holds for $\zeta n\le k\le\frac{n}{2}-O_{\zeta,d}\left(1\right)$, for all $\zeta>0$.

\medskip

Our methods do not apply directly to the Erd\H{o}s-Chv\'{a}tal  conjecture, since the $d$-simplex is not an expansion of a fixed hypergraph $\h$. However, for $C<k<n/C$ (where $C=C(d)$), Theorem~\ref{thm:exact solution for t-stars} implies an even stronger result: any family $\f \subseteq {{[n]}\choose{k}}$ with more than ${{n-1}\choose{k-1}}$ edges contains a `special simplex' -- a copy of $\h^+$, where the edges of $\h$ are all the $d$-element subsets of a $(d+1)$-element set -- which in particular is a $d$-simplex. Adding to this the previous results on the Erd\H{o}s-Chv\'{a}tal conjecture, only the `large $k$' case is left. We solve it using problem-specific techniques to obtain the following:
\begin{thm}
For any $d \in \mathbb{N}$ there exists $n_0(d)$ such that the following holds. Let $n>n_0(d)$, let $d<k\le\frac{d}{d+1}n$ , and let $\f \subseteq {{[n]}\choose{k}}$ be a family that does not contain a $d$-simplex. Then $\left|\f\right|\le {{n-1}\choose{k-1}}$, with equality if and only if $\f$ is a $(1,1)$-star.
\end{thm}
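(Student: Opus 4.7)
My plan is to split the allowed range $d < k \le dn/(d+1)$ into three regimes and to treat each with a different tool. The moderate regime $C(d) < k < n/C(d)$ will be handled by the general junta-based Theorem~\ref{thm:exact solution for t-stars}. The small regime $k \le C(d)$ is already covered by the 1987 Frankl--F\"{u}redi theorem, which proves the Erd\H{o}s--Chv\'{a}tal conjecture whenever $n \ge n_0(k,d)$. The large regime $k \ge n/C(d)$, in which the junta method fails, is the new contribution; I would combine the 1976 Frankl bound for $k \ge (d-1)n/d$ and the 2010 Keevash--Mubayi bound for $\zeta(d)n \le k \le n/2 - O_d(1)$ with a problem-specific argument that fills the remaining gaps.

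For the moderate regime, take $\h$ to be the \emph{special simplex}: the $d$-uniform hypergraph on vertex set $[d+1]$ whose edges are all the $d$-element subsets of $[d+1]$. Then $|E(\h)| = |C(\h)| = d+1$, the kernel $K(\h) = \bigcap_{E \in E(\h)} E$ is empty and hence of size $t-1 = 0$, and the singleton $\{1\}$ is contained in every edge of $\h$ except $[d+1] \setminus \{1\}$; this verifies condition $(3)$ of Theorem~\ref{thm:exact solution for t-stars} with $t = 1$. The theorem then yields that every $\h^+$-free family in $\binom{[n]}{k}$ has size at most $\binom{n-1}{k-1}$, with equality precisely for $(1,1)$-stars. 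A short check confirms that $\h^+$ is itself a $d$-simplex: since the expansion parts are pairwise disjoint and disjoint from the core $[d+1]$, the intersection of all $d+1$ edges of $\h^+$ equals $\bigcap_{j=1}^{d+1}([d+1] \setminus \{j\}) = \emptyset$, while the intersection of any $d$ of them equals the unique common vertex of the corresponding $d$ cores. Hence a family in $\binom{[n]}{k}$ of size exceeding $\binom{n-1}{k-1}$, or of size exactly $\binom{n-1}{k-1}$ but not a star, must contain a special simplex, and therefore a $d$-simplex.

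The main obstacle is the large-$k$ regime. The junta approximation of Theorem~\ref{thm:Junta-approx-theorem} is too coarse here, since its error term is of order $k/n$, which already equals the density of the star itself. The strategy I would pursue is: after a shifting reduction, examine the maximum degree of $\f$. If some vertex $v$ has degree close to $|\f|$, then $\f$ is essentially a star centered at $v$, and a direct stability-type computation forces $|\f| \le \binom{n-1}{k-1}$ with equality only for the full $(1,1)$-star. Otherwise the family is sufficiently spread out, and one should construct a $d$-simplex greedily, exploiting that $k \le dn/(d+1)$ leaves enough room in $[n]$ for $d+1$ edges to have complements that jointly cover $[n]$ yet admit suitably chosen `private' vertices. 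The anticipated difficulty is making this construction work uniformly across the full interval $n/C(d) \le k \le dn/(d+1)$, reconciling it with the previous Frankl and Keevash--Mubayi ranges, and simultaneously ensuring that the star is the unique extremum.
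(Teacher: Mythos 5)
Your treatment of the moderate regime is correct and is exactly what the paper does: the special simplex $\h$ has empty kernel and a singleton contained in all edges but one, so Theorem~\ref{thm:exact solution for t-stars} applies with $t=1$, and your check that $\h^+$ is itself a $d$-simplex is right. The citations for $k\le C$ (Frankl--F\"{u}redi) and $k\ge\frac{d-1}{d}n$ (Frankl) are also the ones the paper uses. The problem is the remaining window $\frac{n}{2}-O_d(1)\le k\le\frac{d-1}{d}n$ (after invoking Keevash--Mubayi), which is the genuinely new content of this theorem, and for which you offer only a plan, not an argument. Your sketch --- shift, look at the maximum degree, treat the near-star case by a stability computation and the spread-out case by greedily building a simplex --- does not engage with the actual difficulty. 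In this density range, $d+1$ sets with empty intersection need not form a $d$-simplex (that implication only holds for $k>\frac{d-1}{d}n$), so ``enough room in $[n]$'' is not the issue; one must produce $d+1$ edges whose total intersection is empty while \emph{every} $d$ of them intersect, and a naive greedy choice gives no control over the $d$-wise intersections. You also give no mechanism for the dichotomy itself: why a $d$-simplex-free family of size close to $\binom{n-1}{k-1}$ at density $k/n$ close to $\frac{d-1}{d}$ must have a vertex of near-full degree.

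For comparison, the paper closes this window (Section~\ref{sec:Chvatal}) with a dedicated stability theorem: it finds a fair $(d+1)$-set $D$, observes that the slices $\f_D^{D\setminus\{i\}}$ are $(d+1)$-wise cross-intersecting, proves a stability version of the Frankl--Tokushige theorem for $s$-wise cross-intersecting families (via biased measures and Friedgut's lemma) to locate a common star, then uses a bootstrapping proposition (if $d$ of the cross-simplex-free families are nearly full, the last is \emph{empty}) and a ``Sudoku'' elimination of slices to conclude $\f$ is literally contained in a $(1,1)$-star. None of this is recoverable from your outline, so the large-$k$ case remains an open gap in your proposal.
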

\noindent This proves the Erd\H{o}s-Chv\'{a}tal simplex conjecture for all $k$, given that $n$ is sufficiently large as function of $d$. Note that the aforementioned Frankl-F\"{u}redi result~\cite{frankl1987exact} proves the conjecture for a fixed $k$, where $n$ goes to infinity. Our theorem proves the conjecture in the
entire range $d<k\le\frac{d}{d+1}n$ (i.e., $k$ is allowed to grow with $n$), provided that $n$ is sufficiently large only as function of $d$ (which is assumed to be fixed, like in all previous works on the conjecture).

\subsubsection{Other applications}
\label{sec:app}

As mentioned above, Theorems~\ref{thm:exact solution for t-stars} and~\ref{thm:OR} apply to a number of well-studied Tur\'{a}n-type problems. In this subsection we present briefly several of these applications, mainly focusing on the problems mentioned above, and compare them to previous results on the respective problems.

%In this section we present a (semi-random) selection of applications of the junta method to specific problems. We briefly outline the main previous results on each problem and state the result that follows from Theorem~\ref{thm:exact solution for t-stars} or Theorem~\ref{thm:OR}.

\mn \emph{The Erd\H{o}s-S\'{o}s `forbidding one intersection' problem.} Posed in 1975~\cite{erdHos1975problems}, the problem asks for the maximal size $f(n,k,t)$ of a $k$-uniform hypergraph on $n$ vertices that does not contain two edges whose intersection is of size exactly $t-1$. Equivalently, the problem asks what is the maximal size of an $\i_t^+$-free $k$-uniform hypergraph, where the forbidden hypergraph $\i_t$ consists of two edges that share exactly $t-1$ vertices.

This problem, for different regimes of the relation between $n,k,$ and $t$, was studied in numerous works, including the
Frankl-Wilson~\cite{frankl1981intersection} theorem (which considers the case where multiple intersection sizes that satisfy some modular conditions are forbidden) and the Frankl-R\"{o}dl~\cite{frankl1987forbidden} theorem (which considers the case where $t$ is linear in $n$). Recent works considered variants of the problem in different settings, e.g., for permutations with forbidden intersections~\cite{ellis2014forbidding,keevash2017frankl}.

We consider the `forbidding one intersection' problem in the regime where $t$ is constant and $n$ is large. In this regime, the problem is related to the classical problem of determining the maximal size of a $t$-intersecting family (i.e., a family in which every two sets intersect in at least $t$ elements), proposed by Erd\H{o}s, Ko, and Rado~\cite{erdos1961intersection}. For the `$t$-intersecting' problem, Ahlswede and Khachatrian~\cite{ahlswede1997complete} showed that the maximal $t$-intersecting family has size $\max_r{|\f_{n,k,t,r}|}$, where $\f_{n,k,t,r}=\{A \in {{[n]}\choose{k}}: |A \cap [t+2r]| \geq t+r\}$, proving a conjecture of Frankl~\cite{frankl1978nkt}. In particular, for all $n \geq (k-t+1)(t+1)$, the extremal size is ${{n-t}\choose{k-t}}$ (which was proved much earlier by Wilson~\cite{wilson1984exact}).

In 1977, Frankl~\cite{frankl1977singleton} showed that for all $k \geq 4$ and $n>n_0(k)$, we have $f(n,k,2)={{n-2}\choose{k-2}}$. At the same year, Frankl~\cite{frankl1977constructive} determined $f(n,k,t)$ up to a constant factor for all $k \geq 3t-2$ and $n \geq n_0(k)$. Generalizing the two aforementioned results, Frankl and F\"{u}redi~\cite{frankl1985forbidding} showed in 1985 that if $k\ge2t$ and $n\ge n_{0}\left(k,t\right)$, then any $\i_{t}^+$-free family $\f \subseteq {{[n]}\choose{k}}$ satisfies $\left|\f\right|\le{{n-t}\choose{k-t}}$, with equality if and only if $\f$ is a $(t,t)$-star. This means that, perhaps surprisingly, forbidding only intersections of size $t-1$ yields the same result as forbidding all intersections of size at most $t-1$.

It is known that the condition $k\ge2t$ is sharp, in the sense that the $(t,t)$-star is no longer an extremal example if $k<2t$. For example, Frankl~\cite{frankl1983Steiner} showed that when $k=2t-1$ and $k-t$ is a prime, $f(n,k,t)$ is attained by the Steiner system $S(n,2k-t-1,t)$, if such a system exists for these values of $n,k,t$.

In 2006, Keevash, Mubayi and Wilson~\cite{keevash2006set} solved the problem completely for $k=4,t=2$, and any value of $n$.
%Their result is one of the very few Tur\'{a}n problems for hypergraphs whose solution is known for all values of $n$.
The problem of determining the minimal value of $n_{0}=n_{0}\left(k,t\right)$ such that for any $n>n_0$ the size of the extremal family is
$\left|\f\right|\le{{n-t}\choose{k-t}}$ is wide open.

Theorem~\ref{thm:exact solution for t-stars}, together with the Frankl-F\"{u}redi result~\cite{frankl1985forbidding}, imply the following:
\begin{cor}
For each $t\in\mathbb{N}$, there exists $C(t)$ such that the following holds. Let $n \in \mathbb{N}$, let $2t\le k\le n/C$, and let $\f \subseteq {{[n]}\choose{k}}$ be a family that does not contain two edges with intersection of size $t-1$. Then $|\f| \le {{n-t}\choose{k-t}}$, with equality
if and only if $\f$ is a $(t,t)$-star.
\end{cor}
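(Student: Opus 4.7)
The plan is to reformulate the forbidden configuration so that Theorem~\ref{thm:exact solution for t-stars} applies, and then fill in the remaining bounded-$k$ regime by invoking the 1985 Frankl-F\"uredi theorem. The naive choice of the forbidden hypergraph is $\i_{t}$ itself: two edges of size $t$ sharing $t-1$ vertices. For this $\i_t$ one has $|K(\i_{t})|=t-1$, but for $t\ge 2$ there is no set $S$ of size $2t-1$ sitting inside any single edge (each edge has only $t$ vertices), so condition (3) of Theorem~\ref{thm:exact solution for t-stars} fails and the theorem cannot be applied directly.

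To get around this, I would replace $\i_{t}$ by the hypergraph $\h$ on $3t-1$ vertices whose two edges $E_{1},E_{2}$ have size $2t-1$ and share exactly a kernel $K=E_{1}\cap E_{2}$ of size $t-1$. Since $k\ge 2t>2t-1$, the $k$-expansion $\h^{+}$ is well-defined. A short embedding check shows that a family $\f\subseteq\binom{[n]}{k}$ contains a copy of $\h^{+}$ if and only if it contains two $k$-sets whose intersection has size exactly $t-1$: one direction is immediate from the definition of expansion, and for the other we use that any two such $k$-sets $A_1,A_2$ have $|A_{i}\setminus A_{3-i}|=k-(t-1)\ge t$, which leaves enough room to embed $E_{i}\setminus K$ (of size $t$) into $A_{i}\setminus A_{3-i}$ and then to place the disjoint expansion vertices in the leftover parts of $A_{1},A_{2}$. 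Thus the $\h^{+}$-free and $\i_{t}^{+}$-free conditions coincide on $\binom{[n]}{k}$ for $k\ge 2t$.

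For this reformulated $\h$ we have $|K(\h)|=t-1$, the set $S=E_{1}$ of size $2t-1$ is contained in all edges but $E_{2}$, and $\max(|E(\h)|,|C(\h)|)=\max(2,t-1)$. Hence Theorem~\ref{thm:exact solution for t-stars} applies and produces a constant $C_{0}=C_{0}(t)$ such that, for all $C_{0}<k<n/C_{0}$, the maximum size of an $\h^{+}$-free subfamily of $\binom{[n]}{k}$ equals $\binom{n-t}{k-t}$, attained only by the $(t,t)$-stars. In the remaining short range $2t\le k\le C_{0}$, the parameter $k$ takes only finitely many values, so I would invoke the Frankl-F\"uredi 1985 theorem, which delivers the same conclusion whenever $n\ge n_{0}(k,t)$. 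Setting
\[
C(t):=\max\!\Bigl(C_{0},\ \max_{2t\le k\le C_{0}} n_{0}(k,t)/k\Bigr),
\]
the hypothesis $k\le n/C(t)$ forces $n\ge n_{0}(k,t)$ for every $k$ in this finite window, so Frankl-F\"uredi covers the leftover range, and gluing the two ranges yields the corollary.

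The only genuine obstacle is the reformulation in the first step: one must recognize that the efficient way to feed the ``forbidding one intersection'' problem into the junta machinery is not through the minimal $\i_{t}$ with edges of size $t$, but through an enlarged hypergraph with edges of size $2t-1$ for which the third hypothesis of Theorem~\ref{thm:exact solution for t-stars} becomes available. Once this reformulation is in place, the proof is a routine gluing of a modern structural theorem with a classical extremal result.
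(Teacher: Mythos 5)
Your proposal is correct and follows essentially the same route as the paper, which derives the corollary by combining Theorem~\ref{thm:exact solution for t-stars} (for $C<k<n/C$) with the Frankl--F\"uredi 1985 theorem (for the finitely many values $2t\le k\le C$, where $k\le n/C$ forces $n\ge n_0(k,t)$). Your reformulation of the forbidden configuration with edges of size $2t-1$ is exactly the detail needed to make condition~(3) applicable; it matches the paper's Condition~(*) in Section~\ref{sec:sub:proof:star}, which is phrased for the $k$-uniform expanded hypergraph and is satisfied by two $k$-sets meeting in $t-1$ elements precisely when $k\ge 2t-1$.
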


\mn \emph{The Frankl-F\"{u}redi `special simplex' problem.} The \emph{special $d$-dimensional simplex} $\s_{d}$ is a hypergraph
that consists of $d+1$ sets $A_{1},\ldots,A_{d+1}$, such that $A_{i}\cap\left[d+1\right]=\left[d+1\right]\backslash\left\{ i\right\} $
for any $i\in\left[d+1\right]$, and the sets $\left\{ A_{i}\backslash\left[d+1\right]\right\} _{i\in\left[d+1\right]}$ are pairwise disjoint. The special simplex problem asks, what is the largest size of a family $\f \subseteq {{[n]}\choose{k}}$ that does not contain a copy of $\s_d$.

Frankl and F\"{u}redi, who posed the problem in 1987~\cite{frankl1987exact}, proved that if $k\ge d+3$ and $n\ge n_{0}\left(k,d\right)$,
then any $\s_{d}$-free family $\f \subseteq {{[n]}\choose{k}}$ satisfies $\left|\f\right|\le {{n-1}\choose{k-1}}$, with equality if and only if $\f$ is a $(1,1)$-star. They conjectured that the assertion holds for $k=d+1,d+2$ as well, but could prove the conjecture only for $d=2$.

In 1999, Cs\'{a}k\'{a}ni and Kahn~\cite{csakany1999homological} showed that one may take $n_{0}\left(3,2\right)=6$ using a homological approach. They conjectured that the Frankl-F\"{u}redi result holds for all $n \geq (d + 1)(k - d + 1)$. However, no progress on the problem was obtained since the Cs\'{a}k\'{a}ni-Kahn work.

Theorem~\ref{thm:exact solution for t-stars}, together with the Frankl-F\"{u}redi result~\cite{frankl1985forbidding}, imply the following:
\begin{cor}
\label{cor:Csakani-Kahn}For each $d \in \mathbb{N}$, there exists $C(d)$ such that the following holds. Let $n \in \mathbb{N}$, let $d+3\le k\le n/C$,
and let $\f \subseteq {{[n]}\choose{k}}$ be a family that does not contain a copy of the special simplex $\s_{d}$. Then
$\left|\f\right|\le {{n-1}\choose{k-1}}$, with equality if and only if $\f$ is a $(1,1)$-star.
\end{cor}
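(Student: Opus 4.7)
The plan is to derive the corollary as a direct application of Theorem~\ref{thm:exact solution for t-stars}, after recognizing the special $d$-simplex $\s_d$ as the $k$-expansion of a small, explicit hypergraph and checking that this hypergraph meets condition~(3) of that theorem with $t=1$.

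First, I would observe that $\s_d=\h^+$ where $\h=\binom{[d+1]}{d}$ is the hypergraph on vertex set $[d+1]$ whose edges are all $d$-subsets of $[d+1]$. Indeed, the defining requirements $A_i\cap[d+1]=[d+1]\setminus\{i\}$ together with pairwise disjointness of the tails $\{A_i\setminus[d+1]\}$ are precisely the conditions that produce the $k$-expansion of $\h$. Consequently, an $\s_d$-free family is the same as an $\h^+$-free family.

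Next, I would verify the hypotheses of Theorem~\ref{thm:exact solution for t-stars} with $t=1$. The hypergraph $\h$ has $|E(\h)|=d+1$ edges and $|C(\h)|\le d+1$ center-vertices, so one may take $m=d+1$ and obtain a constant $C=C(d)$ from the theorem. To check condition~(3): the kernel $K(\h)=\bigcap_{E\in\h}E$ is empty (each $i\in[d+1]$ is omitted by the edge $[d+1]\setminus\{i\}$), matching the required size $t-1=0$; and choosing any $v\in[d+1]$ and setting $S=\{v\}$, one has $|S|=1=2t-1$ and $v$ lies in every edge of $\h$ except $[d+1]\setminus\{v\}$. Thus condition~(3) holds, and the theorem yields directly that in the range $C(d)<k<n/C(d)$ the maximum size of an $\s_d$-free family in $\binom{[n]}{k}$ is $\binom{n-1}{k-1}$, with equality characterized exactly by the $(1,1)$-stars.

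Finally, to match the range stated in the corollary, one must bridge the gap between the lower bound $k>C(d)$ coming from Theorem~\ref{thm:exact solution for t-stars} and the weaker lower bound $k\ge d+3$ appearing in the statement. This is only a finite set of values $k\in\{d+3,\dots,\lceil C(d)\rceil\}$, and for each such $k$ the original 1987 theorem of Frankl and F\"{u}redi~\cite{frankl1987exact} settles the problem as soon as $n\ge n_0(k,d)$; after enlarging $C(d)$ if necessary, the hypothesis $n\ge C(d)\cdot k$ already forces $n$ above the finite maximum $\max_{d+3\le k\le C(d)}n_0(k,d)$, and this finite regime is thereby absorbed. This last step is pure bookkeeping; I expect no substantive obstacle beyond the structural content already supplied by Theorem~\ref{thm:exact solution for t-stars}, which does all of the real work.
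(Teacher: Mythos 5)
Your proposal is correct and is essentially the argument the paper intends: identify $\s_d$ as $\left(\binom{[d+1]}{d}\right)^+$, check that this base hypergraph satisfies condition~(3) of Theorem~\ref{thm:exact solution for t-stars} with $t=1$ (empty kernel, and a single vertex lying in all edges but one), and invoke that theorem for $C(d)<k\le n/C(d)$. Your explicit appeal to the Frankl--F\"{u}redi theorem to cover the finitely many values $d+3\le k\le C(d)$ (after enlarging $C(d)$ so that $n\ge Ck$ exceeds the relevant $n_0(k,d)$) is exactly the bookkeeping the paper leaves implicit, and it is needed to reach the stated range.
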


\mn \emph{The Tur\'{a}n hypergraph problem for paths and cycles.} A $t$-path is a hypergraph $P_t = \{e_1, e_2, \ldots, e_t\}$, such that $|e_i \cap e_j | = 1$ if $|j - i| = 1$ and $e_i \cap e_j = \emptyset$ otherwise. A $t$-cycle is obtained from a $(t-1)$-path $\{e_1, e_2, \ldots, e_{t-1}\}$ by adding an edge $e_t$ that shares one vertex with $e_1$, another vertex with $e_{t-1}$, and is disjoint from the other edges.
The Tur\'{a}n hypergraph problem for paths (resp., cycles)
asks, what is the maximum number $ex_k(n, P_t)$
(resp., $ex_k(n, C_t)$)
of edges in a family $\f \subseteq {{[n]}\choose{k}}$ that does not contain a $t$-path
(resp., $t$-cycle). We present here our result in the case of paths and compare it with previous work; the situation in the case of cycles is similar.

The Erd\H{o}s-Gallai theorem~\cite{erdHos1959paths} from 1959 shows that $ex_2(n, P_t) \leq \frac{t-1}{2}n$, and this is tight whenever $t | n$. In 1977, Frankl~\cite{frankl1977singleton} determined $ex_k(n, P_2)$ (which is identical to the `forbidding singleton intersection' problem) for $n$ sufficiently large as function of $k$. The next exact result on the problem was obtained only recently: In 2014, F\"{u}redi, Jiang and Seiver~\cite{furedi2014exact} determined $ex_k(n, P_t)$ for all $k \geq 4$, $t \geq 3$ and $n$ sufficiently large, using the Deza-Erd\H{o}s-Frankl `delta-system method'~\cite{deza1978intersection}. In an independent work from 2015, Kostochka, Mubayi, and Verstra\"{e}te~\cite{kostochka2015turan} presented the method of `random sampling from the shadow' and used it to obtain a similar result for $k \geq 3$, $t \geq 4$ (except for $(k,t)=(3,4)$), and $n$ sufficiently large. All these results (except for the Erd\H{o}s-Gallai theorem) apply only for $n$ sufficiently large as function of $k$.

Theorem~\ref{thm:OR}, together with the F\"{u}redi-Jiang-Seiver result~\cite{furedi2014exact}, imply the following:
\begin{cor}
\label{cor:paths}For each $t \geq 1$, there exists $C(t)$ such that the following holds. Let $n \in \mathbb{N}$, let $4 \le k\le n/C$,
and let $\f \subseteq {{[n]}\choose{k}}$ be a family that does not contain a copy of the path $P_{2t+1}$. Then
$\left|\f\right|\le {{n}\choose{k}} - {{n-t}\choose{k}}$, with equality if and only if $\f$ is a $(t,1)$-star.
\end{cor}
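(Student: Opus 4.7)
The plan is to exhibit $P_t$ as a $k$-expansion and apply Theorem~\ref{thm:OR}. Take $\h$ to be the $2$-uniform path on vertices $v_0, v_1, \ldots, v_t$, with edges $E(\h) = \{\{v_{i-1}, v_i\} : 1 \le i \le t\}$. Expanding each such $2$-edge by $k-2$ distinct fresh vertices faithfully reproduces the intersection pattern of the $k$-uniform path: consecutive edges share only the corresponding joint vertex $v_i$, and non-consecutive edges become disjoint. Hence $P_t = \h^+$, so a family $\f \subseteq \binom{[n]}{k}$ is $P_t$-free iff it is $\h^+$-free; and $\max(|E(\h)|, |C(\h)|) = t$ is bounded in $t$ alone, putting Theorem~\ref{thm:OR} in scope.

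The main verification is finding the distinguished set $T$ required by Theorem~\ref{thm:OR}. Write $t = 2s+1$ and choose $T := \{v_1, v_3, \ldots, v_{2s-1}\} \subseteq V(\h)$, of size $s$. A direct inspection yields $|\mathrm{span}^1_\h(T)| = 2s = |E(\h)|-1$: each $v_{2j-1}$ lies in exactly its two incident edges $\{v_{2j-2}, v_{2j-1}\}$ and $\{v_{2j-1}, v_{2j}\}$, no two vertices of $T$ share an edge, and only the last edge $\{v_{2s}, v_{2s+1}\}$ is disjoint from $T$. Next, the minimum vertex cover of a graph-path on $t = 2s+1$ edges has size $\lceil t/2 \rceil = s+1$, so no $T' \subseteq V(\h)$ of size $s$ satisfies $\mathrm{span}_\h(T') = \h$. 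Both hypotheses of Theorem~\ref{thm:OR} hold with this choice of $|T|$, and invoking the theorem yields the upper bound together with the characterization of the extremal examples as the $(s,1)$-stars, valid throughout $4 \le k \le n/C(t)$.

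To close the loop, I would check that an $(s,1)$-star $\s'_T$ is genuinely $P_t$-free, so that the bound is attained. A pigeonhole argument suffices: any copy of $P_t = \{E_1, \ldots, E_t\}$ inside $\s'_T$ would force each $E_i$ to meet the $s$-element set $T$, but every vertex of $P_t$ lies in at most two of its edges (only joint vertices lie in exactly two), so the $s$ elements of $T$ together cover at most $2s < 2s+1 = t$ edges, a contradiction.

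I expect no significant obstacle, since the proof is essentially a careful packaging of $P_t$-freeness as an instance of the hypothesis of Theorem~\ref{thm:OR}; all the analytic heavy lifting is hidden inside that theorem, and the task reduces to isolating the correct $\h$ and $T$. The restriction to odd $t$ enters exactly through the vertex-cover condition: for even $t$ the minimum vertex cover of $\h$ drops to $t/2 = s$, so the natural candidate $T$ would satisfy $\mathrm{span}_\h(T) = \h$ and Theorem~\ref{thm:OR} becomes inapplicable, matching the known fact that for even $t$ the extremal $P_t$-free family is not a single $(s,1)$-star and requires a more delicate description.
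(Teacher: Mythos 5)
Your reduction to Theorem~\ref{thm:OR} is the intended route, and your verification of its hypotheses is correct: writing $P_t=\h^+$ for the $2$-uniform path $\h$ with $t$ edges, the set $T=\{v_1,v_3,\ldots,v_{2s-1}\}$ of size $s=(t-1)/2$ meets $t-1$ of the edges in exactly one vertex each, and since the minimum vertex cover of a path with $2s+1$ edges has size $s+1$, no $s$-set spans all of $\h$. But look at what Theorem~\ref{thm:OR} then delivers: the bound $\binom{n}{k}-\binom{n-s}{k}$ with the $(s,1)$-stars as the unique extremal families, where $s=(t-1)/2$ --- not the bound $\binom{n}{k}-\binom{n-t}{k}$ and the $(t,1)$-stars appearing in the corollary. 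These do not coincide, and the printed statement cannot be the right one: a $(t,1)$-star actually \emph{contains} a copy of $P_t$ (already for $t=3$, take three edges through three distinct elements of $T$ whose consecutive pairs share one fresh vertex), and the printed bound is weaker than the exact values determined by F\"{u}redi--Jiang--Seiver~\cite{furedi2014exact} and Kostochka--Mubayi--Verstra\"{e}te~\cite{kostochka2015turan}. So your argument proves the corrected statement, with $(t-1)/2$ in place of $t$ throughout; you should say this explicitly rather than asserting that invoking the theorem ``yields the upper bound'' of the corollary as written.

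A smaller point: Theorem~\ref{thm:OR} only covers $C<k<n/C$, so it alone does not give the full claimed range $4\le k\le n/C$. The finitely many values $4\le k\le C$ must be handled by the earlier fixed-$k$, large-$n$ results for paths (enlarging $C$ so that $n\ge Ck$ exceeds their thresholds), exactly as the paper does for the forbidding-one-intersection corollary; your remark that the conclusion is ``valid throughout $4\le k\le n/C(t)$'' via the theorem alone is an overstatement. The rest --- the identification of $P_t$ as an expansion, the computation of $\mathrm{span}^1_{\h}(T)$, the vertex-cover argument explaining why odd $t$ is needed, and the (redundant but correct) pigeonhole check that the $(s,1)$-star is $P_t$-free --- is fine.
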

The case of paths of even length does not follow directly from Theorem~\ref{thm:OR} since the structure of the conjectured extremal examples in that case is more complex (see~\cite{kostochka2015turan}). It seems, however, that with some more technical effort a similar result for the even case can be obtained as well.
%The situation with respect to forbidden \emph{$t$-cycles} is similar to the situation for $t$-paths (both regarding previous results and regarding the corollary that can be derived from Theorem~\ref{thm:OR}).

%The extremal problem for $k$-uniform hypergraphs for $C_3$ is also well-researched. Indeed, the case $k = 2$
%is precisely Mantel's theorem from 1907. Frankl and F\"{u}redi~\cite{frankl1987exact} showed that the unique extremal
%$r$-graph on $[n]$ not containing $C_3$ consists of all edges containing some $x \in [n]$, for large enough $n$.
%For $k = t = 3$, the exact result was proved for all $n \geq 6$ by Cs\'{a}k\'{a}ny and Kahn~\cite{csakany99homological}. More recently,
%F\"{u}redi and Jiang~\cite{furedi2014hypergraph} determined the extremal function for $C_t$ for all $t \geq 3$, $k \geq 5$ and large $n$;
%their results substantially extend earlier results of Erd\H{o}s and settled a conjecture of the Mubayi and Verstra\"{e}te for $r \geq 5$. They used the `delta system' method. Kostochka, Mubayi, and Verstra\"{e}te extended the F\"{u}redi-Jiang theorem to the case of $k = 3, 4$, using the `random sampling from the shadow' method.

%\subsubsection{Application to other Tur\'{a}n-type problems}

%Our techniques can be applied (with some more effort) also to other classes of problems in extremal hypergraph theory. We present two such applications.
%The first concerns a well-known conjecture of Chv\'{a}tal.

\mn \emph{Families of hypergraphs with no cross matching.} While our main results consider single families that are free of some forbidden configuration, in order to establish them we prove various results in the `cross' setting that studies families $\f_1,\f_2,\ldots,\f_h$ that are `together' free of some hypergraph $\h$ with $h$ edges. Formally, to treat this setting one defines an \emph{ordered} hypergraph to be an ordered tuple of edges. The $k$-expansion of an ordered hypergraph is defined accordingly, and families $\f_1,\f_2,\ldots,\f_h$ are said to be \emph{cross free} of the ordered hypergraph $\h$ if there exists no copy $(E_1,...,E_h)$ of $\h$ with $E_1\in \f_1,\ldots,E_h\in \f_h$. We note that the `cross' setting appears frequently in inductive proofs of statements in extremal hypergraph theory and numerous results in this setting were proved in the last decades (see, e.g.,~\cite{ahlswede1977contributions,frankl1992some,furedi1995cross} and the survey~\cite{frankl2016survey}).

%While our main results consider the `single-family' setting in which we consider a single family $\f$ that is free of a forbidden hypergraph $\h^+$ with $h$ edges, in order to establish them we prove various results in the `cross' setting. In that setting, one studies bounds on the size of $h$ families $\f_1,\ldots,\f_h$ that are `cross free' of $\h^+$ where $\h$ is an \emph{ordered} hypergraph, meaning that there does not exist a copy $(E_1,\ldots,E_h)$ of $\h^+$ such that $E_1 \in \f_1,\ldots,E_h \in \f_h$.

One of the results we prove in this direction concerns a conjecture raised (independently) by Aharoni and Howard~\cite{aharoni2017rainbow} and by Huang, Loh and Sudakov~\cite{huang2012size} in 2012, as a strengthening of the Erd\H{o}s matching conjecture.

%For a $k$-uniform hypergraph $\h$ with $s$ edges, we say that a family of $s$ $k$-uniform hypergraphs $H_1,H_2,\ldots,H_s$ on the vertex-set $[n]$ is \emph{cross free} of $\h$ if there do not exist $E_1 \in E(H_1), \ldots, E_s \in E(H_s)$ such that $(E_1,\ldots,E_s)$ is a copy of $\h$.
A $t$-matching is a $k$-uniform hypergraph that consists of $t$ pairwise disjoint edges.
\begin{conjecture}
\label{Conjecture: cross matching}Let $k\le n/t$, and let $\f_1,\ldots,\f_t \subseteq {{[n]}\choose{k}}$ be families that are cross free of an ordered $t$-matching. Then
\[
\min\left\{ \left|\f_{1}\right|,\ldots,\left|\f_{t}\right|\right\} \le \max \left({{n}\choose{k}}-{{n-t+1}\choose{k}}, {{kt-1}\choose{k}} \right).
\]
Equality is attained if all the families $\f_{1},\ldots,\f_{t}$ are equal either to the same $(t-1,1)$-star, or to the same $(kt-1,k)$-star.
\end{conjecture}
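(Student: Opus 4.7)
The plan is to attack the conjecture through a cross version of the junta method developed in the paper, supplemented by problem-specific arguments for the ``large-$k$'' regime outside the scope of that method. The first step is to formulate and prove a cross analogue of Theorem~\ref{thm:Junta-approx-theorem}: for each $t$ there exist $C,j$ such that whenever $\f_{1},\ldots,\f_{t}\subseteq\binom{[n]}{k}$ with $C<k<n/C$ are cross free of the ordered $t$-matching, there exist juntas $\j_{1},\ldots,\j_{t}\subseteq\binom{[n]}{k}$ on a common coordinate set $J$ with $|J|\le j$, each $\j_{i}$ approximating $\f_{i}$ in the sense of Theorem~\ref{thm:Junta-approx-theorem}, and with $\j_{1},\ldots,\j_{t}$ themselves cross free of the ordered $t$-matching. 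Since the ordered $t$-matching is the $k$-expansion of the ordered hypergraph consisting of $t$ pairwise disjoint singletons, one expects this extension to follow from essentially the same machinery as in Theorem~\ref{thm:Junta-approx-theorem}, with modifications to track the $t$ families simultaneously.

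The second step reduces the conjecture to a junta-level statement. Setting $\g_{i}=\{S\subseteq J:\, A\in\j_{i}\text{ whenever }A\cap J=S\}$, the hypothesis $k<n/C$ ensures that any cross selection $S_{i}\in\g_{i}$ which is pairwise disjoint inside $J$ extends to a cross $t$-matching in $[n]$ (there is ample room outside $J$ since $tk\ll n-j$). Consequently $\g_{1},\ldots,\g_{t}$ admit no pairwise disjoint cross selection, and $|\j_{i}|/\binom{n}{k}$ agrees with $\mp(\g_{i})$ up to lower-order terms, where $p=k/n$. It therefore suffices to prove the following $\mp$-version on the constant-sized ground set $J$: if $\g_{1},\ldots,\g_{t}\subseteq 2^{J}$ admit no pairwise disjoint cross selection, then $\min_{i}\mp(\g_{i})\le 1-(1-p)^{t-1}$, with equality forcing every $\g_{i}$ to be a common $(t-1,1)$-star on $J$. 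This is a finite combinatorial problem which I would attack by induction on $t$; the base $t=2$ is the classical $\mp$-version of the cross-intersecting bound, and for the inductive step I would apply a compression/shifting procedure that preserves cross freeness of a $t$-matching and does not decrease any $\mp(\g_{i})$, reducing to shifted families on which the extremal configuration can be read off directly.

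The third step combines this $\mp$-bound with a bootstrapping argument of the type used in~\cite{dinur2009intersecting}: if $\min_{i}|\f_{i}|$ exceeded $\binom{n}{k}-\binom{n-t+1}{k}$, the approximating juntas would violate the $\mp$-statement; the stability portion then pins each $\f_{i}$ close to a common $(t-1,1)$-star, and a final clean-up forces $\f_{i}$ to coincide with it. The complementary regime $n/C\le k\le\frac{t-1}{t}n$ lies outside the junta method and calls for a separate, shifting-based argument in the spirit of Frankl's treatment of the Erd\H{o}s matching conjecture. The main obstacle is expected to be a sufficiently sharp \emph{stability} version of the junta-level $\mp$-statement, without which the bootstrapping cannot deliver an exact bound and uniqueness; a secondary difficulty is the large-$k$ regime, where the product-measure intuition breaks down and one must exploit the cross-matching structure directly, and splicing the two regimes cleanly at their overlap is an additional technical challenge.
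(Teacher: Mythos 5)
Your plan for the regime $C<k<n/C$ is essentially the paper's own proof: the cross junta approximation you describe is Theorem~\ref{thm:matching junta approximation} (built from Proposition~\ref{lem:associated junta lemma} and Proposition~\ref{prop:uncapturable}), the junta-level extremal statement is Proposition~\ref{lem:finding the largest junta}, and the bootstrapping is Propositions~\ref{prop: unbalanced matching lemma} and~\ref{lem:Locally porcupines are the best}. The one substantive divergence is in your second step. The paper does not pass to a $\mu_{p}$ statement there; it works with the uniform measure directly and argues that each generating family $\j_{i}\subseteq\p(J)$ must contain at least $t-1$ singletons, that the singleton sets admit no system of distinct representatives and hence coincide and have size exactly $t-1$, which pins every junta inside a common $(t-1,1)$-star. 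Be warned that your proposed $\mu_{p}$ lemma, as literally stated (``no cross disjoint selection implies $\min_{i}\mu_{p}(\g_{i})\le 1-(1-p)^{t-1}$''), is false without a smallness restriction on $p$ relative to $|J|$: taking each $\g_{i}$ to be the sets of size greater than $|J|/t$ gives families with no cross disjoint selection whose $\mu_{p}$-measures tend to $1$ as $|J|\to\infty$ for $p$ near $\frac{t-1}{t}$ (compare the Remark following Proposition~\ref{prop:cross-matching}, where only the much weaker sum bound $\sum_{i}\mu_{p_{i}}(\f_{i})\le t-1$ is available, and only under $\sum p_{i}\le 1$). In the application $|J|$ is a constant and $p=k/n<1/C$, so the statement you need is recoverable, but the restriction must be explicit and your compression induction must be carried out in that regime. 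Finally, note that the complementary range $n/C\le k\le\frac{t-1}{t}n$, which you correctly flag as requiring a separate shifting argument, is not handled in the paper either: Corollary~\ref{cor:aharoni-howard} asserts the conjecture only for $k<n/C$, so that portion of your plan is a genuine open gap rather than something you can expect to match against an existing argument here.
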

Huang, Loh, and Sudakov~\cite{huang2012size} proved their conjecture in the case $n\ge3tk^{2}$, and used this result to improve the state-of-the-art for the Erd\H{o}s matching conjecture. (The application to the matching conjecture was later superseded by a result of Frankl~\cite{frankl2013improved}).
Using our techniques (see Section~\ref{sec:Baby Case}), along with the Huang-Loh-Sudakov result~\cite{huang2012size}, we obtain:
\begin{cor}\label{cor:aharoni-howard}
Conjecture~\ref{Conjecture: cross matching} holds for all $k < n/C$, where $C$ depends only on $t$.
\end{cor}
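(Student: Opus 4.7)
The plan is to adapt the junta method used to establish Theorem~\ref{thm:exact solution for t-stars} to the cross setting. As a first step, I would prove a cross analog of Theorem~\ref{thm:Junta-approx-theorem}: if $\f_1,\dots,\f_t \subseteq \binom{[n]}{k}$ are cross free of an ordered $t$-matching, then there exist juntas $\j_1,\dots,\j_t$ depending on a common set $J \subseteq [n]$ of size at most $j(t)$, still cross free of a $t$-matching, such that $|\f_i \setminus \j_i| \leq \eta \cdot |\j_i|$ for every $i$, where $\eta = \max(e^{-k/C},Ck/n)$. This is obtained by running the iterative junta-refinement scheme underlying Theorem~\ref{thm:Junta-approx-theorem} simultaneously on the $t$ families: at each step one locates a bounded-size set of coordinates on which some $\f_i$ is significantly biased compared with its junta prediction, and appends these coordinates to $J$ for all families; cross-freeness prevents this bias from persisting indefinitely, so the process terminates after $O_t(1)$ refinements.

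Second, since $|J| = O_t(1)$ while $n$ is large, the cross-freeness of $\j_1,\dots,\j_t$ is equivalent to a finite combinatorial condition on their $J$-traces $\a_i := \{A \cap J : A \in \j_i\} \subseteq \p(J)$: there is no transversal $A_1 \in \a_1,\dots, A_t \in \a_t$ with the $A_i$ pairwise disjoint. Indeed, any such transversal extends to pairwise disjoint $k$-sets in $[n]$ by picking disjoint completions in $[n]\setminus J$, which is possible whenever $n \geq C' t |J|$. Noting that $|\j_i| = \sum_{A \in \a_i} \binom{n-|J|}{k-|A|}$, the heart of the proof becomes the following baby case: if $\a_1,\dots,\a_t \subseteq \p(J)$ admit no pairwise disjoint transversal, then
\[
\min_i \sum_{A \in \a_i}\binom{n-|J|}{k-|A|} \leq \binom{n}{k} - \binom{n-t+1}{k}.
\]
I would prove this by induction on $t$: if $\emptyset \in \a_{i_0}$ for some $i_0$, then $\{\a_i : i \neq i_0\}$ is cross free of a $(t-1)$-matching and the inductive hypothesis applies; otherwise every member of $\bigcup_i \a_i$ is non-empty and a shifting argument in $\p(J)$ locates a set $T \subseteq J$ of size at most $t-1$ such that some $\a_i$ is contained in $\{A \subseteq J : A \cap T \neq \emptyset\}$, which is the trace of the $(t-1,1)$-star and yields the desired bound exactly.

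Combining the two steps, if $\min_i |\f_i| > \binom{n}{k} - \binom{n-t+1}{k}$, then using $|\f_i| \leq (1+\eta) |\j_i|$ forces $\min_i |\j_i|$ to exceed the baby-case bound once $C$ is chosen sufficiently large in terms of $t$, a contradiction. I expect the main obstacle to be the cross version of Theorem~\ref{thm:Junta-approx-theorem} with a common coordinate set $J$ and simultaneously preserved cross-freeness at every step: in the single-family setting one can symmetrize or re-randomize at will, but in the cross setting these manipulations must be carried out in parallel on all $t$ families while maintaining the joint forbidden-configuration condition, requiring careful bookkeeping of which coordinates have been frozen and by which family. A secondary technical point is the shifting argument in the baby case, which must respect the cross structure and therefore cannot be performed family-by-family.
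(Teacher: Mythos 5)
Your overall architecture agrees with the paper's for its Steps 1 and 2: the paper does prove exactly the cross version of the junta approximation you describe (Theorem~\ref{thm:matching junta approximation}, built from Proposition~\ref{lem:associated junta lemma} applied to each family separately plus Proposition~\ref{prop:uncapturable} showing uncapturable families cross contain a matching), and it does classify the large cross-matching-free juntas (Proposition~\ref{lem:finding the largest junta}). However, there are two genuine gaps.

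The fatal one is your final deduction. From $\min_i|\f_i|>\binom{n}{k}-\binom{n-t+1}{k}=:B$ and $|\f_i|\le(1+\eta)|\j_i|$ you can only conclude $|\j_i|>B/(1+\eta)$, which is \emph{smaller} than $B$, not larger; no choice of $C$ fixes this, because $\eta\ge 0$. More fundamentally, the approximation controls $|\f_i\setminus\j_i|$ but says nothing about $|\j_i\setminus\f_i|$, so as far as the junta step can see, each $\f_i$ could be the $(t-1,1)$-star with a sliver of measure $\epsilon'$ removed from inside and a sliver of measure $\epsilon'$ added outside, giving $|\f_i|>B$. Ruling this out is exactly the content of the paper's Step~3, which you omit: a bootstrapping lemma (Proposition~\ref{prop: unbalanced matching lemma}) showing that if $t-1$ of the families have measure $\ge 1-\epsilon$ and all $t$ are cross free of a matching, the last has measure $O(\epsilon^2)$; applied to the slices $(\f_1)_U^{\{i_1\}},\dots,(\f_{t-1})_U^{\{i_{t-1}\}},(\f_t)_U^{\emptyset}$ it shows (Proposition~\ref{lem:Locally porcupines are the best}) that adding measure $\epsilon'$ outside the star forces a loss of order $\sqrt{\epsilon'}\cdot\frac{k}{n}$ inside, which dominates $\epsilon'$ once $\epsilon'=O((k/n)^3)$. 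Without this step neither the inequality nor the uniqueness of the extremal configuration follows.

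A secondary problem is the combinatorial lemma in your ``baby case.'' The claim that, when all members of the traces are non-empty and there is no pairwise disjoint transversal, some $\a_i$ must be covered by a set of size $t-1$ is false: for $t=2$ take $\a_1=\a_2=\{\{1,2\},\{2,3\},\{1,3\}\}$, which is cross-intersecting but has no cover of size one. The numerical bound survives in this example only because all sets have size $\ge 2$, so the correct route (and the paper's) is to first use \emph{near-extremality} of the juntas to force each generating family to contain at least $t-1$ singletons, then observe that the singleton sets $U_1,\dots,U_t$ admit no rainbow transversal and hence coincide, and only then conclude containment in a common $(t-1,1)$-star. Structure for arbitrary (non-extremal) cross-free traces is simply not available.
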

The main tool here is a `cross-setting' version of Theorem~\ref{thm:Junta-approx-theorem} in the case where $\h$ is a matching.

\mn \emph{Problems in which the extremal example is a more general junta.} We anticipate that with additional effort, our junta method will be applicable not only in cases where the extremal example is a $(t,t)$-star or a $(t,1)$-star, but to any Tur\'{a}n problem for expansion in which the extremal example is a junta. An example of such a result is the following theorem, proved by the authors and David Ellis, that requires several additional methods and hence is presented in a separate paper~\cite{ellis2018stability}.
\begin{thm}
Denote by $\f_{n,k,t,r}$ the family $\{A \in {{[n]}\choose{k}}: |A \cap [t+2r]| \geq t+r\}$, as above.

For any $t \in \mathbb{N}$ and any $\epsilon>0$, there exist $C=C(t)$ and $n_0=n_0(t,\epsilon)$ such that the following holds for all $n>n_0$. Let $C \leq k \leq (\frac{1}{2}-\epsilon)n$ and let $\f \subseteq {{[n]}\choose{k}}$ be a family that does not contain two sets which intersect in exactly $t-1$ elements. Then $|\f| \leq \max_r (|\f_{n,k,t,r}|)$, with equality if and only if $\f$ is isomorphic to one of the families $\f_{n,k,t,r}$.
\end{thm}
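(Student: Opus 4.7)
My plan is to apply Theorem~\ref{thm:Junta-approx-theorem} with $\h=\i_t$, the hypergraph consisting of two edges sharing exactly $t-1$ vertices, whose $k$-expansion $\i_t^+$ is precisely the forbidden configuration. In the regime $C<k<n/C$, this produces an $\i_t^+$-free junta $\j$ depending on at most $j$ coordinates $J$, with $|\f\setminus\j|\le\delta|\j|$ for $\delta=\max(e^{-k/C},Ck/n)$. The first step is to upgrade $\j$ from $\i_t^+$-free to genuinely $t$-intersecting: if $A,B\in\j$ had $|A\cap B|\le t-1$, one could, holding the patterns $A\cap J$ and $B\cap J$ fixed, modify $A$ and $B$ outside $J$ (using $k,n-k\gg j+t$) so that their intersection becomes exactly $t-1$, contradicting $\i_t^+$-freeness. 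Hence the pattern family $\mathcal{P}\subseteq\mathcal{P}(J)$ describing $\j$ must satisfy $|p\cap q|\ge t$ for all $p,q\in\mathcal{P}$.

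By the Ahlswede--Khachatrian theorem~\cite{ahlswede1997complete}, $|\j|\le\max_r|\f_{n,k,t,r}|$, and up to a permutation of $[n]$, $\j$ is contained in some $\f_{n,k,t,r}$ with $[t+2r]\subseteq J$. To convert this into a sharp bound on $|\f|$ I would bootstrap: for each $A\in\f\setminus\j$, I claim a $(1-o(1))$-fraction of sets $B\in\j$ satisfies $|A\cap B|=t-1$. Granting this, since $\f$ is $\i_t^+$-free each such $B$ must lie in $\j\setminus\f$, contradicting $|\j\setminus\f|\le\delta|\j|$ given by the junta approximation. Hence $\f\setminus\j=\emptyset$, so $|\f|\le|\j|\le\max_r|\f_{n,k,t,r}|$. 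The claim itself rests on the explicit structure of the AK-extremal juntas: for $A\notin\j$ one can (by choosing $\j$ inclusion-maximal) find $q\in\mathcal{P}$ with $|p\cap q|<t$ where $p=A\cap J$, and then a direct counting/concentration estimate shows that an overwhelming fraction of $B\in\binom{[n]}{k}$ with $B\cap J=q$ realize $|A\cap B|=t-1$ when $k<n/C$.

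The complementary range $n/C\le k\le(\tfrac{1}{2}-\epsilon)n$, outside the scope of Theorem~\ref{thm:Junta-approx-theorem}, must be handled by separate arguments; here I would combine classical shifting/compression techniques with a stability version of Ahlswede--Khachatrian to reduce `forbidding intersection exactly $t-1$' to `$t$-intersecting', patching the result to the junta-based bound from the first regime. Uniqueness in the equality case follows, once $\f\subseteq\j$ has been shown, from the uniqueness clause of the Ahlswede--Khachatrian theorem applied to the $t$-intersecting family $\j$.

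The main obstacle I expect is the bootstrapping step, since the junta theorem only bounds $|\f\setminus\j|$ by a multiplicative factor $\delta$ of $|\j|$ whereas an exact result requires eliminating this contribution entirely; doing so cleanly demands a uniform-in-$k$ analysis of which $r$ maximizes $|\f_{n,k,t,r}|$ (the optimal $r$ changes at `Frankl transition points'), and the large-$k$ regime $n/C\le k\le(\tfrac{1}{2}-\epsilon)n$ lies outside the junta framework and needs independent techniques, which together plausibly account for why several additional methods are required and the result is deferred to~\cite{ellis2018stability}.
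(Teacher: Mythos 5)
First, a point of order: this paper does not actually prove the theorem you were asked about. It is stated in Section~\ref{sec:app} as a result ``proved by the authors and David Ellis, that requires several additional methods and hence is presented in a separate paper''~\cite{ellis2018stability}. So there is no in-paper proof to compare against; your proposal has to stand on its own, and it does not.

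Two genuine gaps. The first is your bootstrapping claim that for each $A\in\f\setminus\j$ a $(1-o(1))$-fraction of $B\in\j$ satisfies $|A\cap B|=t-1$ exactly. Outside the junta coordinates $J$, the sets $A$ and $B$ are essentially unconstrained $k$-subsets of $[n]\setminus J$, so $|A\cap B|$ is concentrated around $k^2/n$. Once $k\gg\sqrt{n}$ (well inside the range $C<k<n/C$, let alone $k=\Theta(n)$), the probability that the intersection equals the fixed constant $t-1$ tends to $0$, not to $1$; even for $k=O(\sqrt{n})$ one only gets a constant fraction (e.g.\ for $t=1$, $r=1$, an $A$ with $A\cap[3]=\{1\}$ meets a $B$ with $B\cap[3]=\{1,2\}$ in at least one element). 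The paper's own bootstrapping machinery (Section~\ref{sec:bootstrapping}) works quite differently: it passes to slices that are \emph{cross free} of an expanded hypergraph and uses shadow/coupling arguments, precisely because the naive ``most $B$ hit $A$ in exactly $t-1$ elements'' statement fails. You also misquote the junta theorem as bounding $|\j\setminus\f|$ when it bounds $|\f\setminus\j|$; this is repairable by assuming $|\f|>\max_r|\f_{n,k,t,r}|$ for contradiction, but as written the contradiction you derive does not follow.

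The second gap is the range $n/C\le k\le(\tfrac12-\epsilon)n$. You correctly identify that Theorem~\ref{thm:Junta-approx-theorem} does not cover it, but then dispose of it with one sentence about ``shifting/compression techniques with a stability version of Ahlswede--Khachatrian.'' This regime is exactly where the extremal family transitions through the different $\f_{n,k,t,r}$ and is the substantive reason the result needs a separate paper; a placeholder sentence is not a proof. Your overall architecture (junta approximation, upgrade to $t$-intersecting, Ahlswede--Khachatrian on the junta, then bootstrap) is the right shape for the small-$k$ regime, but the two steps above are where the real work lives and neither is carried out.
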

Combined with the Frankl-F\"{u}redi result~\cite{frankl1985forbidding} that resolves the case $2t \leq k \leq C$, this provides a complete solution to the `forbidding one intersection' problem in the regime of a constant $t$, for almost all the range in which the conjectured extremal example is a junta.

\medskip Problems for which the conjectured extremal example is not a junta (like maximal hypergraphs that do not contain a clique or a copy of the Fano plane, see~\cite{keevash2011hypergraph}), seem out of reach for our method.

\subsection{Techniques}
\label{sec:sub:intro:techniques}

Before presenting an outline of our method, we briefly describe the main previous techniques that were used to study Tur\'{a}n problems for expansion. As we indicate, two of them are used in our argument as well.

\subsubsection{Overview of previous techniques}

\medskip \noindent \textbf{The delta-system method.} The arguably most common method for attacking Tur\'{a}n problems for expansion is the delta-system method, proposed by Deza, Erd\H{o}s and Frankl~\cite{deza1978intersection} and developed by F\"{u}redi~\cite{furedi1983kernel} and others. The driving force behind the method (in the way it is used today) is a lemma of F\"{u}redi~\cite{furedi1983kernel} which allows partitioning any $\h^+$-free family into several parts that have a rich structure (called `$(s,J)$-homogeneous hypergraphs') and a `remainder' that contains only a few edges. After the partition is performed, a two-step procedure is applied: First, the restriction on the original family is translated to a restriction on each of the $(s,J)$-homogeneous parts and on the interactions between them. Second, these restrictions are used to deduce an upper-bound on the sizes of the $(s,J)$-homogeneous parts, which eventually leads to an upper bound on the size of the original family.

The delta-system method was used by Frankl and F\"{u}redi (\cite{frankl1987exact}, see Theorem~\ref{thm:Frankl-Furedi} below) to derive an asymptotic upper bound on the size of $\h^+$-free families for any fixed forbidden hypergraph $\h$, and is being continuously used to find exact solutions to various specific problems (see, e.g.,~\cite{furedi2014linear,furedi2014hypergraph,furedi2015turan,furedi2014exact}).

\medskip \noindent \textbf{Random sampling from the shadow.} This method, introduced by Kostochka, Mubayi and Verstra\"{e}te~\cite{kostochka2015turan,kostochka2015problems,kostochka2014turan} in 2015, revolves around the idea of studying a family $\f \subset {{[n]}\choose{k}}$ by inspecting its lower shadow $\partial \f$, defined as the family of all sets of size $k-1$ that are contained in a set of $\f$. The family $\partial \f$ is multi-colored by $n$ colors, where a set $A \in \partial \f$ is colored by the set of colors $\{i\in [n]: A\cup \{i\} \in \f\}$. Then, in order to upper bound the size of a family $\f$ that is free from some forbidden structure $H$, the method proceeds by three steps. First it is shown that the restriction on $\f$ implies that $\partial \f$ is free of a certain multicolored configuration. This is used to upper bound the size of $\partial \f$ using Ramsey-type theorems. Finally, the upper bound on the size of $\partial \f$ is leveraged into an uppper bound on the size of $\f$.

The method of `random sampling from the shadow' was used by Kostochka, Mubayi and Verstra\"{e}te to attack the Tur\'{a}n problem for expansion in the cases of linear paths and cycles~\cite{kostochka2015turan}, trees~\cite{kostochka2015problems}, and graphs with crosscuts of size two~\cite{kostochka2014turan}.

\medskip \noindent \textbf{The stability method.} This method allows leveraging stability results (i.e., results which describe the structure of almost extremal families with respect to some Tur\'{a}n-type problem) into `exact' results (which determine the extremal families exactly). The stability method was introduced by Simonovits~\cite{simonovits1968method} and developed by Mubayi~\cite{mubayi2007intersection} and others. The method proceeds by three steps. First an approximate result is proved, which shows that any family that satisfies the restriction cannot be `much larger' than the desired bound. Then the approximate result is used to obtain a stability statement which characterizes the almost-extremal families. Finally, the stability result is leveraged to obtain an exact extremal result.

The stability method was used in numerous recent works. In particular, Keevash and Mubayi~\cite{keevash2010set} used it in their result on the Erd\H{o}s-Chv\'{a}tal conjecture, and Kostochka, Mubayi, and Verstra\"{e}te~\cite{kostochka2015turan} used it in their recent result on the aforementioned Tur\'{a}n hypergraph problem for paths and cycles.

\subsubsection{Outline of our technique}

\medskip \noindent \textbf{The junta method.} As mentioned above, our main tool is the `junta method', which takes its origin from the work of Dinur and Friedgut~\cite{dinur2009intersecting} on approximation of intersecting families by juntas. The heart of the junta method is the general structure theorem asserting that if a family $\f$ is free of $\h^+$ for some hypergraph $\h$ with $h$ edges, then $\f$ is essentially contained in a junta that is also free of $\h^+$. The proof of this structural result (Theorem~\ref{thm:Junta-approx-theorem} above), which spans most of the paper, can be very roughly sketched as follows (see Section~\ref{sec:overview} for a detailed proof-sketch).
\begin{itemize}
\item We consider the division of each family into a `junta' part and a `quasi-random' part which is far from being approximated by a junta (the formal notion for this is `uncapturable', introduced by Dinur and Friedgut~\cite{dinur2009intersecting}). We seek to approximate each $\h^+$-free family by its `junta' part. For this, we have to show that if $\f$ is $\h^+$-free then its `uncapturable' part is small, and its `junta' part is $\h^+$-free.

\item To prove this, we first show that any sufficiently large uncapturable family $\g$ contains a copy of $\h_1^+$, for \emph{any} hypergraph $\h_1$ with $h$ edges. This part requires defining two new notions of quasi-randomness (which we call `fairness' and `quasiregularity') and studying their inter-relations, as well as studying the relation between an $\h^+$-free family and its shadow (following the aforementioned method of `random sampling from the shadow'). We then deduce that if $\f$ is  $\h^+$-free, then its `junta' part is $\h^+$-free as well.
\end{itemize}

Once the general structure theorem is established, we use it to derive exact solutions of hypergraph Tur\'{a}n-type problems in a two-step procedure which generally follows the aforementioned `stability method':
\begin{enumerate}
%\item The first step is to show that any $\h^+$-free hypergraph is essentially contained in an $\h^+$-free junta.

\item We characterise all the largest juntas that are free of $\h^+$. This is the easiest step, as juntas are very convenient structures to work with.

\item We show that if an $\h^+$-free family $\f \subseteq {{[n]}\choose{k}}$ is a small perturbation of an extremal junta $\j$ , then $\left|\f\right|\le\left|\j\right|$. This is obtained using a `bootstrapping' lemma which asserts that if $\g_1,\ldots,\g_h$ are families that are cross free of a copy of $\h^+$, and in addition, $\g_1,\ldots,\g_{h-1}$ are `very large', then $\g_h$ must be `very small'.
\end{enumerate}
In words, Theorem~\ref{thm:Junta-approx-theorem} shows that the juntas are the largest $\h^+$-free families, up to some `noise'. Step~(1) lets us find an $\h^+$-free junta $\j$ of the largest size, and so by Theorem~\ref{thm:Junta-approx-theorem}, the largest $\h^+$-free hypergraph is a `noisy version' of $\j$. Then, Step~(2) tells us that any $\h^+$-free  noisy version of $\j$ has a smaller size than $\j$, and so $\j$ is the extremal family.

\subsection{Follow-up works}

Since the initial version of this paper appeared online, our methods were used in diverse settings, and gave rise to results in several directions. In this subsection we give a very brief description of three of these follow-up works.

\medskip \noindent \emph{`Forbidding one intersection' for permutations: relation to representation-theoretic techniques.} A family $\f$ of permutations, i.e., $\f \subset \mathcal{S}_{n}$, is called $t$-\emph{intersecting} if every two permutations in $\f$ agree on at least $t$-elements. In a seminal paper from 2011 that used representation theory of the symmetric group and algebraic methods in graph theory, Ellis, Friedgut and Pilpel~\cite{ellis2011intersecting} determined the largest $t$-intersecting families of permutations, provided that $n\ge n_{0}\left(t\right)$. The beautiful proof technique of~\cite{ellis2011intersecting} was shown by Ellis~\cite{ellis2014forbidding} to fail for the corresponding `forbidding intersection problem', for any $t \ge 3$.

In a recent work by Ellis and the second author~\cite{EL18+}, they combined the junta method with representation-theoretic methods to derive a significantly shorter and more robust proof of the main result of~\cite{ellis2011intersecting} and also to extend the result into a solution of the `forbidding one intersection' problem for permutations. They proved:
\begin{thm}[Ellis and Lifshitz]
\label{thm:extremal-forbidden}
For any $t \in \mathbb{N}$, there exists $n_0(t)$ such that the following holds for any $n \geq n_0$. Let $\f \subset \mathcal{S}_n$ be a $(t-1)$-intersection-free family. Then $|\f| \leq (n-t)!$, with equality if and only if $\f$ is a $(t,t)$-star (i.e., the set of all permutations that agree with a fixed bijection of size $t$).
\end{thm}

\medskip \noindent \emph{A `removal lemma' for expanded hypergraphs of large uniformity, via new sharp threshold results.} The celebrated hypergraph removal lemma, proved by Gowers~\cite{Gowers07} and independently by Nagle, R\"{o}dl, Schacht, and Skokan~\cite{NRS06,RS04}, asserts that for any fixed $k$ and for any fixed $k$-uniform hypergraph $\mathcal{H}$, if a $k$-uniform hypergraph $\f$ on $n$ vertices contains only few copies of $\mathcal{H}$, then it can be made $\mathcal{H}$-free by removing `few' of its edges. The removal lemma is derived from a regularity lemma for hypergraphs, similarly to the Ruzsa-Szemer\'{e}di triangle removal lemma~\cite{ruzsa1978triple} derived from Szemer\'{e}di's regularity lemma for graphs~\cite{szemeredi1975arithmetic}. While this settles the `hypergraph removal problem' in the case where $k$ and $\mathcal{H}$ are fixed, the result is meaningless when $k$ is large (say, when $k > \log \log \log n$). Friedgut and Regev~\cite{friedgut2018cheese} used eigenvalue techniques to obtain a hypergraph removal lemma for arbitrarily large $k$, in the special case where $\mathcal{H}$ consists of two pairwise disjoint edges.

In a recent work~\cite{lifshitz2018removal}, the second author combined the junta method with a novel sharp threshold theorem, to generalize the result of~\cite{friedgut2018cheese} into a removal lemma for a wide class of expanded hypergraphs, where $k$ is allowed to be as large as linear in $n$. The sharp threshold theorem of~\cite{lifshitz2018removal} is a robust version of the classical sharp threshold theorem of Friedgut and Kalai~\cite{friedgut1996every} which essentially asserts that for any monotone Boolean function $f$ which satisfies a certain symmetry condition, the expectation of $f$ with respect to the biased measure $\mu_p$ on the discrete cube increases rapidly from close to 0 to close to 1 within a small interval of $p$'s. In~\cite{lifshitz2018removal} it is shown that a variant of the Friedgut-Kalai theorem holds even if the function $f$ is not monotone but only almost monotone, in the sense that there are only few $x,y$ with $\forall i:x_i\le y_i$, such that $f(x)>f(y)$.

\medskip \noindent \emph{Tur\'{a}n-type results for expansion of large graphs, via sharp threshold results for sparse sets.} In all Tur\'{a}n-type results considered in this paper, while the uniformity of the expanded hypergraph (i.e., $k$) may grow up to linearly with $n$, the basic hypergraph $\mathcal{H}$ is  of constant size.

In a recent paper of Keevash, the second author, Long, and Minzer~\cite{KLLM18}, they showed that the junta method can be extended to cases where the size of $\mathcal{H}$ depends on $n$, and used it to show that the extension of the Erd\H{o}s matching conjecture~\cite{erdHos1965problem} to cross intersection (i.e., Conjecture~\ref{Conjecture: cross matching} above) holds for all $n \geq Ckt$, where $C$ is a universal constant. This generalizes results of Huang, Loh, and Sudakov~\cite{huang2012size} and of Frankl~\cite{frankl2013improved}. In order to extend the junta method, the authors of~\cite{KLLM18} developed new sharp threshold theorems for sparse families with respect to a biased product measure on the discrete cube, which generalize the classical sharp threshold theorem of Bourgain~\cite{friedgut1999sharp} and prove a conjecture of Kahn and Kalai~\cite{kahn2007thresholds}.

\medskip

A common feature of the two latter works is that both required developing new sharp threshold theorems. The `sharp threshold phenomenon', which effectively says that a Boolean function exhibits sharp threshold behavior unless it can be approximated by a junta, plays a central role in the junta method via the lemma of Dinur and Friedgut~\cite{dinur2009intersecting} (Lemma~\ref{lem:DF} below), whose proof relies on the Friedgut-Kalai sharp threshold theorem. Hence, it comes by no surprise that extensions of the junta method go hand in hand with new threshold results, and we anticipate that the interplay between these two classes of results will bring more advances in both directions.

\subsection{Organization of the paper}
After presenting definitions and notations in Section~\ref{sec:prelim}, we consider in Section~\ref{sec:Baby Case} a `baby case' of the general problem, in which the forbidden hypergraph is a matching. In this special case, we prove a cross-version of our general results, thus establishing Corollary~\ref{cor:aharoni-howard} above. We present this case first since on the one hand, its proof contains all components of the general proof, and on the other hand, each component is easier than in the general case, so that the proof is easier to follow. (In addition, components of the baby-case proof are used later in the general-case proof.) In Section~\ref{sec:overview} we present a detailed overview of the proof in the general case. Sections~\ref{sec:fairness}--\ref{sec:uncap-contains} are devoted to proving that any `large' uncapturable family contains a copy of $\h^+$, for any constant-size hypergraph $\h$. In Section~\ref{sec:bootstrapping} we establish the bootstrapping lemmas required in the last part of the proof. In Section~\ref{sec:proof} we collect all ingredients together to prove Theorems~\ref{thm:Junta-approx-theorem},~\ref{thm:exact solution for t-stars},~\ref{thm:Stab-for-Stars}, and~\ref{thm:OR}. Finally, the proof of the Erd\H{o}s-Chv\'{a}tal simplex conjecture is presented in Section~\ref{sec:Chvatal}.

\section{Preliminaries}
\label{sec:prelim}

In this section we introduce some notations and a few lemmas that will be used multiple times in the sequel.

\subsection{Notations}

The following notations will be used throughout the paper.

\mn \textbf{Alphabet.} As the paper contains quite a lot of `literal' notations, we tried to make these notations as consistent as possible. In particular:
\begin{itemize}
\item $n$ -- the `universe': All sets we consider in the paper are subsets of $[n]=\{1,2,\ldots,n\}$, where $n$ is a `very large' number and we are interested in the asymptotic dependence of the results on $n$.

\item $k$ -- most of the sets considered in the paper are of size $k$. Unlike most previous papers, $k$ can go to infinity with $n$ (and usually can be up to linear in $n$).

\item $r$ -- appears only in expressions like $\epsilon^r$ and $\left(\frac{k}{n}\right)^r$, which always denote `a negligibly small quantity'. We usually show that claims hold for any value $r \in \mathbb{N}$ (with some parameters depending on $r$) and sometimes use the ability to apply the statements with different values of $r$.

\item $h$ -- this is always the number of edges in the forbidden hypergraph $\h$.

\item $t,d$ -- usually denote the `main' parameter of $\h$, such as the number of edges in a forbidden $t$-matching, or the `dimension' of a forbidden simplex.

\item $C,s,m,u,v$ -- denote auxiliary parameters, which can be large (depending on parameters like $h,t,r$) but are always constant, i.e., do not tend to infinity with $n$.

\item $\zeta,\delta$ -- denote auxiliary parameters, which can be small but do not tend to zero as $n \rightarrow \infty$.

\item $i,j,l$ -- denote indices.
\end{itemize}

\mn \textbf{$k$-element sets.} For a set $S \subseteq [n]$ and $k \leq |S|$, we denote by ${{S}\choose{k}}$ the family of all $k$-element subsets of $S$. For a set family $\f \subseteq [n]$, the sub-family of $\f$ that consists of all $k$-element sets is denoted by $\f^{(k)} = \{A \in \f: |A|=k\}$. We sometimes abbreviate `$k$-element set' to `$k$-set'.

\mn \textbf{Measure.} For a family $\f$ of $k$-element sets, i.e., $\f \subseteq {{[n]}\choose{k}}$, the \emph{measure} of $\f$ is said to be $\mu(\f) = \frac{|\f|}{{{n}\choose{k}}}$ (which means that we consider the uniform measure on the set ${{[n]}\choose{k}}$).

\mn \textbf{A frequently used measure lower bound.} Multiple assertions in the paper hold for families $\f$ that satisfy
\[
\mu(\f) \geq \max\left( e^{-k/C},C\frac{k}{n}\right),
\]
where $C$ is a fixed constant. We note that with respect to this bound, $e^{-k/C}$ is the dominant term when $k\le C\log n$, while the term $C\frac{k}{n}$ is dominant when $k>(C+\epsilon)\log n$, for any $\epsilon>0$ and sufficiently large $n$.

\mn \textbf{Biased measure.} For a family $\f \subseteq \pn$ and for $0<p<1$, the $p$\emph{-biased measure }of $\f$ is defined by
\[
\mu_{p}\left(\f\right)=\sum_{A\in\f}p^{\left|A\right|}\left(1-p\right)^{n-\left|A\right|}.
\]
Intuitively, the measure $\mu_{p}$ defines a probability distribution on sets $A\subseteq\left[n\right]$, where each element in $\left[n\right]$ is chosen to be in $A$ independently at random with probability $p$. The measure $\mu_{p}\left(\f\right)$ is the probability that $A \in \f$.

\mn \textbf{Monotone families.} A family $\a \subseteq \pn$ is called \emph{monotone} if $(A \in \a \wedge A \subseteq B) \Rightarrow B \in \a$. The \emph{monotone closure} $\f^{\uparrow}$ of a family $\f\subseteq\pn$ is the family of all sets in $\pn$ that contain some element of $\f$. That is, $\f^{\uparrow} = \{S \subseteq [n]: \exists A \in \f, A \subseteq S\}$. Obviously, this family is monotone.

\mn \textbf{Complexes.} A family $\c \subseteq\p\left(n\right)$ is said to be a \emph{simplicial complex} (or, in short, a complex) if
$(A \in \a \wedge B \subseteq A) \Rightarrow B \in \a$. We note that such families are also called \emph{down-sets}. The \emph{down-closure} $\f_{\downarrow}$ of a family $\f\subseteq\pn$ is the family of all sets in $\pn$ that are included in an element of $\f$. That is, $\f_{\downarrow} = \{S \subseteq [n]: \exists A \in \f, S \subseteq A\}$. Obviously, this family is a complex.

\mn \textbf{Generalized Binomial coefficients.} For $x \in \mathbb{R}$ and $k\in\mathbb{N}$, we denote ${{x}\choose{k}} = \frac{x\left(x-1\right)\cdots\left(x-k+1\right)}{k!}$.

\mn \textbf{Random variables notation.} Throughout the paper, we use bold letters to denote random variables and regular letters to denote fixed (i.e., non-random values). For example, $\mathbf{S}=S$ means that the randomly chosen set $\mathbf{S}$ equals a fixed set $S$.

\mn \textbf{Uniform sampling.} For a set $S$ (e.g., $S = {{[n]}\choose{k}}$), the notation $\mathbf{X} \sim S$ means that $\mathbf{X}$ is drawn uniformly from the elements of $S$.

\mn \textbf{Asymptotic notation.} For functions $f,g: \mathbb{R}_{+} \rightarrow \mathbb{R}_{+}$ and a parameter $d$, the notation $f=O_d(g)$ means that there exists a constant $C$ that depends only on $d$ such that $f(x) \leq Cg(x)$ for all $x>0$.

\mn \textbf{Disjoint union.} We sometimes use the notation $A \sqcup B$ for the union of $A$ and $B$, where $A,B$ are known to be disjoint.

\subsection{Hypergraph notations}

While the `forbidden' hypergraph in our paper is always simple, it will be convenient for us to use the notion of
multi-hypergraphs.

\mn \textbf{Multi-hypergraphs.} A \emph{multi-hypergraph} is a hypergraph that may contain multiple edges. Given a hypergraph $\h'$, we define the multi-hypergraph $s\cdot\h'$ to be the hypergraph that contains each edge of $\h'$ $s$ times.

\mn \textbf{$k$-expansion of a multi-hypergraph.} Let $\h_1$ be a multi-hypergraph, and suppose that all the edges of $\h_1$ are of size at most $k$ . The $k$-\emph{expansion} of $\h_1$ is the hypergraph $\h_1^{+}\subseteq{{[n]}\choose{k}}$ obtained from $\h_1$ be enlarging each of its edges by adding to it distinct new vertices. For example, the hypergraph $\left(t\cdot\emptyset\right)^{+}$ is a matching, and the hypergraph \emph{$\left(2 \cdot \left\{ \left[t-1\right]\right\} \right)^{+}$} has two hyperedges, which are $k$-element sets whose intersection is of size $t-1$.\emph{ }

\mn \textbf{$d$-expanded hypergraph} We say that a hypergraph $\h\subseteq{{[n]}\choose{k}}$ is $d$-\emph{expanded} if it is the expansion of a $d$-uniform multi-hypergraph. For example, matchings are the only $0$-expanded hypergraphs, and the hypergraph $\left(2 \cdot \left\{ \left[t-1\right]\right\} \right)^{+}$ is $\left(t-1\right)$-expanded.

It is important to note that the following are equivalent:
\begin{itemize}
\item The hypergraph $\h\subseteq{{[n]}\choose{k}}$ is the expansion of a `fixed' hypergraph (in the sense defined in the introduction);

\item There exist constants $d,h$, such that $\h$ is a $d$-expanded hypergraph with $h$ edges.
\end{itemize}
Hence, the multi-hypergraph notation allows us to have the base hypergraph (whose expansion is forbidden) uniform.

\mn \textbf{Slight abuse of notation.} In the introduction, we denoted the base hypergraph by $\h$, and its $k$-expansion -- which is the actually forbidden
hypergraph -- by $\h^+$. For sake of simplicity, in the sequel we use the notation $\h$ for the forbidden $k$-uniform hypergraph, and usually denote the
base multi-hypergraph by $\h_1$, so that $\h=\h_1^+$. In addition, we sometimes use the term \emph{$d$-expanded hypergraph}, without specification of $d$, to denote a forbidden hypergraph of the form $\h=\h_1^+$, where $\h_1$ is \emph{constant-sized}. Formally, this means that for any fixed $m \in \mathbb{N}$, the claims hold for all hypergraphs $\h=\h_1^+$ with $\max(|E(\h_1)|,|C(\h_1)|) \leq m$, where the parameters in the assertions depend on $m$ (see, e.g., Theorem~\ref{thm:exact solution for t-stars}).

\mn \textbf{Degree and center.} The \emph{degree} of a vertex $v$ in a hypergraph $\h$ is the number of edges that contain $v$. The \emph{center} of $\h$ is the set of all vertices of $\h$ of degree at least 2.

\mn \textbf{Kernel.} The \emph{kernel} of a hypergraph $\h$, denoted by $K(\h)$, is the set of vertices that belong to all edges of $\h$.

\mn \textbf{Size.} The size of a hypergraph $\h$ is the number of its edges.

\subsection{The `cross' setting}

While our main questions concern a single family, in several steps of the proof we have to consider the so-called `cross' version of the problems.

\mn \textbf{Cross freeness.} Let $\mathcal{T}=\left(A_{1},\ldots,A_{h}\right)$ be an \emph{ordered} hypergraph, and let $\f_{1}\subseteq {{\left[n\right]}\choose{|A_1|}},\f_{2}\subseteq {{\left[n\right]}\choose{|A_2|}},\ldots,\f_{h}
\subseteq {{\left[n\right]}\choose{|A_h|}}$ be set families. We say that the families $\f_{1},\f_{2},\ldots,\f_{h}$
are \emph{cross free} of $\mathcal{T}$ if there is no copy of $\mathcal{T}$ of the form $\left(B_{1},\ldots,B_{h}\right)$, such that
$B_{i} \in \f_{i}$ for all $i \in [h]$.

\mn \textbf{Cross expansion.} Let $\mathcal{T}=\left(A_{1},\ldots,A_{h}\right)$ be an ordered hypergraph, and suppose that $\left|A_{i}\right|\le k_{i}$ for
all $i\in\left[h\right]$. We define the $\left(k_{1},\ldots k_{h}\right)$-expansion of $\mathcal{T}$ analogously to the $k$-expansion of a single hypergraph.
Namely, we define its edge set to be of the form $\left(A_{1}\cup D_{1},\ldots,A_{h}\cup D_{h}\right)$, for pairwise disjoint sets $D_{1},\ldots,D_{h}$ that are disjoint from all the sets in $\mathcal{T}$, such that $|A_i \cup D_i|=k_i$, for all $i \in [h]$.

\subsection{Slices}

Let $\f\subseteq\pn$ be a family. Then any set $S\subseteq\left[n\right]$ induces a partition of $\f$ into $2^{\left|S\right|}$ \emph{slices} $\left\{ \f_{S}^{B}\right\} _{B\subseteq S}$, according to the intersection with $S$:
\[
\f_{S}^{B}:=\left\{ A\backslash B\,:\, (A\in\f) \wedge (A\cap S=B)\right\}.
\]
We treat the families $\f_{S}^{B}$ as residing in the `universe' ${{[n] \setminus S}\choose{k-|B|}}$.
For example, if $\f$ is the family
\[
\mathrm{Maj}_{\left[3\right]}=\left\{ A\in{{[n]}\choose{k}}\,:\,\left|A\cap\left[3\right]\right|\ge2\right\} ,
\]
then $\f_{\left\{ 1\right\} }^{\left\{ 1\right\} }$ is the $\left(2,1\right)$-star
\[
\left\{ A\in {{\left[n\right]\backslash\left\{ 1\right\} }\choose{k-1}}\,:\, A\cap\left\{ 2,3\right\} \ne\emptyset\right\} ,
\]
 and $\f_{\left\{ 1\right\} }^{\emptyset}$ is the $\left(2,2\right)$-star
\[
\left\{ A\in {{\left[n\right]\backslash\left\{ 1\right\} }\choose{k}}\,:\,\left\{2,3\right\}\subseteq A\right\} .
\]

%Let $\f\subseteq{{[n]}\choose{k}}$ be some family. We define
%the measure\emph{ of $\f$} by setting $\mu\left(\a\right)=\frac{\left|\a\right|}{{{n}\choose{k}}}$.
%Let $\f\subseteq\pn$. The $p$\emph{-biased measure }of $\f$ is
%defined by\emph{ $\mu_{p}\left(\f\right)=\sum_{A\in\f}p^{\left|A\right|}\left(1-p\right)^{n-\left|A\right|}$.
%}Intuitively\emph{,} the measure $\mu_{p}$ defines a probability
%distribution on sets $A\subseteq\left[n\right]$, where each vertex
%in $\left[n\right]$ is chosen to be in $A$ independently at random
%with probability $p$. The measure $\mu_{p}\left(\f\right)$ is the
%probability that $A$ is in $\f$. We write $\f^{\left(l\right)}$
%for the family $\f\cap\binom{\left[n\right]}{l}$ of all the sets
%in $\f$ of size $l$. The \emph{monotone closure} of a family $\f\subseteq\pn$
%is the monotone family of all sets in $\pn$ that contain some sets
%in $\f$. We denote the monotone closure of a family $\f$ by $\f^{\uparrow}.$

\subsection{Juntas}

For a set $J \subseteq [n]$ (which will usually be of `constant' size), for a family $\j \subseteq \p(J)$, and for $k \geq |J|$, the \emph{$k$-uniform junta} generated by $\j$ is
\[
\left\langle \j\right\rangle = \{A\in{{[n]}\choose{k}}: A\cap J\in\j\}.
\]
Any family $\j\subseteq\p\left(J\right)$ gives rise to two natural $k$-uniform juntas. The first is $\left\langle \j\right\rangle$,
and the second is $\left(\j^{\uparrow}\right)^{\left(k\right)}$. Of course, we always have $\left\langle \j\right\rangle \subseteq\left(\j^{\uparrow}\right)^{\left(k\right)}$.
The following lemma asserts that
%these juntas are `essentially equal', in the sense that
\[
\mu\left(\left(\j^{\uparrow}\right)^{\left(k\right)}\backslash\left\langle \j\right\rangle \right)=o\left(\mu\left(\left\langle \j\right\rangle \right)\right).
\]

\begin{lem}
\label{lem:measures of juntas}Let $J \subseteq [n]$, where $j=|J|$ is a constant. Let $\j\subseteq\p\left(J\right)$ be a family, and let $l<k$ be the minimal size of an element of $\j$. Then
\begin{equation}
\left|\left(\j^{\uparrow}\right)^{\left(k\right)}\right|=\left|\j^{\left(l\right)}\right|{{n}\choose{k-l}}+O_{j}\left(\frac{k}{n}\right)^{l+1}{{n}\choose{k}},\label{eq:upper junta measure}
\end{equation}
 and
\begin{equation}
\mu\left(\left(\j^{\uparrow}\right)^{\left(k\right)}\backslash\left\langle \j\right\rangle \right)=O_{j}\left(\frac{k}{n}\right)^{l+1}.\label{eq:lower junta measure}
\end{equation}
\end{lem}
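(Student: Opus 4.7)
The plan is to partition $(\j^{\uparrow})^{(k)}$ according to the trace $A\cap J$. A set $A\in\binom{[n]}{k}$ lies in $(\j^{\uparrow})^{(k)}$ iff $A\cap J\in \j^{\uparrow}\cap\p(J)$, and for each $S\subseteq J$ there are exactly $\binom{n-j}{k-|S|}$ sets $A$ with $A\cap J=S$. This yields the exact identities
\[
|(\j^{\uparrow})^{(k)}| \;=\; \sum_{S\in \j^{\uparrow}\cap\p(J)}\binom{n-j}{k-|S|},\qquad |\langle \j\rangle| \;=\; \sum_{S\in \j}\binom{n-j}{k-|S|},
\]
so the set difference $(\j^{\uparrow})^{(k)}\setminus \langle \j\rangle$ corresponds to the indices $S\in (\j^{\uparrow}\cap\p(J))\setminus \j$.

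The key observation is that every such $S$ has $|S|\ge l+1$, because a size-$l$ element of $\j^{\uparrow}\cap\p(J)$ must coincide with a size-$l$ element of $\j$ and therefore lie in $\j$. Since $|\p(J)|=2^j=O_j(1)$, both $|(\j^{\uparrow})^{(k)}\setminus \langle \j\rangle|$ and the contribution of the sets with $|S|\ge l+1$ to the first identity above are bounded by $O_j\!\left(\binom{n-j}{k-l-1}\right)$. For (1), the contribution of $S\in \j^{(l)}$ is exactly $|\j^{(l)}|\binom{n-j}{k-l}$, and a $j$-fold application of Pascal's identity converts $\binom{n-j}{k-l}$ into $\binom{n}{k-l}$ with an extra error of $O_j\!\left(\binom{n}{k-l-1}\right)$, which is absorbed into the error term.

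It then remains only to verify the elementary estimate $\binom{n}{k-l-1} = O_j\!\left((k/n)^{l+1}\right)\binom{n}{k}$. For $k\le n/2$ this is immediate from the explicit ratio $\prod_{i=0}^{l} \frac{k-i}{n-k+i+1}$, and for $k>n/2$ the right-hand side is already bounded below by a constant depending only on $l$, so both (1) and (2) hold trivially (as every relevant quantity is at most $\binom{n}{k}$). I do not anticipate any genuine obstacle here: the proof is essentially a direct partition of the junta by its trace on $J$, followed by standard binomial coefficient estimates, with the single slightly subtle point being the combinatorial fact that $\j^{\uparrow}\cap\p(J)$ and $\j$ can differ only in sets of size strictly greater than $l$.
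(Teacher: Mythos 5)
Your proof is essentially the paper's proof: partition $(\j^{\uparrow})^{(k)}$ into slices according to the trace on $J$, observe that any trace lying in $\j^{\uparrow}\cap\p(J)$ but not in $\j$ has size at least $l+1$ (since $l$ is the minimal size in $\j$), and finish with the standard estimate $\binom{n}{k-m}\le (k/(n-k))^{m}\binom{n}{k}$. For $k\le n/2$ (indeed for $k\le n/C$, which is the only regime in which the paper invokes this lemma) every step you give is correct, including the Vandermonde/Pascal conversion of $\binom{n-j}{k-l}$ into $\binom{n}{k-l}$.

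The one misstep is your treatment of $k>n/2$: you assert that (1) then holds trivially because "every relevant quantity is at most $\binom{n}{k}$," but the term $|\j^{(l)}|\binom{n}{k-l}$ is \emph{not} $O_j\bigl(\binom{n}{k}\bigr)$ in that range. For instance, with $n=100$, $k=90$, and $\j$ consisting of a single $5$-element set, the left-hand side of (1) is $\binom{95}{10}$ while $\binom{100}{85}=\binom{100}{15}$ exceeds $\binom{100}{90}=\binom{100}{10}$ by a factor of order $10^4$, so no constant depending only on $j$ can absorb the discrepancy. In other words, equation (1) is simply false for $k$ close to $n$; the lemma carries an implicit restriction such as $k\le n/2$, which the paper's own proof also uses silently (its "easy inequality" $\binom{n-j}{k-|A|}=O_j(k/n)^{l+1}\binom{n}{k}$ fails for large $k$) and which is satisfied wherever the lemma is applied. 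Your claim that (2) is trivial for $k>n/2$ is fine, since there $(k/n)^{l+1}$ is bounded below by a constant depending only on $j$ and a $\mu$-measure never exceeds $1$. Aside from this boundary issue, which is a defect of the statement rather than of your argument, the proof is sound and matches the paper's.
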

\begin{proof}
We prove only (\ref{eq:upper junta measure}) as (\ref{eq:lower junta measure}) is similar. We have
\[
\left|\left(\j^{\uparrow}\right)^{\left(k\right)} \right|=\sum_{\{A\subseteq J\,:\, |A|\ge l+1\}}\left|\left(\left(\j^{\uparrow}\right)^{\left(k\right)}\right)_{J}^{A}\right|+\sum_{\{A\subseteq J\,:\, |A|=l\}}\left|\left(\left(\j^{\uparrow}\right)^{\left(k\right)}\right)_{J}^{A}\right|.
\]
Hence, the assertion of the lemma is an immediate consequence of the following easy inequalities:
\[
\sum_{\{A\subseteq J\,:\,\left|A\right|\ge l+1\}}\left|\left(\left(\j^{\uparrow}\right)^{\left(k\right)}\right)_{J}^{A}\right| = \sum_{\{A\subseteq J\,:\,\left|A\right|\ge l+1\}}{{n-j}\choose{k-\left|A\right|}}=O_{j}\left(\frac{k}{n}\right)^{l+1}{{n}\choose{k}},
\]
and
\[
\sum_{A\in\j^{\left(l\right)}}\left(\left(\j^{\uparrow}\right)^{\left(k\right)}\right)_{J}^{A}=\left|\j^{\left(l\right)}\right|{{n-j}\choose{k-l}}=
\left|\j^{\left(l\right)}\right|{{n}\choose{k-l}}+O_{j}\left(\frac{k}{n}\right)^{l+1}{{n}\choose{k}}.
\]

\end{proof}

\subsection{Shadows and the Kruskal-Katona Theorem}

The \emph{shadow} of a family $\f\subseteq{{[n]}\choose{k}}$, denoted by $\partial\left(\f\right)$, is the family $\f_{\downarrow}^{\left(k-1\right)}$ of all $(k-1)$-sets that are contained in some element of $\f$. Sets in $\partial\left(\f\right)$ are called subedges of $\f$. Similarly, the $t$-shadow of $\f$, denoted by $\partial^{t}\left(\f\right)$, is the family $\f_{\downarrow}^{\left(k-t\right)}$.

The classical Kruskal-Katona theorem allows to obtain bounds on the size of $\f$ in terms of the size of its shadows. In this subsection we present two such bounds that will be useful for us in the sequel.

\mn We start with two classical corollaries of the Kruskal-Katona theorem.
%The first bound we need is the following version of the classical Kruskal-Katona Theorem, due to Lov\'{a}sz.
\begin{thm}[Lov\'{a}sz]
\label{Theorem:Kruskal-Katona-Lovasz} Let $\f\subseteq{{[n]}\choose{k}}$ be a family, and let $x \in \mathbb{R}$ be such that $\left|\f\right|={{x}\choose{k}}$. Then $\left|\partial\left(\f\right)\right|\ge {{x}\choose{k-1}}$.
\end{thm}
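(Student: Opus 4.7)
The plan is to deduce Lov\'asz's form from the classical Kruskal-Katona theorem in its combinatorial (cascade) form, which I would take as the main black-box ingredient. Recall the cascade form: every positive integer $N$ admits a unique representation
\[
N = \binom{a_k}{k} + \binom{a_{k-1}}{k-1} + \cdots + \binom{a_s}{s}, \qquad a_k > a_{k-1} > \cdots > a_s \geq s \geq 1,
\]
and the Kruskal-Katona theorem states that if $|\f|=N$ has this cascade, then
\[
|\partial\f| \geq \binom{a_k}{k-1} + \binom{a_{k-1}}{k-2} + \cdots + \binom{a_s}{s-1}.
\]
The standard proof of this combinatorial form proceeds by shifting (left-compression): the shifting operator preserves $|\f|$ and does not increase $|\partial\f|$, so one may assume $\f$ is shifted; then induction on $n$ and $k$, splitting $\f$ according to whether each set contains the element $n$, yields the bound, since colex-initial segments are extremal.

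Given the cascade form, the proof of Lov\'asz's version reduces to the purely analytic claim that whenever $x \in \mathbb{R}_{\geq k-1}$ satisfies $\binom{x}{k}=\sum_{i=s}^{k}\binom{a_i}{i}$ for a cascade as above, one has
\[
\binom{x}{k-1} \leq \sum_{i=s}^{k}\binom{a_i}{i-1}.
\]
I would prove this by induction on the length of the cascade. The base case (a single term) is equality. For the inductive step, with $|\f|=\binom{a_k}{k}+R$ where $R=\sum_{i<k}\binom{a_i}{i}$, I introduce the auxiliary real number $y$ satisfying $\binom{y}{k-1}=R$ (whose existence uses monotonicity of $x \mapsto \binom{x}{k-1}$), apply the induction hypothesis at uniformity $k-1$ to bound $\binom{y}{k-2}$, and then combine with the cascade gap $a_k > a_{k-1}$ via the identity $\binom{u}{k-1}=\binom{u-1}{k-2}+\binom{u-1}{k-1}$ to control $\binom{x}{k-1}-\binom{a_k}{k-1}$ in terms of $\binom{y}{k-1}$ and $\binom{y}{k-2}$.

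The main obstacle is precisely this inductive analytic step: the naive idea of using subadditivity of $t \mapsto \binom{u_k(t)}{k-1}$ (where $\binom{u_k(t)}{k}=t$) is insufficient, because evaluating this function at $\binom{a_i}{i}$ for $i<k$ does not return $\binom{a_i}{i-1}$ (indeed the inequality can go the wrong way without the cascade gap). The gap condition $a_k>a_{k-1}>\cdots$ must therefore be used in an essential way, which is what makes the elementary bookkeeping in the induction slightly delicate. A cleaner alternative that sidesteps the analytic inequality altogether is to work with shifted families from the outset and run induction on $k$ and $n$ directly in the real-$x$ formulation, using continuity of $\binom{x}{k}$ in $x$ to pass from integer to real values of $x$ in the inductive hypothesis.
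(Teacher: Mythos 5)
The paper does not prove this statement at all --- it is quoted as a classical corollary of the Kruskal--Katona theorem --- so your proposal must stand on its own, and as written it has a genuine gap precisely where you yourself locate ``the main obstacle.'' Your reduction is fine up to the point where everything hinges on the purely numerical claim that the cascade bound dominates the Lov\'asz bound, and your inductive step for that claim boils down to the following lemma: if $a\ge k$ is an integer and $x\in[a,a+1)$, $y<a$ are reals with $\binom{x}{k}=\binom{a}{k}+\binom{y}{k-1}$, then $\binom{x}{k-1}\le\binom{a}{k-1}+\binom{y}{k-2}$. Nothing in the proposal establishes this. Invoking ``the cascade gap via the identity $\binom{u}{k-1}=\binom{u-1}{k-2}+\binom{u-1}{k-1}$'' is not an argument here: Pascal's identity relates integer shifts, while the needed inequality compares increments of the real polynomials $t\mapsto\binom{t}{k}$ and $t\mapsto\binom{t}{k-1}$ over the non-integer interval $[a,x]$ against the ratio $\binom{y}{k-2}/\binom{y}{k-1}=(k-1)/(y-k+2)$. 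Moreover the inequality is tight: as $R\uparrow\binom{a}{k-1}$ one has $x\uparrow a+1$ and $y\uparrow a$, and both sides tend to $\binom{a+1}{k-1}$, so this cannot be dismissed as ``bookkeeping'' --- it requires an actual monotonicity or convexity argument, and until it is supplied the derivation from the cascade form is incomplete.

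The one-sentence alternative you mention at the end is in fact the standard proof of Lov\'asz's version and avoids the cascade comparison entirely: take $\f$ shifted, split it as $\f_0\sqcup\f_1$ according to whether a set contains the element $1$ (with $\f_1$ the link of $1$), use shiftedness to get $\partial\f_0\subseteq\f_1$, deduce $|\f_1|\ge\binom{x-1}{k-1}$ (else $|\f_0|>\binom{x-1}{k}$ and induction on $|\f|$ forces $|\f_1|\ge|\partial\f_0|\ge\binom{x-1}{k-1}$, a contradiction), then bound $|\partial\f|\ge|\f_1|+|\partial\f_1|\ge\binom{x-1}{k-1}+\binom{x-1}{k-2}=\binom{x}{k-1}$. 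I would recommend writing out that argument rather than routing through the cascade form.
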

%We will also need the following standard consequence of the Kruskal-Katona Theorem.
\begin{prop}
\label{cor:standard Kruskal-Katona}Let $\f\subseteq{{[n]}\choose{k}}$
be a monotone family, let $t\in\n$, and let $k'>k$.
\begin{enumerate}
\item If $\left|\f^{\left(k\right)}\right|\ge{{n-t}\choose{k-t}}$, then $\left|\f^{\left(k'\right)}\right|\ge {{n-t}\choose{k'-t}}$.
\item If $\left|\f^{\left(k\right)}\right|\ge{{n}\choose{k}}- {{n-t}\choose{k}}$,
then $\left|\f^{\left(k'\right)}\right|\ge{{n}\choose{k'}}-{{n-t}\choose{k'}}$.
\end{enumerate}
\end{prop}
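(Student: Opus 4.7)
The proof of both parts is by iterating Theorem~\ref{Theorem:Kruskal-Katona-Lovasz} (Lov\'{a}sz's form of Kruskal-Katona); the only question is the direction of the shadow. I interpret the proposition in its natural setting, where $\f$ is monotone --- equivalently, one replaces $\f$ by its monotone closure $\f^{\uparrow}$, which only increases each level $|\f^{(k'')}|$ and preserves both hypotheses and conclusions.

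For part~(1), monotonicity gives $\f^{(k+1)} \supseteq \nabla \f^{(k)}$, where $\nabla \f^{(k)} := \{B \in \binom{[n]}{k+1} : \exists A \in \f^{(k)},\, A \subset B\}$ is the upper shadow. I plan to bound $|\nabla \f^{(k)}|$ from below via complementation inside $[n]$: setting $\widetilde{\f^{(k)}} = \{[n]\setminus A : A \in \f^{(k)}\} \subseteq \binom{[n]}{n-k}$, one has $|\widetilde{\f^{(k)}}| \geq \binom{n-t}{k-t} = \binom{n-t}{n-k}$, and Theorem~\ref{Theorem:Kruskal-Katona-Lovasz} applied to $\widetilde{\f^{(k)}}$ gives $|\partial \widetilde{\f^{(k)}}| \geq \binom{n-t}{n-k-1} = \binom{n-t}{(k+1)-t}$. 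The bijection $B \leftrightarrow [n]\setminus B$ identifies $\partial \widetilde{\f^{(k)}}$ with $\nabla \f^{(k)}$, so $|\f^{(k+1)}| \geq \binom{n-t}{(k+1)-t}$. Iterating this single-step argument $k'-k$ times yields part~(1).

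For part~(2), I pass to complements within a single level: let $\g = \binom{[n]}{k}\setminus \f^{(k)}$ and $\g' = \binom{[n]}{k'}\setminus \f^{(k')}$, so that $|\g| \leq \binom{n-t}{k}$. By monotonicity, if some $B \in \g'$ had any $k$-subset in $\f^{(k)}$ then $B$ itself would lie in $\f$, contradicting $B \in \g'$; hence every $k$-subset of every $B \in \g'$ lies in $\g$, i.e.\ $\partial^{k'-k}(\g') \subseteq \g$. Writing $|\g'| = \binom{y}{k'}$ and applying Theorem~\ref{Theorem:Kruskal-Katona-Lovasz} iteratively $k'-k$ times gives $\binom{y}{k} \leq |\partial^{k'-k}(\g')| \leq |\g| \leq \binom{n-t}{k}$, hence $y \leq n-t$ and $|\g'| \leq \binom{n-t}{k'}$, which is part~(2).

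The main obstacle is only conceptual: one must recognize that part~(1) reduces to bounding an \emph{upper} shadow (via complementation into the standard Lov\'{a}sz statement), while part~(2) reduces to bounding the complementary family via the \emph{lower} shadow. Beyond choosing the correct direction of shadow in each case, both arguments are immediate one-line consequences of Theorem~\ref{Theorem:Kruskal-Katona-Lovasz}; no new idea is needed.
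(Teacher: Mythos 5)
Your proof is correct. Note that the paper offers no proof of this proposition at all --- it is stated as a ``classical corollary'' of the Kruskal--Katona theorem --- so there is nothing to compare against; your derivation (complementing $\f^{(k)}$ inside $[n]$ so that the upper shadow becomes a lower shadow for part~(1), and passing to the complementary family $\g'$ within each level and applying the iterated lower-shadow bound for part~(2)) is exactly the standard argument. You also correctly resolve two defects in the statement as printed: the hypothesis $\f\subseteq\binom{[n]}{k}$ is incompatible with referring to $\f^{(k')}$ for $k'>k$, and the intended reading (consistent with how the proposition is invoked in the proof of Lemma~\ref{lem:Kruskal-Katona going up}) is indeed the monotone one; and the conclusion of part~(2) should read $\binom{n}{k'}-\binom{n-t}{k'}$, which is what your argument delivers.
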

The following two lemmas translate Theorem \ref{Theorem:Kruskal-Katona-Lovasz}
and Proposition \ref{cor:standard Kruskal-Katona} into statements about
measures of monotone families, that will be more convenient for us to work
with.

The first lemma bounds the measure of a family $\f\subseteq{{[n]}\choose{k}}$
in terms of the measure of the family $\left(\f_{\downarrow}\right)^{\left(d\right)}$,
for a constant $d$.
\begin{lem}
\label{lem:Kruskal Katona going down}For any constants $d,r$, there
exists a constant $C=C(d,r)$, such that the following holds.
Let $C<k<n/C$, and let $\f\subseteq{{[n]}\choose{k}}$
be a family. Suppose that $\mu\left(\left(\f_{\downarrow}\right)^{\left(d\right)}\right)\le\epsilon$.
Then $\mu\left(\f\right)\le O_{d,r}\left(\epsilon^{r}\right)$.
\end{lem}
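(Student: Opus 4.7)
The plan is to apply the Lov\'asz form of Kruskal--Katona (Theorem~\ref{Theorem:Kruskal-Katona-Lovasz}) iteratively to compare $|\f|$ with $|(\f_{\downarrow})^{(d)}|$. If $\f$ is empty the lemma is trivial, so write $|\f| = \binom{x}{k}$ for the unique $x \geq k$. Because $y \mapsto \binom{y}{\ell}$ is strictly increasing on $[\ell,\infty)$ for any $\ell \geq 0$, each application of Theorem~\ref{Theorem:Kruskal-Katona-Lovasz} to $\partial^{j-1}\f$ turns a bound of the form $|\partial^{j-1}\f| \geq \binom{x}{k-j+1}$ into $|\partial^{j}\f| \geq \binom{x}{k-j}$, retaining the same $x$. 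After $k-d$ iterations we arrive at
\[
|(\f_{\downarrow})^{(d)}| \;=\; |\partial^{k-d}\f| \;\geq\; \binom{x}{d}.
\]

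Combining with the hypothesis $|(\f_{\downarrow})^{(d)}| \leq \epsilon\binom{n}{d}$ together with the crude estimates $\binom{x}{d} \geq (x/2)^d/d!$ (valid once $C \geq 2d$, since then $x \geq k \geq 2d$ and each of the $d$ factors in $\binom{x}{d}$ is at least $x/2$) and $\binom{n}{d} \leq n^d/d!$, I obtain $x \leq 2\epsilon^{1/d}n$. As $x \leq n$, the routine termwise inequality $\frac{x-i}{n-i} \leq \frac{x}{n}$ then yields
\[
\mu(\f) \;=\; \prod_{i=0}^{k-1}\frac{x-i}{n-i} \;\leq\; \Bigl(\frac{x}{n}\Bigr)^{k} \;\leq\; (2\epsilon^{1/d})^{k}.
\]

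To turn $(2\epsilon^{1/d})^k$ into the stated $O_{d,r}(\epsilon^r)$, I would split on the size of $\epsilon$. If $\epsilon \geq 4^{-d}$ the bound $\mu(\f) \leq 1 \leq 4^{dr}\epsilon^r$ is immediate. Otherwise $2\epsilon^{1/d} \leq 1/2$, and writing $(2\epsilon^{1/d})^k = 2^k \epsilon^r \cdot \epsilon^{k/d - r}$ together with $\epsilon^{k/d - r} \leq 4^{-d(k/d - r)} = 4^{-k+dr}$ gives $\mu(\f) \leq 2^{-k+2dr}\epsilon^r$. Choosing $C = \max(2d,\,2dr)$ makes $k \geq 2dr$, so the prefactor is at most $1$ and we are done, with the $O_{d,r}$-constant being $4^{dr}$.

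I do not expect any real obstacle: the statement is essentially Kruskal--Katona plus a short trade-off calculation. The only minor subtlety is that the iteration preserves the same $x$ throughout, which is the reason for working with Lov\'asz's real-valued formulation rather than the integer version; and one must be careful to only claim meaningful bounds once $k$ is at least a constant multiple of $dr$, which is absorbed into the hypothesis $C < k$.
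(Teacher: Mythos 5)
Your proof is correct and takes a slightly different route from the paper's. Both rest on Lov\'asz's real-parameter Kruskal--Katona, but the paper applies it only between the two \emph{constant} levels $d$ and $dr$ (obtaining $|(\f_{\downarrow})^{(dr)}| \le \binom{x}{dr}$ for the same $x$ satisfying $|(\f_{\downarrow})^{(d)}| = \binom{x}{d}$) and then covers the long stretch from level $dr$ up to level $k$ with an elementary coupling claim: for any complex $\g$ and any $l \le k$, $\mu(\g^{(l)}) \ge \mu(\g^{(k)})$. You instead iterate Kruskal--Katona for all $k-d$ steps, keeping the same parameter $x$, which yields the stronger intermediate bound $\mu(\f) \le (x/n)^k$; the case split on $\epsilon$ together with the choice $C \ge 2dr$ then recovers the stated $O_{d,r}(\epsilon^r)$. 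The paper's route keeps the Kruskal--Katona iteration at constant scale and plugs straight into the claimed bound, whereas yours extracts an exponentially small (in $k$) bound and then gives most of it back in the trade-off calculation. Both arguments are correct and of comparable length; the paper's is arguably more modular since the ``going up the complex'' step is decoupled from Kruskal--Katona entirely.
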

Note that Lemma \ref{lem:Kruskal Katona going down} asserts that $\mu(\f)$ is significantly smaller than the
measure of the family $\left(\f_{\downarrow}\right)^{\left(d\right)}$.
%\begin{proof}

In the proof of the lemma we use the following classical result, that can be found in~\cite[Sec.~2.1]{Anderson89}
\begin{prop}[Local LYM Inequality]
Let $\g\subseteq\pn$ be a complex. Then $\mu\left(\g^{\left(l\right)}\right)\ge\mu\left(\g^{\left(k\right)}\right)$,
for any $l\le k$.
\end{prop}

\noindent \emph{Proof of Lemma~\ref{lem:Kruskal Katona going down}. }
Write $\f_{i}=\left(\f_{\downarrow}\right)^{\left(i\right)}$, for
any $i\le k$, and write
\[
\left|\f_{d}\right|={{x}\choose{d}}.
\]
Note that ${{x}\choose{d}} = \Omega_{d} \left(\left(\frac{x}{n}\right)^{d} \right){{n}\choose{d}}$. On the other hand, by a repeated application of Theorem \ref{Theorem:Kruskal-Katona-Lovasz}, we have
\[
\left|\f_{dr}\right|\le {{x}\choose{dr}}=O_{d,r}\left(\left(\frac{x}{n}\right)^{dr} \right){{n}\choose{dr}}.
\]
Hence, to complete the proof, it is sufficient to show that $\mu\left(\f\right)\le\mu\left(\f_{dr}\right)$.
This follows immediately from the Local LYM inequality, substituting $\g=\f_{\downarrow}$.
%\begin{claim}
%Let $\g\subseteq\pn$ be a complex. Then $\mu\left(\g^{\left(l\right)}\right)\ge\mu\left(\g^{\left(k\right)}\right)$,
%for any $l\le k$.
%\end{claim}
%\begin{proof}
%The proof is a simple coupling argument. Note that we have
%\begin{align*}
%\mu\left(\g^{\left(k\right)}\right) & =\Pr_{\mathbf{A}\sim{{[n]}\choose{k}}}\left[\mathbf{A}\in\g\right]=\underset{\mathbf{A}\sim {{\left[n\right]}\choose{l}}}{\mathbb{E}}\left[\Pr_{\mathbf{B}
%\sim {{\left[n\right]\backslash \mathbf{A}}\choose{k-l}}}\left[\mathbf{A}\cup \mathbf{B}\in\g\right]\right].
%\end{align*}
%For each fixed $A\notin\g^{(l)}$ we have $\Pr_{\mathbf{B}\sim {{\left[n\right]\backslash A}\choose{k-l}}}\left[A\cup \mathbf{B}\in\f\right]=0$,
%since $\g$ is a complex. For each $A \in \g^{(l)}$ we obviously have $\Pr_{\mathbf{B}\sim {{\left[n\right]\backslash A}\choose{k-l}}}\left[A\cup \mathbf{B}\in\f\right]\le1$.
%Thus,
%\[
%\mu\left(\g^{\left(k\right)}\right) = \underset{\mathbf{A}\sim {{\left[n\right]}\choose{l}}}{\mathbb{E}}\left[\Pr_{\mathbf{B}\sim {{\left[n\right]\backslash \mathbf{A}}\choose{k-l}}}\left[\mathbf{A}\cup \mathbf{B}\in\f\right]\right]\le\Pr\left[\mathbf{A}\in\g^{\left(l\right)}\right]=\mu\left(\g^{\left(l\right)}\right).
%\]
%This completes the proof of the claim, and thus also of the lemma.
%\end{proof}
%\end{proof}

\medskip The second lemma considers a monotone family $\f$ and bounds $\mu(\f^{\left(k\right)})$ from below in terms of the measure $\mu\left(\f^{\left(l\right)}\right)$ for
$l\le k$. This bound applies in a different regime: we require that $\frac{k}{n}$ is bounded away from $0$ and $1$, and on the other hand, we allow $l$ to be as large as linear in $n$ (instead of the constant value of $d$ in Lemma \ref{lem:Kruskal Katona going down}).
\begin{lem}
\label{lem:Kruskal-Katona going up} For any constants $\zeta>0$ and $r \in \mathbb{N}$,
there exists a constant $C(\zeta,r)>0$, such that the following holds. Let
$\zeta n<k\le\left(1-\zeta\right)n$, let $\epsilon>0$ and let $l<k/C$.
Suppose that $\f\subseteq\pn$ is a monotone family that satisfies $\mu\left(\f^{\left(k\right)}\right)\le\epsilon$.
Then $\mu\left(\f^{\left(l\right)}\right)\le O_{\zeta,r}\left(\epsilon^{r}\right)$. \end{lem}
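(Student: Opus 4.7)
The plan is to transfer the statement, via complementation, to an iterated shadow argument on a simplicial complex, in the spirit of the proof of Lemma~\ref{lem:Kruskal Katona going down}. Since $\f$ is monotone, the complement family $\g := \{[n]\setminus A : A \in \f\}$ is a simplicial complex, and $|\g^{(n-j)}| = |\f^{(j)}|$ for every $j$. Because $k > l$, the level $n-k$ of $\g$ sits \emph{below} the level $n-l$, so iterated shadows apply in the standard direction and we may invoke the Lov\'asz form of Kruskal--Katona (Theorem~\ref{Theorem:Kruskal-Katona-Lovasz}).

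I would write $|\f^{(l)}| = |\g^{(n-l)}| = \binom{y}{n-l}$ for some real $y \geq n-l$ (the case $|\f^{(l)}|=0$ being trivial), and iterate Theorem~\ref{Theorem:Kruskal-Katona-Lovasz} exactly $k-l$ times, using at each step that $\partial \g^{(m)} \subseteq \g^{(m-1)}$ since $\g$ is a complex. This yields
\[
|\f^{(k)}| \;=\; |\g^{(n-k)}| \;\geq\; \binom{y}{n-k}.
\]
Setting $a := n - y$ (with $0 \leq a \leq l$, as otherwise the top binomial vanishes and we are done), the identities
\[
\frac{\binom{n-a}{n-k}}{\binom{n}{n-k}} \;=\; \prod_{j=0}^{a-1}\frac{k-j}{n-j}, \qquad \frac{\binom{n-a}{n-l}}{\binom{n}{n-l}} \;=\; \prod_{j=0}^{a-1}\frac{l-j}{n-j}
\]
let me reformulate the hypothesis as an upper bound of $\epsilon$ on the first product and the desired conclusion as an $\epsilon^{r}$ upper bound on the second.

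What remains is an elementary calculation. Each factor of the first product is at least $(k-l)/n \geq \zeta/2$ (using $l < k/C$ with $C \geq 2$ and $k \geq \zeta n$), which forces $a \geq \log(1/\epsilon)/\log(2/\zeta)$. Each factor of the second product is at most $l/n$, so
\[
\mu(\f^{(l)}) \;\leq\; (l/n)^{a} \;\leq\; \epsilon^{\log(n/l)/\log(2/\zeta)}.
\]
Choosing $C = C(\zeta, r)$ with $C \geq (2/\zeta)^{r}$ forces $n/l > (2/\zeta)^{r}$, making the exponent at least $r$ and yielding $\mu(\f^{(l)}) \leq \epsilon^{r}$ as required. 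The only conceptual move is passing to the complement complex so that Kruskal--Katona points in its natural direction; the rest is bookkeeping, and the main ``obstacle'' is just checking that the power savings follow from the polynomial gap between $l$ and $k$ together with $k \geq \zeta n$.
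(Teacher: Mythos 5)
Your argument is a dual reformulation of the paper's proof rather than a genuinely different route. The paper parametrizes by the integer $t$ with $\binom{n-t-1}{k-t-1}\le|\f^{(k)}|\le\binom{n-t}{k-t}$, applies Proposition~\ref{cor:standard Kruskal-Katona} to transfer this to level $l$, and then makes the same binomial-ratio estimate; you pass to the complement complex, iterate Lov\'asz's Kruskal--Katona (Theorem~\ref{Theorem:Kruskal-Katona-Lovasz}), and parametrize by $a=n-y$ instead. Both proofs ultimately exploit $l<k/C$ together with $k>\zeta n$ in an identical elementary computation, so the content is the same; your version just unpacks Proposition~\ref{cor:standard Kruskal-Katona} into its Lov\'asz form.

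One point does need repair. The identities
\[
\frac{\binom{n-a}{n-k}}{\binom{n}{n-k}} = \prod_{j=0}^{a-1}\frac{k-j}{n-j},
\qquad
\frac{\binom{n-a}{n-l}}{\binom{n}{n-l}} = \prod_{j=0}^{a-1}\frac{l-j}{n-j}
\]
are valid only for \emph{integer} $a$, while Lov\'asz's theorem hands you a real $y$, hence a real $a=n-y$. The fix is a routine rounding: use $\lceil a\rceil$ (still $\le l$, since $a\le l$ and $l\in\mathbb{N}$) when lower-bounding $\mu(\f^{(k)})$ by $(\zeta/2)^{\lceil a\rceil}$, and use $\lfloor a\rfloor$ when upper-bounding $\mu(\f^{(l)})$ by $(l/n)^{\lfloor a\rfloor}$. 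Since $\lceil a\rceil - \lfloor a\rfloor\le 1$ and $l/n\le(\zeta/2)^r$ once $C\ge(2/\zeta)^r$, the mismatch costs only a multiplicative $(2/\zeta)^{r}$, harmlessly absorbed into $O_{\zeta,r}(\epsilon^r)$. With that patch the argument is complete. (Alternatively, you can avoid reals altogether by defining $a$ as the integer with $\binom{n-a-1}{n-l}<|\f^{(l)}|\le\binom{n-a}{n-l}$, which is exactly the paper's device with $t$ replaced by the dual parameter.)
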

\begin{proof}
We may assume that $\epsilon\le\frac{k}{n}$, for otherwise the lemma
holds trivially. So suppose that $\epsilon\le\frac{k}{n}$, and let
$t\in\mathbb{N}$ be such that ${{n-t-1}\choose{k-t-1}}\le\left|\f^{\left(k\right)}\right|\le{{n-t}\choose{k-t}}$.
By Proposition~\ref{cor:standard Kruskal-Katona}, we have $\left|\f^{\left(l\right)}\right|\le {{n-t}\choose{l-t}}$.
Hence, if $t>l$ then $\f^{\left(l\right)}=\emptyset$, and the lemma holds trivially. Suppose that $t \leq l$. Provided that $C$ is sufficiently large, we obtain
\begin{align*}
\epsilon & \ge\mu\left(\f^{\left(k\right)}\right)\ge\frac{{{n-t-1}\choose{k-t-1}}}{{{n}\choose{k}}}=\left(\frac{k}{n}\right)\cdots\left(\frac{k-t}{n-t}\right)\\
 & \ge\left(\frac{k-t}{n-t}\right)^{t+1}\ge\left(\frac{k/2}{n}\right)^{t+1}>\left(\frac{l}{n}\right)^{\frac{t}{r}}.
\end{align*}
 On the other hand, we have
\begin{align*}
\mu\left(\f^{\left(l\right)}\right) \le\frac{{{n-t}\choose{l-t}}}{{{n}\choose{l}}}=\left(\frac{l}{n}\right)\cdots\left(\frac{l-t+1}{n-t+1}\right) \le\left(\frac{l}{n}\right)^{t}<\epsilon^{r}.
\end{align*}
 This completes the proof of the lemma.
\end{proof}

\subsection{Application of analysis of Boolean functions}

As written above, the junta method stems from analysis of Boolean functions and relies upon several Fourier-analytic results. We use these results via a lemma, essentially proved by Dinur and Friedgut~\cite[Lemma~3.2]{dinur2009intersecting}. The lemma requires the definition of capturability that will be discussed thoroughly in Section~\ref{sec:Baby Case}.
\begin{defn}
Let $s\ge0$ be an integer, and let $\epsilon\in\left(0,1\right)$. A family $\f\subseteq{{[n]}\choose{k}}$ is said to be $\left(s,\epsilon\right)$-capturable, if there exists a set $S$ of size at most $s$, such that $\mu\left(\f_{S}^{\emptyset}\right)\le\epsilon$.
Otherwise, it is said to be $\left(s,\epsilon\right)$-uncapturable.
\end{defn}

\begin{lem}[Dinur and Friedgut]
\label{lem:DF}For any constants $\zeta\in\left(0,\frac{1}{2}\right)$ and $r\in\mathbb{N}$,
there exists a constant $s\left(\zeta,r\right)$ such that the following
holds. Let $n,k\in\mathbb{N}$ and $p\in\left(\zeta,1\right)$ be
numbers such that $\frac{k}{n}\le\frac{p}{2}$, and let $\f\subseteq{{[n]}\choose{k}}$
be a family that satisfies $\mu_{p}\left(\f^{\uparrow}\right)\le1-\zeta$.
Then $\f$ is $\left(s,\left(\frac{k}{n}\right)^{r}\right)$-capturable.
\end{lem}
Since the statement of the lemma we use is rather different from its statement in~\cite{dinur2009intersecting}, and as the lemma plays a central role in our argument, we provide its proof for the sake of completeness.

The proof uses the notion of \emph{influences} (which is one of the central notions in analysis of Boolean functions), and its relation to threshold phenomena. Note that each vector $x=(x_1,\ldots,x_n) \in \{0,1\}^n$ naturally corresponds to the set $\{i:x_i=1\} \subset [n]$, and thus, a Boolean function $f:\{0,1\}^n \to \{0,1\}$ naturally corresponds to the family $\{x: f(x)=1\} \subseteq \p([n])$.
\begin{defn}
Let $f:\{0,1\}^n \to \{0,1\}$ be a Boolean function and let $0<p<1$. The influence of the $i$'th coordinate on $f$ with respect to the biased measure $\mu_p$ is
\[
I^p_i[f]= \Pr_{x \sim \mu_p}[f(x) \neq f(x \oplus e_i)],
\]
where $x \oplus e_i$ is obtained from $x$ by flipping the $i$'th coordinate. The total influence of $f$ is the sum of its influences, namely,
\[
I^p[f] = \sum_{i=1}^n I^p_i[f].
\]
\end{defn}

\begin{lem}[Margulis~\cite{Mar74}, Russo~\cite{Rus81}]\label{Lem:Russo}
Let $f:\{0,1\}^n \to \{0,1\}$ be a monotone non-decreasing function. Then
\[
\frac{d \mu_p(\{x:f(x)=1\})}{dp} = I^p[f].
\]
\end{lem}
The lemma implies that if $f$ is a monotone Boolean function and $p_0$ is chosen such that $\mathbb{E}_{x \sim \mu_{p_0}}[f(x)]=1/2$, then $f$ has a \emph{coarse threshold} (i.e., the increase of $\mathbb{E}_{x \sim \mu_{p}}[f(x)]$ from close to $0$ to close to $1$ as $p$ increases is not sharp) if and only if the total influence $I^p[f]$ is small for $p \sim p_0$.

\medskip The notion of capturability has a natural analogue for monotone Boolean functions.
\begin{defn}
For $p \in (0,1)$ and a monotone function $f:\{0,1\}^n \to \{0,1\}$, we say that $f$ is $(s,\epsilon)$-capturable with respect to $\mu_p$ if there exists a set $J \subset [n]$ of size $\le s$ such that
\[
\mu_{p}\left[f_{J\to\overline{0}}\right]:= \mathbb{E}_{x_2 \sim\left(\left\{ 0,1\right\} ^{\left[n\right]\setminus J},\mu_{p}\right)}[f(0,x_2)]
<\epsilon.
\]
\end{defn}
The following classical theorem of Friedgut~\cite{friedgut1998boolean} asserts that if the total influence $I^p[f]$ is small for $p \sim p_0$, then $f$ can be well-approximated with respect to $\mu_p$ by a constant-sized junta, and thus, is \emph{capturable}.
\begin{thm}[Friedgut's junta theorem]\label{Thm:F-Junta}
For any $\epsilon,\zeta>0$ and for any $K$, there exists $j \in \mathbb{N}$, such that
the following holds. Let $\zeta<p<1-\zeta$, and let $f:\{0,1\}^n \to \{0,1\}$ satisfy $I^{p}\left[f\right]\le K$. Then
there exists a $j$-junta $g:\{0,1\}^n \to \{0,1\}$ such that
\[
\Pr_{x\sim\mu_{p}}\left[f\left(x\right)\ne g\left(x\right)\right]<\epsilon.
\]
\end{thm}
The following corollary allows deducing capturability from a coarse threshold assumption in a single step.
%In the following we think of $\zeta'$ as
%being an absolute constant and of $\epsilon$ as being much smaller than $\zeta'$.
\begin{cor}
\label{cor:Friedgut junta}For any $\epsilon,\zeta'>0$ there exists $j=j(\epsilon,\zeta') \in \mathbb{N}$, such that the following holds. Let $p,q \in (0,1)$ satisfy $\zeta'<q<\left(1-\zeta'\right)p.$ Let $f\colon\left\{ 0,1\right\} ^{n}\to\left\{ 0,1\right\} $
be monotone and suppose that $\mathbb{E}_{x \sim \mu_{q}}\left[f(x)\right]>\zeta'$ and that
$\mathbb{E}_{x \sim \mu_{p}}\left[f(x)\right]<1-\zeta'.$ Then $f$ is $(s,\epsilon)$-capturable with respect to $\mu_q$.
%that is, there exists a set $J$ of
%size $\le j$, such that
%\[
%\mu_{q}\left[f_{J\to\overline{0}}\right]:= \mathbb{E}_{x_2 \sim\left(\left\{ 0,1\right\} ^{\left[n\right]\setminus J},\mu_{q}\right)}[f(0,x_2)]
%<\epsilon.
%\]
\end{cor}
Note that the assumption $(\mathbb{E}_{x \sim \mu_{q}}\left[f(x)\right]>\zeta') \wedge (\mathbb{E}_{x \sim \mu_{p}}\left[f(x)\right]<1-\zeta')$
for $q,p$ bounded away from each other, is a coarse threshold assumption. Thus, the corollary tells us that a coarse threshold implies capturability.

\begin{proof}
Let $\delta=\delta\left(\epsilon,\zeta'\right)$ be sufficiently small.
By Lemma~\ref{Lem:Russo} and the mean value theorem, there exists $\bar{p} \in\left[q,\frac{p+q}{2}\right]$ such that
\[
I^{\bar{p}}\left[f\right]\le\frac{2}{p-q}\le\frac{2}{(\zeta')^2}.
\]
By Theorem~\ref{Thm:F-Junta} there exist $j=j(\zeta',\delta)=j(\epsilon,\zeta')$, a set $J$ of size $\le j$,
and a function $g\colon\left\{ 0,1\right\}^n \to\left\{ 0,1\right\}$ that depends only on the coordinates in $J$,
such that
\begin{equation}
\Pr_{x\sim\mu_{\bar{p}}}\left[f\left(x\right)\ne g\left(x\right)\right]<\delta.\label{eq:closeness}
\end{equation}
(Note that the theorem can be applied since $\zeta' \leq \bar{p} \leq 1-\zeta'$ by the choice of $\bar{p}$.)
Using the monotonicity of $f$, Equation~\eqref{eq:closeness}, and the fact that the function $t \mapsto \mu_{t}\left(f\right)$ is monotone non-decreasing, we obtain
\begin{align*}
\delta>\Pr_{x\sim\mu_{\bar{p}}}\left[f\left(x\right)\ne g\left(x\right)\right] & \ge\Pr_{x_{1}\sim\left(\left\{ 0,1\right\} ^{J},\mu_{\bar{p}}\right),x_{2}\sim\left(\left\{ 0,1\right\} ^{\left[n\right]\setminus J},\mu_{\bar{p}}\right)}\left[g\left(x_{1},x_2\right)=0,f\left(x_{1},x_{2}\right)=1\right]\\
 & \ge\Pr_{x_{1}\sim\left(\left\{ 0,1\right\} ^{J},\mu_{\bar{p}}\right),x_{2}\sim\left(\left\{ 0,1\right\} ^{\left[n\right]\setminus J},\mu{}_{\bar{p}}\right)}\left[g\left(x_{1},x_2\right)=0,f\left(0,x_{2}\right)=1\right]\\
 & =\left(1-\mu_{\bar{p}}\left(g\right)\right)\mu_{\bar{p}}\left(f_{J\to\overline{0}}\right)\\
 & \ge\left(1-\mu_{\bar{p}}\left(f\right)-\delta\right)\mu_{\bar{p}}\left(f_{J\to\overline{0}}\right)\\
 & \ge\left(\zeta'-\delta\right)\mu_{q}\left(f_{J\to\overline{0}}\right),
\end{align*}
where the equality in the middle holds since $g$ depends only on $x_1$ and $x_1,x_2$ are independent. Rearranging, we obtain
\[
\mu_{q}\left(f_{J\to\overline{0}}\right)\le\frac{\delta}{\zeta'-\delta}<\epsilon,
\]
provided that $\delta$ is sufficiently small.
\end{proof}

In addition, we need the following claim from~\cite{dinur2009intersecting}, which follows easily from the Kruskal--Katona theorem and a Chernoff bound.
\begin{lem}[\cite{dinur2009intersecting}, Claim 2.5]
\label{lem:claim 2.5} Let $n,k,r$ be integers such that $r<k<n$, and let $p,\eta \in (0,1)$ satisfy $p\left(1-\eta\right)\ge\frac{k-r}{n-r}.$
Let $\mathcal{F}\subseteq\binom{\left[n\right]}{k}$ be a family with
$\left|\mathcal{F}\right|\ge\binom{n-r}{k-r}$. Then
\[
\mu_{p}\left(\mathcal{F}^{\uparrow}\right)>p^{r}\left(1-\mathrm{exp}\left(-\eta^{2}pn/2\right)\right).
\]
\end{lem}

Now we are ready to present the proof of Lemma~\ref{lem:DF}.

\begin{proof}[Proof of Lemma~\ref{lem:DF}]
Let $\zeta,r,n,k,p$ satisfy the assumption of the lemma, and let $\f\subseteq{{[n]}\choose{k}}$
be a family that satisfies $\mu_{p}\left(\f^{\uparrow}\right)\le1-\zeta$. We want to show that $\f$ is $\left(s,\left(\frac{k}{n}\right)^{r}\right)$-capturable, for some $s=s(\zeta,r)$.

Set $q=\frac{p}{1.5},\epsilon=q^{r+2}, \zeta'=\min(q^{r+2},\zeta/1.5),$ and $\eta=0.01.$
%We may assume $n\ge s+k$ as otherwise the lemma holds trivially. \textbf{Revise!}
Let $j=j\left(\zeta',\epsilon\right)$ be sufficiently large for Corollary \ref{cor:Friedgut junta} to hold. Furthermore, assume that $n$ is sufficiently large with respect to $j,r$, such that we have
%We also assume that $s\ge j$ is sufficiently large for having
\begin{equation}\label{Eq:Aux_revised1}
q^{r+1}\left(1-\mathrm{exp}\left(-\eta^{2}q\left(n-j\right)/2\right)\right)>q^{r+2},
\end{equation}
and
\begin{equation}\label{Eq:Aux_revised2}
\frac{\binom{n-j-r-1}{k-r-1}}{\binom{n-j}{k}}\le\left(\frac{k}{n}\right)^{r}.
\end{equation}
Note that we can assume w.l.o.g.~that $n$ is larger than any $C=C(j,r)$, by making sure that $s \geq C$ and noting that the lemma holds trivially when $n<s+k$.

\medskip

By Corollary~\ref{cor:Friedgut junta}, applied to the monotone family $\mathcal{F}^{\uparrow}$ with $p,q,\zeta',\epsilon$ as defined above, either we have $\mu_{q}\left(\mathcal{F}^{\uparrow}\right) \leq \zeta' \leq q^{r+2}$, or there exists a set $J$ of size $\le j$ with
\begin{equation}\label{Eq:Aux_revised3}
\mu_{q}\left(\left(\mathcal{F}^{\uparrow}\right)_{J}^{\emptyset}\right)\le \epsilon=q^{r+2}.
\end{equation}
(Note that Corollary~\ref{cor:Friedgut junta} can be applied to $\mathcal{F}^{\uparrow}$ with these parameters, as the conditions $\zeta'<q<(1-\zeta')p$ and $\mu_p(\mathcal{F}^{\uparrow}) \leq 1-\zeta'$ follow from the assumptions of the lemma and the definition of $\zeta'$.)
In fact,~\eqref{Eq:Aux_revised3} always holds, as in the former case~\eqref{Eq:Aux_revised3} holds with $J=\emptyset.$ Writing
\[
\mathcal{G}=\mathcal{F}_{J}^{\emptyset} \subseteq {{[n] \setminus J}\choose{k}},
\]
we obtain that
\begin{equation}\label{Eq:Aux_revised4}
\mu_{q}\left(\mathcal{G}^{\uparrow}\right) \leq q^{r+2}.
\end{equation}
On the other hand, Lemma~\ref{lem:claim 2.5}, applied to $\mathcal{G}$ with the parameters $n-j,k,r+1,q,\eta$ (in place of $n,k,r,p,\eta$, respectively),
implies that either
\[
\left|\mathcal{G}\right|< \binom{n-j-r-1}{k-r-1}
\]
or
\[
\mu_{q}\left(\mathcal{G}^{\uparrow}\right)>q^{r+1}\left(1-\mathrm{exp}\left(-\eta^{2}q\left(n-j\right)/2\right)\right)>q^{r+2},
\]
where the last inequality follows from~\eqref{Eq:Aux_revised1}. (Note that Lemma~\ref{lem:claim 2.5} can indeed be applied, as the assumption $q(1-\eta)\geq \frac{k-r-1}{n-j-r-1}$ it requires follows from the assumption $\frac{k}{n} \leq \frac{p}{2}$, provided $n$ is sufficiently large with respect to $j$.) As the latter contradicts~\eqref{Eq:Aux_revised4}, the former must hold. This completes the proof of the lemma, as by~\eqref{Eq:Aux_revised2},
\[
\mu\left(\mathcal{G}\right)<\frac{\binom{n-j-r-1}{k-r-1}}{\binom{n-j}{k}}\le\left(\frac{k}{n}\right)^{r}.
\]
\end{proof}

\section{\label{sec:Baby Case}Families that are cross free of a matching}

In this section we demonstrate our junta method on the `baby case' where $\h$ is a matching. Our aim is to prove the following theorem:
\begin{thm}\label{thm:Proof of the aaroni howrd conjecture}
For any constant $t \in \mathbb{N}$, there exists a constant $C=C\left(t\right)$, such that the
following holds. Let $k< n/C$, and let $\f_{1},\ldots,\f_{t}\subseteq{{[n]}\choose{k}}$
be families that are cross free of an ordered matching. Then
\[
\min_{i=1}^{t} \left|\f_{i}\right| \leq {{n}\choose{k}}-{{n-t+1}\choose{k}},
\]
with equality if and only if there exists an $\left(t-1,1\right)$-star $\u$, such that $\f_{1}=\cdots=\f_{t}=\u$.
\end{thm}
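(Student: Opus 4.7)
The plan is the three-step junta method outlined in the introduction, specialized to the matching case: first approximate each $\f_{i}$ by a junta cross-free of a matching, then classify the extremal such juntas, and finally bootstrap to an exact bound and uniqueness. The main obstacle will be the approximation step, which requires showing that any sufficiently quasi-random families cross-contain a matching.

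For the approximation step, I would prove a cross-version of Theorem~\ref{thm:Junta-approx-theorem} for matchings: there exist a constant $j=j(t)$, a common set $J\subseteq[n]$ of size at most $j$, and juntas $\j_{1},\dots,\j_{t}\subseteq\binom{[n]}{k}$ depending only on $J$, such that $\j_{1},\ldots,\j_{t}$ are cross-free of an ordered matching and $|\f_{i}\setminus\j_{i}|\le C(k/n)|\j_{i}|$ for each $i$. The argument follows the Dinur--Friedgut dichotomy: either some $\f_{i}$ has a slice $(\f_{i})_{S}^{B}$ of substantially larger density than its global density, in which case one iteratively refines the junta using $S$, or all the $\f_{i}$'s are ``quasi-random''; in the latter case a shadow-based argument shows that any sufficiently large quasi-random families $\g_{1},\dots,\g_{t}$ cross-contain a matching, forcing the quasi-random residue of each $\f_{i}$ to be negligibly small.

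For the classification step, since $|J|$ is constant and $k<n/C$ with $n$ large, the cross-matching-freeness of $\j_{1},\ldots,\j_{t}$ translates to the purely combinatorial assertion that their generating families $\b_{1},\dots,\b_{t}\subseteq\p(J)$ admit no pairwise disjoint transversal: given disjoint $J$-patterns $B_{1},\dots,B_{t}$ one can always pad with disjoint subsets of $[n]\setminus J$ to form a cross-matching of $k$-sets. A Kleitman-type argument then shows that among monotone $\b\subseteq\p(J)$ with no $t$ pairwise disjoint members, $\mu_{p}(\b)$ with $p=k/n$ is maximized by $\{B\,:\, B\cap T\ne\emptyset\}$ for some $T\subseteq J$ with $|T|=t-1$; combined with Lemma~\ref{lem:measures of juntas} this yields $\min_{i}|\j_{i}|\le\binom{n}{k}-\binom{n-t+1}{k}$, with equality only when all $\j_{i}$ coincide with a common $(t-1,1)$-star.

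For the bootstrapping step, assume $\min_{i}|\f_{i}|\ge\binom{n}{k}-\binom{n-t+1}{k}$. The previous two steps force each $\f_{i}$ to be essentially contained in a common $(t-1,1)$-star $\u$ with generating set $T$ of size $t-1$. A cross-bootstrapping lemma states: if $\f_{1},\dots,\f_{t-1}$ each satisfy $|\f_{i}\setminus\u|\le\epsilon|\u|$, then $|\f_{t}\setminus\u|\le O(\epsilon^{r})|\u|$ for any constant $r$. The proof is a random greedy argument: a typical $A\in\f_{t}\setminus\u$ is disjoint from $T$, and Lemma~\ref{lem:Kruskal Katona going down} shows that disjoint $A_{i}\in\f_{i}\cap\u$ completing a cross-matching with $A$ exist with overwhelming probability. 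Iterating this lemma over all $i$ and then comparing $|\f_{i}|\ge|\u|$ with $|\f_{i}\setminus\u|\le O(\epsilon^{r})|\u|$ forces $\f_{1}=\cdots=\f_{t}=\u$ exactly, which is the claimed extremal characterization.
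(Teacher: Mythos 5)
Your three-step architecture (junta approximation, classification of extremal juntas, bootstrapping) is exactly the paper's, and Steps 1--2 are essentially equivalent to Theorem~\ref{thm:matching junta approximation} and Proposition~\ref{lem:finding the largest junta}; the only real difference in Step~2 is that you invoke a Kleitman-type biased-measure maximization where the paper argues more elementarily (each $\j_i$ must contain $\ge t-1$ singletons, and singleton sets with no disjoint transversal must coincide), and you would still need the cross-family version of your Kleitman statement rather than the single-family one.

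The place that needs repair is Step~3. First, your bootstrapping lemma as stated is false: the hypothesis $|\f_i\setminus\u|\le\epsilon|\u|$ for $i<t$ is vacuous if the $\f_i$ are small (take $\f_1=\emptyset$), so you must add the largeness hypothesis $|\f_i|\ge|\u|$ to conclude that each slice $(\f_i)_U^{\{j\}}$ has measure $\ge 1-O_t(\epsilon)$; this is available in your application but must be part of the lemma. Second, and more substantively, the claim that for a given $A\in\f_t\setminus\u$ ``disjoint $A_i\in\f_i\cap\u$ completing a cross-matching with $A$ exist with overwhelming probability'' cannot hold pointwise when $k=\Theta(n)$: taking $\f_1=\cdots=\f_{t-1}=\{B:B\cap A_0\ne\emptyset\}$ for a fixed $k$-set $A_0$ disjoint from $T$ gives families of measure $1-e^{-\Theta(k^2/n)}\ge 1-\epsilon$ for which no $B_i$ disjoint from $A_0$ exists, yet $A_0$ may lie in $\f_t$. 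The correct mechanism (Proposition~\ref{prop: unbalanced matching lemma}) is to monotonize $\f_t$ up to level $\lfloor n/t\rfloor$, use a random-permutation coupling and a union bound to get the linear bound $(t-1)\epsilon$ at that level, and then apply the Kruskal--Katona ``going up'' Lemma~\ref{lem:Kruskal-Katona going up} (not Lemma~\ref{lem:Kruskal Katona going down}, which concerns constant-level shadows) to convert this into $O(\epsilon^r)$ at level $k$. This power gain is not a refinement but the crux: a merely linear bound $\mu\bigl((\f_t)_U^\emptyset\bigr)\le(t-1)\epsilon$ does not close the final gain/loss comparison, since the mass gained outside the star sits at the weight $\binom{n-t+1}{k}$ while the mass lost sits at the smaller weight $\binom{n-t+1}{k-1}$; only with $\epsilon^r$ for $r\ge 2$ (plus the a priori bound $\epsilon=O(k/n)$ from Step~1) does the self-improving inequality force $\f_1=\cdots=\f_t=\u$.
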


\subsection{Proof overview}
\label{sec:sub:match:overview}

Since in the case of a matching, there is no difference between cross containment of an ordered matching and cross containment of an un-ordered matching, we suppress the term `ordered' for sake of convenience.

As mentioned in the introduction, our proof consists of the following steps.
\begin{enumerate}
\item We first show that if $\f_{1},\ldots,\f_{t}\subseteq{{[n]}\choose{k}}$
are families that are cross free of a matching, then there exist
juntas $\g_{1},\ldots,\g_{t}$ that are also cross free of a matching,
such that each family $\f_{i}$ is essentially contained in the junta
$\g_{i}$.
\item We then show that if $\g_{1},\ldots,\g_{t}$ are juntas that
are cross free of a matching, then $\min_{i=1}^{t}\left|\g_{i}\right|\le{{n}\choose{k}}-{{n-t+1}\choose{k}}$.
Furthermore, we show that if `near equality' holds, then the juntas
$\g_{1},\ldots,\g_{t}$ are included in the same $\left(t-1,1\right)$-star.
\item The above steps show that if $\f_{1},\ldots,\f_{t}$ are families
that are cross free of a matching and satisfy $\min\left\{ \left|\f_{i}\right|\right\} \ge{{n}\choose{k}}-{{n-t+1}\choose{k}}$,
then all the families $\f_{i}$ are small alterations of an
$\left(t-1,1\right)$-star $\mathcal{U}=\{A: A \cap U \neq \emptyset\}$ (for some $|U|=t-1$).
The final step is to leverage this stability result into an exact result
with the help of a bootstrapping lemma.
\end{enumerate}

\medskip

The main step of the proof is the first one. The basic idea here is to decompose our family $\f$ into parts that are `easier to understand'. For this, we use the notion of \emph{capturability}, introduced by Dinur and Friedgut~\cite{dinur2009intersecting}.
\begin{defn}
Let $s\ge0$ be an integer, and let $\epsilon\in\left(0,1\right)$. A family $\f\subseteq{{[n]}\choose{k}}$ is said to be $\left(s,\epsilon\right)$-capturable, if there exists a set $S$ of size at most $s$, such that $\mu\left(\a_{S}^{\emptyset}\right)\le\epsilon$.
Otherwise, it is said to be $\left(s,\epsilon\right)$-uncapturable.
\end{defn}
Note that an $\left(s,\epsilon\right)$-uncapturable family is $\left(s,\epsilon'\right)$-uncapturable for any $\epsilon'<\epsilon$, and that for any $B \subset S \subset [n]$, if $\f$ is $\left(s,\epsilon\right)$-uncapturable, then $\f_S^B$ is $\left(s-|S|,\epsilon\right)$-uncapturable.

\medskip \noindent The proof of Step~1 consists of two parts.

%\begin{enumerate}
%\item
\mn \textbf{Step 1(a).} We associate to each family $\f\subseteq{{[n]}\choose{k}}$,
each $s\in\mathbb{N}$, and $\epsilon'>0$ which is not too small, a set $J\subseteq\left[n\right]$, and a family $\j\subseteq\p\left(J\right)$,
such that for each $B\in\j$,
\begin{enumerate}
\item The family $\f_{B}^{B}$ is $\left(s,\epsilon'\right)$-uncapturable.
\item The family $\f$ is essentially contained in the junta $\j^{\uparrow}$.
\end{enumerate}
Intuitively, this means that that the family $\f$ consists of a negligible part that lies outside of $\j^{\uparrow}$, together with the parts
$\left\{ \f_{B}^{B}\right\} _{B\in\j}$ that are $\left(s,\epsilon'\right)$-uncapturable, and as such, are easier to understand.

This part will be used in subsequent sections as well.
%in which we will treat the case where the forbidden configuration is any hypergraph of the form $\h^{+}$.

\mn \textbf{Step 1(b).} We show that if the families $\f_{1},\ldots,\f_{t}$ are cross free of a matching, then the associated juntas from Step 1(a) are cross free of a matching as well.
%\end{enumerate}

\medskip

The proof of Step~2 is quite straightforward. The proof of Step~3 is also divided into two steps:

\mn \textbf{Step 3(a).} We show a `bootstrapping lemma' which asserts that if $\b_{1},\ldots,\b_{t}$ are some families that are
cross free of a matching, such that $\b_1,\ldots,\b_{t-1}$ are `very large' (formally, $\mu\left(\b_{i}\right)\ge1-\epsilon$),
then the last family must be `very small' (formally, $\mu\left(\b_{t}\right)\le O\left(\epsilon^{2}\right)$).

\mn \textbf{Step 3(b).} With the bootstrapping lemma in hand, we consider the parts of the families $\f_1,\ldots,\f_t$ that lie outside of
the $(t-1,1)$-star $\mathcal{U}$ mentioned above. (Formally, these are the  sets $\{\left(\f_{i}\right)_{U}^{\emptyset}\}$). We assume w.l.o.g. that the
largest among these `outside parts' is $\left(\f_{t}\right)_{U}^{\emptyset}$ and denote its measure by $\epsilon$.
Applying the bootstrapping lemma to the families
\[
\left(\f_{1}\right)_{U}^{\left\{ i_{1}\right\} },\ldots,\left(\f_{t-1}\right)_U^{\left\{ i_{t-1}\right\} },\left(\f_{t}\right)_{U}^{\emptyset},
\]
(which are cross free of a matching), we deduce that there exists $\ell \in [t-1]$ such that $\mu\left(\left(\f_{\ell}\right)_{U}^{\left\{ i_{\ell}\right\} }\right)\le1-\Omega\left(\sqrt{\epsilon}\right)$.

Informally, this shows that if one starts with $t$ copies of the $(t-1,1)$-star $\mathcal{U}$ and then tries to enlarge one of the families
by adding to it a family of measure $\epsilon$, then in order to keep the families cross free of a matching she will have to remove a set of measure at least
$\Omega(\sqrt{\epsilon})$ from one of the other families. Hence, any $t$ families $\f_1,\ldots,\f_t$ that are cross free of a matching and are
`small perturbations' of a $(t-1,1)$-star $\mathcal{U}$, satisfy $\min\left\{ \left|\f_{i}\right|\right\} \le |\mathcal{U}|$, with equality
only for $t$ copies of $\mathcal{U}$. The formal derivation here is a simple calculation.

\medskip

This section is organized as follows. The proof of Step~1 spans Sections~\ref{sec:sub:match:app-juntas}--\ref{sec:sub:match:completing-step-1}, where in Subsection~\ref{sec:sub:match:app-juntas} we present the junta associated to each family, in Subsection~\ref{sec:sub:match:uncap} we show that any uncapturable families $\f_{1},\ldots,\f_{t}$ cross contain a matching, and in Subsection~\ref{sec:sub:match:completing-step-1} we show that if the original families are cross free of a matching then so are the approximating juntas. In Subsection~\ref{sec:sub:match:solving-for-juntas} we prove Step~2, namely, that if $\g_{1},\ldots,\g_{t}$ are juntas that are cross free of a matching, then one of them must be `not larger' than a $(t-1,1)$-star, along with a stability version. The proof of Step~3 spans Sections~\ref{sec:sub:match:bootstrap-1} and~\ref{sec:sub:match:bootstrap-2}, where in Subsection~\ref{sec:sub:match:bootstrap-1} we show that if some families $\b_{1},\ldots,\b_{t}$ are cross free of a matching and $\b_{1},\ldots,\b_{t-1}$ are `very large' then $\b_t$ must be `very small', and in Subsection~\ref{sec:sub:match:bootstrap-2} we use this bootstrapping lemma to deduce that the $(t-1,1)$-star is a `local maximum' for the measure of families that are cross free of a matching. Finally, we complete the proof of Theorem~\ref{thm:Proof of the aaroni howrd conjecture} in Section~\ref{sec:sub:match:completing-the-proof}.

\subsection{Approximation by juntas}
\label{sec:sub:match:app-juntas}

In this subsection we present Step 1(a), namely, the association of a ``nice-behaved'' junta to any family $\f$.
\begin{prop}
\label{lem:associated junta lemma} Let $r,s\in\mathbb{N}$ be constants, and denote $C=\left(2s\right)^{r}$.
For any $k<n$, for any $\epsilon\ge\left(\frac{k}{n}\right)^{r}$, and for any family $\f\subseteq{{[n]}\choose{k}}$,
there exists a set $J\subseteq\left[n\right]$ of size at most $C$ and a family $\j\subseteq\p\left(J\right)$, such that:
\begin{enumerate}
\item For each $B\in\j$, the family $\f_{B}^{B}$ is $\left(s,\epsilon\left(\frac{n}{k}\right)^{\left|B\right|}\right)$-uncapturable,

\item We have
\[
\mu\left(\f\backslash\j^{\uparrow}\right)\le C\epsilon.
\]
\end{enumerate}
\end{prop}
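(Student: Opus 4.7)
The plan is to construct $(J,\j)$ by a top-down capture-and-refine procedure in the spirit of the junta construction of Dinur and Friedgut. Starting from $J=\emptyset$, I would iterate: look for some $B \subseteq J$ whose slice $\f_B^B$ is $(s,\epsilon(n/k)^{|B|})$-capturable by a set $S_B$ of size at most $s$ that is not already contained in $J$; if such $B$ exists, update $J \leftarrow J \cup S_B$ and repeat. When no such $B$ remains, define
\[
\j=\{B\subseteq J : \f_B^B\text{ is }(s,\epsilon(n/k)^{|B|})\text{-uncapturable}\}
\]
and, if necessary, pad $J$ up to size $C$ by adjoining arbitrary extra elements.

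To bound $|J|$, I would exploit the hypothesis $\epsilon \ge (k/n)^r$: it forces $\epsilon(n/k)^{|B|} \ge 1$ whenever $|B| \ge r$, so every slice at ``depth'' $\ge r$ is trivially captured by $S = \emptyset$ and contributes nothing new to $J$. Organizing the procedure as a recursion tree in which each capture at depth $\ell$ branches according to the nonempty $B' \subseteq S_B$ and re-uses previously chosen capturing sets wherever possible, one obtains a tree of depth at most $r$ with effective branching at most $s+1$, yielding $|J| \le (s+1)^r = C$.

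For the measure bound, decompose $\f = \bigsqcup_{B\subseteq J} \f_J^B$, where $\f_J^B = \{A \in \f : A \cap J = B\}$. A set $A$ lies in $\f \setminus \j^\uparrow$ exactly when no $B' \in \j$ satisfies $B' \subseteq A \cap J$. For every such ``bad'' $B = A \cap J$, the termination condition supplies a capturing set $S_B \subseteq J \setminus B$; since $A \cap J = B$ forces $A \cap S_B = \emptyset$, the slice $\f_J^B$ embeds (after removing $B$) into $(\f_B^B)_{S_B}^{\emptyset}$, whence
\[
\frac{|\f_J^B|}{\binom{n}{k}} \le \epsilon\left(\frac{n}{k}\right)^{|B|} \cdot \frac{\binom{n-|B|-|S_B|}{k-|B|}}{\binom{n}{k}} = O(\epsilon),
\]
because the $(n/k)^{|B|}$ factor cancels the $(k/n)^{|B|}$ ratio of binomial coefficients. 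Summing over all bad $B$ (whose count is controlled by the same tree analysis that bounds $|J|$) gives the desired $\mu(\f \setminus \j^\uparrow) \le C\epsilon$.

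The main obstacle I expect is the bookkeeping required to hit exactly the stated constant $C=(s+1)^r$ in both the size of $J$ and the measure bound: a naive recursion that processes each of the $2^s - 1$ nonempty sub-slices independently at every capture produces constants of order $2^{O(rs)}$. Achieving $(s+1)^r$ requires carefully re-using capturing sets across related slices and exploiting monotonicity (if $\f_B^B$ is uncapturable, the same is effectively true for many related super-slices, so a single capture witness can discharge several branches at once), and this merging is the delicate part of the argument.
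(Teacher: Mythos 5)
Your overall strategy---iteratively capture slices, collect the capturing sets into $J$, and let $\j$ consist of the uncapturable slices---is in the right spirit (and close to Dinur--Friedgut), but as written it has two genuine gaps, both caused by running the procedure over \emph{all} subsets $B\subseteq J$ rather than organizing it as a recursion that only branches on the singletons of the current capturing set.

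First, termination and the size of $J$. In your procedure every $B\subseteq J$ with $|B|<r$ whose slice is capturable may demand a capturing set $S_B\not\subseteq J$, and adding $S_B$ creates new subsets of $J$ that may in turn demand further capturing sets. Writing $N=|J|$ at termination, the best available bound is $N\le s\sum_{\ell<r}\binom{N}{\ell}$, which already for $r=2$ reads $N\le s(1+N)$ and gives no bound at all: each newly added vertex $i$ spawns a singleton slice $\f_{\left\{ i\right\}}^{\left\{ i\right\}}$ that may force yet another vertex into $J$, indefinitely. The ``merging'' you defer to is therefore not a constant-chasing refinement; it is what makes the construction well-defined. The paper avoids this by inducting on $r$: capture $\f$ once by a set $S$ of size $s$, recurse \emph{only} on the $s$ slices $\f_{\left\{ i\right\}}^{\left\{ i\right\}}$ for $i\in S$ with parameter $r-1$ and threshold $\epsilon\frac{n}{k}$, and set $\j=\{A_i\cup\{i\}:A_i\in\j_i\}$. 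The members of $\j$ are root-to-node chains of a tree of depth $r$ and branching $s$; subsets of $J$ that are not such chains are never examined, so $|J|\le s+s(s+1)^{r-1}\le(s+1)^r$ holds automatically.

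Second, the measure bound. Your per-slice estimate $\mu(\f_J^B)\le\epsilon$ is correct (the factors $(n/k)^{|B|}$ and $(k/n)^{|B|}$ do cancel), but precisely because the bound is $\epsilon$ independently of $|B|$, a flat sum over the bad $B$'s gives (number of bad $B$'s)$\,\cdot\,\epsilon$, and that count is not controlled by $C$---generically it is of order $\sum_{\ell<r}\binom{|J|}{\ell}$, not $(s+1)^r$. The constant $C\epsilon$ comes from a \emph{hierarchical} union bound: $\f\setminus\j^\uparrow$ is covered by $\f_S^\emptyset$ (measure at most $\epsilon$) together with, for each $i\in S$, the event that $i\in A$ and $A\setminus\{i\}\in\f_{\left\{ i\right\}}^{\left\{ i\right\}}\setminus\j_i^\uparrow$; the factor $\Pr[i\in A]=k/n$ attached to each branch exactly offsets the inflated threshold $\epsilon\frac{n}{k}$ used one level down, yielding $\epsilon+s\cdot\frac{k}{n}\cdot(s+1)^{r-1}\epsilon\frac{n}{k}\le(s+1)^r\epsilon$. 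Neither the tree structure nor this $k/n$ discounting appears in your flat sum, so Condition (2) with the stated constant is not established.
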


\begin{rem}
Note that the sizes of all $B\in\j$ are naturally bounded from above. Indeed, we have $|B| \leq \log_{k/n} \epsilon \leq r$ for any $B \in \j$, as
no family can be $(s,\epsilon')$-uncapturable for $\epsilon' \geq 1$.
\end{rem}

\begin{proof}[Proof of Proposition~\ref{lem:associated junta lemma}]
We construct the junta $\j$ inductively, where the induction variable is $r$ (which measures `how small' $\epsilon$ is allowed to be).
%The proof will proceed by induction on $r$.

For $r=0$, we set $J=\emptyset,\j=\emptyset$. Condition~(1) holds vacuously, and Condition~(2) holds since
\[
\mu\left(\f\backslash\j^{\uparrow}\right)=\mu\left(\f\right)=1\le\epsilon.
\]
For $r>0$, we first consider the case where $\f$ itself is $\left(s,\epsilon\right)$-uncapturable. In this case,
we take $J=\emptyset$ and $\j=\left\{ \emptyset\right\}$. Condition~(2) holds as $\mu\left(\f\backslash\j^{\uparrow}\right)=0$, and
Condition~(1) holds by hypothesis, since $\f=\f_{\emptyset}^{\emptyset}$.

Hence, we may assume that $r>0$ and $\f$ is $\left(s,\epsilon\right)$-capturable. This means that there exists a set $S$ of size $s$, such that
$\mu\left(\a_{S}^{\emptyset}\right)\le\epsilon$. We now apply the induction hypothesis to each of the families $\left\{ \f_{\left\{ i\right\} }^{\left\{ i\right\} }\right\} _{i\in S}$, with $\epsilon \frac{n}{k}$ in place of $\epsilon$.

The induction hypothesis implies that there exist sets $\{J_{i}\}_{i \in S}$ of size $\left|J_{i}\right|=\left(2s\right)^{r-1}$,
and families $\j_{i}\subseteq\p\left(J_{i}\right)$, such that:
\begin{itemize}
\item For each $B\in\j_{i}$, the family $\left(\f_{\left\{ i\right\} }^{\left\{ i\right\} }\right)_{B}^{B}$
is $\left(s,\epsilon\frac{n}{k}\left(\frac{n}{k}\right)^{\left|B\right|}\right)$-uncapturable,
\item We have
\[
\mu\left(\f_{\left\{ i\right\} }^{\left\{ i\right\} }\backslash\j_{i}^{\uparrow}\right)\le\left(2s\right)^{r-1}\frac{\epsilon n}{k}.
\]
\end{itemize}
Let $J=\bigcup_{i \in S} J_{i}\cup S$ and $\j = \{A_i \cup \{i\}: A_{i} \in \j_{i}\}$. We claim that $\j$ is the desired junta.

\medskip

First, note that since $|S| \leq s$, we have
\[
\left|J\right|\le\sum_{i \in S}\left|J_{i}\right|+\left|S\right|\le s+s\left(2s\right)^{r-1}\le\left(2s\right)^{r}=C,
\]
and thus the size of $J$ is as asserted.

To see that Condition~(1) holds, let $B\in\j$, and let $i\in S$ be such that $B\backslash\left\{ i\right\} \in \j_{i}$. Then by the definition of $\j_{i}$, the family $\f_{B}^{B}=\left(\f_{\left\{ i\right\} }^{\left\{ i\right\} }\right)_{B\backslash\left\{ i\right\} }^{B\backslash\left\{ i\right\} }$ is indeed $\left(s,\epsilon\left(\frac{n}{k}\right)^{\left|B\right|}\right)$-uncapturable.
%\begin{enumerate}
%\item

Finally, to see that Condition~(2) holds,  %$\mu\left(\f\backslash\j^{\uparrow}\right)\le\left(t+1\right)^{r}\epsilon$,
note that by a union bound,
\begin{align*}
\mu\left(\f\backslash\j^{\uparrow}\right) & =\Pr_{\mathbf{A}\sim{{[n]}\choose{k}}}\left[\mathbf{A}\in\f\backslash\j^{\uparrow}\right]\\
 & \le\Pr_{\mathbf{A}\sim{{[n]}\choose{k}}}\left[\mathbf{A}\in\f\backslash\j^{\uparrow}\mbox{ and }\mathbf{A}\cap S=\emptyset\right]
 +\sum_{i\in S}\Pr_{\mathbf{A}\sim{{[n]}\choose{k}}}\left[\mathbf{A}\in\f\backslash\j^{\uparrow}\mbox{ and }\mathbf{A}\cap S\supseteq\left\{ i\right\} \right]\\
 & \le\mu\left(\f_{S}^{\emptyset}\right)+\sum_{i\in S}\Pr_{\mathbf{A}\sim{{[n]}\choose{k}}}\left[i\in \mathbf{A}\right]\mu\left(\f_{\left\{ i\right\} }^{\left\{ i\right\} }\backslash\j_{i}^{\uparrow}\right)\\
 & \le\epsilon+\sum_{i\in S}\frac{k}{n}\left(2s\right)^{r-1}\cdot \epsilon \frac{n}{k}\le\left(2s\right)^{r}\epsilon=C\epsilon.
\end{align*}
\noindent This completes the proof.
%\end{enumerate}
\end{proof}

\subsection{Uncapturable families cross contain a matching}
\label{sec:sub:match:uncap}

We now turn to Step~1(b) which shows that if the families $\f_{1},\ldots,\f_{t}$ are cross free of a matching then the associated juntas $\j_{1}^{\uparrow},\ldots,\j_{t}^{\uparrow}$ defined in Proposition \ref{lem:associated junta lemma} are cross free of a matching
as well. In this subsection, we prove that any uncapturable families $\f_{1},\ldots,\f_{t}$
cross contain a matching. As we shall see, this proposition will allow us to show that existence of a matching in the juntas $\j_{1}^{\uparrow},\ldots,\j_{t}^{\uparrow}$ implies existence of a matching in the original families $\f_{1},\ldots,\f_{t}$.
%As the families $\a_{1},\ldots,\a_{t}$ were cross free of a matching
%to begin with, we would obtain that the junta $\j_{1}^{\uparrow},\ldots,\j_{t}^{\uparrow}$
%are cross free of a matching as well.
\begin{prop}
\label{prop:uncapturable} For any constants $r,t\in\mathbb{N}$, there exists $s=s(r,t)$
%there exist $C,t$ that depend only on $r,s$
such that the following holds. Let $k_{1},\ldots,k_{t}<\frac{n}{2t}$,
and let $\f_{1}\subseteq {{\left[n\right]}\choose{k_{1}}},\ldots,\f_{t}\subseteq {{\left[n\right]}\choose{k_{t}}}$
be families that are cross free of a matching. Then there exists $i \in [t]$ such that the family
$\f_{i}$ is $\left(s,\left(\frac{k_{i}}{n}\right)^{r}\right)$-capturable.
\end{prop}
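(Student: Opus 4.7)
The plan is to argue the contrapositive by induction on $t$: if every $\f_{i}$ is $(s,(k_{i}/n)^{r})$-uncapturable for a suitably large constant $s=s(r,t)$, then $\f_{1},\ldots,\f_{t}$ cross contain a matching. The base case $t=1$ is immediate with $s(r,1)=0$, since $(0,(k_{1}/n)^{r})$-uncapturability forces $\mu(\f_{1})>0$, hence $\f_{1}\neq\emptyset$.

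For the inductive step, set $s':=s(r+1,t-1)$ from the induction hypothesis and choose $s=s(r,t)$ as a large constant depending on $s'$, $r$, $t$. The plan is to produce an edge $E_{1}\in\f_{1}$ such that each restricted family
\[
\f_{j}^{E_{1}} := \{A\in\f_{j}:A\cap E_{1}=\emptyset\} \subseteq \binom{[n]\setminus E_{1}}{k_{j}},\qquad 2\le j\le t,
\]
is $(s',(k_{j}/(n-k_{1}))^{r+1})$-uncapturable in the sub-universe $[n]\setminus E_{1}$. The hypothesis $k_{i}<n/(2t)$ gives $n-k_{1}>n/2$ and in particular $k_{j}<(n-k_{1})/(2(t-1))$, so the induction hypothesis applies in $[n]\setminus E_{1}$ and yields a matching $E_{2},\ldots,E_{t}$ inside $[n]\setminus E_{1}$. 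Combining with $E_{1}$ gives the required cross matching of $\f_{1},\ldots,\f_{t}$.

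The existence of a good $E_{1}$ reduces to showing that the set of \emph{bad} $E_{1}\in\f_{1}$ --- those for which some $\f_{j}^{E_{1}}$ is $(s',(k_{j}/(n-k_{1}))^{r+1})$-capturable in $[n]\setminus E_{1}$ --- is a strict subset of $\f_{1}$. A bad $E_{1}$ at index $j$ comes with a witnessing set $S\subseteq[n]\setminus E_{1}$ of size at most $s'$ such that
\[
|\{A\in\f_{j}:A\cap(E_{1}\cup S)=\emptyset\}|\le(k_{j}/(n-k_{1}))^{r+1}\binom{n-k_{1}-s'}{k_{j}}.
\]
For each fixed $S$, I bound the number of such $E_{1}$'s by a double counting argument that swaps summation over $E_{1}\in\f_{1}$ disjoint from $S$ with summation over $A\in\f_{j}$ disjoint from $S$. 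Summing over the $O(n^{s'})$ candidate sets $S$ and the $t-1$ indices $j$, the gain factor $\approx n/k_{j}\ge 2t$ between the uncapturability threshold $(k_{j}/n)^{r}$ and the target threshold $(k_{j}/(n-k_{1}))^{r+1}$ provides enough slack, once $s$ is taken large enough relative to $s'$, $r$, $t$, to make the bad count strictly less than $|\f_{1}|$. Since $s(r+t-1,1)=0$ and each step increases the parameter by a constant, the recursively defined $s(r,t)$ is indeed a constant depending only on $r$ and $t$.

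The main obstacle I anticipate is the double-counting estimate itself. The set $E_{1}\cup S$ has size up to $k_{1}+s'$, which is \emph{not} bounded by a constant, so the $(s,\epsilon)$-uncapturability of $\f_{1}$ (a statement only about constant-size capturing sets) cannot be invoked directly against $E_{1}\cup S$. I plan to handle this via the $p$-biased viewpoint at $p\approx k/n$, together with the Kruskal--Katona-style transfer lemmas of Section~\ref{sec:prelim} (Lemmas~\ref{lem:Kruskal Katona going down} and~\ref{lem:Kruskal-Katona going up}), which let one pass between uniform slices of different sizes. This should convert the required bound into one about constant-size capturing sets, where the uncapturability hypothesis is applicable and the counting goes through.
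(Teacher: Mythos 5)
There is a genuine gap, and you have correctly located it yourself: the double-counting estimate at the heart of your inductive step cannot be carried out from the hypotheses, and in fact it is essentially equivalent to the statement you are trying to prove. Swapping the order of summation turns the quantity you need into
\[
\#\bigl\{(E_{1},A)\;:\;E_{1}\in\f_{1},\ A\in\f_{j},\ E_{1}\cap A=\emptyset,\ (E_{1}\cup A)\cap S=\emptyset\bigr\},
\]
so to show that most $E_{1}$ are good you need a \emph{lower} bound on the number of disjoint pairs between $\f_{1}$ and $\f_{j}$ — i.e., a quantitative form of the very cross-matching statement under proof (already for $t=2$ your step "not every $E_{1}$ is bad" is literally the assertion that $\f_1,\f_2$ are not cross-intersecting). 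The inner count $|\{E_{1}\in\f_{1}:E_{1}\cap(A\cup S)=\emptyset\}|$ asks how much of $\f_{1}$ survives deletion of a set of size $k_{j}+|S|$, which can be linear in $n$; $(s,\epsilon)$-uncapturability, being a statement only about constant-size capturing sets, gives no control over this (e.g.\ $\f_1=\binom{W}{k_1}$ for $|W|$ a small linear fraction of $n$ can be uncapturable yet be annihilated by deleting a single $k_j$-set). The tools you invoke to bridge this — Lemmas~\ref{lem:Kruskal Katona going down} and~\ref{lem:Kruskal-Katona going up} — relate measures of a single monotone family across uniformities and do not produce disjoint pairs across two families, so they do not convert the problem into one about constant-size sets.

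The paper's proof is structurally different and shows what ingredient is actually needed. It monotonizes, uses a one-line coupling (Proposition~\ref{prop:cross-matching}) to get $\sum_{i}\mu_{1/t}(\f_{i}^{\uparrow})\le t-1$, hence some $\mu_{1/t}(\f_{i}^{\uparrow})\le 1-\tfrac1t$, and then applies the Dinur--Friedgut lemma (Lemma~\ref{lem:DF}), which converts "biased measure of the up-closure bounded away from $1$" into $(s,(k/n)^{r})$-capturability. That last lemma rests on Friedgut's junta theorem (Fourier analysis/hypercontractivity) and is the indispensable non-elementary engine; your elementary induction has no substitute for it. If you want to salvage your outline, the missing piece is precisely a lemma of this type, not a refinement of the double counting.
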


The proof of the proposition consists of two steps.
\begin{enumerate}
\item We first show that there exists $i\in\left[t\right]$ such that the biased $\mu_{1/t}$ measure of the monotonization
$\f_{i}^{\uparrow}$ is bounded away from $1$.
\item We then apply a lemma of Dinur and Friedgut~\cite{dinur2009intersecting} (Lemma~\ref{lem:DF} above) which shows that the above
statement implies that the family $\f_{i}$ is capturable. \end{enumerate}
\begin{prop}
\label{prop:cross-matching} Let $\f_{1},\ldots,\f_{t} \subseteq \pn$ be families that are cross free of a matching, and
let $p_{1},\ldots,p_{t}\in\left(0,1\right)$ be such that $p_{1}+\cdots+p_{t}\le1$. Then
\[
\sum_{i=1}^{t}\mu_{p_{i}}\left(\f_{i}\right) \leq t-1.
\]
\end{prop}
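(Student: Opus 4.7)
The plan is to construct a coupling of $p_i$-biased random sets $\mathbf{A}_1,\ldots,\mathbf{A}_t$ that are automatically pairwise disjoint, and then combine the cross-freeness hypothesis with a union bound. The condition $p_1+\cdots+p_t\le 1$ is exactly what makes such a coupling possible.

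Concretely, I would independently assign each element $j\in[n]$ to one of $t+1$ bins, sending $j$ to bin $i$ with probability $p_i$ for $i\in[t]$ and to the ``leftover'' bin with the remaining probability $1-\sum_i p_i\ge 0$. Let $\mathbf{A}_i$ denote the set of elements assigned to bin $i$. Then each $\mathbf{A}_i$ is distributed as a $p_i$-biased random subset of $[n]$ (since an element lies in $\mathbf{A}_i$ iff it was assigned to bin $i$, which happens independently with probability $p_i$), and the sets $\mathbf{A}_1,\ldots,\mathbf{A}_t$ are pairwise disjoint by construction.

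Now, since $\f_1,\ldots,\f_t$ are cross free of a matching, there is no tuple of pairwise disjoint sets $(B_1,\ldots,B_t)$ with $B_i\in\f_i$; in particular
\[
\Pr[\mathbf{A}_1\in\f_1 \text{ and }\cdots\text{ and }\mathbf{A}_t\in\f_t]=0.
\]
Equivalently, the event $\bigcup_{i=1}^t\{\mathbf{A}_i\notin\f_i\}$ has probability $1$, so by the union bound
\[
1\le \sum_{i=1}^t \Pr[\mathbf{A}_i\notin\f_i]=\sum_{i=1}^t\bigl(1-\mu_{p_i}(\f_i)\bigr),
\]
which rearranges to $\sum_{i=1}^t \mu_{p_i}(\f_i)\le t-1$, as desired.

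There is essentially no technical obstacle here; the only conceptual step is recognizing that $\sum p_i\le 1$ permits a simultaneous coupling into disjoint $p_i$-biased sets via a single multinomial-type assignment, turning a cross-containment statement into a one-line union bound. Note that the argument does not require the $\f_i$ to be monotone or uniform, which matches the generality of the statement.
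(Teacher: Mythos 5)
Your proof is correct and is essentially the same as the paper's: the paper also builds a disjoint coupling of the $p_i$-biased sets (by giving each coordinate a uniform label in $[0,1]$ and partitioning $[0,1]$ into intervals of lengths $p_1,\ldots,p_t$, which is exactly your multinomial bin assignment) and then applies a union bound. The only cosmetic difference is that the paper packages the coupling as a two-family lemma about $\mu_{p+q}(\mathcal{A}\sqcup\mathcal{B})$ and iterates it, whereas you do all $t$ families in one step.
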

%We remark that besides its intrinsic interest, Proposition \ref{prop:cross-matching}
%will serve as a crucial tool for us as Lemma \ref{lem:DF} implies
%that any uncapturable family has a large $\frac{1}{t}$-biased measure,
%provided that $\frac{k}{n}$ is small enough.
\begin{proof}[Proof of Proposition \ref{prop:cross-matching}]
For $\a,\b \subseteq\pn$, we denote by $\a\sqcup\b$ the family
of all sets of the form $D_{1}\cup D_{2}$ for some pairwise disjoint
sets $D_{1}\in\a,D_{2}\in\b$.
\begin{claim}
\label{Claim:coupling fun lemma}
Let $\epsilon_{1},\epsilon_{2}>0$ and let $p,q\in\left(0,1\right)$ be numbers such that $p+q\le1$.
Let $\mathcal{A}, \mathcal{B} \in \p\left(\left[n\right]\right)$ be families such that $\mu_{p}\left(\mathcal{A}\right)\ge1-\epsilon_{1}$ and
$\mu_{q}\left(\mathcal{B}\right)\ge1-\epsilon_{2}$. Then $\mu_{p+q}\left(\mathcal{A\sqcup B}\right)\ge1-\epsilon_{1}-\epsilon_{2}$.
\end{claim}
\begin{proof}
We define a coupling of $\mu_{p},\mu_{q}$ and $\mu_{p+q}$. For each
$i\in\left[n\right]$, let $\mathbf{x_{i}}$ be chosen uniformly and independently
at random from $\left[0,1\right]$. We say that a coordinate $i\in[n]$
is of \emph{type 1} if $0\leq \mathbf{x_{i}}\le p$, and
that $i \in [n]$ is of \emph{type 2} if $p<\mathbf{x_{i}}\le p+q$. Let
$\mathbf{A}$ (resp. $\mathbf{B}$) be the set of all coordinates of type $1$
(resp. type $2$). Then $\mathbf{A}$ is distributed according to $\mu_{p}$
(and in particular, $\Pr\left[\mathbf{A}\in\mathcal{A}\right]=\mu_{p}\left(\mathcal{A}\right)$),
$\mathbf{B}$ is distributed according to $\mu_{q}$, and $\mathbf{A}\cup \mathbf{B}$ is distributed
according to $\mu_{p+q}$. Since $\mathbf{A}$ and $\mathbf{B}$ are disjoint, we have:
\begin{align*}
\mu_{p+q}\left(\mathcal{A}\sqcup\mathcal{B}\right) & =\Pr\left[\mathbf{A}\cup \mathbf{B}\in\mathcal{A}\sqcup\mathcal{B}\right]
 =1-\Pr\left[\mathbf{A}\cup \mathbf{B}\notin\mathcal{A}\sqcup\mathcal{B}\right] \\
 &\ge1-\left(\Pr\left[\mathbf{A}\notin\mathcal{A}\right]+\Pr\left(\mathbf{B}\notin\mathcal{B}\right)\right)
  = 1-(1-\mu_p(\a))-(1-\mu_p(\b)) \\
  &\ge 1-\epsilon_{1}-\epsilon_{2}.
\end{align*}

\end{proof}
We now turn back to the proof of Proposition~\ref{prop:cross-matching}. %Let $p_{1},\ldots,p_{t}$
%be some numbers, such that $p_{1}+\cdots+p_{t}\le1$, and
Write $\mu_{p_{i}}\left(\mathcal{F}_{i}\right)=1-\epsilon_{i}$
for each $i$. By Claim~\ref{Claim:coupling fun lemma}, we have
\[
0=\mu_{\sum_{i \in [t]} p_{i}}\left(\emptyset\right)=\mu_{\sum p_{i}}\left(\bigsqcup_{i\in\left[t\right]}\mathcal{F}_{i}\right)\ge1-\sum_{i \in [t]} \epsilon_{i},
\]
where the second equality holds since $\f_1,\ldots,\f_t$ are cross free of a matching. Hence,
\[
\sum_{i=1}^{t}\mu_{p_{i}}\left(\f_{i}\right)\le t-\sum_{i=1}^{t}\epsilon_{i}\le t-1,
\]
as asserted.
\end{proof}

\begin{rem}
We note that if $p_{1}+\cdots+p_{t}>1$, then for \emph{any} $\epsilon>0$, there exist
families $\f_{1},\ldots,\f_{t}$ that are cross free of a matching,
such that $\mu_{p_{i}}\left(\f_{i}\right)\ge1-\epsilon$ for all $i$. Indeed, choosing $q_1,\ldots,q_t$ such that $q_{1}<p_{1},q_{2}<p_{2},\ldots,q_{t}<p_{t}$ and $\sum_{i=1}^{t}q_{i}>1$, for a sufficiently large $n$ we obtain that the families $\f_1,\ldots,\f_t$, where
\[
\f_{i}=\left\{ A\subseteq\pn\,:\,\left|A\right|\ge q_{i}n\right\},
\]
are cross free of a matching and satisfy $\mu_{p_{i}}\left(\f_{i}\right)\ge1-\epsilon$. In this sense, Proposition~\ref{prop:cross-matching} is sharp.
\end{rem}

\iffalse
To prove Proposition \ref{prop:uncapturable}, we need the following
lemma of Dinur and Friedgut~\cite{dinur2009intersecting}.
\begin{lem}[Dinur and Friedgut]
\label{lem:DF}For any constants $\zeta\in\left(0,\frac{1}{2}\right)$ and $r\in\mathbb{N}$,
there exists a constant $s\left(\zeta,r\right)$ such that the following
holds. Let $n,k\in\mathbb{N}$ and $p\in\left(\zeta,1\right)$ be
numbers such that $\frac{k}{n}\le\frac{p}{2}$, and let $\a\subseteq{{[n]}\choose{k}}$
be a family that satisfies $\mu_{p}\left(\a^{\uparrow}\right)\le1-\zeta$.
Then $\a$ is $\left(s,\left(\frac{k}{n}\right)^{r}\right)$-capturable.
\end{lem}
\fi

Now we can prove Proposition \ref{prop:uncapturable}.
\begin{proof}[Proof of Proposition \ref{prop:uncapturable}]
Since the families $\f_{1},\ldots,\f_{t}$ are cross free of a matching,
their monotonizations $\f_{1}^{\uparrow},\ldots,\f_{t}^{\uparrow}$
are cross free of a matching as well. By Proposition \ref{prop:cross-matching},
this implies that $\sum_{i=1}^{t}\mu_{1/t}\left(\f_{i}^{\uparrow}\right)\le t-1$.
Hence there exists a family $\f_{i}^{\uparrow}$, such that
$\mu_{1/t}\left(\f_{i}^{\uparrow}\right)\le1-\frac{1}{t}$.
By Lemma \ref{lem:DF} (applied with $\zeta=\frac{1}{2t}$ and $p=\frac{1}{t}$; note that $k_i<\frac{n}{2t}$ and so the assumption $\frac{k}{n} \leq \frac{p}{2}$ holds), we obtain that there exists a constant $s=s\left(t,r\right)$
such that the family $\f_{i}$ is $\left(s,\left(\frac{k_{i}}{n}\right)^{r}\right)$-capturable.
This completes the proof of the proposition.
\end{proof}

\subsection{The approximating juntas are cross free of a matching}
\label{sec:sub:match:completing-step-1}

We are now ready to show that if $\f_{1},\ldots,\f_{t}$ are
families that are cross free of a matching, then the associated juntas
$\j_{1}^{\uparrow},\ldots,\j_{t}^{\uparrow}$ defined in Section~\ref{sec:sub:match:app-juntas}
are cross free of a matching as well.
\begin{thm}
\label{thm:matching junta approximation} Let $t,r$ be some constants, let $k<\frac{n}{2t}$,
and let $\f_{1},\ldots,\f_{t}\subseteq{{[n]}\choose{k}}$
be families that are cross free of a matching. Then there exist
$O_{r,t}\left(1\right)$-juntas $\g_{1},\ldots,\g_{t}\subseteq{{[n]}\choose{k}}$
that are cross free of a matching, such that $\mu\left(\f_{i}\backslash\g_{i}\right)=O_{t,r}\left(\left(\frac{k}{n}\right)^{r}\right)$
for each $i\in\left[t\right]$.
\end{thm}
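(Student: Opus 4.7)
The plan is to follow the two-step scheme of Section~\ref{sec:sub:match:overview}. I will first approximate each $\f_{i}$ by a junta using Proposition~\ref{lem:associated junta lemma}, and then argue by contradiction that these juntas are themselves cross free of a matching via Proposition~\ref{prop:uncapturable}. Throughout I assume $t\ge 2$ (the case $t=1$ forces $\f_{1}=\emptyset$) and that $n$ is large enough in terms of $r,t$ for the $O$-estimates to carry content. Set $\epsilon:=(k/n)^{r}$ and apply Proposition~\ref{lem:associated junta lemma} to each $\f_{i}$ with recursion depth $r$ and slice-tolerance $s:=s(2r,t)+t(r-1)$, where $s(2r,t)$ is the constant supplied by Proposition~\ref{prop:uncapturable} with parameter $2r$. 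This produces, for each $i$, a set $J_{i}\subseteq[n]$ of size at most $C:=(s+1)^{r}=O_{r,t}(1)$ and a family $\j_{i}\subseteq\p(J_{i})$ such that each slice $(\f_{i})_{B}^{B}$ with $B\in\j_{i}$ is $(s,\epsilon(n/k)^{|B|})$-uncapturable, and $\mu(\f_{i}\setminus\j_{i}^{\uparrow})\le C\epsilon=O_{r,t}((k/n)^{r})$. Setting $\g_{i}:=(\j_{i}^{\uparrow})^{(k)}$ yields an $O_{r,t}(1)$-junta with $\mu(\f_{i}\setminus\g_{i})=O_{r,t}((k/n)^{r})$, so the task reduces to showing that the $\g_{i}$ are cross free of a matching.

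For the second step, suppose for contradiction that $(E_{1},\ldots,E_{t})$ is a cross matching with $E_{i}\in\g_{i}$, and for each $i$ pick $B_{i}\in\j_{i}$ with $B_{i}\subseteq E_{i}$; the $B_{i}$'s are then pairwise disjoint. The crucial structural observation is that $|B|\le r-1$ for every $B\in\j_{i}$: inspection of the recursive construction in the proof of Proposition~\ref{lem:associated junta lemma} shows that each of its $r$ levels adds at most one new element to any output set, starting from the empty set. Consequently $|B_{1}\sqcup\cdots\sqcup B_{t}|\le t(r-1)$, a constant that is independent of $s$. Writing $B:=B_{1}\sqcup\cdots\sqcup B_{t}$ and forming the restricted slices
\[
\mathcal{G}_{i} := \bigl((\f_{i})_{B_{i}}^{B_{i}}\bigr)_{B\setminus B_{i}}^{\emptyset} \subseteq \binom{[n]\setminus B}{k-|B_{i}|},
\]
any pairwise-disjoint selection $A_{i}\in\mathcal{G}_{i}$ produces a cross matching $(A_{i}\sqcup B_{i})_{i\in[t]}$ in $\f_{1}\times\cdots\times\f_{t}$, contradicting the hypothesis.

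To produce such a selection I will apply Proposition~\ref{prop:uncapturable} inside the universe $[n]\setminus B$. The uncapturability of $(\f_{i})_{B_{i}}^{B_{i}}$ transfers directly to $\mathcal{G}_{i}$, which is $(s-|B\setminus B_{i}|,\epsilon(n/k)^{|B_{i}|})$-uncapturable there. The choice of $s$ guarantees $s-|B\setminus B_{i}|\ge s(2r,t)$, while $\epsilon(n/k)^{|B_{i}|}\ge(k/n)^{r}$. Since $k<n/(2t)\le n/4$ and $|B|\le n/2$ for $n$ large, one computes $(k_{i}/(n-|B|))^{2r}\le 4^{r}(k/n)^{2r}\le(k/n)^{r}$, so this uncapturability is in fact stronger than the $(s(2r,t),(k_{i}/(n-|B|))^{2r})$-uncapturability required by Proposition~\ref{prop:uncapturable}. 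Applying that proposition with parameter $2r$ inside $[n]\setminus B$ yields the desired cross matching, completing the contradiction.

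The main technical obstacle is precisely this parameter alignment between the two propositions. Without the observation $|B|\le r-1$ one would only have the trivial bound $|B_{1}\sqcup\cdots\sqcup B_{t}|\le t(s+1)^{r}$, which grows with $s$ and makes the condition $s-|B\setminus B_{i}|\ge s(2r,t)$ unachievable. The fact that elements of $\j_{i}$ have size at most $r-1$---a bound on the \emph{depth} of the recursion, independent of the slice-tolerance $s$---is what decouples the slice-tolerance from the junta support and lets the scheme close.
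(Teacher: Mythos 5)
Your proof is correct and follows essentially the same route as the paper: approximate each $\f_i$ via Proposition~\ref{lem:associated junta lemma}, then derive a contradiction from a hypothetical cross matching in the juntas by restricting to the slices above the $B_i$'s and invoking Proposition~\ref{prop:uncapturable}. The only (harmless) differences are cosmetic: you obtain the bound $|B|\le r-1$ by inspecting the recursion directly, where the paper instead truncates $\j_i$ using Lemma~\ref{lem:measures of juntas}, and you make explicit (via the parameter $2r$) the loss incurred by passing to the universe $[n]\setminus B$, which the paper absorbs into ``$s$ chosen large enough.''
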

We remark that Theorem \ref{thm:matching junta approximation} generalizes
a result of Dinur and Friedgut \cite{dinur2009intersecting} who proved
the same assertion in the case $t=2$ (i.e., for intersecting families); see Section~\ref{sec:proof}.
\begin{proof}
Let $s=s\left(t,r\right)$ be a sufficiently large constant to be defined below.
By Proposition~\ref{lem:associated junta lemma} (applied with $\epsilon=(k/n)^r$), there exist sets $J_{1},\ldots,J_{t}$ of size
$O_{t,r}\left(1\right)$ each, and families $\j_{1}\subseteq\p\left(J_{1}\right),\ldots,\j_{t}\subseteq\p\left(J_{t}\right)$,
such that for each $i\in\left[t\right]$, we have:
\begin{enumerate}
\item For each set $B\in\j_{i}$, the family $\left(\f_{i}\right)_{B}^{B}$
is $\left(s,\left(\frac{k}{n}\right)^{r-\left|B\right|}\right)$-uncapturable.

\item $\mu\left(\f_{i}\backslash\j_{i}^{\uparrow}\right)=O_{t,r}\left(\left(\frac{k}{n}\right)^{r}\right)$.

\end{enumerate}
By the proof of Lemma~\ref{lem:measures of juntas}, we may remove from the families
$\j_{i}$ all the sets of size at least $r$, so we assume without
loss of generality that $\left|B\right|<r$ for each $B\in\j_{i}$.

To complete the proof we shall show that the families $\j_{1}^{\uparrow},\ldots,\j_{t}^{\uparrow}$
are cross free of a matching. Suppose on the contrary that there exist
a matching $A_{1},\ldots,A_{t}$ with $A_{1}\in\j_{1}^{\uparrow},\ldots,A_{t}\in\j_{t}^{\uparrow}.$
This implies that there exists a matching $B_{1},\ldots,B_{t}$
with $B_{1}\in\j_{1},\ldots,B_{t}\in\j_{t}$. Write $E=B_{1}\cup\cdots\cup B_{t}$. To reach a contradiction, we show
that the following contradicting claims hold:
\begin{claim}
\label{Trivial 2}The families $\left(\f_{1}\right)_{E}^{B_{1}},\ldots,\left(\f_{t}\right)_{E}^{B_{t}}$
are $\left(s-\left(r-1\right)\left(t-1\right),\left(\frac{k}{n}\right)^r\right)$-uncapturable.
\end{claim}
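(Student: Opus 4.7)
The plan is to derive the claim directly from property~(1) of the juntas $\j_i$ given by Proposition~\ref{lem:associated junta lemma}, which is precisely what was set up for this purpose. The key observation is that the slice we care about can be rewritten as an iterated slice:
\[
\left(\f_{i}\right)_{E}^{B_{i}} \;=\; \left(\left(\f_{i}\right)_{B_{i}}^{B_{i}}\right)_{E \setminus B_{i}}^{\emptyset},
\]
since the condition $A\cap E = B_i$ is equivalent to $B_i\subseteq A$ together with $A\cap(E\setminus B_i)=\emptyset$, and the ambient universe $[n]\setminus E$ is the same on both sides.

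Next I would record the general fact that slicing by a disjoint set preserves uncapturability at the cost of decreasing $s$ by the size of the slicing set: if $\g\subseteq\binom{[n]\setminus B_i}{k-|B_i|}$ is $(s,\epsilon)$-uncapturable and $T\subseteq[n]\setminus B_i$, then $\g_T^{\emptyset}$ is $(s-|T|,\epsilon)$-uncapturable, because for any test set $S'$ in $[n]\setminus B_i\setminus T$ of size $\le s-|T|$ we have $(\g_T^{\emptyset})_{S'}^{\emptyset}=\g_{T\cup S'}^{\emptyset}$ with $|T\cup S'|\le s$ (using the disjointness of $T$ and $S'$), so uncapturability of $\g$ applies.

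Combining, property~(1) guarantees that $(\f_i)_{B_i}^{B_i}$ is $\left(s,\left(\tfrac{k}{n}\right)^{r-|B_i|}\right)$-uncapturable, and since the $B_j$'s are pairwise disjoint and each $|B_j|<r$, the set $T:=E\setminus B_i=\bigsqcup_{j\ne i}B_j$ satisfies $|T|\le(r-1)(t-1)$. Applying the slicing observation yields that $(\f_i)_E^{B_i}$ is $\left(s-(r-1)(t-1),\left(\tfrac{k}{n}\right)^{r-|B_i|}\right)$-uncapturable, and since $k<n$ we have $\left(\tfrac{k}{n}\right)^{r-|B_i|}\ge\left(\tfrac{k}{n}\right)^{r}$, finishing the claim. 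There is essentially no obstacle here: the statement is bookkeeping built on top of the carefully designed parameters of Proposition~\ref{lem:associated junta lemma}, and the only point worth being careful about is verifying the equality of slices and the accounting of $|E\setminus B_i|$.
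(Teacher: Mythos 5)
Your proof is correct and follows essentially the same route as the paper's: invoke property~(1) of Proposition~\ref{lem:associated junta lemma} to get that $(\f_i)_{B_i}^{B_i}$ is $\left(s,\left(\frac{k}{n}\right)^{r-|B_i|}\right)$-uncapturable (hence $\left(s,\left(\frac{k}{n}\right)^{r}\right)$-uncapturable), then pass to the further slice by $E\setminus B_i$ at the cost of reducing $s$ by $|E\setminus B_i|\le(r-1)(t-1)$. The paper states the slicing step as following "by definition," whereas you spell out the identity $(\g_T^{\emptyset})_{S'}^{\emptyset}=\g_{T\cup S'}^{\emptyset}$ justifying it; the content is the same.
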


\begin{claim}
\label{trivial 1}The families $\left(\f_{1}\right)_{E}^{B_{1}},\ldots,\left(\f_{t}\right)_{E}^{B_{t}}$
are cross free of a matching.
\end{claim}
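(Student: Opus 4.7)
The plan is to dispatch Claim~\ref{trivial 1} by a short lifting argument: any purported cross-matching inside the slices can be lifted to a cross-matching in the original families $\f_1,\ldots,\f_t$, directly contradicting the standing hypothesis.

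First I would suppose for contradiction that pairwise disjoint sets $C_1,\ldots,C_t$ exist with $C_i \in (\f_i)_E^{B_i}$. Unpacking the definition of the slice, each such $C_i$ has the form $A_i \setminus B_i$ for some $A_i \in \f_i$ with $A_i \cap E = B_i$; equivalently $A_i = B_i \sqcup C_i$, with $B_i \subseteq E$ and $C_i \subseteq [n]\setminus E$. I would then verify that $(A_1,\ldots,A_t)$ is a matching: the $B_i$'s are pairwise disjoint because $(B_1,\ldots,B_t)$ was chosen as a matching and $E$ was defined as their disjoint union; the $C_i$'s are pairwise disjoint by assumption; and $B_i \cap C_j = \emptyset$ for all $i,j$ since $B_i \subseteq E$ while $C_j \subseteq [n]\setminus E$. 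Combining these, $A_i \cap A_j = \emptyset$ whenever $i \neq j$, so $(A_1,\ldots,A_t)$ is a cross-matching in $\f_1,\ldots,\f_t$, which is the desired contradiction.

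There is no substantive obstacle here; the argument is purely definitional, which is presumably why the claim is labeled \emph{trivial}. The real content of this portion of the proof of Theorem~\ref{thm:matching junta approximation} lies in Claim~\ref{Trivial 2}, where uncapturability must be shown to descend from $\f_i$ to the slice $(\f_i)_E^{B_i}$ with only an $|E|$-sized loss in the capture-set budget (bounded by $(r-1)(t-1)$ once one invokes Lemma~\ref{lem:measures of juntas} to truncate $\j_i$ so that every $B_i \in \j_i$ has size strictly less than $r$), combined with the initial choice of $s = s(r,t)$ large enough for Proposition~\ref{prop:uncapturable} to apply to the slices even after this loss. The role of Claim~\ref{trivial 1} is simply to close the contradiction that these two ingredients set up.
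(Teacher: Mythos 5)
Your proof is correct and is essentially the paper's own argument: assume a cross-matching $C_1,\ldots,C_t$ in the slices and lift it to the matching $B_1\cup C_1,\ldots,B_t\cup C_t$ in $\f_1,\ldots,\f_t$, contradicting the hypothesis. The paper states this in one line; your version just spells out the disjointness checks.
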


\begin{proof}[Proof of Claim \ref{Trivial 2}]
By the hypothesis, each family $(\f_i)_{B_{i}}^{B_{i}}$ is $\left(s,\left(\frac{k}{n}\right)^r\right)$-uncapturable.
So by definition, the family $(\f_i)_{E}^{B_{i}}$ is $\left(s-\left|E\backslash B_{i}\right|,\left(\frac{k}{n}\right)^r\right)$-uncapturable.
The claim follows from the fact that $\left|E\backslash B_i\right|\le\left(r-1\right)\left(t-1\right),$ for each $i$.
\end{proof}

\begin{proof}[Proof of Claim \ref{trivial 1}]
Suppose on the contrary that there exists a matching $C_{1}\in\left(\f_{1}\right)_{E}^{B_{1}},\ldots,C_{t}\in\left(\f_{t}\right)_{E}^{B_{t}}$. Then
the sets $C_{1}\cup B_{1},\ldots,C_{t}\cup B_{t}$ constitute a cross matching in the families $\f_{1},\ldots,\f_{t}$, contradicting the hypothesis.
\end{proof}

By Proposition~\ref{prop:uncapturable}, Claims~\ref{Trivial 2} and~\ref{trivial 1} contradict each other (provided that $s=s(r,t)$ is chosen to be large
enough). Hence, the families $\j_{1}^{\uparrow},\ldots,\j_{t}^{\uparrow}$ are cross free of a matching, as asserted.
\end{proof}

\subsection{Characterization of `large' juntas that are cross free of a matching}
\label{sec:sub:match:solving-for-juntas}

In this subsection we present Step~2 of the proof, which shows that if some juntas $\g_{1},\ldots,\g_{t}$ are cross free of a matching,
then $\min\left|\g_{i}\right|\le{{n}\choose{k}}-{{n-t+1}\choose{k}}$, and that near inequality holds if and only if all these juntas are included in the same $\left(t-1,1\right)$-star.

We use the following simple observation.
\begin{obs}
For any constants $j,t$ there exists a constant $C\left(t,j\right)$, such that the following holds. Let $J \in {{[n]}\choose{j}}$, let $k \le n/C$, and let $\left\langle \j_{1}\right\rangle ,\ldots,\left\langle \j_{t}\right\rangle \subseteq{{[n]}\choose{k}}$
be $J$-juntas that are cross free of a matching. Then the families
$\j_{1},\ldots,\j_{t}$ are cross free of a matching as well.
\end{obs}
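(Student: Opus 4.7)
The plan is to argue by contradiction. Suppose that $\j_1, \ldots, \j_t \subseteq \mathcal{P}(J)$ do cross contain a matching; then there are pairwise disjoint sets $B_1 \in \j_1, \ldots, B_t \in \j_t$, all of which are subsets of $J$ and hence satisfy $|B_i| \le j$. The goal is to extend this `small' matching inside $J$ to a genuine matching $A_1, \ldots, A_t$ of $k$-sets, with $A_i \in \langle \j_i \rangle$, which will contradict the hypothesis that $\langle \j_1 \rangle, \ldots, \langle \j_t \rangle$ are cross free of a matching.

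The extension is constructed greedily. First I would set aside $J$, and then for each $i \in [t]$ in turn pick an arbitrary set $D_i \subseteq [n] \setminus J$ of size $k - |B_i|$, disjoint from all the previously chosen $D_\ell$'s ($\ell < i$), and let $A_i = B_i \sqcup D_i$. Since $B_i \in \j_i \subseteq \mathcal{P}(J)$ and $D_i$ is disjoint from $J$, we get $A_i \cap J = B_i \in \j_i$, so $A_i \in \langle \j_i \rangle$. The $A_i$'s are pairwise disjoint because the $B_i$'s are and the $D_i$'s lie outside $J$ and were chosen disjointly.

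The only thing to verify is that the greedy choice is feasible, i.e., that enough room remains in $[n] \setminus J$ at each step. The total number of new elements required is $\sum_{i=1}^t (k - |B_i|) \le tk$, while $|[n] \setminus J| = n - j$. Since $k \le n/C$, this needs $tn/C + j \le n$, which holds for all sufficiently large $n$ as soon as $C > t$; choosing, e.g., $C(t,j) = 2t$ together with the mild requirement $n \ge 2j$ (automatic for $n$ large relative to $j$) suffices. This is really the only `obstacle', and it is a counting issue rather than a structural one — the constant $C$ is forced precisely by the need to fit $t$ disjoint $k$-sets inside the $n - j$ coordinates outside $J$.
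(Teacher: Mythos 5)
Your proof is correct and is essentially identical to the paper's: both argue by contraposition, extending a hypothetical cross matching $B_1\in\j_1,\ldots,B_t\in\j_t$ inside $J$ to a cross matching of $k$-sets in $\langle\j_1\rangle,\ldots,\langle\j_t\rangle$ by greedily appending disjoint sets from $[n]\setminus J$, with the condition $k\le n/C$ guaranteeing enough room. The feasibility count you carry out is exactly the content of the paper's phrase ``provided that $C$ is sufficiently large.''
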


\begin{proof}
Suppose on the contrary that the sets $A_{1}\in\j_{1},\ldots,A_{s}\in\j_{t}$
constitute a matching. Provided that $C$ is sufficiently large, we can
also find a matching $B_{1}\in {{\left[n\right]\backslash J}\choose{k-\left|A_{1}\right|}},\ldots,B_{t}\in {{\left[n\right]\backslash J}\choose{k-\left|A_{t}\right|}}$.
The sets $A_{1}\cup B_{1}\in\left\langle \j_{1}\right\rangle ,\ldots,A_{t}\cup B_{t}\in\left\langle \j_{t}\right\rangle $
constitute a matching, a contradiction.
\end{proof}

\begin{prop}
\label{lem:finding the largest junta}For any constant $j$, there exists $C(j)$, such that the following holds. Let $J \in {{[n]}\choose{j}}$, and let $k\le n/C$. Suppose that $\g_{1},\ldots,\g_{t}\subseteq{{[n]}\choose{k}}$
are $J$-juntas that are cross free of a matching. Then
\begin{equation}
\min_{\{i \in [t]\}}\left\{ \left|\g_{i}\right|\right\} \le{{n}\choose{k}}-{{n-t+1}\choose{k}}.\label{eq:largest junta matching}
\end{equation}
Moreover, if
\begin{equation}
\min\left\{ \left|\g_{i}\right|\right\} \ge{{n}\choose{k}}-{{n-t+2}\choose{k}}+C\frac{k^{2}}{n^{2}}{{n}\choose{k}},\label{eq:largest junta matching stability}
\end{equation}
then the juntas $\g_{1},\ldots,\g_{t}$ are contained in the same
$\left(t-1,1\right)$-star.\end{prop}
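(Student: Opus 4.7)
The plan is to reduce the proposition to a combinatorial problem on $\p(J)$ and apply Hall's theorem. I write $\g_i=\langle\j_i\rangle$ with $\j_i\subseteq\p(J)$; the observation preceding the proposition guarantees that the families $\j_i$ are cross free of a matching in $\p(J)$. Replacing each $\j_i$ by its up-closure $\tilde\j_i$ inside $\p(J)$ only enlarges the associated junta to $(\j_i^{\uparrow})^{(k)}$ and preserves the cross-free-of-matching property (a matching in $\tilde\j_i$ can be shrunk pointwise to a matching of minimal sets of $\j_i$), so I may assume each $\j_i$ is monotone in $\p(J)$ when upper bounding $|\g_i|\le|(\j_i^{\uparrow})^{(k)}|$. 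Let $l_i$ denote the minimum size of a set in $\j_i$. If $l_i\ge 2$ for some $i$, then Lemma~\ref{lem:measures of juntas} gives $|\g_i|=O_j((k/n)^2)\binom{n}{k}$, which for $C=C(j,t)$ large is smaller than both $\binom{n}{k}-\binom{n-t+1}{k}=\Theta(k/n)\binom{n}{k}$ and the right-hand side of~(\ref{eq:largest junta matching stability}), settling both parts. If $l_i=0$ for some $i$, then $\tilde\j_i=\p(J)$, so any cross matching in the other $t-1$ families extends (using $k\le n/C$ to pick disjoint $k$-sets in $[n]\setminus J$) to a full $t$-matching; these other families are therefore cross free of a $(t-1)$-matching, and induction on $t$ yields $\min_{\ell\ne i}|\g_\ell|\le\binom{n}{k}-\binom{n-t+2}{k}$, again settling both parts.

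In the remaining case $l_i=1$ for every $i$, the sets $T_i:=\{v\in J:\{v\}\in\j_i\}$ are non-empty, and cross freeness of a matching becomes exactly the statement that $T_1,\ldots,T_t$ admit no system of distinct representatives. Hall's theorem and standard minimal-violator reasoning then produce $I\subseteq[t]$ with $|T^*|=|I|-1$ and $T_i\subseteq T^*:=\bigcup_{i\in I}T_i$ for every $i\in I$, while every proper subset of $I$ admits an SDR into $T^*$. Fix any $i^*\in I$; then $|T_{i^*}|\le|I|-1\le t-1$. If $|T_{i^*}|\le t-2$, the volume bound $|\g_{i^*}|\le|T_{i^*}|\binom{n-j}{k-1}+O_j((k/n)^2)\binom{n}{k}$ combined with the $\Theta(k/n)\binom{n}{k}$ gap between $\binom{n-t+2}{k}$ and $\binom{n-t+1}{k}$ yields $|\g_{i^*}|\le\binom{n}{k}-\binom{n-t+1}{k}$ for $C$ large. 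If $|T_{i^*}|=t-1$, then $|I|=t$ and $\{T_i\}_{i\ne i^*}$ admits an SDR $\{v_i\}_{i\ne i^*}\subseteq T^*$; any set $B\in\j_{i^*}$ with $|B|\ge 2$ and $B\subseteq J\setminus T^*$ would, together with the singletons $\{v_i\}$ for $i\ne i^*$, form a cross matching, contradicting the hypothesis. Hence $\g_{i^*}$ is contained in the $(t-1,1)$-star generated by $T^*$, giving $|\g_{i^*}|\le\binom{n}{k}-\binom{n-t+1}{k}$ and establishing~(\ref{eq:largest junta matching}).

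For the stability assertion~(\ref{eq:largest junta matching stability}), the degenerate cases are ruled out by the same bounds as above (for $C$ chosen large enough), and the hypothesis forces $|T_i|\ge t-1$ for every $i$ (otherwise the volume bound gives $|\g_i|\le\binom{n}{k}-\binom{n-t+2}{k}+O_j((k/n)^2)\binom{n}{k}$, contradicting the lower bound). Combined with $|T_{i^*}|\le t-1$, this forces $|I|=t$, $|T^*|=t-1$, and $T_i=T^*$ for every $i$. Running the matching-extension argument above with each $i_0\in[t]$ in place of $i^*$ then shows that no $\j_{i_0}$ contains any $B\subseteq J\setminus T^*$ with $|B|\ge 2$, so every $\g_i$ is contained in the same $(t-1,1)$-star generated by $T^*$, as required. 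The main technical care point is this matching-extension argument: without it, the naive volume bound leaves an $O((k/n)^2)\binom{n}{k}$ slack that the $\Theta(k/n)\binom{n}{k}$ gap between consecutive binomials cannot absorb in the boundary case $|T_{i^*}|=t-1$.
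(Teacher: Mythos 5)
Your proof is correct, and its core -- extract the singleton sets $T_i$ from each $\j_i$, observe that cross-freeness of a matching forbids a system of distinct representatives, force $T_1=\cdots=T_t$ to be a common set of size $t-1$ under the stability hypothesis, and conclude containment in the $(t-1,1)$-star via the matching-extension argument -- is the same as the paper's. The organization differs. The paper proves only the ``Moreover'' statement and notes that~(\ref{eq:largest junta matching}) follows from it in one line, since the right-hand side of~(\ref{eq:largest junta matching stability}) is at most $\binom{n}{k}-\binom{n-t+1}{k}$ once $k\le n/C$; you instead prove~(\ref{eq:largest junta matching}) from scratch, which is why you need the full Hall's-theorem/minimal-violator apparatus together with the extra cases $l_i=0$ (handled by induction on $t$) and $l_i\ge 2$. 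The paper replaces Hall's theorem by a short bespoke induction showing that $t$ sets of size at least $t-1$ admitting no distinct representatives must all coincide and have size exactly $t-1$; that shortcut is available to it because the stability hypothesis already guarantees at least $t-1$ singletons in every $\j_i$. Your longer route does buy something: it treats explicitly the degenerate possibilities ($\emptyset\in\j_i$, or $\j_i$ containing no singletons at all) that the paper's counting claim passes over, and your final containment argument (a set of $\j_{i^*}$ disjoint from $T^*$ plus an SDR of the remaining $T_i$ yields a forbidden cross matching) is identical to the paper's last claim. Both arguments share the mild quantitative caveat that the $O\left(\left(k/n\right)^2\right)\binom{n}{k}$ error terms carry constants depending on $t$ as well as $j$, which is harmless in the paper's application where $j=O_t(1)$.
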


\begin{proof}
The assertion~(\ref{eq:largest junta matching}) clearly follows from the `Moreover'
statement, so we only prove the latter. Write $\g_{i}=\left\langle \j_{i}\right\rangle $
for some family $\j_{i}\subseteq\p\left(J\right)$.
\begin{claim}
\label{Claim each Gi contains many singletons} Each family $\j_{i}$ contains at least $t-1$ singletons.
\end{claim}
\begin{proof}
Otherwise, we would have $\left|\j_{i}\right|\le\left(t-2\right){{n-1}\choose{k-1}}+O\left(\frac{k}{n}\right)^{2}{{n}\choose{k}}$
by Lemma~\ref{lem:measures of juntas}. A straightforward calculation shows that this contradicts~(\ref{eq:largest junta matching stability}),
provided that $C$ is sufficiently large.
\end{proof}

Write $U_{1}=\j_{1}^{\left(1\right)},\ldots,U_{t}=\j_{t}^{\left(1\right)}$ (i.e., the sets of singletons in $\j_1,\ldots,\j_t$, respectively).
Since the families $\j_{1},\ldots,\j_{t}$ are cross free of a matching, there are no distinct elements $i_{1}\in U_{1},\ldots,i_{t}\in U_{t}$.
The following claim shows that this implies $U_{1}=\cdots=U_{t}$.
\begin{claim}
Let $t\ge2$, and let $U_{1},\ldots,U_{t}$ be sets of size at least $t-1$, such that there are no distinct elements $i_{1}\in U_{1},\ldots,i_{t}\in U_{t}$.
Then $U_{1}=\cdots=U_{t}$, and $|U_i|=t-1$.
\end{claim}
\begin{proof}
We prove the claim by induction on $t$. For $t=2$, the claim is trivial. Suppose that $t>2$, and choose some $a\in U_{1}$. By the induction
hypothesis, we have $U_{2}\backslash\left\{ a\right\} = \ldots = U_{t}\backslash\left\{ a\right\} = U' $, for some $|U'|=t-2$.
Since for all $i$, $|U_{i}| \geq t-1$, it follows that $a \in U_i$ for all $i>1$. Hence, the sets $U_{2},\ldots,U_{t}$ are equal. The same argument
shows that the sets $U_{1},U_{3},\ldots,U_{t}$ are equal, and therefore all the sets $U_{1},\ldots,U_{t}$ are equal and are of size
$t-1$.
\end{proof}

Write $U_{1}=\cdots=U_{t-1}=U$, and let $\mathcal{U}$ be the $\left(t-1,1\right)$-star of all the sets whose intersection with $U$ is non-empty (i.e., $\mathcal{U}= \{S \in {{[n]}\choose{k}}: S \cap U \neq \emptyset\}$). The proof
of Proposition~\ref{lem:finding the largest junta} will be finished by the following claim.
\begin{claim}
Each family $\g_{i}$ is contained in $\mathcal{U}$.
\end{claim}
\begin{proof}
Suppose on the contrary that $\g_{t} \not \subseteq \mathcal{U}$,
and let $A_t\in \g_{t}\backslash\mathcal{U}$. Write $U=\left\{ i_{1},\ldots,i_{t-1}\right\} $.
Then the sets $\left\{ i_{1}\right\} \in\g_{1},\ldots,\left\{ i_{t-1}\right\} \in\g_{t-1},A_{t}\in\g_{t}$
constitute a matching. This contradicts the fact the the families $\g_{1},\ldots,\g_{t}$ are cross free of a matching.
\end{proof}
\noindent This completes the proof of the proposition.
\end{proof}

\subsection{The bootstrapping lemma}
\label{sec:sub:match:bootstrap-1}

We now turn to Step~3 of the proof. In this subsection we present Step~3(a) -- the bootstrapping lemma which
asserts that if some families $\b_{1},\ldots,\b_{t}$ are
cross free of a matching and $\b_1,\ldots,\b_{t-1}$ are `very large', then $\b_t$ must be `very small'.

Our proof relies on a coupling argument which is similar to the
argument we used in the proof of Claim~\ref{Claim:coupling fun lemma}.
\begin{prop}
\label{prop: unbalanced matching lemma}For any $t,r\in\mathbb{N}$,
there exists a constant $C=C\left(t,r\right)$, such that the following
holds. Let $\epsilon>0$, let $k_{1},\ldots,k_{t}\le n/C$, and let $\b_{1}\subseteq {{\left[n\right]}\choose{k_{1}}},\ldots,\b_{t}\subseteq {{\left[n\right]}\choose{k_{t}}}$
be families that are cross free of a matching. If $\mu\left(\b_{1}\right),\ldots,\mu\left(\b_{t-1}\right)\ge1-\epsilon$,
then $\mu\left(\b_{t}\right)\le O_{r,t}\left(\epsilon^{r}\right)$.
\end{prop}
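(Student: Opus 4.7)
The plan is to combine Proposition~\ref{prop:cross-matching} (applied in the $p$-biased measure) with Lemma~\ref{lem:Kruskal-Katona going up}. First, I pass to the monotone closures $\mathcal{B}_1^\uparrow,\ldots,\mathcal{B}_t^\uparrow$, which remain cross free of a matching, and apply Proposition~\ref{prop:cross-matching} with $p_1=\cdots=p_t=1/t$ to obtain
\[
\sum_{i=1}^{t} \mu_{1/t}(\mathcal{B}_i^\uparrow) \;\le\; t-1.
\]

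The next step is to translate the uniform-measure hypothesis $\mu(\mathcal{B}_i)\ge 1-\epsilon$ into a lower bound for $\mu_{1/t}(\mathcal{B}_i^\uparrow)$ when $i<t$. Since $\mathcal{B}_i^\uparrow$ is monotone, a standard coupling shows that the slice measure $\mu\bigl((\mathcal{B}_i^\uparrow)^{(j)}\bigr)$ is nondecreasing in $j$; in particular, for every $j\ge k_i$ it is at least $\mu(\mathcal{B}_i)=1-\epsilon$. Since $k_i\le n/C$ is much smaller than the mean $n/t$ of $|\mathbf{X}_{1/t}|$ once $C$ is large compared to $t$, Chernoff gives $\Pr[|\mathbf{X}_{1/t}|<k_i]\le \exp(-\Omega_t(n))$, and hence $\mu_{1/t}(\mathcal{B}_i^\uparrow)\ge 1-\epsilon-\exp(-\Omega_t(n))$. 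Substituting for $i=1,\ldots,t-1$ into the previous display yields $\mu_{1/t}(\mathcal{B}_t^\uparrow)\le (t-1)\bigl(\epsilon+\exp(-\Omega_t(n))\bigr)$.

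Finally, I convert this biased-measure bound back into a uniform-measure bound on $\mathcal{B}_t$. Let $m=\lfloor n/t\rfloor$, which is a median of $\mathrm{Bin}(n,1/t)$; monotonicity of the slice measure of $\mathcal{B}_t^\uparrow$ gives $\mu_{1/t}(\mathcal{B}_t^\uparrow)\ge \tfrac{1}{2}\mu\bigl((\mathcal{B}_t^\uparrow)^{(m)}\bigr)$, so $\mu\bigl((\mathcal{B}_t^\uparrow)^{(m)}\bigr)=O_t(\epsilon)$ (absorbing the exponentially small additive term, which in its own right is so strong that it forces $\mathcal{B}_t=\emptyset$ in the extreme regime where it dominates $\epsilon$). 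Applying Lemma~\ref{lem:Kruskal-Katona going up} at top level $m=\Theta_t(n)$ with target level $l=k_t$, using $\zeta=1/(2t)$ and choosing $C$ larger than $t$ times the constant $C'(\zeta,r)$ of that lemma so that $l<m/C'$, then produces
\[
\mu(\mathcal{B}_t)=\mu\bigl((\mathcal{B}_t^\uparrow)^{(k_t)}\bigr)\;\le\; O_{r,t}(\epsilon^r),
\]
which is the claim. The only delicate point is the simultaneous calibration of $C=C(t,r)$ to control both the Chernoff tail in the second paragraph and the Kruskal--Katona threshold in the third; the conceptual content of the bootstrapping is entirely captured by the biased cross-matching inequality in the first paragraph.
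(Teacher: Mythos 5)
Your overall architecture matches the paper's in its second half: both proofs pass to the slice $(\b_t^{\uparrow})^{(\lfloor n/t\rfloor)}$, bound its (uniform) measure by $O_t(\epsilon)$, and then invoke Lemma~\ref{lem:Kruskal-Katona going up} to descend to level $k_t$ and gain the $r$-th power. Where you differ is in how that level-$\lfloor n/t\rfloor$ bound is obtained. The paper does it with a single permutation coupling on uniform measures: it samples pairwise disjoint sets with marginals uniform on $\binom{[n]}{k_1},\ldots,\binom{[n]}{k_{t-1}},\binom{[n]}{\lfloor n/t\rfloor}$, and a union bound gives $\mu\bigl((\b_t^{\uparrow})^{(\lfloor n/t\rfloor)}\bigr)\le (t-1)\epsilon$ exactly, with no error term. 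You instead route through the $\mu_{1/t}$-biased measure via Proposition~\ref{prop:cross-matching}, which requires converting the uniform hypothesis into a biased one and back; each conversion is legitimate (the slice-monotonicity you use for up-sets, and the median bound at $\lfloor n/t\rfloor$, are both fine), but the first conversion costs an additive Chernoff term $\exp(-\Omega_t(n))$ that the paper's argument never sees.

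That additive term is the one genuine gap. When $\epsilon\ge \exp(-\Omega_t(n))$ it is harmless, but in the complementary regime you assert that $\mu_{1/t}(\b_t^{\uparrow})\le O_t(\exp(-\Omega_t(n)))$ ``forces $\b_t=\emptyset$,'' and this is not automatic: a single set in $\b_t$ already contributes $\mu_{1/t}(\b_t^{\uparrow})\ge t^{-k_t}\ge \exp(-(n\ln t)/C)$, so you must verify that the Chernoff exponent (roughly $n/(8t)$ once $C\ge 2t$) exceeds $(n\ln t)/C$ by more than $\ln(2(t-1))$. This does hold after taking $C\gg t\log t$, and the degenerate cases $k_t=0$ and $n=O_t(1)$ need separate (easy) treatment, but none of this appears in your write-up --- it is exactly the ``calibration'' you defer, and it is not pure bookkeeping, since choosing $C$ too small relative to $t$ would break the comparison. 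Either carry out that exponent comparison explicitly, or replace your first half by the paper's permutation coupling, which produces the level-$\lfloor n/t\rfloor$ bound with no exponential error at all.
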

\begin{proof}
Let $\b_1,\ldots,\b_t$ be families that satisfy the assumptions of the proposition. Consider the family $\tilde{\b}_{t} = \left(\b_{t}^{\uparrow}\right)^{\left(\lfloor n/t \rfloor \right)}$, and note that the families $\b_{1},\ldots,\b_{t-1},\tilde{\b_{t}}$
are cross free of a matching as well. By Lemma~\ref{lem:Kruskal-Katona going up}, we have
\begin{equation}
\mu\left(\b_{t}\right)\le O_{t,r}\left(\mu\left(\tilde{\b_{t}}\right)^{r}\right),\label{eq:second}
\end{equation}
provided that $C$ is sufficiently large. (Specifically, we apply Lemma~\ref{lem:Kruskal-Katona going up} with $k=\lfloor n/t \rfloor$ and $l=k_t$ to obtain the assertion of the lemma for a constant $C'$, and then~\eqref{eq:second} holds, provided that $C>tC'$.) We now use a simple coupling argument to show that $\mu(\tilde{\b}_{t})=O_t(\epsilon)$,
which will complete the proof. We need the following claim.
\begin{claim}
Let $n,l_{1},\ldots,l_{t} \in \mathbb{N}$ be such that $n>l_{1}+\cdots+l_{t}$.
Then there exists a distribution on tuples $(A_1,\ldots,A_t)$ of subsets of $[n]$, such that the sets $\mathbf{A_1},\ldots,\mathbf{A_t}$ are always pairwise disjoint
and the marginal distributions are $\mathbf{A_{1}}\sim {{\left[n\right]}\choose{l_{1}}},\mathbf{A_{2}}\sim {{\left[n\right]}\choose{l_{2}}},\ldots,\mathbf{A_{t}}\sim {{\left[n\right]}\choose{l_{t}}}$.
\end{claim}
\begin{proof}
Let $\mathbf{\sigma}\sim S_{n}$ be a uniformly chosen random permutation on $[n]$. Then the disjoint sets
\begin{align*}
\mathbf{A_{1}}: & =\left\{ \sigma\left(1\right),\ldots,\sigma\left(l_{1}\right)\right\} ,\mathbf{A_{2}}:=\left\{ \sigma\left(l_{1}+1\right),\ldots,\sigma\left(l_{1}+l_{2}\right)\right\} ,\\
 & \ldots,\mathbf{A_{t}}:=\left\{ \sigma\left(l_{1}+\cdots+l_{t-1}+1\right),\ldots,\sigma\left(l_{1}+\cdots+l_{t}\right)\right\}
\end{align*}
have the desired marginal distributions.
\end{proof}
Let $(\mathbf{A_1},\ldots,\mathbf{A_t})$ be distributed as in the claim, with $(k_1,\ldots,k_{t-1},\lfloor n/t \rfloor)$ in place of $(l_1,\ldots,l_t)$.
Using the claim and a simple union bound, we have
\begin{align*}
0&=\Pr\left[\mathbf{A_{1}}\in\b_{1},\ldots,\mathbf{A_{t}}\in\tilde{\b_{t}}\right] \ge \Pr[\mathbf{A_t} \in \tilde{\b_{t}}] - \sum_{i=1}^{t-1} \Pr[\mathbf{A_i} \not \in \b_i] \\
 &= \mu(\tilde{\b_t}) - \sum_{i=1}^{t-1} (1-\mu(\b_i)) \geq \mu(\tilde{\b_t})-(t-1)\epsilon,
\end{align*}
where the first equality holds since $\b_1,\ldots,\b_{t-1},\tilde{\b}_t$ are cross free of a matching, and the last inequality uses the assumption $\mu(\b_i) \geq 1-\epsilon$ for all $i \in [t-1]$. Therefore, $\mu(\tilde{\b_t}) \leq (t-1)\epsilon$.
%By the claim we have a joint distribution on disjoint sets $A_{1}\sim{{[n]}\choose{k_1}},\ldots,A_{s-1}\sim\binom{\left[n\right]}{k_{s}},A_{s}\sim\binom{\left[n\right]}{\left\lfloor n/s\right\rfloor }$.
%Write $\mu\left(\b_{i}\right)=1-\epsilon_{i},\mu\left(\tilde{\b_{s}}\right)=\epsilon_{s}$,
%and note that all the numbers $\epsilon_{1},\ldots,\epsilon_{s-1}$
%are smaller than $\epsilon$. A union bound implies that
%\begin{align*}
%0=\Pr\left[A_{1}\in\b_{1},\ldots,A_{s}\in\tilde{\b_{s}}\right] & \ge\epsilon_{s}-\sum_{i=1}^{s-1}\epsilon_{i},
%\end{align*}
% and hence $\epsilon_{s}\le\left(s-1\right)\epsilon$.
By~(\ref{eq:second}), this implies
\[
\mu\left(\mathcal{\b}_{t}\right)\le O_{t,r}\left(\left(\left(t-1\right)\epsilon\right)^{r}\right)=O_{t,r}\left(\epsilon^{r}\right).
\]
This completes the proof of the proposition.
\end{proof}

\subsection{The $\left(t-1,1\right)$-star is locally maximal among the `cross matching-free' families}
\label{sec:sub:match:bootstrap-2}

We now turn to Step~3(b) of the proof, which uses the above bootstrapping to show that if $\f_{1},\ldots,\f_{t}$
are `small alterations' of an $\left(t-1,1\right)$ star, then
\[
\min_{i=1}^{t} \left|\f_{i}\right|\le{{n}\choose{k}}-{{n-t+1}\choose{k}},
\]
 with equality if and only if the families $\f_{1},\ldots,\f_{t-1}$
are all equal to the same $\left(t-1,1\right)$-star.
\begin{prop}
\label{lem:Locally porcupines are the best} For each constant $t$,
there exists $C=C\left(t\right)$ such that the following holds. Let $k\le n/C$, and let $\f_{1},\ldots,\f_{t}\subseteq{{[n]}\choose{k}}$
be families that are cross free of a matching. Suppose additionally, that there exists a set $U$ of size $t-1$, such that $\mu\left(\left(\f_{i}\right)_{U}^{\emptyset}\right)\le O_t \left( \left(\frac{k}{n}\right)^{3} \right)$ for any $i\in\left[t\right]$. Then
\[
\min_{i=1}^{t} \left|\f_{i}\right|\le{{n}\choose{k}}-{{n-t+1}\choose{k}},
\]
 with equality if and only if the families $\f_{1},\ldots,\f_{t}$
are all equal to the same $\left(t-1,1\right)$-star $\mathcal{U}=\{A \in {{[n]}\choose{k}}: A \cap U \neq \emptyset\}$.
\end{prop}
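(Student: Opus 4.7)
The plan is to reduce everything to a fixed-point inequality on the outside-measures $\epsilon_i = \mu((\f_i)_U^\emptyset)$, using the bootstrapping lemma from the previous subsection. Throughout I assume $|U|=t-1$ (so $\mathcal{U}$ is indeed a $(t-1,1)$-star) and write $\delta_i^B = \binom{n-t+1}{k-|B|} - |(\f_i)_U^B|\ge 0$. The starting observation is the exact identity
\[
|\f_i| - |\mathcal{U}| = |(\f_i)_U^\emptyset| - \sum_{\emptyset\ne B\subseteq U}\delta_i^B.
\]
The bulk of the argument will show the stronger statement: if $|\f_i|\ge |\mathcal{U}|$ for every $i$, then in fact $\f_i=\mathcal{U}$ for every $i$. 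The proposition then follows immediately, since if $\min_i|\f_i|>|\mathcal{U}|$ this would give all $\f_i=\mathcal{U}$, contradicting strict inequality, and the equality case is handled simultaneously.

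First I would translate the assumption $|\f_i|\ge|\mathcal{U}|$ into a lower bound on singleton slices: it forces $\delta_i^{\{u\}}\le|(\f_i)_U^\emptyset|=\epsilon_i\binom{n-t+1}{k}$ for every $u\in U$. Converting to $\binom{[n]\setminus U}{k-1}$-measure via $\binom{n-t+1}{k}/\binom{n-t+1}{k-1}=\Theta(n/k)$ (valid since $k<n/C$), this gives $\mu((\f_i)_U^{\{u\}})\ge 1-\Theta_t(n/k)\,\epsilon_i$. Next, for any bijection $\sigma:[t-1]\to U$ and any $j\in[t]$, the families
\[
\bigl((\f_\ell)_U^{\{\sigma(\pi(\ell))\}}\bigr)_{\ell\ne j},\qquad (\f_j)_U^\emptyset,
\]
are cross free of a matching in $\binom{[n]\setminus U}{\cdot}$: any cross matching among them, lifted by the $\sigma$-elements, would yield a matching in $\f_1,\ldots,\f_t$.

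Now I would invoke the bootstrapping lemma, Proposition~\ref{prop: unbalanced matching lemma}, with $r=2$. It gives
\[
\epsilon_j \;\le\; O_t\!\left(\Bigl(\Theta(n/k)\max_{i\ne j}\epsilon_i\Bigr)^{2}\right) \;\le\; O_t\!\left((n/k)^{2}\,\epsilon_{\max}^{2}\right)
\]
for each $j\in[t]$. Taking the maximum over $j$ yields the self-referential bound
\[
\epsilon_{\max}\;\le\;O_t\!\left((n/k)^{2}\,\epsilon_{\max}^{2}\right).
\]
If $\epsilon_{\max}>0$, dividing through produces $\epsilon_{\max}\ge \Omega_t((k/n)^{2})$. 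But the hypothesis says $\epsilon_{\max}\le O_t((k/n)^{3})$, so together these force $k/n\ge\Omega_t(1)$, which contradicts $k<n/C$ once $C=C(t)$ is large enough.

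Hence $\epsilon_{\max}=0$, i.e., every $\f_i$ is contained in $\mathcal{U}$, and combined with $|\f_i|\ge|\mathcal{U}|$ this yields $\f_i=\mathcal{U}$ for all $i$, as claimed. The converse direction of the equality statement is immediate: if all $\f_i=\mathcal{U}$, then by pigeonhole any $t$ sets from $\mathcal{U}$ must share an element of the $(t-1)$-set $U$, so $\mathcal{U}^{\,t}$ is cross free of a matching, and the hypothesis $\mu((\mathcal{U})_U^\emptyset)=0\le O_t((k/n)^3)$ holds trivially. The main obstacle in the proof is the exact power $(k/n)^3$ in the hypothesis: it is just what is needed to beat the power $(k/n)^2$ produced by applying the bootstrap at $r=2$, and I would have to check that Proposition~\ref{prop: unbalanced matching lemma} can indeed be applied in our regime (i.e., $k-1,k\le(n-t+1)/C'$ for the relevant $C'$), which it can by absorbing $t$ into the constant.
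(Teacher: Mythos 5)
Your proof is correct and is essentially the paper's argument read in the contrapositive direction: both slice along $U$, observe that the $t-1$ singleton slices together with one empty slice are cross free of a matching, and invoke Proposition~\ref{prop: unbalanced matching lemma} with $r=2$, with the hypothesis $\mu((\f_i)_U^{\emptyset})\le O_t((k/n)^3)$ beating the resulting $(k/n)^2$ threshold in exactly the same way. The only difference is organizational --- the paper bounds $|\f_\ell|$ directly by a gain/loss count on the slices, while you close the self-referential inequality $\epsilon_{\max}\le O_t((n/k)^2\epsilon_{\max}^2)$ to force $\epsilon_{\max}=0$ --- but the content is identical.
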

\begin{proof}
Let $\f_{1},\ldots,\f_{t}\subseteq{{[n]}\choose{k}}$ be
families that are cross free of a matching, and suppose that
$\min_{i=1}^{t}\left|\f_{i}\right|\ge{{n}\choose{k}}-{{n-t+1}\choose{k}}$.
We show that all the families are all equal to the same $\left(t-1,1\right)$-star. Write
\[
\max_{i=1}^{t} \mu\left(\left(\f_{i}\right)_{U}^{\emptyset}\right)=\epsilon',
\]
and suppose w.l.o.g. that $\mu\left(\left(\f_{t}\right)_{U}^{\emptyset}\right)=\epsilon'$.
Let $U=\left\{ i_{1},\ldots,i_{t-1}\right\} $. The families
\[
\left(\f_{t}\right)_{U}^{\emptyset},\left(\f_{1}\right)_{U}^{\left\{ i_{1}\right\} },\ldots,\left(\f_{t-1}\right)_{U}^{\left\{ i_{t-1}\right\} }
\]
are cross free of a matching. By Proposition \ref{prop: unbalanced matching lemma} (applied with $r=2$), it follows that there exists $\ell\in\left[t-1\right]$ such that $\mu\left(\left(\f_{\ell}\right)_{U}^{\{i_{\ell}\}}\right)\le1-\Omega_{t}\left(\sqrt{\epsilon'}\right)$, and hence, $\left|\left(\f_{\ell}\right)_{U}^{\{i_{\ell}\}}\right|\le {{n-t+1}\choose{k-1}}(1-ct \sqrt{\epsilon'})$ for some constant $c(t)$.
Thus,
\begin{align}
\left|\f_{\ell}\right| & =\sum_{B\subseteq U} \left| \left(\f_{\ell}\right)_{U}^{B} \right| \le \left|\left(\f_{\ell}\right)_{U}^{\emptyset}\right| + \left|\left(\f_{\ell}\right)_{U}^{\{i_{\ell}\}} \right| +  \sum_{B\in \p(U) \setminus \{\emptyset,\{i_{\ell}\}\}} {{n-t+1}\choose{k-\left|B\right|}} \nonumber \\
 & \leq {{n}\choose{k}}-{{n-t+1}\choose{k}}+\epsilon' {{n-t+1}\choose{k}}-c\sqrt{\epsilon'} {{n-t+1}\choose{k-1}} \nonumber \\
 & \le{{n}\choose{k}}-{{n-t+1}\choose{k}}+{{n-t+1}\choose{k}}\left(\epsilon'-\frac{k}{n}c\sqrt{\epsilon'}\right).\label{eq:ak2}
\end{align}
Since by assumption, $\epsilon' \leq O_t \left(\left(\frac{k}{n} \right)^3 \right)$, this implies
\begin{equation}
\left|\f_{\ell}\right|\le{{n}\choose{k}}-{{n-t+1}\choose{k}}.\label{eq:bound ak}
\end{equation}
Finally, equality holds in (\ref{eq:bound ak}) if and only if equality holds in
(\ref{eq:ak2}), which is possible only when $\epsilon'=0$. Therefore, equality holds only when the families $\f_{1},\ldots,\f_{t}$
are all equal to the $\left(t-1,1\right)$-star $\mathcal{U}=\{A \in {{[n]}\choose{k}}: A \cap U \neq \emptyset\}$. This
completes the proof.
\end{proof}

\subsection{Proof of Theorem~\ref{thm:Proof of the aaroni howrd conjecture}}
\label{sec:sub:match:completing-the-proof}

We are now ready to prove Theorem~\ref{thm:Proof of the aaroni howrd conjecture}. %Let us recall its statement.
%\begin{thm*}
%\label{thm:Proof of the aaroni howrd conjecture}For each constant
%$s$, there exists a constant $c=c\left(s\right)$, such that the
%following holds. Let $k\le cn$, and let $\f_{1},\ldots,\f_{s}\subseteq{{[n]}\choose{k}}$
%be families that are cross free of a matching. Then
%\[
%\min_{i=1}^{s}\left|\f_{i}\right| \leq {{n}\choose{k}}-\binom{n-s+1}{k},
%\]
%with equality if and only if there exists an $\left(s-1,1\right)$-star $\mathcal{U}$, such that $\f_{1}=\cdots=\f_{s}=\mathcal{U}$.
%\end{thm*}

\begin{proof}[Proof of Theorem~\ref{thm:Proof of the aaroni howrd conjecture}]
By Theorem~\ref{thm:matching junta approximation} (applied with $r=3$), there exists an
$O_{t}\left(1\right)$-set $J$ and juntas $\j_{1},\ldots,\j_{t}\subseteq\p\left(J\right)$
that are cross free of a matching, such that $\mu\left(\f_{i}\backslash \langle \j_{i}^{\uparrow} \rangle \right)=O_{t}\left(\frac{k}{n}\right)^{3}$ for all $i$.
Assuming that $\min_{i=1}^{t}\left|\f_{i}\right| \geq {{n}\choose{k}}-{{n-t+1}\choose{k}}$, this implies that
\[
\left|\langle \j_{i}^{\uparrow} \rangle \right|\ge{{n}\choose{k}}-{{n-t+1}\choose{k}}-O_{t}\left(\left(\frac{k}{n}\right)^{3}{{n}\choose{k}}\right)
\]
for any $i\in\left[t\right]$. By Proposition \ref{lem:finding the largest junta}, it follows that
the juntas $\langle \j_{1}^{\uparrow} \rangle ,\cdots, \langle \j_{t}^{\uparrow} \rangle$ are all equal
to the same $\left(t-1,1\right)$-star. Provided that $C$ is sufficiently large, the assertion of the theorem
follows now from Proposition \ref{lem:Locally porcupines are the best}.
\end{proof}

\section{Detailed Overview of the Proof Strategy in the General Case}
\label{sec:overview}

After demonstrating our junta technique in the `baby case' where $\h$ is a matching, we are now ready to treat the general case where $\h$ is allowed to be any $d$-expanded hypergraph. In order to facilitate reading, we present in this section the `big picture' of the argument, which spans Sections~\ref{sec:fairness}--\ref{sec:proof}.

%Before we start, it will be convenient for us to generalize
%the Tur\'{a}n problem for expansion from to two additional settings:
%\begin{itemize}
%\item \textbf{The cross setting- }Even if the main interested of the reader
%lies in understanding families $\f$, that's free of some forbidden
%hypergraph $\h$, it turns out that the slices of $\f$ (The families
%$\f_{S}^{B}$), satisfy restrictions of being `cross' free of some
%forbidden hypergraph for classical Tur\'{a}n problem.
%\item \textbf{Expansion of multigraphs-} It will be also convenient for
%us to talk about families that are free of the expansion of a multi-hypergraph
%(i.e. an hypergraph that contains multiple edges), rather then the
%expansion of mere hypergraphs.
%\end{itemize}

%\subsection{Multihypergraphs, and the cross setting }

%\subsection{Overview of the proof }

Fix $d,h$, and let $\h$ denote a fixed $d$-expanded hypergraph of size $h$. The general structure of the proof of our main results is the same as
in the case where $\h$ is a matching:
\begin{enumerate}
\item We prove the junta approximation theorem (Theorem~\ref{thm:Junta-approx-theorem}) which asserts that for any forbidden hypergraph $\h$, any $\h$-free family can be approximated by an $\h$-free junta.

\item For a specific $\h$ (or for some class of $\h$'s), we find the extremal $\h$-free junta $\j$ and show that any $\h$-free junta that is nearly extremal, is contained in $\j$.

\item The above steps imply that any nearly extremal $\h$-free family is a small alteration of $\j$. The last step is to bootstrap this stability
result and show that the size of any $\h$-free small alteration of $\j$ is smaller than the size of $\j$.
\end{enumerate}

The proof of Step~2 is an easy generalization of the proof of the corresponding step in Section~\ref{sec:Baby Case}. The proof of Step~3 is a rather direct generalization as well, but it requires several technical propositions which generalize Proposition~\ref{prop: unbalanced matching lemma}. These propositions span Section~\ref{sec:bootstrapping}, and a detailed overview of their place in the `large picture' is given in Section~\ref{sec:sub:boot:motivation}. Here we present an informal description of the most complex part of the argument: Step~1 -- the proof of Theorem~\ref{thm:Junta-approx-theorem}.

\subsection{Overview of the proof of Theorem~\ref{thm:Junta-approx-theorem}}
\label{sec:sub:overview:junta}

Recall that the \emph{kernel} $K\left(\h\right)$ of a hypergraph $\h$ is the intersection of all its edges. Set $t=|K\left(\h\right)|+1$. It is easy to see that the $\left(t,t\right)$-star is free of $\h$. As the $(t,t)$-star $\s_T \subseteq {{[n]}\choose{k}}$ has measure $\Theta\left(\left(\frac{k}{n}\right)^{t}\right)$, this allows us to treat any family of measure $o\left(\left(\frac{k}{n}\right)^{t}\right)$
as negligible.

The proof of Theorem~\ref{thm:Junta-approx-theorem} is composed of three steps:
\begin{enumerate}
\item We choose (with foresight) parameters $s$ (a `sufficiently large' integer) and  $\epsilon=\left(\frac{k}{n}\right)^{t} \cdot \max\left(C\frac{k}{n},e^{-k/C}\right)$ for a sufficiently large constant $C$ that depends only on $\h$, and apply Proposition~\ref{lem:associated junta lemma} with these parameters. The proposition asserts that any $\h$-free family $\f\subseteq{{[n]}\choose{k}}$ can be approximated by a junta $\j = \left\langle \j'\right\rangle $ (where $\j' \subseteq \p(J)$), such that for each $B\in\j'$, the slice $\f_{B}^{B}$ is $\left(s,\epsilon\left(\frac{n}{k}\right) ^{\left|B\right|}\right)$-uncapturable.

    Note that $\j'$ contains no set $B$ of size at least $t+1$, since otherwise, the slice $\f_{B}^{B}$ would be $(s,\beta)$-uncapturable for $\beta \geq 1$, which is of course impossible.

\item We show that $\j'$ contains no sets of size at most $t-1$, and thus, is $t$-uniform.

\item We then show that the junta $\j=\left\langle \j'\right\rangle $ is free of $\h$.
\end{enumerate}
It turns out that Steps~2 and~3 can be reduced to pure statements concerning uncapturable families, due to the following observations:

\begin{obs}
For each $B$ of size at most $t-1$, the family $\g:=\f_{B}^{B}$ is free of the hypergraph obtained from $\h$ by removing $\left|B\right|$ vertices out of its kernel.
\end{obs}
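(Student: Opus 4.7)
The plan is a one-step proof by contradiction, lifting a hypothetical copy of the reduced hypergraph in $\g$ back to a copy of $\h$ in $\f$. First I will make the statement precise: pick any subset $B' \subseteq K(\h)$ with $|B'| = |B|$, which is possible because $|B| \leq t-1 = |K(\h)|$, and set $\h' := \{E \setminus B' : E \in \h\}$. This is a $(k-|B|)$-uniform hypergraph, and the choice of $B'$ is immaterial up to isomorphism since all kernel vertices play symmetric roles in $\h$.

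Next, I will assume for contradiction that $\g = \f_{B}^{B}$ contains a copy of $\h'$, namely $h$ sets $E_1', \ldots, E_h' \in \g \subseteq \binom{[n]\setminus B}{k-|B|}$ whose incidence structure is isomorphic to $\h'$. By the definition of the slice $\f_{B}^{B}$, each $E_i := E_i' \cup B$ lies in $\f$, and $|E_i| = k$ because $B$ is disjoint from $[n]\setminus B$. I will then verify that the sets $E_1, \ldots, E_h$ form a copy of $\h$ in $\f$: the isomorphism between $\h'$ and $\{E_1',\ldots,E_h'\}$ extends to an isomorphism between $\h$ and $\{E_1,\ldots,E_h\}$ by sending the removed kernel vertices $B'$ bijectively onto $B$. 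This extension is injective since the image of the original isomorphism lies in $[n] \setminus B$, and it preserves edges because $B' \subseteq E$ for every $E \in \h$. The resulting copy of $\h$ inside $\f$ contradicts the $\h$-freeness of $\f$.

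There is no real obstacle here: the observation is essentially tautological once one notices that gluing a common disjoint set $B$ to every edge of $\h'$ exactly reintroduces $|B|$ kernel vertices, restoring $\h$. The only point deserving care is that $B$ plays no non-kernel role in the new copy, which is automatic because $B' \subseteq K(\h)$, so vertices of $B'$ appear in every edge of $\h$; hence the lifted sets $E_1,\ldots,E_h$ share exactly the image of $K(\h')$ together with $B$, matching $K(\h) = K(\h') \sqcup B'$.
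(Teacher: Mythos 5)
Your proof is correct and is exactly the lifting argument the paper intends: the observation is stated without proof in the overview, and the evident justification is precisely your contradiction argument of re-attaching $B$ to each edge of a hypothetical copy of the reduced hypergraph in $\f_{B}^{B}$ to recover a copy of $\h$ in $\f$. The one point needing care — that the extension of the isomorphism by a bijection $B'\to B$ remains injective and edge-preserving because $B'\subseteq K(\h)$ lies in every edge while the images $E_i'$ avoid $B$ — is handled correctly.
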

\noindent Thus, in order to accomplish Step 2, it is sufficient to show that:
\begin{prop}[Informal]\label{P:1}
Any $\left(s,\max \left(C\frac{k}{n},e^{-k/C}\right) \cdot \left(\frac{k}{n}\right)^{t'}\right)$-uncapturable family contains a copy of any $d$-expanded hypergraph with kernel of size $t'-1$.
\end{prop}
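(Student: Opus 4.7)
The plan is to prove the proposition by induction on the number $h$ of edges of the base multi-hypergraph $\h_1$ (where $\h = \h_1^+$), with the case $h=1$ being immediate from the uncapturability bound $\mu(\g) > (k/n)^{t'}$, which easily supplies an edge of $\g$ containing any prescribed $t'$-set. For the inductive step, set $C = C(\h_1)$ and let $K = K(\h_1)$ denote the kernel (with $K \subseteq C$ when $h \geq 2$); both have size bounded by a constant depending only on $\h$. The strategy is to produce a copy of $\h$ in $\g$ by first selecting an injection $\phi : C \to [n]$ and then, edge by edge, greedily choosing $A_1, \ldots, A_h \in \g$ such that for each $i$ we have $A_i \cap \phi(C) = \phi(E_i \cap C)$ and the sets $A_i \setminus \phi(C)$ are pairwise disjoint. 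The pairwise-disjoint-extensions requirement automatically supplies fresh vertices for the non-center vertices of $\h_1$ that are present in $E_i$, so $A_1, \ldots, A_h$ genuinely form a copy of $\h$.

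The greedy selection succeeds provided each of the $h$ slices $\g_{\phi(C)}^{\phi(E_i \cap C)}$ has measure significantly larger than $O((hk/n)^2)$, since the sets chosen at earlier steps forbid only an $O(hk^2/n)$ fraction of any slice. The central difficulty is therefore to choose $\phi$ making all $h$ slices sufficiently large simultaneously. Uncapturability of $\g$ directly controls only empty slices $\mu(\g_S^{\emptyset})$, and is not a priori strong enough on its own; bridging this gap is precisely the role of the notions of \emph{fairness} and \emph{quasiregularity} to be developed in Sections~\ref{sec:fairness}--\ref{sec:uncap-contains}. The plan is to show that any sufficiently uncapturable family is, up to a negligibly small sub-family, quasiregular, in the sense that a typical $\phi$ yields slices of essentially the expected measure. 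An averaging argument over $\phi$ then produces a single $\phi$ for which all $h$ slices are simultaneously large, and greedy selection completes the copy.

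The main obstacle I anticipate is establishing the quasiregularity lemma in the precise parameter regime $\epsilon = \max\bigl(O(k/n), e^{-\Omega(k)}\bigr)\cdot (k/n)^{t'}$. Two sub-regimes arise, according to which term in the maximum dominates: for $k = O(\log n)$ the exponential term $e^{-\Omega(k)}$ is binding and a concentration-of-measure approach is natural, while for larger $k$ the polynomial term dominates and a shadow-sampling argument of the Kostochka--Mubayi--Verstra\"{e}te type is more appropriate. Stitching the two regimes together along the boundary where both terms are comparable, and strengthening the conclusions of both arguments from empty slices $\g_S^{\emptyset}$ to arbitrary slices $\g_S^B$, is where I expect the argument to be most technical and to consume the bulk of Sections~\ref{sec:fairness}--\ref{sec:uncap-contains}.
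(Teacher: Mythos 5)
There is a genuine gap at the heart of your argument: the greedy, union-bound disjointness step. You claim that the sets chosen at earlier stages forbid only an $O(hk^{2}/n)$ fraction of each slice, so that greedy selection succeeds once every slice $\g_{\phi(C)}^{\phi(E_i\cap C)}$ has measure exceeding this. But the proposition is asserted for the full range $C<k<n/C$, and for $k\gg\sqrt{n}$ the quantity $hk^{2}/n$ exceeds $1$: two independent uniform $k$-subsets of $[n]$ intersect with probability $1-e^{-\Theta(k^{2}/n)}$, so almost every candidate for $A_i$ meets the previously chosen edges. No fairness or quasiregularity statement can rescue this, since the slices have measure at most $1$. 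Even for smaller $k$ the comparison fails: the guaranteed measure is only $\max\bigl(O(k/n),e^{-\Omega(k)}\bigr)\cdot(n/k)^{|B|-t'}\cdot$(something of order $1$), which for $k=\Theta(\log n)$ can be $n^{-10}$, far below $\log^{2}n/n$. This is exactly why the paper does \emph{not} finish with a union bound: in the large-$k$ regime the disjointness step is Proposition~\ref{prop:uncapturable} (uncapturable families cross contain a matching), whose proof goes through monotonization, the biased-measure coupling $\sum_i\mu_{p_i}(\f_i)\le t-1$ of Proposition~\ref{prop:cross-matching}, and the Dinur--Friedgut capturability lemma -- a fundamentally measure-theoretic argument with no analogue in your sketch.

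A second, related gap is the small-$k$ regime $C<k<C\log n$, where the uncapturability threshold $e^{-\Omega(k)}(k/n)^{t'}$ is far too weak for any slice-counting argument. There the paper proceeds by induction on the kernel size $t'$, passing to the shadow: it shows uncapturability of $\f$ forces uncapturability of $\partial(\f)$, that $\h$-freeness forces $\partial(\f)$ to avoid a rainbow copy of a fixed expanded hypergraph (Proposition~\ref{Prop:color the shadow}) and a genuine copy of a hypergraph with kernel one smaller (Proposition~\ref{prop:t-shadow}), and then lifts the resulting bound on $|\partial^{t'}(\f)|$ back to $|\f|$ via the Kostochka--Mubayi--Verstra\"{e}te inequality. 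You gesture at a shadow-sampling argument, but you assign it to the wrong regime (you propose it for large $k$, where the paper instead uses quasiregularity plus the cross-matching theorem), and your outline contains no substitute for the kernel induction or the rainbow-freeness of the shadow. Your instinct that fairness and quasiregularity are needed to pass from empty slices to arbitrary slices is correct and matches the paper; it is the two steps above -- producing pairwise disjoint extensions without a union bound, and handling $k=O(\log n)$ via the shadow -- that are missing and that constitute most of the actual proof.
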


\begin{obs}
For any $h$-tuple of (possibly non-distinct) $t$-sized sets $B_{1},\ldots,B_{h} \in \j'$, if the junta $\langle \j' \rangle$ contains a copy $(B_1 \cup E_1,\ldots,B_h \cup E_h)$ of $\h$ such that $E_i \cap J = \emptyset$ for all $i \in [h]$, then the families
    \[
    \g_{1}:=\f_{B_{1}\cup \ldots \cup B_h}^{B_{1}},\g_{2}:=\f_{B_{1}\cup \ldots \cup B_h}^{B_{2}},\ldots,\g_{h}:=\f_{B_{1}\cup \ldots \cup B_h}^{B_{h}}
    \]
are cross free of the hypergraph $(E_1,E_2,\ldots,E_h)$.
\end{obs}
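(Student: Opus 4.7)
The plan is to establish the observation by contrapositive: assuming that $\g_{1},\ldots,\g_{h}$ cross contain a copy $(D_{1},\ldots,D_{h})$ of the ordered hypergraph $(E_{1},\ldots,E_{h})$, I would produce a copy of $\h$ inside $\f$, contradicting the standing hypothesis that $\f$ is $\h$-free. The copy I intend to exhibit is the natural one, namely $(B_{1}\cup D_{1},\ldots,B_{h}\cup D_{h})$.

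First I would unpack the slice definition. Since $D_{i}\in\g_{i}=\f_{B_{1}\cup\cdots\cup B_{h}}^{B_{i}}$, there exists $A_{i}\in\f$ with $A_{i}\cap(B_{1}\cup\cdots\cup B_{h})=B_{i}$ and $D_{i}=A_{i}\setminus B_{i}$. Hence $B_{i}\cup D_{i}=A_{i}\in\f$, and, crucially, each $D_{i}$ is disjoint from $B_{1}\cup\cdots\cup B_{h}$. In particular, $D_{i}\cap J=\emptyset$, since $B_{j}\subseteq J$ for every $j$.

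Second, I would verify that $(B_{1}\cup D_{1},\ldots,B_{h}\cup D_{h})$ is genuinely a copy of $\h=(B_{1}\cup E_{1},\ldots,B_{h}\cup E_{h})$. Since $(D_{1},\ldots,D_{h})$ is a copy of $(E_{1},\ldots,E_{h})$, there is an injection $\phi$ from the vertex set $E_{1}\cup\cdots\cup E_{h}$ into $[n]$ with $\phi(E_{i})=D_{i}$ for each $i$. I would extend $\phi$ by the identity on $J\supseteq B_{1}\cup\cdots\cup B_{h}$, so that on each edge $B_{i}\cup E_{i}$ the extension returns $B_{i}\cup D_{i}$. The two disjointness conditions make this extension well-defined and injective: $E_{i}\cap J=\emptyset$ ensures the domains of $\phi$ and the identity do not overlap, while $D_{i}\cap J=\emptyset$ (obtained above from the slice) ensures the images do not overlap. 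Thus $(B_{1}\cup D_{1},\ldots,B_{h}\cup D_{h})$ is an ordered copy of $\h$ whose edges all lie in $\f$, contradicting $\h$-freeness.

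No real obstacle is anticipated; the observation is essentially a definitional unpacking. The only delicate point is to deploy each disjointness condition at the correct step: $E_{i}\cap J=\emptyset$ to keep the identity-extension of $\phi$ compatible on $J$, and the automatic disjointness of $D_{i}$ from $\bigcup_{j}B_{j}$ coming from the slice to guarantee that the resulting vertex map is injective and that the pairwise intersection pattern of $(B_{i}\cup D_{i})_{i}$ matches that of $(B_{i}\cup E_{i})_{i}$ edge by edge.
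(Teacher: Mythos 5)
Your approach is the right one, and it is essentially the unstated proof the paper intends (the statement is an \texttt{Observation}, given without proof); the argument is indeed a definitional unwinding of the slice operation. However, there is one genuine slip in the middle step. You claim that each $D_i$ satisfies $D_i \cap J = \emptyset$, ``since $B_j \subseteq J$ for every $j$.'' This is a non sequitur. From $D_i = A_i \setminus B_i$ with $A_i \cap (B_1 \cup \cdots \cup B_h) = B_i$ you only get that $D_i$ is disjoint from $B_1 \cup \cdots \cup B_h$. Since $J$ may be strictly larger than $B_1 \cup \cdots \cup B_h$, the set $A_i$ (and hence $D_i$) is entirely free to meet $J \setminus (B_1 \cup \cdots \cup B_h)$, and nothing in the slice definition prevents this. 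Consequently, if you literally extend $\phi$ by the identity on all of $J$ you cannot guarantee injectivity, because the image $D_1 \cup \cdots \cup D_h$ of the $\phi$-part may collide with the identity-part on $J$.

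The good news is that you never needed the stronger statement. The vertex set of the ordered hypergraph $(B_1 \cup E_1, \ldots, B_h \cup E_h)$ is $\bigl(B_1 \cup \cdots \cup B_h\bigr) \cup \bigl(E_1 \cup \cdots \cup E_h\bigr)$, so it suffices to define $\psi$ as the identity on $B_1 \cup \cdots \cup B_h$ and as $\phi$ on $E_1 \cup \cdots \cup E_h$. Well-definedness uses $E_i \cap J = \emptyset$ together with $B_j \subseteq J$ (hence $E_i \cap B_j = \emptyset$); injectivity uses that $D_1 \cup \cdots \cup D_h$ is disjoint from $B_1 \cup \cdots \cup B_h$, which is exactly what the slice gives you. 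Then $\psi(B_i \cup E_i) = B_i \cup D_i = A_i \in \f$ for each $i$, exhibiting a copy of $\h$ in $\f$ and contradicting $\h$-freeness. So the fix is just to shrink the domain of the identity extension from $J$ to $B_1 \cup \cdots \cup B_h$; everything else in your argument stands.
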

\noindent Thus, in order to accomplish Step~3, it is sufficient to show that:
\begin{prop}[Informal]\label{P:2}
Any $h$ $\left(s,\max \left(C\frac{k}{n},e^{k/C}\right) \right)$-uncapturable
families $\g_{1},\ldots,\g_{h}$ cross contain a copy of any $d$-expanded hypergraph.
\end{prop}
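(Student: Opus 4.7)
My plan is to prove the proposition by a random embedding of the base hypergraph $\h_1$, combined with the cross-matching result already established in Section~\ref{sec:Baby Case}. Write $\h = \h_1^+$ with $\h_1 = (A_1, \ldots, A_h)$ a $d$-uniform ordered hypergraph on vertex set $V$. Any cross copy of $\h$ in $(\g_1, \ldots, \g_h)$ takes the form $E_i = \phi(A_i) \sqcup D_i$ for an injection $\phi : V \hookrightarrow [n]$ and pairwise disjoint $(k-d)$-sets $D_1, \ldots, D_h \subseteq [n] \setminus \phi(V)$ with $\phi(A_i) \cup D_i \in \g_i$. So I will first randomly place the ``core'' $\phi(V)$, then extract the disjoint extensions $D_i$ from the resulting slices.

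\textbf{Step 1: Placement of the core.} I sample $\phi$ uniformly at random among injections $V \hookrightarrow [n]$ and consider the conditional families
\[
\g_i^\phi := (\g_i)_{\phi(V)}^{\phi(A_i)} \subseteq \binom{[n]\setminus\phi(V)}{k-d},
\]
viewed as families in the reduced universe $[n] \setminus \phi(V)$. The goal is to show that, with positive probability, each $\g_i^\phi$ is $(s', \epsilon')$-uncapturable for parameters $s' = s - O_{d,h}(1)$ and $\epsilon' = O_{d,h}(\epsilon)$ of the same form $\max(O(k/n), e^{-\Omega(k)})$. The intuition is that uncapturability survives slicing by constant-size sets, up to constant factors in the parameters, provided the slicing pattern is ``typical''. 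Quantitatively, uncapturability of $\g_i$ provides, for each small bad set $S \subseteq [n]\setminus\phi(V)$, a lower bound on $\mu((\g_i)_{S \cup \phi(V)}^\emptyset)$, which I would convert into the desired lower bound on $\mu_{[n]\setminus\phi(V)}((\g_i^\phi)_S^\emptyset)$ by averaging over $\phi$ and using that most $\phi$'s place $\phi(V)$ generically in $[n]$. A union bound over the $h$ families and the $O_{d,h}(1)$ possible intersection patterns of $\h_1$ then yields a positive-measure set of ``good'' embeddings $\phi$.

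\textbf{Step 2: Cross-matching the extensions.} Fix a good $\phi$ from Step 1. The families $\g_1^\phi, \ldots, \g_h^\phi$ are $(s', \epsilon')$-uncapturable in $\binom{[n]\setminus\phi(V)}{k-d}$. By Proposition~\ref{prop:uncapturable}, any such $h$-tuple of sufficiently uncapturable families cross-contains a matching: there exist pairwise disjoint $D_i \in \g_i^\phi$. The resulting tuple $(\phi(A_1) \sqcup D_1, \ldots, \phi(A_h) \sqcup D_h)$ is then the desired cross copy of $\h = \h_1^+$, completing the proof.

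\textbf{Main obstacle.} The principal technical difficulty is the averaging argument in Step 1, specifically establishing that uncapturability ``descends'' to the sliced families with the right parameters. The issue is that $(s,\epsilon)$-uncapturability of $\g_i$ is a statement about slices $(\g_i)_T^\emptyset$ with the \emph{empty} pattern, whereas I need control over slices $(\g_i)_{\phi(V)}^{\phi(A_i)}$ with a nonempty pattern of size $d$. Bridging this requires a slice-stability lemma of the form: if $\g$ is $(s,\epsilon)$-uncapturable with $\epsilon \le \max(O(k/n), e^{-\Omega(k)})$, then for most $d$-subsets $B$ of $[n]$ the slice $(\g)_B^B$ remains $(s-d, O(\epsilon))$-uncapturable in the reduced universe. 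This is delicate because the measure-in-the-restricted-universe of $(\g)_B^B$ differs from $\mu(\g)$ by a factor that depends on the local structure of $\g$ near $B$, so the argument must track how mass in $\g$ distributes over patterns on $B$. This is also exactly where the specific form of the hypothesis on $\epsilon$ is used: a weaker uncapturability assumption would not give enough room to absorb the $(n/k)^d$ rescaling factor incurred when passing from the full universe to the reduced one. Once this slice-stability lemma is in hand, the averaging, the union bound, and the final appeal to Proposition~\ref{prop:uncapturable} are routine.
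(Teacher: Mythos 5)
Your skeleton (randomly place the core of $\h_1$, then cross-match the extensions inside the resulting slices) is the same as the paper's, but the whole weight of your argument rests on the ``slice-stability lemma'' of Step 1, which you correctly identify as the crux and then leave unproven. As sketched, it does not go through: to certify that $\g_i^\phi=(\g_i)_{\phi(V)}^{\phi(A_i)}$ is $(s',\epsilon')$-uncapturable you must lower-bound $\mu\bigl(((\g_i)_S^{\emptyset})_{\phi(V)}^{\phi(A_i)}\bigr)$ for \emph{every} candidate capturing set $S$ of size $\le s'$ simultaneously, and an averaging argument over $\phi$ only controls each fixed $S$ with constant failure probability -- far too weak to union-bound over the $\binom{n}{s'}$ choices of $S$. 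So the proposed route to Step 1 has a genuine hole, and Step 2 (Proposition~\ref{prop:uncapturable}) cannot be invoked.

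The paper sidesteps this entirely by proving a stronger statement that needs no uncapturability of the slices: it shows (Proposition~\ref{Prop:turan for cross}) that measure at least $\max\bigl(e^{-k/C},C\tfrac{k}{n}\bigr)$ for each $\g_i$ already forces cross containment. The two ingredients are (i) the fairness proposition (Proposition~\ref{prop:Fairness}): for a family of measure at least $e^{-k/C}$, a random constant-size set $G$ is $\tfrac12$-fair, so \emph{every} slice $(\g_i)_G^{G_i}$ -- including those with nonempty pattern -- retains half the measure; a union bound over the $h$ families gives one common fair $G$. This is exactly the ``descent to nonempty patterns'' you need, but for measure rather than for uncapturability, and it is a nontrivial standalone result (a Chernoff-plus-double-counting argument occupying Section~\ref{sec:fairness}). (ii) The final matching step then uses the exact cross-matching bound of Theorem~\ref{thm:Proof of the aaroni howrd conjecture} -- families cross free of a matching have minimum measure roughly at most $(h-1)\tfrac{k}{n}$ -- which only requires the slice measures to exceed $C\tfrac{k}{n}/2$, not Proposition~\ref{prop:uncapturable}. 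To repair your proof you should replace the uncapturability-descent lemma by a fairness statement for measure and swap the final appeal accordingly; the uncapturability hypothesis of the proposition is used only to extract the initial lower bound $\mu(\g_i)>\epsilon$ (the case $S=\emptyset$), not to survive the slicing.
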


Propositions~\ref{P:1} and~\ref{P:2} may be of independent interest. The proof of their formal versions (namely, Propositions~\ref{prop:uncap single k small} and~\ref{prop:uncap cross k large}, respectively) span Section~\ref{sec:uncap-contains}. We describe their structure -- along with the tools we develop for proving them -- in the following subsection.

\subsection{Overview of the proof of Propositions~\ref{P:1} and~\ref{P:2}}
\label{sec:sub:overview:uncap}

%We first describe the proof strategy of Proposition~\ref{P:2}, that appears to be easier to prove due to the stronger uncapturability assumption, and then we describe the proof of Proposition~\ref{P:1}.
The proof has two stages.

\subsection*{Showing that sufficiently large families cross contain any $d$-expanded hypergraph}

We prove Proposition~\ref{Prop:turan for cross} which states that any families $\f_{1},\ldots,\f_{h}$, each of measure at least $\epsilon=\max \left(e^{-k/C},C\frac{k}{n} \right)$, cross contain any $d$-expanded hypergraph $\h'=H^+$ of size $h$, provided that $C$ is sufficiently large.

The proof proceeds in three steps:
\begin{enumerate}
\item \textbf{The Fairness Proposition:} Given a family $\f$, recall that any set $S\subseteq[n]$ induces a partition of $\f$ into the $2^{|S|}$ families $\left\{ \f_{S}^{B}\right\} _{B\subseteq S}$. A set $S$ is said to be $\delta$\emph{-fair} for $\f$ if for any $B\subseteq S$, we have $\mu\left(\f_{S}^{B}\right) \geq (1-\delta)\mu\left(\f\right)$. We show that for any constant $s,\delta$ and any `not-very-small' family $\f\subseteq {{[n]}\choose{k}}$ (formally, $\mu(\f) \geq e^{-k/C}$ for a sufficiently large constant $C$), \emph{almost every} constant-sized subset $S \subseteq \left[n\right]$ is $0.1$-fair for $\f$. The proof of this proposition, which uses a rather simple (but somewhat technical) combination of a Chernoff-type argument with double-counting, spans Section~\ref{sec:fairness}.

\item We use the fairness proposition to find a copy of $H$ of the form $\left(H_{1},\ldots,H_{h}\right)$, such that the families
$\left(\f_{1}\right)_{H_{1}\cup\cdots\cup H_{h}}^{H_{1}},\ldots,\left(\f_{h}\right)_{H_{1}\cup\cdots\cup H_{h}}^{H_{h}}$
are also `large' (almost like the initial families $\f_1,\ldots,\f_h$).

\item We use Theorem~\ref{thm:Proof of the aaroni howrd conjecture} to find a matching
$M_{1}\in\left(\f_{1}\right)_{H_{1}\cup\cdots\cup H_{h}}^{H_{1}},\ldots,M_{h}\in\left(\f_{h}\right)_{H_{1}\cup\cdots\cup H_{h}}^{H_{h}}$. This completes the proof, as $M_{1}\cup H_{1}\in\f_{1},\ldots,M_{h}\cup H_{h}\in\f_{h}$ constitute a copy of $\h'$ cross contained in $\f_1,\ldots,\f_h$.
\end{enumerate}

\subsection*{Showing that an $\left(s,\max \left(C\frac{k}{n},e^{-k/C}\right) \cdot \left(\frac{k}{n}\right)^{t'}\right)$-uncapturable family contains a copy of any $d$-expanded hypergraph with kernel of size $t'$}

To prove this statement (whose $t'=0$ case corresponds to Proposition~\ref{P:1} and whose $t'>0$ case corresponds to Proposition~\ref{P:2}), we present two separate arguments -- one that applies for $k> C\log n$, and another that applies for $C<k<n^{1/C}$.

\mn \textbf{The case $k>C\log n$.} In order to deal with this case, we define the notion of \emph{$\left(l,\alpha\right)$-quasiregularity}. For $\ell \in \mathbb{N}$ and $\alpha>1$, we say that a family $\f\subseteq{{[n]}\choose{k}}$ is $\left(l,\alpha\right)$-\emph{quasiregular} if $\mu\left(\f_{B}^{B}\right)\le\alpha\mu\left(\f\right)$
for any $B\subseteq\left[n\right]$ of size at most $l$. Note that while this is clearly a strong notion of regularity when $\alpha$ is very close to 1, we shall apply this notion mostly with $\alpha=n^{\Theta\left(1\right)}.$

Let $\h$ be a fixed $d$-expanded hypergraph. We show that if $\f\subseteq{{[n]}\choose{k}}$ is an $\left(s,C\left(\frac{k}{n}\right)^{r}\right)$-uncapturable family, for any constant $r$ and a sufficiently large constant $C$, then $\f$ contains a copy of $\h$, through the following steps.
\begin{enumerate}
\item We show that any $\left(s',\Theta(n/k)\right)$-quasiregular families $\g_{1},\ldots,\g_{h}$ cross contain a copy of any fixed ordered hypergraph $\h'$.
\item We use the uncapturability of $\f$ to find pairwise disjoint sets $D_{1},\ldots,D_{h}$ such that the families
\[
\f_{1}:=\f_{D_{1}\cup\cdots\cup D_{h}}^{D_{1}},\ldots,\f_{h}:=\f_{D_{1}\cup\cdots\cup D_{h}}^{D_{h}}
\]
are $\left(s',\Theta(n/k)\right)$-quasiregular.
\end{enumerate}
These two steps complete the proof, since denoting $\h=(H_1,\ldots,H_h)^+$, the families $\f_1,\ldots,\f_h$ cross contain a copy $(B_1,\ldots,B_h)$ of $(H_1,\ldots,H_h)^+$, and thus, $(B_1 \cup D_1,\ldots,B_h \cup D_h)$ is a copy of $\h$ contained in $\f$.

%Note that this case is the place in the argument where the assumption $\epsilon \geq \left(\Omega(k/n)\right)^{t+1}$ is used.

\mn \textbf{The case $C<k<n^{1/C}$.} This is the most complex case, and its treatment spans Section~\ref{sec:Random-sampling}. Let $\f\subseteq{{[n]}\choose{k}}$ be an $\left(s,\max \left(C\frac{k}{n},e^{-k/C}\right) \cdot \left(\frac{k}{n}\right)^{t'}\right)$-uncapturable family, and let $\h$ be a hypergraph with kernel of size $t'-1$. Our goal is to show that $\f$ contains a copy of $\h$. To accomplish this, we employ the Kostochka-Mubayi-Verstra\"{e}te~\cite{kostochka2015turan} method of `random sampling from the shadow', which suggests to study a family $\f$ via its shadow.

First, we reduce the claim to the case $K(\h)=\emptyset$ by performing the following steps:
\begin{enumerate}
\item We show that if $\f\subseteq{{[n]}\choose{k}}$ is uncapturable, then the shadow $\partial(\f)$ is uncapturable as well.

\item We show that for any `fixed' $\h$ with $|K(\h)|=t_0$, there exists a `fixed' hypergraph $\h'$ with kernel of size $t_0-1$, such that any family whose shadow contains $\h'$ must contain $\h$.
\item The above steps allow us to use induction on $t'$, thus reducing to the case $t'=1$.
\end{enumerate}
The most interesting of these steps is~(2), whose proof involves a probabilistic argument.

For dealing with the case $t'=0$, we introduce the following notion. For $\f\subseteq{{[n]}\choose{k}}$, we define a coloring $c\colon\partial\left(\f\right)\to\left[n\right]$ of the shadow of $\f$ by letting $c\left(e\right)$ be some $i\in\left[n\right]$,
such that $e\cup\left\{ i\right\} \in \f$. We say that $\partial(\f)$ contains a \emph{rainbow copy} of a hypergraph $\h'$ if it contains a copy $\left(A_{1},\ldots,A_{h}\right)$ of $\h'$, such that the colors of the sets $A_{1},\ldots,A_{h}$ are distinct.
\begin{enumerate}
\item We show that if $\f$ is $\h$-free, then $\partial\left(\f\right)$ is free of a rainbow copy of a `sufficiently large' expanded
hypergraph. Consequently, if $s$ is sufficiently large, and if $S_{1},\ldots,S_{s} \subseteq [n]$ are pairwise disjoint, then the families $\bigcup_{i\in\s_{j}}\f_{\left\{ i\right\} }^{\left\{ i\right\} }\subseteq {{\left[n\right]}\choose{k-1}}$
are cross free of some expanded hypergraph. This allows us to deduce that $\min_j\left\{ \mu\left(\bigcup_{i\in\s_{j}}\f_{\left\{ i\right\} }^{\left\{ i\right\} }\right)\right\}<e^{-k/C}$, using Proposition~\ref{Prop:turan for cross}.
\item Writing (w.l.o.g.) $\mu\left(\f_{\left\{ 1\right\} }^{\left\{ 1\right\} }\right)\ge\cdots\ge\mu\left(\f_{\left\{ n\right\} }^{\left\{ n\right\} }\right)$, and applying Step~1 several times with appropriate choices of the families $S_{1},\ldots,S_{s} \subseteq [n]$, we deduce
    that $\mu\left(\left(\partial\left(\f\right)\right)_{\left[s\right]}^{\emptyset}\right)\le e^{-k/C}$. This shows that $\partial(\f)$ is capturable, and as mentioned above, this implies that $\f$ is capturable, a contradiction.
%Step~1 implies that for a sufficiently large $l$, the families $\f_{\left\{ 1\right\} }^{\left\{ 1\right\} },\ldots,\left\{ \f_{\left\{ l+1\right\} }^{\left\{ l+1\right\} }\right\} $ are cross free of some expanded hypergraph. This allows us to deduce that
%$\mu\left(\f_{\left\{ l+1\right\} }^{\left\{ l+1\right\} }\right)\le e^{-ck}$.
%\item Step~2 implies that for a sufficiently large constant $m$, we may find a partition $S_{1},\ldots,S_{m}\subseteq\left[n\right]-\left[l\right]$,
%such that the families $\bigcup_{i\in\s_{j}}\f_{\left\{ i\right\} }^{\left\{ i\right\} }$
%are of essentially the same measure. This allows us to deduce that $\mu\left(\left(\partial\left(\f\right)\right)_{\left[l\right]}^{\emptyset}\right)\le e^{-ck}$.
%\item The final step is to deduce from Step~3 that $\mu\left(\f_{\left[l\right]}^{\emptyset}\right)$ is small, which means that $\f$ is capturable, a contradiction.
\end{enumerate}

\section{Most Constant-Sized Subsets of Not-Very-Small Families are Fair}
\label{sec:fairness}

Recall that for any family $\f\subseteq{{[n]}\choose{k}}$, any set $S \subseteq [n]$ induces a partition of $\f$ into
the $2^{\left|S\right|}$ slices $\left\{ \f_{S}^{B}\right\} _{B\subseteq S}$.
We say that $S$ is \emph{$\delta$-fair} for $\f$, if for any $B\subseteq S$ we have $\mu\left(\f_{S}^{B}\right)\ge\mu\left(\f\right)\left(1-\delta\right)$.
In this section we show that for any constant $\delta>0$ and $s \in \mathbb{N}$, `almost all' the $s$-sized subsets of $[n]$ are $\delta$-fair
for $\f$, provided that $|\f|$ is not too small.
\begin{prop}
\label{prop:Fairness}For any constants $\delta>0,s\in\mathbb{N}$, there exist constants $C,k_0$ that depend only on $s$ and $\delta$,
such that the following holds. Let $k_0 \leq k \leq n-k_0$, and let $\f\subseteq{{[n]}\choose{k}}$ satisfy $\mu\left(\f\right)\ge \max(e^{-k/C},e^{-(n-k)/C})$.

Then the probability that a random $s$-subset $\mathbf{S}\sim {{\left[n\right]}\choose{s}}$ is $\delta$-fair for $\f$ is at least $1-\delta$.
\end{prop}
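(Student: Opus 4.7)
The plan is to prove the proposition by induction on $s$; the base case $s=1$ contains the main analytic content (an exponential-tilting argument using the lower bound on $\mu(\f)$), and the inductive step reduces the general case to it. Throughout, I may assume $k\le n/2$ by passing to complements.

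\textbf{Base case $s=1$.} I will show that, for $C$ sufficiently large in terms of $\delta$, the ``bad'' sets of singletons
\[
I_{1}:=\{i\in[n]:\mu(\f_{\{i\}}^{\{i\}})<(1-\delta)\mu(\f)\},\qquad I_{2}:=\{i:\mu(\f_{\{i\}}^{\emptyset})<(1-\delta)\mu(\f)\}
\]
each have size at most $\delta n/2$. For $I_{1}$, a direct double-counting gives
\[
\mathbb{E}_{\mathbf{A}\sim\f}\bigl[|\mathbf{A}\cap I_{1}|\bigr]=\frac{k}{n\mu(\f)}\sum_{i\in I_{1}}\mu(\f_{\{i\}}^{\{i\}})<(1-\delta)\frac{|I_{1}|k}{n},
\]
while for a uniform $\mathbf{A}'\sim\binom{[n]}{k}$, the variable $|\mathbf{A}'\cap I_{1}|$ is hypergeometric with the larger mean $|I_{1}|k/n$. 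The inequality
\[
\mu(\f)\,\mathbb{E}_{\f}\bigl[e^{-t|\mathbf{A}\cap I_{1}|}\bigr]\le\mathbb{E}_{\mathbf{A}'}\bigl[e^{-t|\mathbf{A}'\cap I_{1}|}\bigr],
\]
combined with Jensen's inequality on the left and optimization over $t>0$, yields the Chernoff-type bound $\mu(\f)\le\exp(-c\delta^{2}|I_{1}|k/n)$ for an absolute constant $c>0$. Coupling this with $\mu(\f)\ge e^{-k/C}$ forces $|I_{1}|\le n/(cC\delta^{2})\le\delta n/2$, as soon as $C$ is taken to be at least $2/(c\delta^{3})$. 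The bound on $I_{2}$ is symmetric (the expectation $\mathbb{E}_{\f}[|\mathbf{A}\cap I_{2}|]$ is now too \emph{large}, and one uses the upper-tail Chernoff for hypergeometrics).

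\textbf{Inductive step.} Decompose $\mathbf{S}=\mathbf{S}'\sqcup\{\mathbf{b}\}$ with $\mathbf{S}'\sim\binom{[n]}{s-1}$ uniform and $\mathbf{b}$ uniform in $[n]\setminus\mathbf{S}'$; these two choices together reproduce the uniform distribution on $\binom{[n]}{s}$. By induction applied with parameter $\delta/2$, $\mathbf{S}'$ is $(\delta/2)$-fair with probability $\ge 1-\delta/2$, so every slice $\g_{B'}:=\f_{\mathbf{S}'}^{B'}$, for $B'\subseteq\mathbf{S}'$, has density at least $(1-\delta/2)\mu(\f)\ge e^{-(k-|B'|)/C}$ after enlarging $C$ slightly (which is affordable since $s$ is a constant). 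Applying the base case to each $\g_{B'}$ with parameter $\delta/2^{s}$ shows that at most a $\delta/2^{s}$-fraction of $b\in[n]\setminus\mathbf{S}'$ is bad for $\g_{B'}$; a union bound over the $2^{s-1}$ choices of $B'$ shows that with probability $\ge 1-\delta/2$ over $\mathbf{b}$, $\mathbf{b}$ is $(\delta/2^{s})$-fair for every $\g_{B'}$ simultaneously. Using the identifications $\mu(\f_{\mathbf{S}}^{B'})=\mu((\g_{B'})_{\{\mathbf{b}\}}^{\emptyset})$ and $\mu(\f_{\mathbf{S}}^{B'\cup\{\mathbf{b}\}})=\mu((\g_{B'})_{\{\mathbf{b}\}}^{\{\mathbf{b}\}})$, this gives $\mu(\f_{\mathbf{S}}^{B})\ge(1-\delta)\mu(\f)$ for every $B\subseteq\mathbf{S}$, so $\mathbf{S}$ is $\delta$-fair, with total failure probability $\le\delta$.

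\textbf{Main obstacle.} The heart of the proof is the base case, and in particular the exponential-tilting step that converts the expectation gap on $|\mathbf{A}\cap I_{1}|$ into the exponential bound on $\mu(\f)$. Getting the correct $\delta^{2}$-dependence in the exponent -- which keeps $C$ polynomial rather than exponential in $1/\delta$ -- requires the standard leading-order expansion of the KL divergence between two hypergeometric distributions whose means differ by a factor of $1-\delta$. The inductive step is then just bookkeeping; since $s$ is fixed, the compounded loss at each level is a constant factor depending only on $s$ and $\delta$, keeping the overall constant $C=C(s,\delta)$ manageable.
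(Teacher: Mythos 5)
Your proof is correct and follows essentially the same route as the paper's: the substance is in the $s=1$ case, which both you and the paper obtain by combining a hypergeometric Chernoff-type bound with a double-counting identity and the hypothesis $\mu(\f)\ge e^{-k/C}$, and the general case is the same induction on $s$ (the paper peels one element off $S$ and recurses on the two slices $\f_{\{i\}}^{\{i\}}$ and $\f_{\{i\}}^{\emptyset}$, whereas you fix $\mathbf{S}'$ first and apply the base case to all $2^{s-1}$ slices of $\f_{\mathbf{S}'}$ -- a mirror image of the same bookkeeping). The only real execution difference is in the base case, where the paper truncates $\f$ to the sets with typical intersection with the bad set and then averages, while you average first and convert the mean deficit into $\mu(\f)\le e^{-c\delta^{2}k|I_{1}|/n}$ by exponential tilting; both are fine, though note that the bound on $I_{2}$ (like the paper's part (2) of Lemma 5.2) genuinely needs $k$ bounded away from $n$, a hypothesis the proposition's statement omits and which your ``pass to complements'' remark does not quite recover, since complementation turns the assumption $\mu(\f)\ge e^{-k/C}$ into the weaker-than-needed $\mu(\f)\ge e^{-k/C}$ rather than $e^{-(n-k)/C}$ -- but this is an imprecision shared with the paper and harmless in all applications, where $k<n/C$.
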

The proof of the proposition is a rather simple combination of a Chernoff-type argument with double counting. We first present the proof in the
case $s=1$, and then we leverage it to a proof for a general $s$ using an inductive argument.

\subsection{The case $s=1$}

We use the following Chernoff-type bound proved in~\cite{janson2000random}, pp.~27--29.

Let $k,m\in\left[n\right]$. The \emph{hypergeometric random variable} $X$ with parameters $\left(n,k,m\right)$
is defined to be the size of the intersection $\left|\mathbf{T}\cap V\right|$, where $\mathbf{T}\sim{{[n]}\choose{k}}$ is a uniformly chosen random set
of size $k$ and $V\in {{\left[n\right]}\choose{m}}$ is a fixed set of size $m$.
\begin{lem}
\label{lem:Chernoff bound} Let $n,k,m\in\mathbb{N}$, let $a\in\left(0,3/2\right)$,
and let $X$ be a hypergeometric random variable with parameters $\left(n,k,m\right)$.
Then $\Pr\left[\left|X-\frac{km}{n}\right|>a\frac{km}{n}\right]\le e^{-\frac{a^{2}km}{3n}}$.
\end{lem}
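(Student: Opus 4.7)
The plan is to reduce the hypergeometric concentration to a binomial concentration via Hoeffding's classical majorization theorem, and then apply the standard exponential-moment Chernoff method. Intuitively, the hypergeometric $X$ sums indicators of a sample drawn \emph{without} replacement, while the binomial with matching mean sums indicators drawn \emph{with} replacement; one expects---and Hoeffding proved---that the former is at least as concentrated as the latter.

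First, I would establish the following comparison, due to Hoeffding (1963): if $X$ is hypergeometric with parameters $(n,k,m)$ and $Y \sim \mathrm{Bin}(k, m/n)$, then $\mathbb{E}[\phi(X)] \le \mathbb{E}[\phi(Y)]$ for every convex $\phi : \mathbb{R} \to \mathbb{R}$. The cleanest route is to couple the two samples and write $X$ as a conditional expectation of a random variable distributed as $Y$ (condition on the multiset of drawn labels in the with-replacement experiment); Jensen's inequality then gives the comparison. Applied to $\phi(x) = e^{\pm \lambda x}$, this yields
\[
\mathbb{E}[e^{\pm \lambda X}] \le \mathbb{E}[e^{\pm \lambda Y}] \qquad \text{for every } \lambda \ge 0.
\]

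Next, set $p = m/n$ and $\mu = kp = km/n$, and apply the textbook multiplicative Chernoff bound to $Y$. Since $Y$ is a sum of $k$ independent $\mathrm{Bernoulli}(p)$ variables, we have the MGF bound $\mathbb{E}[e^{\lambda Y}] = (1-p+p e^\lambda)^k \le \exp(\mu(e^\lambda - 1))$. Markov's inequality followed by optimization over $\lambda > 0$ (choosing $\lambda = \log(1+a)$) gives
\[
\Pr[X \ge (1+a)\mu] \le \left(\frac{e^a}{(1+a)^{1+a}}\right)^\mu \le e^{-a^2 \mu / 3}
\]
for $a \in (0, 3/2)$, where the second inequality is a routine Taylor estimate for $a - (1+a)\log(1+a)$. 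The lower-tail bound $\Pr[X \le (1-a)\mu] \le e^{-a^2 \mu / 2}$ follows analogously with $\lambda < 0$, using $\log(1-a) \ge -a - a^2$ for $a \in (0,1)$. Combining the two tails gives the claimed bound on $\Pr[|X - \mu| > a\mu]$.

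The only mildly non-trivial step is Hoeffding's comparison, but it is a very standard Jensen-based argument; everything else is routine Chernoff optimization. Since the lemma is stated verbatim in the Janson--Luczak--Rucinski monograph, in practice I would simply cite it rather than reproduce the proof.
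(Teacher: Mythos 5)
The paper does not prove this lemma at all: it is quoted directly from Janson--\L{}uczak--Ruci\'{n}ski (pp.~27--29), exactly as you anticipate in your final sentence, so there is no in-paper argument to compare against. Your sketch is the standard proof of that cited result and is essentially correct: Hoeffding's convex-order comparison between sampling without and with replacement reduces the MGF of the hypergeometric to that of the binomial, and the usual Chernoff optimization then gives the two tails, with the verification that $a-(1+a)\log(1+a)\le -a^{2}/3$ indeed holds on all of $(0,3/2]$ (the function vanishes to second order at $0$, dips, and is still negative at $a=3/2$). Two small quibbles. First, your quoted estimate $\log(1-a)\ge -a-a^{2}$ fails as $a\to 1$; the inequality one actually needs for the lower tail is $(1-a)\log(1-a)\ge -a+a^{2}/2$, which does hold on $(0,1)$ and gives the stated $e^{-a^{2}\mu/2}$. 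Second, summing the two tails yields $e^{-a^{2}\mu/3}+e^{-a^{2}\mu/2}\le 2e^{-a^{2}\mu/3}$, i.e.\ the two-sided bound carries a factor $2$ (as it does in the cited source); the lemma as stated in the paper drops this factor, which is immaterial for every application in the paper but means your argument, like the citation, proves the bound only up to that constant.
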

%The proof of the $s=1$ case of Proposition \ref{prop:Fairness} is
%proved by combining the above Chernoff bound with a simple double
%counting argument.

\begin{lem}
\label{prop:the case s=00003D1}For any $\delta>0$,
there exist constants $C,k_0$ which depend only on $\delta$
such that the following holds. Let $k_{0}<k<n-k_{0}$, let $\f\subseteq{{[n]}\choose{k}}$,
and let $\mathbf{i}\sim\left[n\right]$ be a randomly chosen element. Then:
\begin{enumerate}
\item If $\mu\left(\f\right)\ge e^{-k/C}$, then $\Pr \left[\mu\left(\f_{\left\{ i\right\} }^{\left\{ i\right\} }\right)\ge\left(1-\delta\right) \mu\left(\f\right) \right] \geq 1-\delta/2$.

\item If $\mu\left(\f\right)\ge e^{-\left(n-k\right)/C}$, then $\Pr \left[\mu\left(\f_{\left\{ i\right\} }^{\emptyset}\right)\ge\left(1-\delta\right)\mu\left(\f\right) \right] \geq 1-\delta/2$.
\end{enumerate}
In particular, if $\mu\left(\f\right)\ge\max\left\{ e^{-k/C},e^{-\left(n-k\right)/C}\right\} $,
then the probability that the singleton $\left\{ i\right\} $ is $\delta$-fair for $\f$ is at least $1-\delta$.
\end{lem}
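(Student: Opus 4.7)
The plan is to prove part~(1) directly by combining double-counting with the Chernoff-type bound of Lemma~\ref{lem:Chernoff bound}, deduce part~(2) from~(1) by passing to the complement family, and obtain the ``in particular'' clause by a union bound. The argument uses no property of~$\f$ beyond its measure.

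For part~(1), introduce the ``bad'' set
\[
V := \bigl\{\, i \in [n] \,:\, \mu(\f_{\{i\}}^{\{i\}}) < (1-\delta)\mu(\f) \,\bigr\},
\]
and aim to show $|V|/n \le \delta/2$. Suppose for contradiction that $|V| > \delta n/2$ and sample $\mathbf{A} \sim \binom{[n]}{k}$ uniformly. For each $i \in V$ the identity $\Pr[\mathbf{A}\in\f,\,i\in\mathbf{A}] = \mu(\f_{\{i\}}^{\{i\}})\cdot k/n$ together with the defining inequality of~$V$ give, upon summation,
\[
\e\bigl[\,|\mathbf{A}\cap V|\cdot \mathbf{1}_{\mathbf{A}\in\f}\,\bigr] \;<\; (1-\delta)\,\mu(\f)\cdot k|V|/n.
\]
On the other hand $|\mathbf{A}\cap V|$ is hypergeometric with mean $k|V|/n \ge \delta k/2$, so Lemma~\ref{lem:Chernoff bound} applied with $a=\delta/2$ yields
\[
\Pr\bigl[\,|\mathbf{A}\cap V| < (1-\delta/2)\,k|V|/n\,\bigr] \;\le\; e^{-\delta^{3}k/24}.
\]
Lower-bounding the previous expectation by restricting to the event that $\mathbf{A}\in\f$ \emph{and} $|\mathbf{A}\cap V| \ge (1-\delta/2)k|V|/n$ therefore gives
\[
(1-\delta/2)\,(k|V|/n)\bigl(\mu(\f) - e^{-\delta^{3}k/24}\bigr) \;\le\; (1-\delta)\,\mu(\f)\cdot k|V|/n,
\]
which rearranges to $\mu(\f) \le (2/\delta)\,e^{-\delta^{3}k/24}$. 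Choosing $C=C(\delta)$ and $k_{0}=k_{0}(\delta)$ large enough so that this bound is strictly smaller than $e^{-k/C}$ for all $k\ge k_{0}$ contradicts the hypothesis, completing~(1).

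For part~(2), the cleanest route is to pass to the complement family $\widetilde{\f} := \{\,[n]\setminus A \,:\, A\in\f\,\}\subseteq\binom{[n]}{n-k}$. A direct check using $\binom{n}{n-k}=\binom{n}{k}$ and $\binom{n-1}{n-k-1}=\binom{n-1}{k}$ shows that $\mu(\widetilde{\f})=\mu(\f)$ and $\mu(\widetilde{\f}_{\{i\}}^{\{i\}})=\mu(\f_{\{i\}}^{\emptyset})$ for every $i\in[n]$, so~(2) for $\f$ is exactly~(1) applied to $\widetilde{\f}$ with $n-k$ in the role of~$k$. This is precisely why the hypothesis reads $\mu(\f)\ge e^{-(n-k)/C}$ and why the statement imposes $k<n-k_{0}$. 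The ``in particular'' clause is then a union bound over the two bad events, each of probability at most $\delta/2$. The only obstacle I anticipate is purely quantitative: keeping track of the dependence $C,k_{0}=\mathrm{poly}(1/\delta)$ carefully enough that these constants will later survive the inductive blow-up in~$s$ needed for the general Proposition~\ref{prop:Fairness}; there are no conceptual difficulties beyond this bookkeeping.
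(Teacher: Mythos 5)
Your proposal is correct and takes essentially the same approach as the paper: both define the identical bad set $V$, assume $|V|>\delta n/2$ for contradiction, and combine Lemma~\ref{lem:Chernoff bound} with double-counting over $i\in V$, with part~(2) obtained by complementation and the final claim by a union bound. The only difference is presentational — you bound the single expectation $\e\bigl[|\mathbf{A}\cap V|\cdot\mathbf{1}_{\mathbf{A}\in\f}\bigr]$ from both sides, whereas the paper introduces the truncated subfamily $\f'=\{A\in\f:|A\cap V|\ge(1-\delta/2)\tfrac{k}{n}|V|\}$ and compares two averages, which is the same calculation repackaged.
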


\begin{proof}
We only show~(1), as~(2) follows by replacing the family $\f$ with the family $\left\{ A^{c}\::\, A\in\f\right\} $, and the `in particular'
statement follows from a simple union bound.

Let $V$ be the set of all `bad coordinates', i.e.,
\[
V= \{i\in\left[n\right]: \mu\left(\f_{\left\{ i\right\} }^{\left\{ i\right\} }\right) < \left(1-\delta\right)\mu\left(\f\right) \}.
\]
Suppose on the contrary that $\left|V\right|\ge \frac{\delta}{2}n$.

Let $\f'$ be the sub-family of $\f$ which consists of all sets that have a `large' intersection with $V$. Formally,
\[
\f' = \{A \in \f: |A \cap V| \geq (1-\frac{\delta}{2})\frac{k}{n}|V|\}.
\]
The proof proceeds in three steps.
\begin{enumerate}
\item First, we use a Chernoff-type argument to show that $\f'$ contains most of the sets in $\f$.

\item Then, we consider the average $\frac{1}{\left|V\right|}\sum_{i\in V}\mu \left(\left(\f'\right){}_{\left\{ i\right\} }^{\left\{ i\right\} } \right)$ and use double counting, along with the above Chernoff-type argument, to show that this average is `large'.

\item We show directly that the apparently larger average $\frac{1}{\left|V\right|}\sum_{i\in V}\mu \left(\f{}_{\left\{ i\right\} }^{\left\{ i\right\} } \right)$ is `small', reaching a contradiction.

\end{enumerate}

\mn \textbf{$\f'$ is large.} By Lemma~\ref{lem:Chernoff bound}, we have
\begin{equation}
\Pr_{\mathbf{A}\sim{{[n]}\choose{k}}}\left[\left|\left|\mathbf{A}\cap V\right|-\frac{k}{n}\left|V\right|\right|\ge\frac{\delta}{2} \frac{k}{n}\left|V\right|\right] \le\exp\left(-\Omega_{\delta}\left(\frac{k\left|V\right|}{n}\right)\right)= \exp\left(-\Omega_{\delta}\left(k\right)\right),\label{eq:fairness1}
\end{equation}
where the last equality uses the assumption $|V| \geq \frac{\delta}{2} n$. Hence,
\begin{align}
\mu\left(\f'\right) & \ge\mu\left(\f\right)-\Pr_{\mathbf{A}\sim{{[n]}\choose{k}}} \left[\left|\mathbf{A}\cap V\right|<\left(1-\frac{\delta}{2}\right) \frac{k}{n}\left|V\right|\right] \ge\mu\left(\f\right)-\exp\left(-\Omega_{\delta}\left(k\right)\right).
\label{eq:fairness 2}
\end{align}
Provided that $C,k_0$ are sufficiently large (as functions of $\delta$), we have
$\exp\left(-\Omega_{\delta}\left(k\right)\right)\le \frac{\delta}{2} e^{-k/C}\le \frac{\delta}{2}\mu\left(\f\right)$. Substituting this into~\eqref{eq:fairness 2}, we obtain
\begin{equation}
\mu\left(\f'\right)\ge\mu\left(\f\right)\left(1-\frac{\delta}{2}\right).\label{eq:remove the prime}
\end{equation}

\mn \textbf{The average $\frac{1}{\left|V\right|}\sum_{i\in V}\mu \left(\left(\f'\right){}_{\left\{ i\right\} }^{\left\{ i\right\} } \right)$ is large.}  Since $\left|A\cap V\right|\ge\left(1-\frac{\delta}{2}\right)\frac{k}{n}\left|V\right|$
for any $A\in\f'$, we have:
\begin{align*}
\frac{1}{\left|V\right|}\sum_{i\in V}\mu\left(\left(\f'\right)_{\left\{ i\right\} }^{\left\{ i\right\} }\right) & =\frac{1}{\left|V\right|}\sum_{i\in V}\frac{\left|\left(\f'\right)_{\left\{ i\right\} }^{\left\{ i\right\} }\right|}{{{n-1}\choose{k-1}}} =\frac{1}{\left|V\right|{{n-1}\choose{k-1}}}\sum_{i\in V}\sum_{A\in\f'}1_{i\in A} =\frac{1}{\left|V\right|{{n-1}\choose{k-1}}}\sum_{A\in\f'}\sum_{i\in V}1_{i\in A} \\
& =\frac{1}{\left|V\right|{{n-1}\choose{k-1}}}\sum_{A\in\f'}\left|A\cap V\right|
 \ge\frac{1}{\left|V\right|{{n-1}\choose{k-1}}}\sum_{A\in\f'}\left(1-\frac{\delta}{2}\right)\frac{k}{n}\left|V\right| \\
& = \frac{1}{\left|V\right|{{n-1}\choose{k-1}}}\left(1-\frac{\delta}{2}\right)\left|\f'\right|\frac{k}{n}\left|V\right|
  = \frac{1}{{{n}\choose{k}}}\left(1-\frac{\delta}{2}\right)\left|\f'\right|
  = \mu\left(\f'\right)\left(1-\frac{\delta}{2}\right).
\end{align*}
By~\eqref{eq:remove the prime}, this implies
\begin{align}
\frac{1}{\left|V\right|}\sum_{i\in V}\mu\left(\left(\f'\right)_{\left\{ i\right\} }^{\left\{ i\right\} }\right) \geq \mu\left(\f'\right)\left(1-\frac{\delta}{2}\right)  \ge\left(1-\frac{\delta}{2}\right)^{2}\mu\left(\f\right)>\left(1-\delta\right)\mu\left(\f\right).
\label{eq:fairness 4}
\end{align}

\mn \textbf{The average $\frac{1}{\left|V\right|}\sum_{i\in V}\mu \left(\f{}_{\left\{ i\right\} }^{\left\{ i\right\} } \right)$ is small.} By the definition of $V$, for any $i \in V$ we have $\mu\left(\f_{\left\{ i\right\} }^{\left\{ i\right\} }\right)<\left(1-\delta\right)\mu\left(\f\right)$. This holds also on the average, and thus,
\[
\frac{1}{\left|V\right|}\sum_{i\in V}\mu\left(\f{}_{\left\{ i\right\} }^{\left\{ i\right\} } \right) < (1-\delta)\mu\left(\f\right),
\]
contradicting~\eqref{eq:fairness 4}. This completes the proof of the lemma.
\end{proof}

\subsection{The general case}

We now reduce the case where $s$ is a general constant to the case $s=1$ which we already proved. We use the following simple
claim.
\begin{claim}
\label{fairness}Let $n>0$, let $S\subseteq\left[n\right]$, let
$i\in S$, and let $\f\subseteq\pn$. Suppose that $\left\{ i\right\} $
is $\frac{\delta}{2}$-fair for $\f$, and that $S\backslash\left\{ i\right\} $
is $\frac{\delta}{2}$-fair for both of the families $\f_{\left\{ i\right\} }^{\left\{ i\right\} },\f_{\left\{ i\right\} }^{\emptyset}$.
Then $S$ is $\delta$-fair for $\f$.\end{claim}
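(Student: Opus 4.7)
The plan is to verify $\delta$-fairness of $S$ slice-by-slice using the transitivity of slicing: for every $B \subseteq S$, one has the identity
\[
\f_S^B \;=\; \bigl(\f_{\{i\}}^{B \cap \{i\}}\bigr)_{S \setminus \{i\}}^{B \setminus \{i\}},
\]
because a set $A$ satisfies $A \cap S = B$ iff $A \cap \{i\} = B \cap \{i\}$ and $(A \setminus (B \cap \{i\})) \cap (S \setminus \{i\}) = B \setminus \{i\}$. This lets me break the one-step fairness question about the pair $(S,B)$ into two successive one-step questions for which hypotheses are in hand.

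Fix an arbitrary $B \subseteq S$ and split into two cases according to whether $i \in B$ or not. If $i \in B$, the identity above becomes $\f_S^B = (\f_{\{i\}}^{\{i\}})_{S \setminus \{i\}}^{B \setminus \{i\}}$, so the assumption that $S \setminus \{i\}$ is $\tfrac{\delta}{2}$-fair for $\f_{\{i\}}^{\{i\}}$ gives $\mu(\f_S^B) \ge (1-\tfrac{\delta}{2})\mu(\f_{\{i\}}^{\{i\}})$, and then the assumption that $\{i\}$ is $\tfrac{\delta}{2}$-fair for $\f$ gives $\mu(\f_{\{i\}}^{\{i\}}) \ge (1-\tfrac{\delta}{2})\mu(\f)$. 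If $i \notin B$, the same argument goes through verbatim with $\emptyset$ in place of $\{i\}$ throughout, using that $S \setminus \{i\}$ is $\tfrac{\delta}{2}$-fair for $\f_{\{i\}}^{\emptyset}$.

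In either case, chaining the two bounds yields $\mu(\f_S^B) \ge (1-\tfrac{\delta}{2})^2 \mu(\f) \ge (1-\delta)\mu(\f)$, where the final inequality is the elementary estimate $(1-\tfrac{\delta}{2})^2 = 1-\delta+\tfrac{\delta^2}{4} \ge 1-\delta$. Since $B \subseteq S$ was arbitrary, this is exactly the definition of $S$ being $\delta$-fair for $\f$. There is no real obstacle here: the content is entirely the observation that the two hypothesized fairness conditions compose through the transitive slicing identity, with the $\delta/2$'s chosen precisely so that a product of two $(1-\tfrac{\delta}{2})$ factors beats $1-\delta$.
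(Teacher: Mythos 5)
Your proof is correct and essentially identical to the paper's: both split on whether $i\in B$, use the transitivity of slicing to compose the $\tfrac{\delta}{2}$-fairness of $\{i\}$ for $\f$ with that of $S\setminus\{i\}$ for the relevant slice, and conclude via $(1-\tfrac{\delta}{2})^2 \ge 1-\delta$. The only difference is that you spell out the slicing identity explicitly, which the paper leaves implicit.
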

\begin{proof}
Let $\f\subseteq\pn$ be as in the hypothesis, and let
$B\subseteq S$. We have to show that $\mu\left(\f_{S}^{B}\right)\ge\left(1-\delta\right)\mu\left(\f\right)$.
Suppose first that $i\in B$. Since the set
$S\backslash\left\{ i\right\} $ is $\frac{\delta}{2}$-fair for
the family $\f_{\left\{ i\right\} }^{\left\{ i\right\} }$, and since
the singleton $\left\{ i\right\} $ is $\frac{\delta}{2}$-fair for $\f$,
we have
\[
\mu\left(\f_{S}^{B}\right)\ge\left(1-\frac{\delta}{2}\right)\mu\left(\f_{\left\{ i\right\} }^{\left\{ i\right\} }\right)\ge\left(1-\frac{\delta}{2}\right)^{2}\mu\left(\f\right)>\left(1-\delta\right)\mu\left(\f\right).
\]
\noindent Similarly, if $i \not \in B$, we obtain
\[
\mu\left(\f_{S}^{B}\right)\ge\left(1-\frac{\delta}{2}\right)\mu\left(\f_{\left\{ i\right\} }^{\emptyset}\right)\ge\left(1-\frac{\delta}{2}\right)^{2}\mu\left(\f\right)>\left(1-\delta\right)\mu\left(\f\right).
\]

\end{proof}

We are now ready to prove Proposition \ref{prop:Fairness}.
\begin{proof}[Proof of Proposition \ref{prop:Fairness}]
The proof goes by induction on $s$. The case $s=1$ is covered by Lemma~\ref{prop:the case s=00003D1}. Hence, we assume that the assertion holds for all $s \leq s_0-1$, namely, that for any $s \leq s_0-1$ and any $\delta'>0$, there exist $C\left(s,\delta'\right),k_0\left(s,\delta'\right)$ such that if  $k>k_{0}$ and if $\f\subseteq{{[n]}\choose{k}}$ is a family that satisfies
$\mu\left(\f\right)\ge\exp\left(-k/C\right)$, then the probability that a set $\mathbf{S}\sim{{[n]}\choose{s}}$ is $\delta'$-fair for $\f$ is at least $1-\delta'$. We have to show that the same statement holds with respect to $s_0$ and any $\delta>0$.

\noindent Let $\mathbf{S}\sim {{\left[n\right]}\choose{s_0}}$, and let $i \in S$. By Claim \ref{fairness}, we have
\begin{align*}
\Pr\left[\mathbf{S}\mbox{ is not }\delta\mbox{-fair for }\f\right] & \le\Pr\left[\left\{ i\right\} \mbox{ is not }\frac{\delta}{2}\mbox{-fair for }\f\right]\\
 & +\Pr\left[\{i\}\mbox{ is }\frac{\delta}{2}\mbox{-fair for }\f \mbox{ and } S\backslash\left\{ i\right\} \mbox{ is not }\frac{\delta}{2}\mbox{-fair for }\f_{\left\{ i\right\} }^{\emptyset}\right]\\
 & +\Pr\left[\{i\}\mbox{ is }\frac{\delta}{2}\mbox{-fair for }\f \mbox{ and } S\backslash\left\{ i\right\} \mbox{ is not }\frac{\delta}{2}\mbox{-fair for }\f_{\left\{ i\right\} }^{\left\{ i\right\} }\right].
\end{align*}
Hence, we will be done if we can choose $C\left(s_0,\delta\right),k_0\left(s_0,\delta\right)$ such that each of the terms in the right hand side is no larger than $\delta/3$. For the first term, this clearly follows from the induction hypothesis (applied with $s=1,\delta'=\delta/3$). For the third term, note that if $\{i\}$ is $\frac{\delta}{2}$-fair for $\f$ then $\mu \left(\f_{\left\{ i\right\} }^{\{i\}}\right) \geq (1-\frac{\delta}{2})\mu(\f)$, and hence, we can apply the induction hypothesis with $s=s_0-1,\delta'=\delta/3$, provided that $C(s_0,\delta)$ is taken to be sufficiently large so that
\[
(1-\frac{\delta}{2}) e^{-\frac{k}{C(s_0,\delta)}} \geq e^{-\frac{k-1}{C(s_0-1,\delta/3)}}.
\]
(Note that the choice of $C(s_0,\delta)$ can be made independently of $k$, since the statement becomes weaker as $k$ grows.)
Finally, the second term is similar to the third one. This completes the proof of the proposition.
\end{proof}

\section{\label{sec:Random-sampling}The Shadows of $\h$-Free Families}

Let $\h$ be a $d$-expanded hypergraph of size $h$. In this section we study the \emph{shadow} of an
$\h$-free family $\f\subseteq{{[n]}\choose{k}}$.

Our `meta'-goal, which originates in the work of Kostochka, Mubayi, and Verstra\"{e}te~\cite{kostochka2015turan,kostochka2015problems,kostochka2014turan}, is to show that $\h$-freeness of $\f$ imposes a restriction of a similar type on the shadow $\partial(\f)$ (and in some cases, also on the $t$-shadow $\partial^t(\f)$). Once we establish such a result, we shall use it in Sections~\ref{sec:uncap-contains} and~\ref{sec:bootstrapping} to bound the size of $\h$-free families in a three-step procedure:
\begin{enumerate}
\item Deduce from the $\h$-freeness of $\f$ that $\partial(\f)$ satisfies a certain restriction.

\item Deduce an upper bound on the size of $\partial(\f)$ from the restriction it satisfies.

\item Deduce an upper bound on $|\f|$ from the upper bound on the size of $\partial(\f)$, using a result of Kostochka et al.~\cite{kostochka2015turan}.
\end{enumerate}

Unfortunately, the assertion that if $\f$ is free of $\h$ then $\partial(\f)$ is free of some (possibly larger) expanded hypergraph $\h'$, is false, as can be seen in the following example:
\begin{example}\label{Ex:6.1}
Let $\h$ be a hypergraph whose kernel is empty. Then the $(1,1)$-star
$\s_{\{1\}}=\left\{ A\in{{[n]}\choose{k}}\,:\,1\in A\right\} $ is free of $\h$, while its shadow consists of all ${{\left[n\right]}\choose{k-1}}$.
\end{example}

\noindent Hence, in order to obtain a restriction on the shadow $\partial(\f)$, we introduce a \emph{coloring} of $\partial\left(\f\right)$.

\medskip

One can think of $\partial(\f)$ as being naturally colored by the elements of $\left[n\right]$, where a subedge $E\in\partial\left(\f\right)$
is colored by all the elements $i\in\left[n\right]$ such that $E\cup\left\{ i\right\} \in \f$. (Note that in this coloring, each subedge can have more than one color.)
\begin{defn} We say that the shadow $\partial\left(\f\right)$
contains a \emph{rainbow copy} of an ordered hypergraph $\left( E_{1},\ldots,E_{h}\right)$, if there exist $F_1,\ldots,F_h \in \partial(\f)$, a bijection $f:E_1 \cup \ldots \cup E_h \to F_1 \cup \ldots \cup F_h$ such that $f(E_i)=F_i$ for each $i$, and distinct $j_{1} \not \in F_1,\ldots,j_{h} \not \in F_h$ such that all the sets $F_{1}\cup\left\{ j_{1}\right\} ,\ldots,F_{h}\cup\left\{ j_{h}\right\} $ belong to $\f$.
\end{defn}
Taking the coloring of the shadow into consideration, we see that the shadow of $\s_{\{1\}}$ in Example~\ref{Ex:6.1} does satisfy some restrictions. Indeed, it
is easily seen that $\partial(\s_{\{1\}})$ does not contain a rainbow copy of a $3$-matching. It turns out that a similar phenomenon occurs for any forbidden hypergraph $\h$.

\medskip

We prove two propositions, which apply to different types of $\h$. The first applies for any $d$-expanded $\h$ of size $h$, and asserts that if $\f$ is free of $\h$ then $\partial(\f)$ is free of a rainbow copy of a larger $d$-expanded hypergraph $\h'=\left(s \cdot {{[v]}\choose{d}} \right)^+$, for sufficiently large $s,v$.
\begin{prop}
\label{Prop:color the shadow} Let $n,k,d,h \in \mathbb{N}$, and let $\h\subseteq{{[n]}\choose{k}}$ be a $d$-expanded hypergraph
of size $h$. Denote $s:=2h^{2}+1$ and $v:=2dh^{2}+d$.

For any $\h$-free family $\f\subseteq{{[n]}\choose{k}}$, the shadow $\partial\left(\f\right)$ does not contain a rainbow copy of
the hypergraph $\h'=\left(s \cdot {{\left[v\right]}\choose{d}}\right)^{+}$.
\end{prop}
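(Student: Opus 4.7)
The plan is to argue by contradiction: assume $\partial(\f)$ contains a rainbow copy of $\h' = (s \cdot \binom{[v]}{d})^+$, and construct a copy of $\h = \h_1^+$ in $\f$, contradicting $\h$-freeness. Unpacking the rainbow copy, I identify the vertex set of the base multi-hypergraph $s \cdot \binom{[v]}{d}$ with $[v] \subseteq [n]$, so that for every $e \in \binom{[v]}{d}$ and every $j \in [s]$ there are pairwise disjoint expansion sets $D_{e,j}$ of size $k-1-d$ (all disjoint from $[v]$) together with pairwise distinct colors $c_{e,j} \in [n] \setminus (e \cup D_{e,j})$, such that every set $e \cup D_{e,j} \cup \{c_{e,j}\}$ lies in $\f$.

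Write $\h = \h_1^+$ with $\h_1$ a $d$-uniform multi-hypergraph on $h$ edges. To build a copy of $\h$ in $\f$, I will choose an injection $\phi \colon V(\h_1) \to [v]$ whose image edges $e_1, \ldots, e_h$ realize $\h_1$'s incidence pattern, together with multiplicities $j_1, \ldots, j_h \in [s]$ satisfying $j_i \neq j_l$ whenever $e_i = e_l$. The candidate edges $F_i := e_i \cup D_{e_i, j_i} \cup \{c_{e_i, j_i}\} \in \f$ automatically satisfy all the disjointness conditions needed to form a copy of $\h_1^+$, except for two remaining ones: (i) $c_{e_i, j_i} \notin [v]$, which is stronger than needed since $\bigcup_l e_l \subseteq [v]$, and (ii) $c_{e_i, j_i} \notin D_{e_l, j_l}$ for every $l \neq i$.

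For condition (i), call $e \in \binom{[v]}{d}$ \emph{bad} if more than $s/4$ of its multiplicities $j$ satisfy $c_{e,j} \in [v]$. Since the colors are all distinct, at most $v$ pairs $(e,j)$ have $c_{e,j} \in [v]$, so there are at most $4v/s = 4d$ bad edges. Because $\binom{v}{d}$ is considerably larger than $4dh$ under the choice $v = 2dh^2 + d$, a short counting argument produces an injection $\phi$ whose image edges $e_1, \ldots, e_h$ are all non-bad. Fixing such $\phi$, the sets $J_i := \{j \in [s] : c_{e_i, j} \notin [v]\}$ satisfy $|J_i| \geq 3s/4$, and selecting $j_i \in J_i$ will automatically secure (i).

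It remains to handle condition (ii) and the distinct-$j$ constraint. I draw $(j_1, \ldots, j_h)$ uniformly at random from $J_1 \times \cdots \times J_h$. The key observation is that the sets $\{D_{e_l, j}\}_{j \in [s]}$ are pairwise disjoint, so for each fixed $j_i$ the element $c_{e_i, j_i}$ lies in at most one of them; hence $\Pr[c_{e_i, j_i} \in D_{e_l, j_l}] \leq 1/|J_l| \leq 4/(3s)$, and an analogous bound handles $\Pr[j_i = j_l]$ when $e_i = e_l$. Union bounding over the $O(h^2)$ such events yields a total failure probability of at most $2h(h-1)/s$, which is strictly less than $1$ thanks to $s = 2h^2 + 1$. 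A valid $(j_1, \ldots, j_h)$ therefore exists, producing a forbidden copy of $\h$ in $\f$. The main obstacle is condition (ii): a priori each large set $D_{e_l, j_l}$ could contain many candidate colors, but the pairwise disjointness of $\{D_{e_l, j}\}_{j \in [s]}$ reduces the effective size to $1$ upon randomizing $j_l$, and the precise values $s = 2h^2 + 1$ and $v = d(2h^2 + 1)$ are calibrated so that both the counting argument for (i) and the union bound for (ii) just barely close.
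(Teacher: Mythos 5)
Your proposal is correct and follows essentially the same strategy as the paper's proof: embed the base multi-hypergraph $\h_1$ into the center $[v]$ of the rainbow copy, and use the $s$ parallel expansions of each $d$-set (whose expansion sets are pairwise disjoint, so a fixed color meets at most one of them) to make the added color-vertices avoid all other edges with positive probability, with $v$ large enough that colors rarely land in the center. The only cosmetic difference is that you derandomize the choice of embedding via a ``bad edge'' count and impose the slightly stronger requirement $c_{e_i,j_i}\notin[v]$, whereas the paper chooses the embedding $\mathbf{G}$ at random and only needs each color to avoid $\mathbf{G}$.
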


The second proposition applies for a hypergraph $\h$ with kernel of size $t$, and asserts that if $\f$ is free of $\h$, then $\partial(\f)$ is free of a larger hypergraph $\h''$ with kernel of size $t-1$. Since any copy of $\h''$ is forbidden (and not only a rainbow one), this proposition can be applied $t$ times in a row, thus allowing us to use inductive arguments.

To formulate the proposition, we need a convenient notation for hypergraphs with a non-empty kernel. For a hypergraph $H \subseteq \p([m])$ with an empty kernel and and for $t>0$, we denote by $H \oplus [t]$ the hypergraph
\[
\{A \subseteq \p([m+t]): A = A' \cup \{m+1,m+2,\ldots,m+t\} \mbox{ for some } A' \in H\}.
\]
Of course, any hypergraph $\h$ with $|K(\h)|=t$ can be represented in this way (up to reordering the coordinates), for some hypergraph $H$ with an empty kernel.
\begin{prop}
\label{prop:t-shadow} For any constants $t,h,d\in\mathbb{N}$, there exists $C=C\left(t,h,d\right)$, such that the
following holds for any $C<\ell<u/C$.

Let $\h$ be a $d$-expanded hypergraph of size $h$ with kernel of size $t$. For any $\h$-free family $\f\subseteq{{[n]}\choose{k}}$, the shadow $\partial\left(\f\right)$ is free of
the hypergraph $\h''=\left({{\left[u\right]}\choose{\ell}}\oplus\left[t-1\right]\right)^{+}$.
\end{prop}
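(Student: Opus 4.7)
Plan. I would prove the proposition by induction on $t$, with the base case $t=1$ doing the main work. For the inductive step from $t-1$ to $t$, the key tool is the elementary identity $\partial(\f_T^T)=(\partial(\f))_T^T$ valid for any $(t-1)$-subset $T\subseteq[n]$. Writing $\h=\h_1^+$ with $\h_1=H_0\oplus[t]$, where $H_0$ is a $(d-t)$-uniform multi-hypergraph of size $h$ with empty kernel, the $\h$-freeness of $\f$ is equivalent to: for every $(t-1)$-set $T$, the slice $\f_T^T$ (viewed as a $(k-t+1)$-uniform family on $[n]\setminus T$) is free of $(H_0\oplus[1])^+$, a $(d-t+1)$-expanded hypergraph of size $h$ with kernel of size one. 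Applying the base case to $\f_T^T$ in the universe $[n]\setminus T$ then gives that $(\partial(\f))_T^T=\partial(\f_T^T)$ is free of $(\binom{[u]}{\ell})^+$; since this holds for every such $T$, $\partial(\f)$ is free of $(\binom{[u]}{\ell}\oplus[t-1])^+$, as desired.

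For the base case $t=1$, suppose for contradiction that $\partial(\f)$ contains a copy of $(\binom{[u]}{\ell})^+$, witnessed by a set $V'\subseteq[n]$ of size $u$ and pairwise disjoint sets $\{D_B\}_{B\in\binom{V'}{\ell}}$ in $[n]\setminus V'$ with $B\cup D_B\in\partial(\f)$. For each $B$, I pick a color $c(B)\in N(B):=\{i\in[n]:B\cup D_B\cup\{i\}\in\f\}$, which is nonempty by definition of the shadow. Setting $s:=2h^2+1$ and $v:=2dh^2+d$ as in Proposition~\ref{Prop:color the shadow} applied to $\h$, the strategy is to realise the forbidden rainbow structure inside $\partial(\f)$. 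To this end, I fix $V_0\subseteq V'$ with $|V_0|=v$ and for each $F\in\binom{V_0}{d}$ and $j\in[s]$ pick a ``filler'' $B_{F,j}=F\cup W_{F,j}$ with $W_{F,j}\subseteq V'\setminus V_0$ of size $\ell-d$, demanding that the $\{W_{F,j}\}$ be pairwise disjoint (feasible since $u\geq v+s\binom{v}{d}(\ell-d)$). The edges $\{B_{F,j}\cup D_{B_{F,j}}\}$ then form a copy of $(s\cdot\binom{[v]}{d})^+$ in $\partial(\f)$.

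The argument now splits according to whether the fillers can be chosen so that the colors $\{c(B_{F,j})\}$ are pairwise distinct. In Case A (achievable), the resulting copy is rainbow, directly contradicting Proposition~\ref{Prop:color the shadow}. In Case B (unachievable for any choice of fillers), a Hall-type argument exploiting the abundance of filler choices per slot forces a ``popular'' color $c^*\in[n]$ whose preimage $\mathcal{S}_{c^*}=\{B\in\binom{V'}{\ell}:c(B)=c^*\}$ is dense and structurally rich; quantitatively, for $u$ and $\ell$ large enough, one can extract $V''\subseteq V'$ with $|V''|=\Omega(h\ell)$ such that $\binom{V''}{\ell}\subseteq\mathcal{S}_{c^*}$. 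Then $\mathcal{S}_{c^*}^+\subseteq\f_{\{c^*\}}^{\{c^*\}}$, and the latter is $H_0^+$-free (a direct consequence of $\h$-freeness, since $\h=(H_0\oplus[1])^+$ with kernel $\{c^*\}$); but when $|V''|$ is sufficiently large, a direct embedding of $H_0$ into $V''$ together with the pairwise disjoint $D_B$'s and distinct extras from $V''$ exhibits a copy of $H_0^+$ inside $\mathcal{S}_{c^*}^+$, the sought contradiction.

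The main obstacle is the quantitative execution of Case B: one must show that if no filler choice makes the $s\binom{v}{d}$ colors pairwise distinct, then the forced color $c^*$ has a preimage rich enough to contain a complete $\binom{V''}{\ell}$-substructure, not merely a large number of sets. This requires $u$ and $\ell$ to be chosen much larger than $h$ and $d$, and drives the size of the constant $C=C(t,h,d)$ in the statement.
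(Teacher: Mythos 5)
Your reduction from general $t$ to $t=1$ is sound: the identity $\partial\left(\f_{T}^{T}\right)=\left(\partial\left(\f\right)\right)_{T}^{T}$ holds, the slice $\f_{T}^{T}$ is indeed free of $(H_0\oplus[1])^{+}$, and a copy of $\left(\binom{[u]}{\ell}\oplus[t-1]\right)^{+}$ in $\partial(\f)$ restricts to a copy of $\binom{[u]}{\ell}^{+}$ in some $\left(\partial\f\right)_{T}^{T}$. This is essentially the same reduction the paper performs (it passes to $\f_{T'}^{T'}$ directly rather than by induction). The genuine gap is in your base case, specifically in the Case~A/Case~B dichotomy. The negation of ``some choice of fillers makes all $s\binom{v}{d}$ colors distinct'' is a Hall-type defect condition, and from it you cannot extract a single color $c^{*}$ whose preimage contains a \emph{complete} sub-hypergraph $\binom{V''}{\ell}$: the failure could be caused by several moderately popular colors, none of them dominant; and even a color class of density $1/2$ in $\binom{[u]}{\ell}$ need not contain a complete $\binom{V''}{\ell}$ with $|V''|>\ell$ (e.g., the family of $\ell$-sets with even element-sum), so density alone — let alone a Hall failure — does not yield the complete substructure your Case~B endgame requires.

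What actually closes this case in the paper is a measure-theoretic rather than exact-structure argument, driven by Proposition~\ref{Prop:turan for cross} (any $h$ families of measure at least $\max(e^{-\ell/C},C\ell/u)$ cross contain any fixed expanded hypergraph). First, each color class $\c_i\subseteq\binom{[u]}{\ell}$ is free of the $\ell$-expansion of $\h_0$ (your observation that $\f_{\{c^*\}}^{\{c^*\}}$ is $H_0^{+}$-free, pushed down to the $\ell$-sets), hence by Proposition~\ref{Prop:turan for cross} \emph{every} color class has measure at most $\delta=\max(e^{-\ell/C},C\ell/u)$ — there is no ``popular color'' case at all. Second, Proposition~\ref{lem:Step 2 shadows} groups these uniformly small classes into boundedly many groups of measure between $\delta$ and $2\delta$; choosing $u,\ell$ so that $\delta<\tfrac{1}{2m+1}$ guarantees at least $m$ large groups, and applying Proposition~\ref{Prop:turan for cross} to the groups produces a multi-colored copy of $H=s'\cdot\binom{[v']}{d-t+1}$, i.e., a rainbow copy in the shadow, contradicting Proposition~\ref{Prop:color the shadow}. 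Your Case~A is the special instance where distinct colors come for free; the middle ground of several medium-sized color classes, which your sketch does not address, is exactly what the grouping lemma handles. To repair your proof you would need to replace the Hall argument with this two-fold use of the cross-Tur\'{a}n statement for large families (or an equivalent counting tool).
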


\subsection{Proof overview}
\label{sec:sub:shadow-overview}

The proof of Proposition~\ref{Prop:color the shadow} is a rather simple probabilistic argument.

\medskip

We write $\h=\h_1^+$ for some $d$-uniform multi-hypergraph $\h_1$ of size $h$, and suppose on the contrary that $\partial(\f)$ contains a rainbow copy of $\h'=\left(s \cdot {{[v]}\choose{d}} \right)^+$, for sufficiently large $s,v$ to be determined below. This means that we can assign \emph{distinct} vertices $i_E$ for each $E \in E(\h')$, in such a way that for any $E \in E(\h')$ we have $E \cup i_E \in \f$.

We now define a \emph{random copy} $\mathbf{(H_1,\ldots,H_h)}$ of $\h$ in the following way: First, we choose a random copy $\mathbf{(E'_1,\ldots,E'_h)}$ of $\h_1$ inside ${{[v]}\choose{d}}$. Then we enlarge this copy into the $h$-tuple $\mathbf{(E_1 \in \partial(\f),\ldots,E_h \in \partial(\f))}$, by randomly choosing one of the $s$ disjoint enlargements of each $\mathbf{E'_i}$ contained in $\partial(\f)$. Finally, our candidate copy of $\h$ in $\f$ is $\mathbf{\tilde{H}=(E_1 \cup \{i_{E_1}\},\ldots,E_h \cup \{i_{E_h}\})}$. It is clear that $\mathbf{\tilde{H}}$ is a copy of $\h$ in $\f$ if and only if each added vertex $\mathbf{i_{E_j}}$ is disjoint from the initial copy of $\h_1$ and with each enlargement $\mathbf{E_l \setminus E'_l}$, for all $l \neq j$. We show that by choosing $v,s$ to be sufficiently large, we can ensure that this occurs with a positive probability, and thus, $\f$ contains a copy of $\h$, a contradiction.

\medskip

The proof of Proposition~\ref{prop:t-shadow} is a bit more complex. We denote $\h = \h_0 \oplus [t]$, and suppose on the contrary that $\partial(\f)$ contains a copy of
\[
\h'' = \left({{\left[u\right]}\choose{\ell}}\oplus\left(t-1\right)\right)^{+}:= \left(\c \oplus [t-1] \right)^+,
\]
for sufficiently large (and appropriately chosen) $u,\ell$. We color the shadow $\partial(\f)$ with the color set $[n]$ such that $(\chi(E)=i) \Rightarrow (E \cup \{i\} \in \f)$. (If there are several possible `colors' for some $E$, we choose one of them arbitrarily.) Thus, each color class $\chi^{-1}(i)$ is contained in $\f_{\{i\}}^{\{i\}}$. The coloring induces a partition of the hypergraph $\c \cong {{[u]}\choose{\ell}}$ into $n$ hypergraphs $\c_1,\ldots,\c_n$, where the edge set of $\c_i$ consists of all edges that are colored $i$ in the copy of $\c$ in $\partial(\f)$.

\iffalse
The proof is based on two observations:
\begin{itemize}
\item We observe that each $\c_i$ is free of $\h_0$. (Indeed, if $\c_i$ had contained a copy of $\h_0$, then $\chi^{-1}(i)$ would contain a copy of $(\h_0 \oplus [t-1])^+$, and so would $\f_{\{i\}}^{\{i\}}$. Hence, $\f$ would contain a copy of $\h=(\h_0 \oplus [t])^+$, a contradiction.) This will allow us to deduce that each $\c_i$ is `small'. From this we deduce (with an additional argument) that the $\c_i$'s contain a `multi-colored' copy of any $d$-expanded hypergraph $H$, and thus the shadow $\partial(\f)$ contains a rainbow copy of $(H \oplus [t-1])^+$ (in which we denote the set of coordinates that correspond to $[t-1]$ by $T'$).

\item We observe that the family $\f_{T'}^{T'}$ is free of the $(d-t+1)$-expanded hypergraph $\h_0 \oplus [1]$, and thus, by Proposition~\ref{Prop:color the shadow}, its shadow does not contain a rainbow copy of some fixed hypergraph. This is a contradiction, since by the first paragraph, $\partial(\f_{T'}^{T'})$ contains a rainbow copy of $H^+$, for any $d$-expanded hypergraph $H$.
\end{itemize}
\fi

The proof consists of three steps.
\begin{enumerate}
\item We prove Proposition~\ref{Prop:turan for cross} which asserts that any $h$ `sufficiently large' families $\f_1,\ldots,\f_h \subseteq {{[n]}\choose{k}}$ cross contain a copy of any fixed $d$-expanded hypergraph of size $h$. (Formally, the required measure is $\min(\mu(\f_i)) \geq \max \left(e^{-k/C},C\frac{k}{n} \right)$, where $C$ depends on $d,h$.) The proof of the proposition uses Theorem~\ref{thm:Proof of the aaroni howrd conjecture} and Proposition~\ref{prop:Fairness}.

\item We observe that each $\c_i$ is free of $\h_0$. (Indeed, if $\c_i$ had contained a copy of $\h_0$, then $\chi^{-1}(i)$ would contain a copy of $(\h_0 \oplus [t-1])^+$, and so would $\f_{\{i\}}^{\{i\}}$. Hence, $\f$ would contain a copy of $\h=(\h_0 \oplus [t])^+$, a contradiction.) Therefore, Step~1 allows us to deduce that each $\c_i$ is `small'. We show that using the small size of the $\c_i$'s, we can partition $[n]$ into a constant number of sets $V_1,\ldots,V_q$ such that the families $\cup_{i \in V_j} \c_i$ are of `roughly equal' sizes, and in particular, are `large' subsets of ${{[u]}\choose{\ell}}$. We then use Step~1 again to deduce that the families $\cup_{i \in V_1} \c_i, \ldots, \cup_{i \in V_q} \c_i$ cross contain \emph{any} fixed hypergraph $H$, provided that $u,\ell$ are chosen properly (depending on the hypergraph we want the families to cross contain).

\item Step~2 (applied with some fixed hypergraph $H$ to be determined below) implies that the shadow $\partial(\f)$ contains a rainbow copy of $(H \oplus [t-1])^+$. Denoting the set of coordinates that correspond to $[t-1]$ here by $T'$, this implies that $\partial \left(\f_{T'}^{T'} \right)$ contains a rainbow copy of $H^+$. However, we observe that the family $\f_{T'}^{T'}$ is free of the $(d-t+1)$-expanded hypergraph $\h_0 \oplus [1]$, and thus, by Proposition~\ref{Prop:color the shadow}, its shadow does not contain a rainbow copy of the hypergraph $H = s' \cdot {{[v']}\choose{d-t+1}}$. Applying Step~2 with this $H$ and choosing $u,\ell$ properly, this yields a contradiction.
\end{enumerate}

The rest of this Section is organized as follows. The proofs of Propositions~\ref{Prop:color the shadow} and~\ref{prop:t-shadow} are presented in Sections~\ref{sec:sub:shadow:random} and~\ref{sec:sub:shadow:kernel}, respectively. Finally, in Section~\ref{sec:sub:shadow:sizes} we deduce a relation between the size of an $\h$-free family and the size of its shadows, using a result of Kostochka et al.~\cite{kostochka2015turan} and Proposition~\ref{prop:t-shadow}.

\subsection{Families that are free of a general fixed hypergraph -- proof of Proposition~\ref{Prop:color the shadow}}
\label{sec:sub:shadow:random}

In this section we present the proof of Proposition~\ref{Prop:color the shadow}, following the strategy outlined in Section~\ref{sec:sub:shadow-overview}.

\begin{proof}[Proof of Proposition~\ref{Prop:color the shadow}]
Suppose on the contrary that $\partial\left(\f\right)$ contains a rainbow copy of $s \cdot {{\left[v\right]}\choose{d}}^{+}$ and denote this copy by $\c$. We will show that $\f$ contains a copy of $\h$ (and thus obtain a contradiction) by defining a random procedure for choosing $h$ sets in $\f$, and showing that these sets constitute a copy of $\h$ with a positive probability.

Since $\h$ is $d$-expanded, it can be written in the form $\h=\h_1^+$, for some $d$-uniform multi-hypergraph $\h_1$ of size $h$. Write $\h_1=\left\{ H_{1},\ldots,H_{h}\right\}$ and denote $H=H_{1}\cup\cdots\cup H_{h}$.
%and $\h=\left\{ H_{1}\cup D_{1},\ldots,H_{h}\cup D_{h}\right\} $
%for some pairwise disjoint sets $D_{1},\ldots,D_{h}\subseteq {{\left[n\right]\backslash H}\choose{k-d}}$.

It will also be helpful for us to make the fact that $\partial\left(\f\right)$ contains a rainbow copy of $\h'=s \cdot {{\left[v\right]}\choose{d}}^{+}$ more
explicit. Write $m=s{{v}\choose{d}}$, and let $\c=\left\{ C_{1},\ldots,C_{m}\right\} $ be a copy of $\h$ in $\partial\left(\f\right)$. Denote by $V$ the \emph{center} of $\c$ (i.e., the set that corresponds to $\left[v\right]$ in the isomorphism between $\c$ and $\h'=s \cdot {{[v]}\choose{d}}^{+}$).

By definition, there exist distinct $i_{1},\ldots,i_{m}\in\left[n\right]$ such that the sets $C_{1}\cup\left\{ i_{1}\right\} ,\ldots,C_{m}\cup\left\{ i_{m}\right\} $ belong to
$\f$. We define a coloring $\chi\colon\c\to\left[n\right]$ that attaches to each set $C_{j}$ the corresponding singleton $\left\{ i_{j}\right\} $.

We now describe a random procedure for choosing $h$ sets out of $C_{1}\cup\left\{ i_{1}\right\} ,\ldots,C_{m}\cup\left\{ i_{m}\right\} $. As we shall prove below, the chosen sets constitute a copy of $\h$ in $\f$ with a positive probability.

Our random procedure proceeds as follows:
\begin{enumerate}
\item We choose a set $\mathbf{G}\sim {{V}\choose{\left|H\right|}}$. Intuitively, $\mathbf{G}$ should be thought of as a {}``copy''
of the set $H$.
\item We choose uniformly at random a bijection $\mathbf{\pi}\colon H\to\mathbf{G}$,
and set $\mathbf{G}_{1}:=\mathbf{\pi}\left(H_{1}\right),\ldots,\mathbf{G_{h}}:=\mathbf{\pi}\left(H_{h}\right)$.
This makes the hypergraph $\left\{ \mathbf{G}_{1},\ldots,\mathbf{G}_{h}\right\} $ a random copy of $\h_1$.

\medskip

At this stage, note that if we will be able to associate to each $\mathbf{G_{i}}$ a set $\mathbf{E_{i}}$ of the form $C_{j}\cup\left\{ i_{j}\right\} $, such that $\mathbf{G_{i}}\subseteq\mathbf{E_{i}},$ and such that $\mathbf{E_{1}\backslash G_{1},\ldots,E_{h}\backslash G_{h}}$
are pairwise disjoint sets that do not intersect $\mathbf{G}$, then our proof would be completed. We associate sets of the form $C_{j}\cup\left\{ i_{j}\right\} $ randomly in the following way:

\item We choose uniformly at random a set $\mathbf{S}_{i}\supseteq\mathbf{G}_{i}$
out of the $s$ sets $C_{i,1},\ldots,C_{i,s} \in \c$ that contain $\mathbf{G_{i}}$, and denote  $\mathbf{E_{i}}:= \mathbf{S}_{i}\cup\chi\left(\mathbf{S}_{i}\right) \in\f$.
\end{enumerate}

The following claim will complete the proof, by contradicting our hypothesis that the family $\f$ is $\h$-free.
\begin{claim}
The hypergraph
\[
\s=\left\{ \mathbf{E_{1},E_{2},\ldots,E_{h}}\right\} \subseteq\f
\]
is a copy of $\h$ with a positive probability.
\end{claim}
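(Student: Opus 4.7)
The plan is to show by the probabilistic method that the random hypergraph $\s=\{\mathbf{E_1},\ldots,\mathbf{E_h}\}$ is a valid copy of $\h=\h_1^+$ inside $\f$ with strictly positive probability, contradicting the $\h$-freeness of $\f$. By construction each $\mathbf{E_i}=\mathbf{S_i}\cup\chi(\mathbf{S_i})\in\f$ automatically, and the centers $(\mathbf{G_1},\ldots,\mathbf{G_h})=(\pi(H_1),\ldots,\pi(H_h))$ realize a copy of the base multi-hypergraph $\h_1$ inside $\binom{V}{d}$. Writing $\mathbf{R_i}:=\mathbf{E_i}\setminus\mathbf{G_i}=T_{\mathbf{G_i},\mathbf{l_i}}\cup\{\mathbf{c_i}\}$ with $\mathbf{c_i}:=\chi(\mathbf{S_i})$, it therefore suffices to establish that with positive probability the $\mathbf{R_i}$'s are pairwise disjoint and disjoint from $\mathbf{G}:=\mathbf{G_1}\cup\cdots\cup\mathbf{G_h}$.

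Several of the needed disjointness conditions hold deterministically: the enlargements $\{T_{D,l}\}$ are pairwise disjoint and all disjoint from $V\supseteq\mathbf{G}$, and $\mathbf{c_i}\notin\mathbf{S_i}$ since $|\mathbf{S_i}\cup\{\mathbf{c_i}\}|=k>|\mathbf{S_i}|$. The remaining obstructions can be packaged into three failure events that I would union-bound. The first is a collision $(\mathbf{G_i},\mathbf{l_i})=(\mathbf{G_j},\mathbf{l_j})$ for some $i\ne j$; since $\pi$ is a bijection this forces $H_i=H_j$ as subsets, and conditional on $\mathbf{G_i}=\mathbf{G_j}$ we have $\Pr[\mathbf{l_i}=\mathbf{l_j}]=1/s$, giving probability at most $\binom{h}{2}/s<1/3$ by the choice $s=2h^{2}+1$. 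This event simultaneously handles $\mathbf{c_i}=\mathbf{c_j}$ (which, under the rainbow hypothesis, is equivalent to $\mathbf{S_i}=\mathbf{S_j}$) and $T_{\mathbf{G_i},\mathbf{l_i}}\cap T_{\mathbf{G_j},\mathbf{l_j}}\ne\emptyset$.

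The second event is $\mathbf{c_i}\in\mathbf{G}\setminus\mathbf{G_i}$ for some $i$. The key leverage is that the $m=s\binom{v}{d}$ colors $c_{D,l}$ are pairwise distinct (by the rainbow property), so at most $|V|=v$ of them lie in $V$, yielding $\Pr[\mathbf{c_i}\in V]\le v/(s\binom{v}{d})$. Combining this with the conditional distribution of $\mathbf{G}\setminus\mathbf{G_i}$, which is a uniform $(|H|-d)$-subset of $V\setminus\mathbf{G_i}$, a union bound over $i\in[h]$ yields a negligible bound thanks to $v=2dh^{2}+d$. The third event is $\mathbf{c_i}\in T_{\mathbf{G_j},\mathbf{l_j}}$ for some $j\ne i$; pairwise disjointness of the $T_{D,l}$'s implies that for each fixed value of $\mathbf{c_i}$ there is at most one pair $(D_*,l_*)$ with $\mathbf{c_i}\in T_{D_*,l_*}$. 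Conditioning on $(\mathbf{G_i},\mathbf{l_i})$ fixes both $\mathbf{c_i}$ and $(D_*,l_*)$, and the conditional probability that $(\mathbf{G_j},\mathbf{l_j})$ hits this specific pair is $O(1/(sv))$ uniformly, since $\mathbf{G_j}\mid\mathbf{G_i}$ is uniform on at least $\binom{v-d}{d-|H_i\cap H_j|}$ sets. Summing over the $h(h-1)$ ordered pairs again gives a bound well below $1/3$.

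Combining the three bounds shows that none of the failure events occurs with probability strictly greater than zero, so $\s$ is indeed a copy of $\h$ in $\f$ with positive probability, yielding the desired contradiction. The main technical subtlety is the bookkeeping in the third event: the conditional law of $\mathbf{G_j}$ given $\mathbf{G_i}$ depends on the intersection pattern $H_i\cap H_j$ through $\pi$, and one must argue uniformly over all such patterns. All remaining steps reduce to straightforward counting once $s$ and $v$ are taken generously as in the statement.
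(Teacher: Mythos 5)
Your proof follows essentially the same route as the paper's: a union bound over the unwanted intersections among the added colours $\chi(\mathbf{S_i})$, the enlargement parts $\mathbf{S_j}\setminus\mathbf{G_j}$, and the centre $\mathbf{G}$, exploiting the rainbow property, the pairwise disjointness of the enlargements in the copy of $\left(s\cdot\binom{[v]}{d}\right)^{+}$, and the choices $s=2h^{2}+1$, $v=2dh^{2}+d$. The paper uses only two families of bad events ($\chi(\mathbf{S_i})\in\mathbf{G}\setminus\mathbf{G_i}$, bounded by $d(h-1)/(v-d)$, and $\chi(\mathbf{S_i})\in\mathbf{S_j}\setminus\mathbf{G_j}$, bounded by $1/s$); your additional collision event $(\mathbf{G_i},\mathbf{l_i})=(\mathbf{G_j},\mathbf{l_j})$ is in fact a useful addition, since when the base multi-hypergraph $\h_1$ has a repeated edge the paper's two events do not literally rule out $\mathbf{S_i}=\mathbf{S_j}$. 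One quantitative slip: your claimed uniform $O(1/(sv))$ bound for the third event fails exactly in that repeated-edge case $H_i=H_j$, where $\mathbf{G_j}$ is determined by $\mathbf{G_i}$ and the conditional probability of hitting the distinguished pair $(D_*,l_*)$ is only $1/s$ (coming solely from $\mathbf{l_j}$). This is harmless: falling back to the bound $1/s$ per ordered pair, the three contributions total at most $\binom{h}{2}/s+h\cdot v/\bigl(s\binom{v}{d}\bigr)+h(h-1)/s<1$, so the union bound still closes and the conclusion stands.
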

\begin{proof}
%Denote by $R$ the \emph{center} of $\c$ (i.e., the set that corresponds to $\left[r\right]$ in the isomorphism between $\c$ and $\h'=s \cdot \binom{\left[r\right]}{d}^{+}$).
%
We define a series of `bad events', such that if none of these bad events occur, then the random sets $\mathbf{E_{1}\backslash G_{1},\ldots,E_{h}\backslash G_{h},G}$ are pairwise disjoint. The events are divided into two types.
\begin{itemize}
\item \textbf{Bad events of type 1:} For $i\in\left[h\right]$, we let $B_{i}$ be the event that the vertex $\chi\left(\mathbf{S_{i}}\right)$ is
included in $\mathbf{G\backslash G_{i}}$.
\item \textbf{Bad events of type 2:} For $i, j\in\left[h\right]$ such that $i \neq j$, we let $B_{ij}$ be the event that the vertex $\chi\left(\mathbf{S_{i}}\right)$ is included in $\mathbf{S_{j}\backslash G_{j}}$ and no bad event of type~1 occurs.
\end{itemize}
It is easy to verify that $\s$ is a copy of $\h$ if none of the above bad events occurs. Thus, by a union bound,
\begin{align}
\Pr\left[\s\mbox{ is a copy of }\h\right] & \ge1-\Pr\left[\bigcup_{i=1}^{h}B_{i} \cup \bigcup_{i=1}^{h}\bigcup_{j\ne i}^{h}B_{i,j} \right] \geq
1-\sum_{i=1}^{h}\Pr\left[B_{i}\right]-\sum_{i=1}^{h}\sum_{j\ne i}^{h}\Pr\left[B_{ij}\right].
\label{eq:union bound 1}
\end{align}
The assertion of the claim would follow from an upper bound on the probabilities $\Pr\left[B_{i}\right],\Pr\left[B_{ij}\right]$.

\mn \textbf{An upper bound on the probability of a bad event of type 1.} Let $i\in\left[h\right]$. We claim that
for each possible value $S_{i}$ of the random set $\mathbf{S_{i}}$, we have $\Pr\left[B_{i}\,|\,\mathbf{S_{i}}=S_{i}\right]\le\frac{\left|\mathbf{G\backslash G_{i}}\right|}{\left|V\backslash\mathbf{G_{i}}\right|}$. Indeed, if $\chi\left(S_{i}\right) \not \in V$ then $\Pr\left[B_{i}\,|\,\mathbf{S_{i}}=S_{i}\right]=0$, and if $\chi\left(S_{i}\right) \in V$, then the probability that $\chi\left(S_{i}\right)$ was chosen to be in the random set $\mathbf{G\backslash G_{i}}$ is $\frac{\left|\mathbf{G\backslash G_{i}}\right|}{\left|V\backslash\mathbf{G_{i}}\right|}.$ Hence,
\begin{equation}
\Pr\left[B_{i}\right]\le\frac{\left|\mathbf{G\backslash G_{i}}\right|}{\left|V\backslash\mathbf{G_{i}}\right|}\le\frac{d\left(h-1\right)}{v-d},\label{eq:Bad1}
\end{equation}
for any $i\in\left[h\right]$.

\mn \textbf{An upper bound on the probability of a bad event of type 2.} Let $i,j$ be distinct elements of $\left[h\right]$. We claim that for each possible pair of values $(G,S_i)$ of the random sets $\mathbf{G},\mathbf{S_{i}}$, we have $\Pr\left[B_{i,j}\,|\,\mathbf{S}_{i}=S_{i},\mathbf{G}=G\right]\le\frac{1}{s}$. Indeed, note that for fixed $G$ and $S_i$, assuming that no bad event of type~1 occurs, the vertex $\chi\left(S_{i}\right)$ belongs to at most one of the sets $C_{j,1},\ldots,C_{j,s}$. If $\chi\left(S_{i}\right)$ does not belong to any of these sets, we have
$\Pr\left[B_{i,j}\,|\,\mathbf{S}_{i}=S_{i},\mathbf{G}=G\right]=0<\frac{1}{s}$. Otherwise, let $l\in\left[s\right]$ be such that $\chi\left(S_{i}\right) \in C_{j,l}$. Then
\[
\Pr\left[B_{i,j}\,|\,\mathbf{S}_{i}=S_{i},\mathbf{G}=G\right]=\Pr\left[\mathbf{S_{j}}=C_{j,l}\right]=\frac{1}{s}.
\]
Therefore, we have
\begin{equation}
\Pr\left[B_{i,j}\right]\le\frac{1}{s}.\label{eq:bad2}
\end{equation}
Plugging~(\ref{eq:Bad1}) and~(\ref{eq:bad2}) into Equation~(\ref{eq:union bound 1}), we obtain
\[
\Pr\left[\s\mbox{ is a copy of }\h\right]>1-\frac{dh^{2}}{v-d}-\frac{h^{2}}{s}>0,
\]
where the latter inequality holds due to the choice of $s$ and $r$. This completes the proof of the claim.
\end{proof}
By the claim, the hypergraph $\left\{ \mathbf{E_{1},E_{2},\ldots,E_{h}}\right\} $
is a copy of $\h$ in $\f$ with a positive probability. This contradicts the hypothesis that $\f$ is $\h$-free.
\end{proof}

\subsection{Families that are free of a hypergraph with a non-empty kernel -- proof of Proposition~\ref{prop:t-shadow}}
\label{sec:sub:shadow:kernel}

In this section we present the proof of Proposition~\ref{prop:t-shadow}, following the steps outlined in Section~\ref{sec:sub:shadow-overview}.

We start with a lemma which shows that `large' families cross contain a copy of any fixed $d$-expanded ordered hypergraph.

\begin{prop}
\label{Prop:turan for cross}
For any constants $d,h$, there exists a constant $C$ that depends only on $d,h$,
such that the following holds. Let $\h$ be a $d$-expanded ordered hypergraph of size $h$. Let $C < k_{1},\ldots,k_{h}< n/C$, and let $\f_{1}\subseteq {{\left[n\right]}\choose{k_{1}}},\ldots,\f_{h}\subseteq {{\left[n\right]}\choose{k_{h}}}$ be families such that for each $i \in [h]$,
\begin{equation}\label{Eq:Shadow-1}
\mu\left(\f_{i}\right)\ge \max \left(e^{-k_i/C},C\frac{k_i}{n} \right).
\end{equation}
Then $\f_1,\ldots,\f_h$ cross contain a copy of $\h$.
\end{prop}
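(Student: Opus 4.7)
The plan is to follow the three-step strategy sketched in Section~\ref{sec:sub:overview:uncap}: I use the Fairness Proposition to locate a constant-size ``base'' subset of $[n]$ that already hosts a copy of the base multi-hypergraph $\h_1$, and then extend this to a full copy of $\h = \h_1^+$ by producing a cross matching in the associated slices, invoked via Proposition~\ref{prop:uncapturable}. Write $\h = \h_1^+$, where $\h_1 = (H_1, \ldots, H_h)$ is a $d$-uniform ordered multi-hypergraph on a vertex set $V(\h_1)$ of size $u \le dh$, and fix $\delta = 1/(10h)$.

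For Steps 1--2, I plan to apply Proposition~\ref{prop:Fairness} with parameters $s = u$ and $\delta$ to each family $\f_i$. Provided $C$ is sufficiently large, hypothesis~\eqref{Eq:Shadow-1} implies $\mu(\f_i) \ge e^{-k_i/C}$, so a uniformly random $\mathbf{S} \sim \binom{[n]}{u}$ is $\delta$-fair for $\f_i$ with probability at least $1-\delta$. A union bound over $i \in [h]$ produces (with positive probability) a single $S$ that is simultaneously $\delta$-fair for all $\f_1, \ldots, \f_h$. Fix such an $S$ and any bijection $V(\h_1) \to S$, and write $H_i^{*}$ for the image of $H_i$. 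The slices $\g_i := (\f_i)_S^{H_i^{*}} \subseteq \binom{[n] \setminus S}{k_i - d}$ then satisfy $\mu(\g_i) \ge (1-\delta)\mu(\f_i)$. If I can find a cross matching $M_1 \in \g_1, \ldots, M_h \in \g_h$, the sets $H_i^{*} \cup M_i$ form a cross copy of $\h$ in $\f_1, \ldots, \f_h$, completing the proof.

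For Step 3, I argue by contradiction: suppose $\g_1, \ldots, \g_h$ are cross free of a matching. Writing $n' = n - u$ and $k_i' = k_i - d$, Proposition~\ref{prop:uncapturable} (with $r=2$ and $t=h$) furnishes a constant $s = s(2,h)$, an index $i$, and a set $T \subseteq [n] \setminus S$ of size at most $s$ with $\mu((\g_i)_T^{\emptyset}) \le (k_i'/n')^2$. Decomposing $\g_i$ by its intersection with $T$,
\[
\mu(\g_i) \;=\; \sum_{B \subseteq T} \mu((\g_i)_T^{B}) \cdot \frac{\binom{n' - |T|}{k_i' - |B|}}{\binom{n'}{k_i'}} \;\le\; \mu((\g_i)_T^{\emptyset}) + O_{s}\!\left(\frac{k_i'}{n'}\right) \;\le\; C_1 \frac{k_i}{n},
\]
for some $C_1 = C_1(d,h)$. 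This contradicts the lower bound $\mu(\g_i) \ge (1-\delta) C k_i/n$ once $C$ is chosen large enough relative to $C_1$.

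The main obstacle is the interplay between the two terms in the lower bound $\max(e^{-k_i/C}, C k_i/n)$: the exponential piece is essential to invoke Proposition~\ref{prop:Fairness} in Step 1, while the linear piece $C k_i/n$ is precisely what will contradict the capturability of $\g_i$ in Step 3. Tracking how the constants in these two independent applications (and the $O_s(\cdot)$ constant appearing in the slice decomposition) interact is the main bookkeeping challenge of the proof; taking $C$ larger than a fixed polynomial in the Fairness constant, the constant $s(2,h)$ from Proposition~\ref{prop:uncapturable}, and $d$ and $h$ will suffice.
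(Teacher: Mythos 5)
Your proposal is correct and follows essentially the same route as the paper's proof: apply Proposition~\ref{prop:Fairness} (with a union bound) to find a single constant-sized set that is simultaneously fair for all the families, place a copy of the base hypergraph $\h_1$ on it, and then complete the expansion by producing a cross matching in the corresponding slices, whose measures are still at least $\Omega(C k_i/n)$. The only divergence is in the final step: the paper obtains the matching by invoking Theorem~\ref{thm:Proof of the aaroni howrd conjecture} directly (the slices are larger than an $(h-1,1)$-star), whereas you obtain it from Proposition~\ref{prop:uncapturable} plus a short slice decomposition showing a capturable family has measure $O_s(k/n)$ --- a logically weaker ingredient that works equally well here, with the constant bookkeeping you describe being exactly what is needed.
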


\iffalse
\begin{rem}
Note that with respect to the bound~\eqref{Eq:Shadow-1} on the measures of the $\f_i$'s, there exists a constant $c$ such that term $e^{-k_i/C}$
is the dominant term when $k_i\le c\log n,$ and the term $C\frac{k_i}{n}$ is dominant when $k_i>c\log n.$
\end{rem}
\fi

\begin{proof}[Proof of Proposition~\ref{Prop:turan for cross}]
Let $\h_1=(H_{1},\ldots,H_{h})$ be a $d$-uniform ordered hypergraph such that $\h=\h_1^+$, and denote $H=H_{1}\cup\cdots\cup H_{h}$. By Proposition \ref{prop:Fairness} (which can be applied since $\mu(\f_i) \geq e^{-k_i/C}$ for all $i$), there exists a set $G \in {{\left[n\right]}\choose{\left|H\right|}}$ that is $\frac{1}{2}$-fair for each of the families $\f_i$ (simultaneously). Choose an arbitrary bijection $\pi\colon H\to G$,
and write $G_{1}=\pi\left(H_{1}\right),\ldots G_{h}=\pi\left(H_{h}\right)$. By the $\frac{1}{2}$-fairness of $G$, we have
\[
\mu\left(\left(\f_{i}\right)_{G}^{G_{i}}\right)\ge\frac{1}{2}\mu\left(\f_{i}\right)\ge \frac{C}{2}\frac{k_i}{n},
\]
for any $i \in [h]$. This allows us to apply Theorem~\ref{thm:Proof of the aaroni howrd conjecture} to the families
$\left(\f_{1}\right)_{G}^{G_{1}},\ldots,\left(\f_{h}\right)_{G}^{G_{h}}$ and deduce that they cross contain a matching $\left(D_{1},\ldots,D_{h}\right)$, provided that $C$ is large enough.
(Note that while Theorem~\ref{thm:Proof of the aaroni howrd conjecture} deals with the case where all families have the same uniformity, Lemma~3.1 of~\cite{huang2012size} allows deducing the same conclusion also for different uniformities, given a stronger assumption on the sizes of the families; this assumption holds in our setting, provided $C$ is large enough.)
%(Specifically, $C=2h$ is sufficient for this purpose, since it assures that for each $i \in [h]$, $\mu(\left(\f_{i}\right)_{G}^{G_{i}})$ is larger than the measure of the $(h-1,1)$-star $\s' \subseteq {{[n]}\choose{k_i}}$.)
%Note that while Theorem~\ref{thm:Proof of the aaroni howrd conjecture} deals with the case where all families have the same uniformity, we can apply it in our setting, by replacing each $\f_i$ with...

Now, the hypergraph $\left\{ G_{1}\cup D_{1},\ldots,G_{h}\cup D_{h}\right\}$ is a copy of $\h$ and satisfies $G_{i}\cup D_{i}\in\f_{i}$ for each
$i\in\left[h\right]$. Hence, $\f_1,\ldots,\f_h$ cross contain a copy of $\h$, as asserted.
\end{proof}

We now present another lemma which will allow us to perform Step~2 of the proof. This simple lemma essentially shows that given a partition of some set to many `small' disjoint sets, we can group the sets together into a constant number of disjoint subsets whose sizes are roughly equal.
\begin{prop}\label{lem:Step 2 shadows}
Let $0<\delta<1$, let $S$ be a set, and let $f:S \rightarrow [n]$ be a partition of $S$ into $n$ disjoint subsets. If for each $i$ we have $|f^{-1}(i)| \leq \delta |S|$, then there exists a partition $V_1,V_2,\ldots,V_q$ of $[n]$ such that for each $j \in [q]$,
\begin{equation}\label{Eq:Shadow-2}
\left|\cup_{i \in V_j} f^{-1}(i) \right| \leq 2 \delta |S|,
\end{equation}
and for each $j \in [q]$ except for at most one,
\begin{equation}\label{Eq:Shadow-3}
\left|\cup_{i \in V_j} f^{-1}(i) \right| \geq \delta |S|.
\end{equation}
\end{prop}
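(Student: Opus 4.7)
The plan is to run a simple greedy bin-packing argument on the indices $1, 2, \ldots, n$. I process the indices in order, maintaining a current bin $V_j$ that I enlarge by adding one index at a time. After each addition, I check the accumulated mass $\left|\bigcup_{i' \in V_j} f^{-1}(i')\right|$; once this mass reaches or exceeds $\delta|S|$, I close $V_j$ and open a fresh bin $V_{j+1}$. After all $n$ indices have been processed, the resulting sequence $V_1, \ldots, V_q$ is by construction a partition of $[n]$.

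To verify the claimed bounds, I consider two cases. For every bin $V_j$ that was closed during the procedure (i.e., every bin except possibly the last), its final mass is at least $\delta|S|$ by the rule that triggered closure, giving the lower bound~\eqref{Eq:Shadow-3}. For the upper bound, note that immediately before the triggering element $i_0$ was added, the mass of $V_j$ was strictly less than $\delta|S|$ (otherwise $V_j$ would have been closed at a previous step), so the final mass of $V_j$ is less than $\delta|S| + |f^{-1}(i_0)| \leq 2\delta|S|$, using the hypothesis $|f^{-1}(i)| \leq \delta|S|$. This is precisely the step where that hypothesis is needed. For the last bin $V_q$, if it was never closed then its mass is strictly less than $\delta|S| \leq 2\delta|S|$, so the upper bound~\eqref{Eq:Shadow-2} still holds; the lower bound may fail only for this single bin, as permitted in the statement.

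There is no substantial obstacle here — the argument is entirely routine greedy packing — so the writeup will be short and self-contained. The one subtle point worth flagging explicitly is that the bound $|f^{-1}(i)| \leq \delta|S|$ is used exclusively to control the overshoot of a bin by its triggering element, which is exactly what yields the constant $2$ in $2\delta|S|$.
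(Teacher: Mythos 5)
Your proof is correct and takes essentially the same approach as the paper: the paper repeatedly merges pairs of parts of mass at most $\delta|S|$ until at most one such part remains, whereas you run a single left-to-right greedy sweep, but in both arguments the hypothesis $|f^{-1}(i)|\le\delta|S|$ is used only to bound the overshoot of a block by its final added part, which is exactly what yields the factor $2$ in \eqref{Eq:Shadow-2}, and the single possibly-undersized block in \eqref{Eq:Shadow-3} arises the same way. The difference is purely one of bookkeeping.
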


\begin{proof}
The required partition $V_1,V_2,\ldots,V_q$ is obtained by starting with the sets $f^{-1}(1),\ldots,f^{-1}(n)$, and sequentially unifying pairs of sets which are both of size $<\delta |S|$, until there is no such pair. It is clear by construction that the process terminates after at most $n$ steps, that no resulting set $V_i$ exceeds the size $2\delta|S|$, and that at most one of the $V_i$'s is of size $<\delta |S|$.
\end{proof}

Now we are ready to present the proof of Proposition~\ref{prop:t-shadow}.

\begin{proof}[Proof of Proposition~\ref{prop:t-shadow}]
We denote $\h = \h_0 \oplus [t]$, fix $C=C(t,h,d)$ whose value will be given below, and suppose on the contrary that $\partial(\f)$ contains a copy of
\[
\h'' = \left({{[u]}\choose{\ell}}\oplus\left[t-1\right]\right)^{+},
\]
for some $u,\ell$ that satisfy the assumption of the proposition, with respect to $C$. We color the shadow $\partial(\f)$ with the color set $[n]$ such that $(\chi(E)=i) \Rightarrow (E \cup \{i\} \in \f)$. (If there are several possible `colors' for an edge, we choose one of them arbitrarily.) Thus, each color class $\chi^{-1}(i)$ is contained in $\f_{\{i\}}^{\{i\}}$. The coloring induces a partition of the hypergraph $\c \cong {{[u]}\choose{\ell}}$ into $n$ hypergraphs $\c_1,\ldots,\c_n$.

Denote the set of coordinates that correspond in the copy of the hypergraph $\h''$ to $[t-1]$ by $T'$. As explained in Section~\ref{sec:sub:shadow-overview}, the family $\f_{T'}^{T'}$ is free of the $(d-t+1)$-expanded hypergraph $\h_0 \oplus [1]$. Hence, by Proposition~\ref{Prop:color the shadow}, its shadow does not contain a rainbow copy of the hypergraph $H = s' \cdot {{[v']}\choose{d-t+1}}$, for an appropriate choice of $s',v'$. Fix such a choice of $s',v'$ and set $m=s' {{v'}\choose{d-t+1}}$.

\medskip

As explained in Section~\ref{sec:sub:shadow-overview}, each $\c_i$ is free of $\h_0$. Hence, by Proposition~\ref{Prop:turan for cross} there exists $C_1=C_1(d,h)$ such that if $C_1 < \ell < u/C_1$, we have
\begin{equation}\label{Eq:Shadow-4}
\mu(\c_i) \leq \max \left(e^{-\ell/C_1},C_1 \frac{\ell}{u} \right),
\end{equation}
for all $i$, where $\mu$ denotes the uniform measure on $\c \cong {{\left[u\right]}\choose{\ell}}$.
We would like now to apply Proposition~\ref{lem:Step 2 shadows} to the partition $(\c_1,\ldots,\c_n)$ of $\c$, in such a way that we will be able to apply Proposition~\ref{Prop:turan for cross} to the resulting partition. To do so, we first set a constant $\tilde{C}_2$ to be $C(m,d-t+1)$ in the notations of Proposition~\ref{Prop:turan for cross} (i.e., the constant for which the proposition holds with respect to the parameters $(m,d-t+1)$), and set $C_2=\max(C_1,\tilde{C}_2)$. We then apply Proposition~\ref{lem:Step 2 shadows} to the partition $(\c_1,\ldots,\c_n)$ of $\c$, with the parameter
\[
\delta = \max \left(e^{-\ell/C_2},C_2 \frac{\ell}{u} \right).
\]
(Note that by the choice of $C_2$ and~\eqref{Eq:Shadow-4}, we indeed have $\mu(\c_i) \leq \delta$ for all $i \in [n]$). This is the point at which we determine $C$: we choose it in such a way that for any $C<\ell<u/C$, we have $\delta < \frac{1}{2m+1}$. (This can clearly be done without violating the previous conditions, by taking $C$ to be sufficiently large as function of $d,h,t$).

By Proposition~\ref{lem:Step 2 shadows}, there exists a partition $V_1,\ldots,V_q$ of $[n]$ such that for each $j \in [q]$,
\begin{equation}\label{Eq:Shadow-5}
\mu \left(\cup_{i \in V_j} \c_i \right) \leq 2 \delta,
\end{equation}
and for each $j \in [q]$ except for at most one,
\begin{equation}\label{Eq:Shadow-6}
\mu \left(\cup_{i \in V_j} \c_i \right) \geq \delta.
\end{equation}
Note that $\c$ is a disjoint union of the families $\{\cup_{i \in V_j} \c_i\}_{j \in [q]}$, and thus,
\[
1=\mu(\c) = \sum_{i=1}^q \mu \left(\cup_{i \in V_j} \c_i \right) \leq 2\delta q,
\]
where the inequality follows from~\eqref{Eq:Shadow-5}. Since $\delta <\frac{1}{2m+1}$, this implies that $q \geq m+1$. Hence, by~\eqref{Eq:Shadow-6}, there exist $m$ indices $j_1,\ldots,j_m$ such that $\mu \left(\cup_{i \in V_j} \c_i \right) \geq \delta$ for all $j \in \{j_1,\ldots,j_m\}$.

We now apply Proposition~\ref{Prop:turan for cross} to the families $\cup_{i \in V_{j_1}} \c_i, \ldots, \cup_{i \in V_{j_m}} \c_i$. By the choice of $\delta$, Proposition~\ref{Prop:turan for cross} implies that these families cross contain a copy of the hypergraph $H$. Therefore, the shadow $\partial(\f)$ contains a rainbow copy $D$ of the hypergraph $(H \oplus [t-1])^+$ in which the set of coordinates that correspond to $[t-1]$ is $T'$ (defined at the beginning of the proof).

Finally, we consider the family $\f_{T'}^{T'}$ and its shadow. The previous paragraph implies that $\partial(\f_{T'}^{T'})$ contains a rainbow copy of the hypergraph $H$. This is a contradiction, since (as explained at the beginning of the proof) the family $\f_{T'}^{T'}$ is free of the $(d-t+1)$-expanded hypergraph $\h_0 \oplus [1]$, and thus, by Proposition~\ref{Prop:color the shadow}, its shadow does not contain a rainbow copy of $H$. This completes the proof.
\end{proof}

\subsection{Relation between the size of an $\h$-free family and the sizes of its shadows}
\label{sec:sub:shadow:sizes}

The following proposition, essentially proved by Kostochka, Mubayi, and Verstra\"{e}te~\cite[Lemmas~3.1 and~3.2]{kostochka2015turan}, allows us to bound the size of an $\h$-free family $\f$ in terms of the size of its shadow.
\begin{prop}
\label{lem:kostochka-mubayi-verstraete}Let $k>dh+1$, and let $\h \subseteq {{[n]}\choose{k}}$ be a $d$-expanded hypergraph of size $h$. For any $\h$-free family $\f\subseteq{{[n]}\choose{k}}$, we have $\left|\f\right|\le kh\left|\partial\left(\f\right)\right|$.
\end{prop}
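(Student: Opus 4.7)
The plan is to argue by contradiction. Suppose $|\f| > kh|\partial(\f)|$; I will produce a copy of $\h$ inside $\f$, contradicting $\h$-freeness. The starting point is the bijective double-counting identity
\[
k|\f| \;=\; \sum_{A \in \f}|A| \;=\; \sum_{E \in \partial(\f)}|c(E)|,
\]
where $c(E) := \{v \in [n]\setminus E : E \cup \{v\} \in \f\}$ is the ``color set'' at the subedge $E$. The hypothesis translates into $\sum_E|c(E)| > k^2 h\, |\partial(\f)|$, so the average of $|c(E)|$ over $\partial(\f)$ strictly exceeds $k^2 h$. By averaging, at least one subedge $E^*$ has $|c(E^*)| > k^2 h$, giving a heavy cluster of more than $k^2 h$ edges of $\f$ that all contain $E^*$.

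From this heavy structure I would construct a copy of $\h = \h_1^+$ greedily. Write $\h_1 = \{F_1,\ldots,F_h\}$ with each $F_i$ of size $d$, so $|V(\h_1)| \le dh < k-1 = |E^*|$ by the assumption $k > dh+1$. The idea is to build the copy one edge at a time: pick $H_1,\ldots,H_h \in \f$ together with an injection $\phi\colon V(\h_1) \to [n]$ such that $\phi(F_i) \subseteq H_i$ and the padding $H_i \setminus \phi(F_i)$ is disjoint from $\bigcup_{j<i} H_j$ outside the core vertices shared with earlier $F_j$'s. At each step, the number of edges of $\f$ passing through the already-committed piece of the core is large -- this is the content of the heavy-shadow identity combined with iterated Kruskal--Katona bounds on the higher shadows $|\partial^j(\f)|$ for $j \le dh$. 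Subtracting the few ``forbidden'' extensions that reuse already-placed vertices still leaves a valid choice, so the construction terminates successfully.

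The main obstacle is ensuring the disjointness of the paddings $D_i$ in the expansion. Having $|c(E^*)|$ large only produces many edges of $\f$ that \emph{share} the $(k-1)$-set $E^*$, but since any two edges of $\h$ meet in at most $d$ vertices -- far fewer than $k-1$ -- such edges cannot directly serve as distinct $H_i$'s. The resolution is to leverage not just one heavy $E^*$ but the abundance of heavy subedges guaranteed by the averaging argument, and to use the hypothesis $k > dh+1$, which leaves $k-d \ge 1$ fresh padding slots per newly chosen edge, so that one can keep avoiding previously-used vertices at each greedy step. Quantitatively, if at some step the valid extensions of the partial copy dropped below the required count, then the resulting deficit in $\sum_E|c(E)|$ would contradict the lower bound $k^2 h\,|\partial(\f)|$, and this tension between the overall density and the local structural constraints is precisely what forces the copy of $\h$ to appear.
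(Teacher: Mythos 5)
The paper does not prove this proposition from scratch; it cites Lemmas~3.1 and~3.2 of Kostochka--Mubayi--Verstra\"{e}te, whose argument has two parts: (i) a \emph{deletion} step showing that if $|\f|>kh|\partial(\f)|$ then there is a \emph{nonempty} subfamily $\f'\subseteq\f$ in which every subedge $E\in\partial(\f')$ has more than $kh$ extensions $v$ with $E\cup\{v\}\in\f'$ (repeatedly delete, for any subedge with between $1$ and $kh$ extensions, all edges containing it; this removes at most $kh$ edges per element of the shadow, so it cannot exhaust $\f$); and (ii) a greedy embedding of $\h=\h_1^+$ into $\f'$ by single-vertex rotations $B\mapsto (B\setminus\{w\})\cup\{v\}$, which succeeds because at every step the set of vertices that must be avoided has size less than $kh$ (here $k>dh+1$ is used), while every subedge encountered offers more than $kh$ choices of $v$.

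Your proposal is missing exactly ingredient (i), and this is a genuine gap rather than a detail. Averaging over the identity $k|\f|=\sum_{E}|c(E)|$ only yields that \emph{some} (or on average, many) subedges are heavy, and you correctly observe that a single heavy subedge $E^*$ is useless: the edges through $E^*$ pairwise intersect in $k-1>d$ vertices and cannot serve as distinct edges of a copy of $\h$. But your proposed repair does not close the gap. The greedy rotation argument needs \emph{every} subedge it happens to pass through to be rich -- and these are determined by the construction, not selectable in advance -- so "an abundance of heavy subedges" in the average sense gives no control. Likewise, a local failure of the greedy step (a subedge encountered mid-construction with few extensions) produces no quantifiable deficit in $\sum_E|c(E)|$, so the "tension with the global lower bound" you invoke is not a contradiction one can actually derive; and Kruskal--Katona bounds on $|\partial^j(\f)|$ give size information about iterated shadows, not the pointwise richness needed here. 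The fix is to replace the averaging by the minimum-degree cleaning argument above, after which your greedy/rotation plan goes through essentially as you describe it.
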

As the proposition is proved in~\cite{kostochka2015turan} only in a special case, we present its proof for a general $\h$ for the sake of completeness.
\begin{proof}
For $\ell \in \mathbb{N}$, a family $\f \subseteq {{[n]}\choose{k}}$ is called $\ell$-full, if for any $A \in \partial(\f)$, we have $|\f_{A}^{A}| \geq \ell$. The assertion is an immediate consequence of the following two lemmas:
\begin{itemize}
\item For any $k \geq 2, \ell \geq 1$, any family $\f \subseteq {{[n]}\choose{k}}$ has an $(\ell+1)$-full subfamily $\f'$ with $|\f'| \geq |\f| - \ell|\partial(\f)|$.
\item For any hypergraph $\h$ with $h$ edges of size $\leq d$ and any $k>dh+1$, any non-empty $(kh+1)$-full family $\f \subseteq {{[n]}\choose{k}}$ contains a copy of $\h^+$.
\end{itemize}
The easy first lemma is exactly~\cite[Lemma~3.1]{kostochka2015turan}. The second lemma is proved in~\cite[Lemma~3.2]{kostochka2015turan} for the case where $\h$ is a path or a cycle. To prove it for a general $\h$, observe that the assertion would follow once we show:
\begin{claim}
Let $\f \subseteq {{[n]}\choose{k}}$ be a non-empty $\ell$-full family. For any $E,E_1,\ldots,E_m \in \f$ such that $|E \cup E_1 \cup \ldots \cup E_m|<\ell$   and for any $S \subset E$, there exists $E' \in \f$ such that $E' \cap (E \cup E_1 \cup \ldots \cup E_m) = S$.
\end{claim}
Indeed, given the claim, one can show that $\f$ contains a copy of $\h^+$ constructively, as follows. First, one embeds $V(\h)$ into $V' \subset E$, for an arbitrary $E \in \f$; thus, the edges of $\h$ correspond to $E'_1,E'_2,\ldots,E'_h \subset V'$. Then one constructs the edges of the copy, $E_1,E_2,\ldots,E_h \in \f$, one by one, such that at each step $i$, one has $E_i \cap (E \cup E_1 \cup \ldots \cup E_{i-1}) = E'_i$. It is clear that the resulting sub-hypergraph of $\f$ is a copy of $\h^+$.

\medskip \noindent \emph{Proof of the Claim.} Consider the family $\g=\{A \in \f: S \subseteq A \cap (E \cup E_1 \cup \ldots \cup E_m)\}$. Note that $\g$ is non-empty, as $E \in \g$. Let $E' \in \g$ be such that $|E' \cap (E \cup E_1 \cup \ldots \cup E_m)|$ is minimal. We claim that $E' \cap (E \cup E_1 \cup \ldots \cup E_m)=S$, which will prove the assertion. Assume on the contrary $v \in (E' \cap (E \cup E_1 \cup \ldots \cup E_m)) \setminus S$. Consider $E' \setminus \{v\} \in \partial(\f)$. Since $\f$ is $\ell$-full and $|E \cup E_1 \cup \ldots \cup E_m|<\ell$, there exists $v' \not \in E \cup E_1 \cup \ldots \cup E_m$, such that $E'' := (E' \setminus \{v\}) \cup \{v'\} \in \f$. However, $|E'' \cap (E \cup E_1 \cup \ldots \cup E_m)| < |E' \cap (E \cup E_1 \cup \ldots \cup E_m)|$, a contradiction. This completes the proof of the claim and of Proposition~\ref{lem:kostochka-mubayi-verstraete}.
\end{proof}

Using Proposition \ref{prop:t-shadow}, we can generalize Proposition~\ref{lem:kostochka-mubayi-verstraete} to the $t$-shadow, where $(t-1)$ is the size of the kernel of $\h$.
\begin{prop}
\label{cor:t-shadows}
For any constants $d,h,t$, there exists a constant $C=C(d,h,t)$ such that the following holds. Let $C < k < n/C$ and let $\h\subseteq{{[n]}\choose{k}}$ be a $d$-expanded hypergraph of size $h$ whose kernel is of size $t-1$. For any $\h$-free family  $\f\subseteq{{[n]}\choose{k}}$, we have
\[
\left|\f\right|\le C k^{t}|\partial^{t}\left(\f\right)|.
\]
\end{prop}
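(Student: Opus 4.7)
The plan is to iterate Proposition~\ref{lem:kostochka-mubayi-verstraete} exactly $t$ times, using Proposition~\ref{prop:t-shadow} at each intermediate stage to supply the forbidden hypergraph needed to apply the basic shadow bound to $\partial^i(\f)$. First I would build a decreasing chain of forbidden hypergraphs $\h^{(0)}, \h^{(1)}, \ldots, \h^{(t-1)}$ whose kernel sizes decrease from $t-1$ down to $0$. Set $\h^{(0)} := \h$, which has kernel of size $t-1$. Assuming inductively that $\partial^i(\f)$ is free of a constant-parameter $d_i$-expanded hypergraph $\h^{(i)}$ of size $h_i$ with kernel of size $t-1-i$, I invoke Proposition~\ref{prop:t-shadow} (with the parameter there set to $t-1-i$ and with suitable constants $u_{i+1}, \ell_{i+1}$ in the valid range) to conclude that $\partial^{i+1}(\f)$ is free of
\[
\h^{(i+1)} := \left(\binom{[u_{i+1}]}{\ell_{i+1}} \oplus [t-2-i]\right)^+,
\]
which is automatically $(\ell_{i+1} + t - 2 - i)$-expanded of size $\binom{u_{i+1}}{\ell_{i+1}}$ with kernel of size $t-2-i$. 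After $t-1$ such iterations I arrive at $\h^{(t-1)}$ with empty kernel, and all of the parameters depend only on $d$, $h$, $t$.

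Next, for each $i \in \{0, 1, \ldots, t-1\}$, I would apply Proposition~\ref{lem:kostochka-mubayi-verstraete} to the $\h^{(i)}$-free family $\partial^i(\f) \subseteq \binom{[n]}{k-i}$, obtaining $|\partial^i(\f)| \leq (k-i) h_i |\partial^{i+1}(\f)|$. The hypothesis $k - i > d_i h_i + 1$ of that proposition is met for every $i \leq t-1$ provided the constant $C = C(d,h,t)$ in the statement of Proposition~\ref{cor:t-shadows} is taken sufficiently large. Multiplying these $t$ inequalities telescopes to
\[
|\f| \leq \prod_{i=0}^{t-1} (k-i) h_i \cdot |\partial^t(\f)| \leq C_2 k^t |\partial^t(\f)|,
\]
where $C_2 := \prod_i h_i$ depends only on $d$, $h$, $t$, as required.

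The only point that demands genuine care is the first step: one must verify that the iterative application of Proposition~\ref{prop:t-shadow} is consistent, i.e., that the output hypergraph $\h^{(i+1)}$ has precisely the data needed to feed into the next iteration. This is immediate because the output is always given in the explicit form $\left(\binom{[u]}{\ell} \oplus [t'-1]\right)^+$, a $(\ell + t'-1)$-expanded hypergraph whose size $\binom{u}{\ell}$ and expandedness are bounded by constants depending only on the inputs to that application. Since the total number of iterations is a constant ($t-1$), all parameters in the chain stay bounded uniformly in terms of $d$, $h$, $t$, and the condition $k < n/C$ is used precisely through the range hypothesis $C < \ell < u/C$ at each intermediate application of Proposition~\ref{prop:t-shadow}.
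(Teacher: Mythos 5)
Your proposal is correct and is essentially the paper's own proof, just written out in more detail: the paper likewise iterates Proposition~\ref{prop:t-shadow} to show each successive shadow $\partial^l(\f)$ is free of a constant-parameter expanded hypergraph, then applies Proposition~\ref{lem:kostochka-mubayi-verstraete} $t$ times in a row and telescopes. The only cosmetic quibble is that the role of $k<n/C$ is not literally "through the hypothesis $C<\ell<u/C$" (those constrain the chosen constants $u,\ell$); it enters through the implicit requirements in the proof of Proposition~\ref{prop:t-shadow}, as the paper itself remarks.
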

\begin{proof}
By Proposition \ref{prop:t-shadow}, for any $l \leq t-1$, the $l$'th shadow $\partial^l(\f)$ is free of a $d_l$-expanded hypergraph $\h_l$ of size $h_l$, where $d_l,h_l$ depend only on $d,h,t$. Hence, Proposition \ref{lem:kostochka-mubayi-verstraete} can be applied $t$ times in a row, to the families $\f,\partial(\f),\partial^2(\f),\ldots$, to yield the assertion. (Note that the condition $C < k < n/C$ is required in the application of Proposition \ref{prop:t-shadow}).
\end{proof}

\section{Uncapturable Families Contain Any Fixed $d$-Expanded Hypergraph}
\label{sec:uncap-contains}

Let $d,h$ be constants, and let $\h$ be a $d$-expanded ordered hypergraph of size $h$. In this section we prove two propositions. The first essentially asserts that for $C\log n< k <n/C$, any $h$ uncapturable families of $k$-sets cross contain a copy of $\h$. Formally:
\begin{prop}
\label{prop:uncap cross k large} For any constants $d,h,r$, there exist constants $C,s$ that depend only on $d,h,r$,
such that the following holds. Let $\h$ be a $d$-expanded ordered hypergraph of size $h$. Let $C\log n < k_{1},\ldots,k_{h} < n/C$, and let $\f_{1}\subseteq{{[n]}\choose{k_1}},\ldots,\f_{h}\subseteq{{[n]}\choose{k_h}}$
be $\left(s,\left(\frac{k_{i}}{n}\right)^{r}\right)$-uncapturable families. Then $\f_1,\ldots,\f_h$ cross contain a copy of $\h$.
\end{prop}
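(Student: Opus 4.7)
The plan is to follow the two-step strategy sketched in Section~\ref{sec:sub:overview:uncap}. Introduce the auxiliary notion of \emph{$(l,\alpha)$-quasiregularity}: a family $\g\subseteq\binom{[n]}{k}$ is $(l,\alpha)$-quasiregular if $\mu(\g_B^B)\le\alpha\,\mu(\g)$ for every $B\subseteq[n]$ with $|B|\le l$. The proposition then reduces to the combination of two statements: (a) any $h$ families that are $(l,\alpha)$-quasiregular with $\alpha=\Theta(n/k)$ cross-contain a copy of any fixed $d$-expanded ordered hypergraph of size $h$; and (b) after conditioning on appropriate constant-size pivot sets, any $h$ uncapturable families become quasiregular.

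\textbf{Step 1 (Quasiregularization).} For $i=1,\ldots,h$, build inductively pivot sets $D_i\subseteq[n]\setminus(D_1\cup\cdots\cup D_{i-1})$ of constant size so that $\tilde{\f}_i:=(\f_i)_{D_1\cup\cdots\cup D_h}^{D_i}$ becomes $(l,\alpha)$-quasiregular. The mechanism is greedy augmentation: starting with $D_i=\emptyset$, whenever a set $B$ of size at most $l$ violates the quasiregularity inequality for the current slice, replace $D_i$ by $D_i\cup B$. The relevant measure is at most $1$ and is multiplied by a factor of $\alpha$ per augmentation, while the $(s,(k/n)^r)$-uncapturability of $\f_i$ (for $s$ sufficiently large as a function of $d,h,r$) keeps the measure of order $(k/n)^r$; hence the process terminates in $O_r(1)$ rounds and yields $|D_i|=O_{d,h,r}(1)$.

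\textbf{Step 2 (Embedding).} Write $\h=\h_1^+$ with $\h_1=(H_1,\ldots,H_h)$ $d$-uniform, and $V=V(\h_1)$. Seek an injective embedding $\phi:V\to[n]\setminus\bigcup_jD_j$ such that $\mu((\tilde{\f}_i)_{\phi(H_i)}^{\phi(H_i)})\ge\Omega(\mu(\tilde{\f}_i))$ for every $i$, and then extend each $\phi(H_i)$ to a full $k_i$-set in $\tilde{\f}_i$, with the extensions pairwise disjoint, by invoking Theorem~\ref{thm:Proof of the aaroni howrd conjecture} on the cross-matching problem for the further-conditioned slices. To construct $\phi$, proceed by induction on $i$: given a partial embedding satisfying the desired inequalities for $j<i$, extend to $V(H_i)\setminus V(H_{<i})$ by a random injection $\psi$, and use the averaging identity $\mathbb{E}_\psi[\mu(\g_{\psi(H'_i)}^{\psi(H'_i)})]\approx\mu(\g)$ (where $\g=(\tilde{\f}_i)_{\phi(H_i\cap V(H_{<i}))}^{\phi(H_i\cap V(H_{<i}))}$ and $H'_i=H_i\setminus V(H_{<i})$), together with the $\alpha$-upper bound from quasiregularity, to locate via a Markov-type argument a positive fraction of extensions for which the inequality holds.

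The main obstacle will be Step 2: embeddings of different $H_i$'s are correlated (they share vertices of $\h_1$), so a naive union bound fails, and the averaging above requires a \emph{lower} bound on $\mu(\g)$, not merely the upper bound provided by quasiregularity. Resolving this forces a strengthening of Step 1: pick $l$ with slack to absorb the $O_{d,h}(1)$ further conditionings executed in Step 2, and select the pivot sets so that the constant-size sub-slices $(\tilde{\f}_i)_B^B$ retain measure $\Omega(\mu(\tilde{\f}_i))$ for every $B$ of size up to $l-dh$. Because $h$, $d$, $l$, and the number of induction steps are all constants depending only on $d,h,r$, this bookkeeping is feasible and closes the proof.
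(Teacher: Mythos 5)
Your overall architecture (quasiregularize via greedy pivoting, embed the base hypergraph into well-behaved slices, finish with a cross-matching) matches the paper's, and your Step 1 is essentially Proposition~\ref{lem:Step 2 uncap}. The gap is in Step 2, and although you have correctly located it, your proposed fix does not work. The averaging identity together with the quasiregularity upper bound $\mu(\g_B^B)\le\alpha\,\mu(\g)$ with $\alpha=\Theta(n/k)$ gives, via Markov, only that a $\Theta(k/n)$-fraction of placements is good for each family; since the $H_i$'s share vertices you need a single placement that is good for all $h$ families simultaneously, and a union bound over bad events of probability $1-\Theta(k/n)$ each is vacuous. Your proposed strengthening of Step 1 --- choosing the pivots so that $\mu\left(\left(\tilde{\f}_i\right)_B^B\right)\ge\Omega\left(\mu\left(\tilde{\f}_i\right)\right)$ for \emph{every} $B$ of size at most $l-dh$ --- is unattainable: an uncapturable, quasiregular family may entirely avoid a fixed vertex $j$, in which case $\left(\tilde{\f}_i\right)_{\{j\}}^{\{j\}}=\emptyset$.

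What is actually needed is Proposition~\ref{prop:Fairness}: for a family of measure at least $e^{-k/C}$, \emph{almost every} constant-sized set is $\delta$-fair, so the good event for each family has probability $1-\delta$ rather than $\Theta(k/n)$ and the union bound over the $h$ families goes through; the paper then converts fairness plus quasiregularity into uncapturability of the conditioned slices via Proposition~\ref{lem:random and fair imply ultra uncapturable}. The fairness proposition rests on a Chernoff-plus-double-counting argument, and it is exactly where the hypothesis $k>C\log n$ enters (it guarantees $(k_i/n)^r\ge e^{-k_i/C}$). Your argument never uses that hypothesis, which is a red flag: the proposition is false for $k=o(\log n)$, as shown by $\f_i=\binom{S_i}{k}$ for a balanced partition $S_1\sqcup\cdots\sqcup S_h=[n]$. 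A secondary issue: the slices you feed into the final cross-matching step have measure only $\Theta((k/n)^r)$, well below the threshold $\approx (h-1)k/n$ required by Theorem~\ref{thm:Proof of the aaroni howrd conjecture}; the paper instead closes with Proposition~\ref{prop:uncapturable}, which applies to uncapturable families of arbitrarily small measure.
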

Note that the hypothesis $k_{i}> C\log n$ is necessary, due to the following example.
\begin{example}
Let $\{S_{1},\ldots,S_{h}\}$ be a balanced partition of $\left[n\right]$. Then the families $\f_{1}={{S_{1}}\choose{k}},\ldots,\f_{h}={{S_{h}}\choose{k}}$
are cross free of any hypergraph of size $h$ except for the matching hypergraph $\m_{h}$ that consists of $h$ pairwise disjoint edges. However, for any constant $s$,
the families $\f_{i}$ are $\left(s,\frac{k}{n}\right)$-uncapturable if $k\le c\log n$ for a sufficiently small constant $c$.
\end{example}

The second proposition holds for any $C< k < n/C$, but applies only in the `single family' setting. Here we essentially show that any `not too small' uncapturable family contains a copy of $\h$. Formally:
\begin{prop}
\label{prop:uncap single k small} For any constants $d,h,r,t$, there exist constants $C,s$ that depend only on $d,h,r,t$,
such that the following holds. Let $\h$ be a $d$-expanded hypergraph of size $h$ with a kernel of size $t-1$. Let $C \le k\le n/C$, and let $\f\subseteq{{[n]}\choose{k}}$ be an $\left(s,\epsilon \left(\frac{k}{n}\right)^{t}\right)$-uncapturable family, where
$\epsilon=\max \left(e^{-k/C},C \left(\frac{k}{n}\right)^r \right)$. Then $\f$ contains a copy of $\h$.
\end{prop}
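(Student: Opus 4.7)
The plan is to prove Proposition~\ref{prop:uncap single k small} by splitting into two regimes based on the magnitude of $k$, following the strategy outlined in Section~\ref{sec:sub:overview:uncap}. Fix $d, h, r, t$, let $\h$ have kernel of size $t-1$, and let $\f$ be the given uncapturable family. The range $k \ge C_1 \log n$ (where $\epsilon = C(k/n)^r$ dominates) is handled by a quasiregularity argument; the range $C \le k \le C_1 \log n$ (where $e^{-k/C}$ may dominate) is handled by induction on $t$ using the shadow results of Section~\ref{sec:Random-sampling}. The constant $C_1$ is chosen so that the two regimes jointly cover $C \le k \le n/C$.

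For the large-$k$ regime I would introduce \emph{$(\ell, \alpha)$-quasiregularity} ($\mu(\f_B^B) \le \alpha \mu(\f)$ for all $|B| \le \ell$) and prove two lemmas. First, a \emph{regularization lemma}: any uncapturable family admits, via a greedy iteration that absorbs witnesses of non-quasiregularity into a growing constant-sized set $D$, a slice $\f_D^D$ that is $(s', O(n/k))$-quasiregular; the iteration terminates in a bounded number of steps because each step multiplies the slice measure by more than $\alpha$, which uncapturability forbids. Second, a \emph{quasiregular cross-containment lemma}: any $(s', O(n/k))$-quasiregular families $\g_1, \ldots, \g_h$ cross contain any ordered $d$-expanded hypergraph, proved by using Proposition~\ref{prop:Fairness} to fix a common center and Theorem~\ref{thm:Proof of the aaroni howrd conjecture} on the complementary slice to find the disjoint enlargements. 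Iterating the regularization on $h$ disjoint portions of $[n]$ produces pairwise disjoint equal-sized sets $D_1, \ldots, D_h$ with each slice $\f_{D_1 \cup \cdots \cup D_h}^{D_i}$ quasiregular; applying the cross lemma to these slices and reattaching the disjoint $D_i$'s (which do not disturb the intersection pattern and in particular preserve the kernel of size $t-1$) yields a copy of $\h$ in $\f$.

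For the small-$k$ regime I would induct on $t$. In the inductive step ($t \ge 2$), a Kruskal--Katona-type estimate shows that $\partial(\f)$ inherits uncapturability suitable for the weaker statement with kernel size $t-2$; the contrapositive of Proposition~\ref{prop:t-shadow} then reduces the task to finding a copy of $\h'' = (\binom{[u]}{\ell} \oplus [t-2])^+$ in $\partial(\f)$, which is supplied by the inductive hypothesis. For the base case $t = 1$ (empty kernel), I would argue by contradiction: if $\f$ were $\h$-free, Proposition~\ref{Prop:color the shadow} forbids a rainbow copy of a sufficiently large expanded hypergraph in $\partial(\f)$. Ordering $[n]$ so that $\mu(\f_{\{i\}}^{\{i\}})$ is non-increasing and partitioning the tail into disjoint blocks $S_1, \ldots, S_{s'}$, the forbidden rainbow copy translates into cross-freeness of the families $\bigcup_{i \in S_j} \f_{\{i\}}^{\{i\}}$ with respect to a fixed $d$-expanded hypergraph in $\binom{[n]}{k-1}$. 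Proposition~\ref{P:2} then forces $\min_j \mu(\bigcup_{i \in S_j} \f_{\{i\}}^{\{i\}}) \le e^{-k/C}$; iterating this argument with varying choices of blocks drives $\mu((\partial(\f))_{[s'']}^{\emptyset}) \le e^{-k/C}$ for a constant $s''$, contradicting the uncapturability of $\partial(\f)$.

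The main obstacle I expect is the base case of the small-$k$ regime, where the sorting-and-coloring argument must convert the forbidden rainbow structure into a sharp enough measure bound on $\mu((\partial(\f))_{[s'']}^{\emptyset})$ to contradict uncapturability, while carefully tracking the interplay between the $e^{-k/C}$ and $C(k/n)^r$ terms in $\epsilon$ and the extra $(k/n)^t$ factor. A secondary technical difficulty is respecting the disjointness requirement on $D_1, \ldots, D_h$ during the large-$k$ regularization step, since each round must be performed in a region of $[n]$ disjoint from previously selected sets, which restricts the available uncapturability budget and requires careful parameter bookkeeping.
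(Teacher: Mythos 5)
Your overall architecture matches the paper's: the regime $k\gg\log n$ is handled by upgrading uncapturability to quasiregularity and applying a cross-containment lemma for quasiregular families (this is exactly Propositions~\ref{lem:Step 2 uncap} and~\ref{prop: pseudorandomness}, packaged as Proposition~\ref{prop:uncap cross k large} with all families equal to $\f$), and the regime $k=O(\log n)$ is handled by induction on the kernel size via the shadow, with the base case being the rainbow-coloring/sorting/blocking argument. That part of the proposal is sound.

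The genuine gap is in the mechanism you name for transferring uncapturability between $\f$ and $\partial(\f)$. You attribute it to ``a Kruskal--Katona-type estimate,'' but Kruskal--Katona gives the inequality in the useless direction: it yields $\mu\left(\f_{S}^{\emptyset}\right)\le\mu\left(\partial\left(\f_{S}^{\emptyset}\right)\right)\le\mu\left(\left(\partial\f\right)_{S}^{\emptyset}\right)$ with \emph{no gain}, whereas the induction requires a gain of a factor $\Theta(k^2/n)$. Concretely: the hypothesis for kernel size $t-1$ is $(s,\epsilon(k/n)^{t})$-uncapturability of $\f$, while the inductive hypothesis applied to $\partial(\f)$ (whose forbidden hypergraph, by Proposition~\ref{prop:t-shadow}, has kernel of size $t-2$) produces a capturing set $S$ with $\mu\left(\left(\partial\f\right)_{S}^{\emptyset}\right)\le\epsilon'(k/n)^{t-1}$ only. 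To contradict the hypothesis on $\f$ you must convert this into $\mu\left(\f_{S}^{\emptyset}\right)\le\epsilon(k/n)^{t}$, i.e.\ pick up an extra factor of $k/n$. This factor does not come from shadow combinatorics of arbitrary families; it comes from the Kostochka--Mubayi--Verstra\"{e}te lemma (Proposition~\ref{lem:kostochka-mubayi-verstraete}), which gives $|\g|\le kh|\partial(\g)|$, equivalently $\mu(\g)\le O(k^2h/n)\,\mu(\partial(\g))$, and which is valid only because $\g=\f_{S}^{\emptyset}$ is itself $\h$-free (it is false for, say, $\g=\binom{[n]}{k}$). The same omission surfaces in your base case: your blocking argument yields $\mu\left(\left(\partial\f\right)_{[s'']}^{\emptyset}\right)\le O\left(e^{-k/C''}\right)$, and you propose to contradict ``the uncapturability of $\partial(\f)$'' --- but that was never a hypothesis, and establishing it from the uncapturability of $\f$ (with the threshold \emph{increased} by $n/k$) would require the very same lemma. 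The correct endgame is to apply Proposition~\ref{lem:kostochka-mubayi-verstraete} to the $\h$-free family $\f_{[s'']}^{\emptyset}$, using $\partial\left(\f_{[s'']}^{\emptyset}\right)\subseteq\bigcup_{i>s''}\f_{\{i\}}^{\{i\}}$, to conclude $\mu\left(\f_{[s'']}^{\emptyset}\right)\le\frac{k^{2}h}{n}\cdot O\left(e^{-k/C''}\right)\le e^{-k/C}\cdot\frac{k}{n}$, which is what contradicts the stated $(s,\epsilon\cdot k/n)$-uncapturability of $\f$. With this substitution (and it is only a substitution of the correct tool, not a new idea), your proof closes; without it, the exponents of $(k/n)$ do not match at any level of the induction.
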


\subsection{Proof overview}

A central ingredient in the proofs of both propositions is the following new notion of regularity, that is somewhat reminiscent of $\epsilon$-fairness.
\begin{defn}
A family $\f\subseteq{{[n]}\choose{k}}$ is said to be $\left(h,\alpha\right)$-quasiregular
if $\mu\left(\f_{B}^{B}\right)\le\alpha\mu\left(\f\right)$ for any
$B\subseteq\left[n\right]$ with $|B| \leq h$.
\end{defn}
It is clear that when $\alpha$ is very close to $1$, quasiregularity imposes a strong restriction on the structure of $\f$. However, we shall consider this notion mainly with much larger values of $\alpha$, at the vicinity of $n/k$. We will show that quasiregularity implies uncapturability (with appropriate parameters), and that on the other hand, uncapturable families can be `upgraded' to quasiregular ones by looking at appropriate slices.

\medskip

The proof of Proposition \ref{prop:uncap cross k large} consists of three steps. Let $\h_1 \subseteq {{\left[hd\right]}\choose{d}}$ be the `base' multi-hypergraph, such that $\h_1^{+}=\h$.
%We show that the families
%$\f_{1},\ldots,\f_{h}$ cross contain a copy of $\h$ through the
%following steps.
\begin{enumerate}
\item We first show that there exist `small' disjoint sets $T_{1},\ldots,T_{h}$, such that the families $\g_{1}:=\left(\f_{1}\right)_{T_{1}\cup\cdots\cup T_{h}}^{T_{1}},\ldots,\g_{h}:=\left(\f_{h}\right)_{T_{1}\cup\cdots\cup T_{h}}^{T_{h}}$ are quasiregular. (This is the `upgrade' from uncapturability to quasiregularity mentioned above.)

\item We use the quasiregularity to show that there exists a copy of $\h_1$ of the form $\left( H_{1},\ldots,H_{h}\right) $,
such that the families $\left(\g_{1}\right)_{H_{1}\cup\cdots\cup H_{h}}^{H_{1}},\ldots,\left(\g_{h}\right)_{H_{1}\cup\cdots\cup H_{h}}^{H_{h}}$
are uncapturable. This part relies on Proposition~\ref{prop:Fairness}.

\item We apply Proposition \ref{prop:uncapturable} to find a matching $(M_1,M_2,\ldots,M_h)$ such that
\[
M_{1}\in\left(\g_{1}\right)_{H_{1}\cup\cdots\cup H_{h}}^{H_{1}},\ldots,M_{h}\in\left(\g_{h}\right)_{H_{1}\cup\cdots\cup H_{h}}^{H_{h}}.
\]
Now, the sets
\[
M_{1}\cup H_{1}\cup T_{1}\in\f_{1},\ldots,M_{h}\cup H_{h}\cup T_{h}\in\f_{h}
\]
constitute a copy of $\h$, and thus, the families $\f_{1},\ldots,\f_{h}$ cross contain a copy of $\h$.
\end{enumerate}

Proposition \ref{prop:uncap single k small} is proved by induction on $|K(\h)|$ and is mainly based on
the relation between $\f$ and its shadow explored in Section \ref{sec:Random-sampling}. We let $\f$ be an $\h$-free family and
want to show that $\f$ is capturable. Denote $\a_i = \f_{\{i\}}^{\{i\}}$ for all $i \in [n]$ and assume w.l.o.g. $\mu(\a_1) \ge \ldots \ge \mu(\a_n)$. The proof of the induction basis proceeds in three steps.
\begin{enumerate}
\item We use Proposition~\ref{Prop:color the shadow} to assert that there exist $m,l$ such that any $s=l {{m}\choose{d}}$ of the $\a_i$'s are cross free of a `multi-colored' copy of the hypergraph $l\cdot {{[m]}\choose{d}}$, and deduce that one of them must be `small'.

\item By a slightly more involved argument, we deduce that not only the families $\a_i$ are `small' for all $i \geq s$, but also that $\mu(\cup_{i>s} \a_i)$ is `small'.

\item We deduce that the shadow of the family $\f_{[s]}^{\emptyset}$ is `small', and hence, by Proposition \ref{lem:kostochka-mubayi-verstraete},
$\f_{\left[s\right]}^{\emptyset}$ is `small' as well, meaning that $\f$ is capturable.
\end{enumerate}
The induction step is easy, using Proposition~\ref{prop:t-shadow} which asserts that if $\f$ is free of a hypergraph with kernel of size $t$, then its shadow
is free of a `not much larger' hypergraph with kernel of size $t-1$.

\mn The rest of this section is organized as follows. In Section~\ref{sec:sub:quasiregularity} we study the relation between quasiregularity and uncapturability. The proof of Proposition~\ref{prop:uncap cross k large} is presented in Section~\ref{sec:sub:uncap-proof-cross}, and the proof of Proposition~\ref{prop:uncap single k small} is presented in Section~\ref{sec:sub:uncap-proof-small}.

\subsection{Uncapturability and quasiregularity}
\label{sec:sub:quasiregularity}

In this subsection we study the relation between the new notion of quasiregularity and the notion of uncapturability discussed in the previous sections. Our first proposition asserts that quasiregularity, even with $\alpha$ as large as $cn/k$ (for a sufficiently small constant $c$) is sufficient to imply uncapturability. Moreover, we show that if $\f$ is quasiregular and $t$ is a constant, then for any set $T$ of size $t$ that is $\epsilon$-fair for $\f$, each slice $\f_{T}^{B}$ (for $B \subseteq T$) is uncapturable.
\begin{prop}
\label{lem:random and fair imply ultra uncapturable} Let $s \geq 1$ and $t \geq 0$ be integers, let $0<\alpha<1$ and $0 \leq \epsilon<1$, and write $c=\frac{\alpha\left(1-\epsilon\right)}{\left(s+t\right)}$. Let
$k<cn$, let $\f\subseteq{{[n]}\choose{k}}$ be a $\left(t+1,c\frac{n}{k}\right)$-quasiregular family, and let $T$
be a $t$-element set that is $\epsilon$-fair for $\f$. Then for any
$B\subseteq T$, the slice $\f_{T}^{B}$ is $\left(s,\left(1-\alpha\right)\mu\left(\f\right)\right)$-uncapturable.
\end{prop}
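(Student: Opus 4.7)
Fix a set $S \subseteq [n] \setminus T$ with $|S| \leq s$, and write $\sigma = |S|$, $b = |B|$; since $(\f_T^B)_S^{\emptyset} = \f_{T \cup S}^B$, the goal reduces to showing $\mu(\f_{T \cup S}^B) > (1-\alpha)\mu(\f)$. The strategy is a one-line union-bound decomposition of $\f_T^B$ into the sub-slice $\f_{T\cup S}^B$ and a ``bad'' remainder, bounding the former from below via the $\epsilon$-fairness of $T$ and the latter from above via the $(t+1,cn/k)$-quasiregularity of $\f$.

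The decomposition $\f_T^B = \f_{T\cup S}^B \sqcup \{A \in \f : A\cap T = B,\ A\cap S \neq \emptyset\}$ gives, by a union bound, $|\f_{T\cup S}^B| \geq |\f_T^B| - \sum_{i \in S}|\f_{T\cup\{i\}}^{B\cup\{i\}}|$. The $\epsilon$-fairness of $T$ then supplies the lower bound $|\f_T^B| \geq (1-\epsilon)\mu(\f)\binom{n-t}{k-b}$. For each term in the sum over $i$, the trivial inclusion $\f_{T\cup\{i\}}^{B\cup\{i\}} \subseteq \{A \in \f : B\cup\{i\}\subseteq A\}$ combined with quasiregularity applied to the set $B\cup\{i\}$ (of size at most $t+1$) yields $|\f_{T\cup\{i\}}^{B\cup\{i\}}| \leq c(n/k)\mu(\f)\binom{n-b-1}{k-b-1}$. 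Combining these two bounds and dividing through by $\binom{n-t-\sigma}{k-b}$ produces
\[
\mu(\f_{T\cup S}^B)\ \geq\ \mu(\f)\left[(1-\epsilon)\,\frac{\binom{n-t}{k-b}}{\binom{n-t-\sigma}{k-b}} \ -\ sc\,\tfrac{n}{k}\,\frac{\binom{n-b-1}{k-b-1}}{\binom{n-t-\sigma}{k-b}}\right].
\]

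The remaining work is arithmetic: verifying that the hypothesis $k<cn$ together with the specific choice $c=\alpha(1-\epsilon)/(s+t)$ forces the bracketed expression to exceed $1-\alpha$. Each of the binomial ratios telescopes into factors of the form $(n-O(1))/(n-k-O(1))$, and the bound $k<cn$ keeps every such factor within $(1-c)^{-1}$ of one. Using Bernoulli's inequality $(1-c)^{t+\sigma-b} \geq 1-(s+t)c$ together with the identity $(s+t)c = \alpha(1-\epsilon)$, the correction term collapses to essentially $\alpha - \epsilon$, leaving exactly the slack one needs; the strict inequality is obtained from the strict inequality $\binom{n-t}{k-b}/\binom{n-t-\sigma}{k-b}>1$ whenever $\sigma>0$ (and is immediate from $\mu(\f_T^B)\ge(1-\epsilon)\mu(\f)>(1-\alpha)\mu(\f)$ when $\sigma=0$). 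The only real obstacle is therefore bookkeeping: the argument itself is a one-shot union bound, but the binomial-ratio estimates must be tracked precisely enough to recover $(1-\alpha)\mu(\f)$ rather than a weaker quantity such as $(1-\alpha)(1-\epsilon)\mu(\f)$.
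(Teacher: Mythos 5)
Your decomposition and the two input bounds are all correct, and structurally this is the paper's own argument collapsed into a single computation: the paper proves the $t=0$ case by exactly this counting and then feeds $\f_B^B$ (which inherits quasiregularity up to the factor $\frac{1}{1-\epsilon}$ supplied by fairness) back into it. The gap is precisely at the step you dismiss as ``bookkeeping'': the bracketed expression does \emph{not} exceed $1-\alpha$ in general. The positive term contributes essentially $(1-\epsilon)$, since the ratio $\binom{n-t}{k-b}/\binom{n-t-\sigma}{k-b}$ tends to $1$ as $k/n\to 0$ and so supplies no usable slack; the subtracted term, when $|S|=s$, is at least roughly $s c=\tfrac{s}{s+t}\alpha(1-\epsilon)$, because $\tfrac{n}{k}\binom{n-b-1}{k-b-1}/\binom{n-t-\sigma}{k-b}$ is at least $1$ up to lower-order corrections (your Bernoulli step bounds this factor from \emph{above} by $(1-(s+t)c)^{-1}>1$, i.e.\ it makes the correction larger, not smaller, and certainly does not collapse it to $\alpha-\epsilon$). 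So the computation delivers at best about $(1-\epsilon)\bigl(1-\tfrac{s}{s+t}\alpha\bigr)\mu(\f)$, which equals $(1-\alpha)(1-\epsilon)\mu(\f)$ when $t=0$ and falls below $(1-\alpha)\mu(\f)$ whenever $\epsilon$ is not small compared with $\alpha t/(s+t)$: for instance $s=t=1$, $\alpha=1/2$, $\epsilon=2/5$ gives roughly $0.42\,\mu(\f)$ against the required $0.5\,\mu(\f)$. The same issue already appears in your $\sigma=0$ base case, where the inequality $(1-\epsilon)\mu(\f)>(1-\alpha)\mu(\f)$ silently assumes $\epsilon<\alpha$, which is not a hypothesis.

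In fairness, the paper's own proof carries the same loss: it establishes that $\f_B^B$ is $(s+t,(1-\alpha)\mu(\f_B^B))$-uncapturable and then replaces $\mu(\f_B^B)$ by $\mu(\f)$ without justification, so what is actually proved there is uncapturability at level $(1-\alpha)(1-\epsilon)\mu(\f)$ --- harmless for the applications, where $\alpha$ and $\epsilon$ are fixed constants and only the order of magnitude matters. But your write-up explicitly promises to recover the full $(1-\alpha)\mu(\f)$ ``rather than $(1-\alpha)(1-\epsilon)\mu(\f)$'', and the mechanism you offer for this (the telescoping binomial ratios plus Bernoulli) does not and cannot deliver it from these hypotheses. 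To repair the proposal, either weaken the target to $(1-\alpha)(1-\epsilon)\mu(\f)$ (equivalently, absorb the $(1-\epsilon)$ into a redefined $\alpha$), or add the hypothesis that $\epsilon$ is sufficiently small relative to $\alpha t/(s+t)$; as written, the final inequality is simply false.
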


\begin{proof}
First we prove the assertion for $t=0$. That is, we assume that $\f$ is $\left(1,c\frac{n}{k}\right)$-quasiregular and have to show that $\f$ itself is
$\left(s,\left(1-\alpha\right)\mu\left(\f\right)\right)$-uncapturable. Suppose, on the contrary, that
%\begin{claim}
%\label{lem:quasi-implies capturable}Let $s \in \mathbb{N}$ and $\alpha\in\left(0,1\right)$,
%and write $c=\frac{\alpha}{s}$ . Let $k<cn$ and let $\f\subseteq{{[n]}\choose{k}}$ be a $\left(c\frac{n}{k},1\right)$-quasiregular
%family. Then $\f$ is $\left(s,\mu\left(\f\right)\left(1-\alpha\right)\right)$-uncapturable.\end{claim}
%\begin{proof}
there is a set $S$ of size $s$ such that $\mu\left(\f_{S}^{\emptyset}\right)\le\left(1-\alpha\right)\mu\left(\f\right)$.
We have
\begin{align*}
\mu\left(\f\right) = \frac{\left|\f\right|}{{{n}\choose{k}}} & \leq \frac{\sum_{i\in S}\left|\f_{\left\{ i\right\} }^{\left\{ i\right\} }\right|+\left|\f_{S}^{\emptyset}\right|}{{{n}\choose{k}}}=\sum_{i\in S}\frac{{{n-1}\choose{k-1}}}{{{n}\choose{k}}}\mu\left(\f_{\left\{ i\right\} }^{\left\{ i\right\} }\right)+\mu\left(\f_{S}^{\emptyset}\right)\frac{{{n-s}\choose{k}}}{{{n}\choose{k}}}.
\end{align*}
By the quasiregularity of $\f$, we have $\mu\left(\f_{\left\{ i\right\} }^{\left\{ i\right\} }\right)<\frac{\alpha}{s} \frac{n}{k}\mu\left(\f\right)$,
and by the assumption on $S$, $\mu\left(\f_{S}^{\emptyset}\right)\frac{{{n-s}\choose{k}}}{{{n}\choose{k}}}\le (1-\alpha)\mu\left(\f\right)$.
Thus,
\[
\mu\left(\f\right)\leq \sum_{i\in S}\frac{{{n-1}\choose{k-1}}}{{{n}\choose{k}}}\mu\left(\f_{\left\{ i\right\} }^{\left\{ i\right\} }\right)+\mu\left(\f_{S}^{\emptyset}\right)\frac{{{n-s}\choose{k}}}{{{n}\choose{k}}} < \mu\left(\f\right)\left(\alpha+1-\alpha\right)=\mu\left(\f\right),
\]
a contradiction.

\mn Now we consider a general $t \geq 0$. Let $|T|=t$ and $B \subseteq T$. By the quasiregularity of $\f$,
the family $\f_{B}^{B}$ is $\left(t+1-\left|B\right|,\frac{\mu\left(\f\right)}{\mu\left(\f_{B}^{B}\right)}c\frac{n}{k}\right)$-quasiregular.
Note that by the fairness of $T$, we have $\mu(\f_B^B) \geq (1-\epsilon)\mu(\f)$, and thus, $\frac{\mu\left(\f\right)}{\mu\left(\f_{B}^{B}\right)}c \leq \frac{1}{1-\epsilon} \frac{\alpha(1-\epsilon)}{s+t} \leq \frac{\alpha}{s+t}$. Hence, $\f_B^B$ is $\left(1,\frac{\alpha}{s+t}\frac{n}{k}\right)$-quasiregular.
Therefore, the above proof of the case $t=0$ implies that $\f_{B}^{B}$ is $\left(s+t,\left(1-\alpha\right)\mu\left(\f\right)\right)$-uncapturable.
Hence, the family $\f_{T}^{B}$ is $\left(s,\left(1-\alpha\right)\mu\left(\f\right)\right)$-uncapturable.
This completes the proof.
\end{proof}

\medskip

Our second proposition asserts that uncapturable families can be `upgraded' to quasiregular families, namely, that if $\f_{1},\ldots,\f_{h}$ are uncapturable
families, then we may find `small' pairwise disjoint sets $T_{1},\ldots,T_{h}$, such that the families $\left(\f_{1}\right)_{T_{1}\cup\cdots\cup T_{h}}^{T_{1}},\ldots,\left(\f_h\right)_{T_{1}\cup\cdots\cup T_{h}}^{T_{h}}$ are quasiregular.
\begin{prop}
\label{lem:Step 2 uncap} For any constants $h,h',r,c$, there exist constants $C,s$ that depend only on $h,h',r,c$,
such that the following holds. Let $C < k_{1},\ldots,k_{h} < n/C$
and let $\f_{1}\subseteq{{[n]}\choose{k_1}},\ldots,\f_{h}\subseteq{{[n]}\choose{k_h}}$
be families. Suppose that each family $\f_{i}$ is $\left(s,\left(\frac{k_{i}}{n}\right)^{r}\right)$-uncapturable.
Then there exist pairwise disjoint sets $T_{1},\ldots,T_{h}$ of size at most $2h'r$, such that for any $i\in\left[h\right],$
the family $\left(\f_{i}\right)_{T_{1}\cup\ldots\cup T_{h}}^{T_{i}}$
is $\left(h',c\frac{n}{k_{i}}\right)$-quasiregular, and $\mu \left(\left(\f_{i}\right)_{T_{1}\cup\cdots\cup T_{h}}^{T_{i}} \right) \ge\frac{1}{2}\left(\frac{k_{i}}{n}\right)^{r}$.
\end{prop}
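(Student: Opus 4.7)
The plan is to construct $(T_1,\ldots,T_h)$ by iterative refinement. Initialize $T_i=\emptyset$ for all $i\in[h]$ and write $\g_i:=(\f_i)^{T_i}_{T_1\cup\cdots\cup T_h}$. Repeatedly perform the following update: while some $\g_i$ fails to be $(h',cn/k_i)$-quasiregular, pick a witnessing set $B\subseteq[n]\setminus\bigcup_j T_j$ with $|B|\le h'$ satisfying $\mu((\g_i)^B_B)>(cn/k_i)\mu(\g_i)$, and replace $T_i$ by $T_i\cup B$. When the process terminates, all the required quasiregularity conditions hold by construction, and pairwise disjointness of the $T_j$'s is preserved automatically.

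The measure lower bound $\mu(\g_i)\ge\tfrac12(k_i/n)^r$ is the easier ingredient: initially $\mu(\g_i)=\mu(\f_i)\ge(k_i/n)^r$ by uncapturability applied with $S=\emptyset$, and every update at coordinate $i$ multiplies $\mu(\g_i)$ by at least $cn/k_i$, so this measure only grows during self-updates at $i$. If cross-interactions could be ignored, every coordinate would be updated at most $r$ times, since $\mu(\g_i)$ travels from $(k_i/n)^r$ up to at most $1$ and each self-update contributes a factor of at least $n/k_i$. This would yield $|T_i|\le h'r$; the factor of $2$ in the statement absorbs the slack forced by the cross-interaction discussed below.

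The main obstacle is that an update at an index $l$ restricts every other slice $\g_j$ to $\{D\in\g_j:D\cap B=\emptyset\}$ in a smaller universe, and an adversarial witness $B$ could in principle shrink $\mu(\g_j)$ dramatically. To handle this, I would exploit the freedom in choosing the witness $B$. By slightly tightening the quasiregularity threshold (so that a single violation produces a non-negligible family of witnessing sets via an averaging argument over $h'$-subsets), Proposition~\ref{prop:Fairness} applied to each other $\g_j$ -- whose measure is kept above $e^{-k_j/C}$ by taking the uncapturability parameter $s=s(h,h',r,c)$ large enough to dominate $\sum|T_j|$ -- shows that most small sets are $\varepsilon$-fair for $\g_j$. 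A union bound over $j\neq l$ then produces a witness $B$ that is simultaneously $\varepsilon$-fair for every other $\g_j$, guaranteeing $\mu(\g_j^{\mathrm{new}})\ge(1-\varepsilon)\mu(\g_j)$. Tracking the potential $\Phi:=\sum_i\log\mu(\g_i)$, each update increases $\Phi$ by at least $\log(cn/k_l)-(h-1)|\log(1-\varepsilon)|$, which is strictly positive and bounded away from zero once $\varepsilon$ is chosen sufficiently small in terms of $h,h',r,c$. Since $\Phi\le 0$ throughout and initially $\Phi\ge-r\sum_i\log(n/k_i)$, the total number of updates is $O(rh)$, so $|T_i|\le 2h'r$ follows. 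The hardest technical step will be justifying the ``many witnesses'' averaging claim needed to guarantee that a simultaneously $\varepsilon$-fair witness exists at every iteration, and carrying the lower bound $\mu(\g_j)\ge e^{-k_j/C}$ as an invariant of the iteration.
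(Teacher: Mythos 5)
Your proposal is a genuinely different route from the paper's, and it contains a real gap at the ``many witnesses'' step.

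The paper proceeds in two stages. First, it makes a single greedy pass over the families in order: it slices $\f_1$ down to a $\left(h',\sqrt{n/k_1}\right)$-quasiregular family $(\f_1)_{T_1}^{T_1}$ (note the exponent $\tfrac12$ --- this is a strictly \emph{stronger} regularity than the target $c\,n/k_i$), then slices the still-uncapturable $(\f_2)_{T_1}^{\emptyset}$ to choose $T_2$, and so on; the number of iterations per family is bounded by $2r$ since each multiplies the measure by at least $\sqrt{n/k_i}$, and the measure starts at $(k_i/n)^r$ and stays $\le 1$. Second, a separate cleanup pass shows that the excess strength of $\sqrt{n/k_i}$-quasiregularity survives the subsequent restriction to avoid $S=T_{i+1}\cup\cdots\cup T_h$: writing $\a=(\f_i)_{T_1\cup\cdots\cup T_i}^{T_i}$, Proposition~\ref{lem:random and fair imply ultra uncapturable} gives $\mu(\a_S^{\emptyset})\ge\tfrac12\mu(\a)$, and if $\a_S^{\emptyset}$ had a slice $B$ violating $(h',cn/k_i)$-quasiregularity one would lift back to a slice of $\a$ of measure $\Omega\!\left(\tfrac{n}{k_i}\mu(\a)\right)$, contradicting $\mu(\a_B^B)\le\sqrt{n/k_i}\,\mu(\a)$. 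The $\sqrt{n/k_i}$ slack is exactly what absorbs the constant-factor loss from conditioning on $\mathbf{A}\cap S=\emptyset$, with no need to choose the slicing sets adversarially against the other families.

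Your proposal aims directly for $(h',cn/k_i)$-quasiregularity and tries to control cross-interactions by taking each witness $B$ to be $\varepsilon$-fair for all other $\g_j$. The step that does not work is the claim that a slightly tightened threshold yields a ``non-negligible'' family of witnesses. When $\g_i$ fails $(h',cn/k_i)$-quasiregularity you get \emph{one} witness $B_0$ of size $\le h'$. Averaging $\mu\!\left((\g_i)_B^B\right)$ over extensions $B\supseteq B_0$ with $|B|=h'$ gives expectation $\mu\!\left((\g_i)_{B_0}^{B_0}\right)\ge \tfrac{cn}{k_i}\mu(\g_i)$, and the best Markov-type bound against the ceiling $1$ yields that an $\Omega\!\left(\tfrac{n}{k_i}\mu(\g_i)\right)$-fraction of extensions are witnesses for the weakened threshold. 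Early in the iteration $\mu(\g_i)$ can be $\Theta\!\left((k_i/n)^{r}\right)$, so the witness fraction can be $\Theta\!\left((k_i/n)^{r-1}\right)$, which vanishes with $n$. Proposition~\ref{prop:Fairness} only guarantees that a \emph{constant} $1-\delta$ fraction of sets are $\delta$-fair for a given $\g_j$, so the set of ``bad'' sets (unfair for some $\g_j$) has constant density and can entirely swallow the vanishing set of witnesses; the union bound you propose does not close. There is also a secondary issue that after fixing $B_0$ you need fairness among \emph{extensions} of $B_0$ rather than among arbitrary $h'$-sets, which is not the statement of Proposition~\ref{prop:Fairness}. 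The potential function $\Phi=\sum_i\log\mu(\g_i)$ and the bound $|T_i|\le 2h'r$ are fine as accounting, but without a guaranteed simultaneously-fair witness the iteration as described cannot be shown to terminate with the required properties.
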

We shall need the following simple claim.
\begin{claim}
\label{lem: Every family contains a big pseudorandom slice}Let $r,h$ be constants, and let $\f\subseteq{{[n]}\choose{k}}$
be a $k$-uniform family. If $\mu\left(\f\right)\ge (k/n)^{r}$, then there exists a set $T$ of size $\le2h'r$ such that the slice
$\f_{T}^{T}$ is an $\left(h',\sqrt{n/k}\right)$-quasiregular family, and $\mu\left(\f_{T}^{T}\right)\ge\mu\left(\f\right)$.
\end{claim}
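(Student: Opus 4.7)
The plan is a straightforward density-increment / iterative purification argument: repeatedly slice $\f$ by a small violating set until the residual slice itself becomes quasiregular, and control the number of iterations via the fact that measures are bounded by $1$.

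More concretely, I would initialise $T_0 = \emptyset$ and, at stage $j$, terminate if $\f_{T_j}^{T_j}$ is already $(h', \sqrt{n/k})$-quasiregular, setting $T := T_j$. Otherwise, by failure of quasiregularity there exists $B \subseteq [n]\setminus T_j$ with $1 \le |B| \le h'$ such that
\[
\mu\bigl((\f_{T_j}^{T_j})_B^B\bigr) > \sqrt{n/k}\cdot \mu(\f_{T_j}^{T_j}).
\]
(The case $|B|=0$ is impossible since $\sqrt{n/k}>1$ in our regime.) Set $T_{j+1} := T_j \sqcup B$ and use the identity $\f_{T_{j+1}}^{T_{j+1}} = (\f_{T_j}^{T_j})_B^B$ to conclude that the measure of the current slice grows strictly by a factor greater than $\sqrt{n/k}$ at every step. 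In particular the sequence of measures is strictly increasing, so $\mu(\f_T^T) \ge \mu(\f)$ holds automatically at termination.

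The termination bound is the essential point: after $j$ successful iterations,
\[
\mu(\f_{T_j}^{T_j}) > (n/k)^{j/2}\cdot \mu(\f) \ge (n/k)^{j/2}\cdot (k/n)^r.
\]
Since $\mu(\f_{T_j}^{T_j}) \le 1$, this forces $j \le 2r$, so the procedure halts within $2r$ rounds and the accumulated set has size $|T| \le 2rh'$, as required.

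I do not anticipate any substantive obstacle: this is the standard `remove a dense slice' scheme, calibrated to the multiplicative scale $\sqrt{n/k}$ rather than a constant factor. The only mildly delicate bookkeeping is that each witnessing $B$ is chosen in the residual universe $[n]\setminus T_j$, so the statement ``$\f_T^T$ is $(h',\sqrt{n/k})$-quasiregular'' must be interpreted as a statement about slices taken within that residual universe, with measure normalised accordingly; because $|T|$ is bounded by the constant $2h'r$ and $k,n$ are much larger, this causes no genuine difficulty.
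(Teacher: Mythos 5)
Your proof is correct and follows essentially the same density-increment iteration as the paper: at each non-quasiregular stage you pass to a slice whose measure increases by a factor of at least $\sqrt{n/k}$, and the trivial bound $\mu \le 1$ together with the hypothesis $\mu(\f)\ge (k/n)^r$ caps the number of iterations at $2r$. The only difference is that you spell out a few details the paper leaves implicit (the witnessing set is non-empty since $\sqrt{n/k}>1$, and measures are to be read in the shrinking residual universe), which is harmless.
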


\begin{proof}
If $\f$ itself is $\left(h',\sqrt{n/k}\right)$-quasiregular, we are done. Otherwise,
we may find some set $S_{0}$ of size at most $h'$, such that $\mu\left(\f_{S_{0}}^{S_{0}}\right)\ge\sqrt{n/k} \cdot \mu\left(\f\right)$.
We now repeat this process with $\f_{S_{0}}^{S_{0}}$ instead of $\f$. The process will terminate after at most $2r$ iterations.
\end{proof}
\begin{proof}[Proof of Proposition \ref{lem:Step 2 uncap}]
Let $s,C$ be sufficiently large constants to be determined below. Since $\f_{1}$ is $\left(s,\left(\frac{k_{1}}{n}\right)^{r}\right)$-uncapturable,
we have $\mu\left(\f_1\right)\ge\left(\frac{k_{1}}{n}\right)^{r}$. By Claim \ref{lem: Every family contains a big pseudorandom slice},
there exists a set $T_{1}$ of size at most $2h'r$ such that the slice $\left(\f_{1}\right)_{T_{1}}^{T_{1}}$ is $\left(h',\sqrt{\frac{n}{k_{1}}}\right)$-quasiregular and $\mu(\left(\f_{1}\right)_{T_{1}}^{T_{1}})\geq \mu(\f_1)$.
Since the family $\left(\f_{2}\right)_{T_{1}}^{\emptyset}$ is $\left(s-\left|T_1\right|,\left(\frac{k_{2}}{n}\right)^{r}\right)$-uncapturable,
there exists a set $T_{2}$ of size at most $2h'r$
such that the slice $\left(\f_{2}\right)_{T_{1}\cup T_{2}}^{T_{2}}$
is $\left(h',\sqrt{\frac{n}{k_{2}}}\right)$-quasiregular. Continuing in this fashion, we obtain that there exist pairwise disjoint sets $T_{1},T_{2},\ldots,T_{h}$, each of size at most $2h'r$, such that each family $\left(\f_{i}\right)_{T_{1}\cup T_{2}\cup\cdots\cup T_{i}}^{T_{i}}$
is $\left(h',\sqrt{\frac{n}{k_{i}}}\right)$-quasiregular, and such
that $\mu\left(\left(\f_{i}\right)_{T_{1}\cup T_{2}\cup\cdots\cup T_{i}}^{T_{i}}\right)\ge\left(\frac{k_{i}}{n}\right)^{r}$.

Let $i\in\left[h\right]$. Write $\a$ for the $\left(h',\sqrt{\frac{n}{k_{i}}}\right)$-quasiregular
family $\left(\f_{i}\right)_{T_{1}\cup T_{2}\cup\cdots\cup T_{i}}^{T_{i}}$, and denote
$S=T_{i+1}\cup\cdots\cup T_{h}$. It is clear that in order to complete the proof of the proposition, it is sufficient to show that
for any $i$, the corresponding family $\a_{S}^{\emptyset}$ is $\left(h',c\frac{n}{k_{i}}\right)$-quasiregular,
and satisfies $\mu\left(\a_{S}^{\emptyset}\right)\ge\frac{1}{2}\left(\frac{k_i}{n}\right)^{r}$. (Note that while we suppress the index $i$ in the
definition of $\a$ and $S$ for sake of clarity, $\a$ and $S$ do depend on $i$.)

\medskip

The family $\a \subseteq {{[n] \setminus (T_1 \cup \ldots \cup T_i)}\choose{k-|T_i|}}$ is $\left(h',\sqrt{\frac{n}{k_{i}}}\right)$-quasiregular, and thus, for a sufficiently large $C$ it is $\left(1,\frac{1}{2|S|}\frac{n-|T_1 \cup \ldots \cup T_i|}{k-|T_i|}\right)$-quasiregular. Hence, by Proposition~\ref{lem:random and fair imply ultra uncapturable} (applied with $t=0,s=|S|,\alpha=1/2$), the family $\a$ is $(|S|,\mu(\a)/2)$-uncapturable. In particular,
\begin{align}\label{Aux-1}
\mu\left(\a_{S}^{\emptyset}\right)\ge \frac{\mu\left(\a\right)}{2}\ge\frac{1}{2}\left(\frac{k_i}{n}\right)^{r}.
\end{align}
Suppose on the contrary that $\a_{S}^{\emptyset}$ is not $\left(h',c\frac{n}{k_{i}}\right)$-quasiregular, i.e., that there exists a set $B$ with $|B| \leq h'$ such that $\mu\left(\a_{S\cup B}^{B}\right)\ge c\frac{n}{k_{i}}\mu\left(\a_{S}^{\emptyset}\right)$. In such a case, we have
\[
\mu\left(\a_{B}^{B}\right)\ge\Pr_{\mathbf{A}\sim {{\left[n\right]\backslash B}\choose{k_i-\left|B\right|}}}\left[\mathbf{A}\cap S=\emptyset\right]\mu\left(\a_{S \cup B}^{B}\right) \ge c'\mu \left(\a_{S \cup B}^B \right) \geq cc' \frac{n}{k_i} \mu(\a_S^{\emptyset}) \ge \frac{cc'}{2} \frac{n}{k_i} \mu(\a),
\]
for some $c'$ that depends on $h,h',r,c$ (where the second to last equality follows from the assumption on $B$ and the last inequality follows from~\eqref{Aux-1}).

However, provided that $C$ is sufficiently large, this contradicts the fact that
\[
\mu\left(\a_{B}^{B}\right) \le \sqrt{\frac{n}{k_{i}}}\mu\left(\a\right),
\]
which follows from the $\left(h',\sqrt{\frac{n}{k_{i}}}\right)$-quasiregularity of $\a$. This completes the proof.
\end{proof}

\subsection{The case of a large $k$ -- proof of Proposition~\ref{prop:uncap cross k large}}
\label{sec:sub:uncap-proof-cross}

In this subsection we prove Proposition~\ref{prop:uncap cross k large}, namely, that for $C\log n < k < n/C$,
uncapturable families cross contain any fixed ordered expanded hypergraph. First we prove the same assertion under the stronger assumption
of quasiregularity, and then we prove Proposition~\ref{prop:uncap cross k large} using the `upgrade' from uncapturability to quasiregularity presented
above.

%Let $\h$ be some $d$-expanded hypergraph of constant size. We now
%show that if $\f_{1},\ldots,\f_{h}$ are some non-negligible quasiregular
%families, then they cross contain a copy of $\h$.
\begin{prop}
\label{prop: pseudorandomness} For any constants $d,h,r \in \mathbb{N}$,
there exists a constant $c=c\left(d,h,r\right)$ such that the
following holds. Let $k_{1},\ldots,k_{h} \leq cn$, and let $\f_{1}\subseteq{{[n]}\choose{k_1}},\ldots,\f_{h}\subseteq {{[n]}\choose{k_{h}}}$.
Suppose that each family $\f_{i}$ is $\left(dh+1,c\frac{n}{k_{i}}\right)$-quasiregular.
If $\f_{1},\ldots,\f_{h}$ are cross free of some $d$-expanded hypergraph $\h$ of size $h$, then there exists
$i\in\left[h\right]$, such that
\[
\mu\left(\f_{i}\right)\le \min \left(e^{-ck_{i}},\frac{3}{2}\left(\frac{k_{i}}{n}\right)^{r} \right).
\]
\end{prop}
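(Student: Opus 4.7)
The plan is to prove the contrapositive: if every family $\f_i$ is `large' in the sense that $\mu(\f_i)>e^{-ck_i}$ and $\mu(\f_i)>\tfrac{3}{2}(k_i/n)^r$, then $\f_1,\ldots,\f_h$ cross contain a copy of $\h$. Write $\h=\h_1^+$ for an ordered $d$-uniform multi-hypergraph $\h_1=(H_1,\ldots,H_h)$ and set $H=H_1\cup\cdots\cup H_h$, so $|H|\le dh$. The strategy has three stages: find a `fair' enlarging set $G$ that plays the role of a copy of $H$; use fairness$+$quasiregularity to ensure that the slices of the $\f_i$'s above the $G_i$'s are uncapturable; and then run the matching-finding theorem on the slices.

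First I locate a set $G\in\binom{[n]}{|H|}$ that is simultaneously $\delta$-fair for every $\f_i$. Because $\mu(\f_i)>e^{-ck_i}$, Proposition~\ref{prop:Fairness} applied with $s=|H|$ and a small $\delta>0$ guarantees that a random $|H|$-subset is $\delta$-fair for each individual $\f_i$ with probability at least $1-\delta$; a union bound over $i\in[h]$ (taking $\delta<1/(2h)$ and $c$ small enough that $e^{-ck_i}$ beats the fairness threshold from Proposition~\ref{prop:Fairness}) then yields such a $G$. Fix any bijection $\pi:H\to G$ and set $G_i=\pi(H_i)$. The $(dh+1,cn/k_i)$-quasiregularity of $\f_i$ combined with the $\delta$-fairness of $G$ allows me to invoke Proposition~\ref{lem:random and fair imply ultra uncapturable} (with $t=|H|$ and, say, $\alpha=1/3$) to conclude that each slice $(\f_i)_G^{G_i}$ is $(s',\tfrac{2}{3}\mu(\f_i))$-uncapturable, for any chosen $s'$, provided $c$ is small enough. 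The hypothesis $\mu(\f_i)>\tfrac{3}{2}(k_i/n)^r$ then upgrades this to $(s',(k_i/n)^r)$-uncapturability of every slice $(\f_i)_G^{G_i}\subseteq\binom{[n]\setminus G}{k_i-|G_i|}$.

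The final stage is to apply the contrapositive of Proposition~\ref{prop:uncapturable} to the slices: taking $s'=s'(r,h)$ large enough, the $h$ uncapturable slices must cross contain a matching $(M_1,\ldots,M_h)$. Then $(M_1\cup G_1,\ldots,M_h\cup G_h)$ forms a copy of $\h=\h_1^+$ cross contained in $\f_1,\ldots,\f_h$, contradicting the cross-freeness assumption and completing the proof.

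The main obstacle is the cascade of parameters, which must be fixed in the right order. First choose $r,d,h$; then $s'$ so that Proposition~\ref{prop:uncapturable} produces the matching from $(s',(k_i/n)^r)$-uncapturable families; then $\delta$ and $\alpha$ small enough that Proposition~\ref{lem:random and fair imply ultra uncapturable} delivers uncapturability at strength at least $\tfrac{2}{3}\mu(\f_i)$ (which requires the quasiregularity constant $c(s'+|H|)/(1-\delta)$ to be at most $\alpha$); and finally $c$ small enough that both the fairness hypothesis $\mu(\f_i)\ge e^{-k_i/C_{\mathrm{fair}}}$ in Proposition~\ref{prop:Fairness} and the quasiregularity-to-uncapturability inequality are simultaneously met. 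None of these demands is tight, so the cascade closes cleanly once the order is respected.
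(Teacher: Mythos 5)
Your proposal is correct and follows essentially the same route as the paper's proof: a simultaneously fair set $G$ obtained from Proposition~\ref{prop:Fairness}, the passage from quasiregularity plus fairness to uncapturability of the slices $(\f_i)_{G}^{G_i}$ via Proposition~\ref{lem:random and fair imply ultra uncapturable}, and Proposition~\ref{prop:uncapturable} applied to those slices to produce the cross matching that completes a copy of $\h$ — the paper merely runs this in the direct rather than contrapositive direction. One remark: exactly like the paper's own argument, what your contrapositive actually establishes is the disjunction ``some $\mu(\f_i)\le e^{-ck_i}$ or some (possibly different) $\mu(\f_j)\le\tfrac{3}{2}(k_j/n)^{r}$'' rather than the literal $\min$ at a single index, but this is precisely what is used downstream (e.g.\ in Corollary~\ref{Cor:aux-1}), so the discrepancy is inherited from the statement, not introduced by you.
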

\begin{proof}
Let $c$ be a small constant to be determined below and let $\f_{1}\subseteq{{[n]}\choose{k_1}},\ldots,\f_{h}\subseteq {{[n]}\choose{k_{h}}}$
be families that satisfy the assumptions of the proposition. Suppose that for all $i$, $\mu\left(\f_{i}\right) \geq e^{-ck_{i}}$. We shall show
that there exists $i\in\left[h\right]$, such that $\mu\left(\f_{i}\right) \leq \frac{3}{2}\left(\frac{k_{i}}{n}\right)^{r}$, which will complete the proof.

Write $\h=\left(A_{1},\ldots,A_{h}\right)^{+}$ for sets $A_{1},\ldots,A_{h}$ of size $d$, and let $A=A_{1}\cup\cdots\cup A_{h}$. Since
$\mu\left(\f_{i}\right) \geq e^{-ck_{i}}$ for all $i$, Proposition~\ref{prop:Fairness} (that can be applied if $c$ is sufficiently small,
as function of $h,d$) implies that there exists an $\left|A\right|$-set $A'$ that is $\frac{1}{3}$-fair for each of the families $\f_{1},\ldots,\f_{h}$.

Let $\left( A_{1}',\ldots,A'_{h}\right) \subseteq\p\left(A'\right)$
be a copy of the ordered hypergraph $\left( A_{1},\ldots,A_{h}\right) $.
Since the families $\f_{1},\ldots,\f_{h}$ are cross free of $\h=(A_1,\ldots,A_h)^+$,
the families $\left(\f_{1}\right)_{A'}^{A'_{1}},\ldots,\left(\f_{h}\right)_{A'}^{A_{h}'}$
are \emph{cross free of a matching}. Indeed, if there existed pairwise disjoint sets
$D_{1}\in\left(\f_{1}\right)_{A'}^{A_{1}'},\ldots,D_{h}\in\left(\f_{h}\right)_{A'}^{A_{h}'}$,
then the sets $D_{1}\cup A_{1}'\in\f_{1},\ldots,D_{h}\cup A_{h}'\in\f_{h}$
would constitute a copy of $\h$.

By Proposition \ref{prop:uncapturable} (applied to the families $\left(\f_{1}\right)_{A'}^{A'_{1}},\ldots,\left(\f_{h}\right)_{A'}^{A_{h}'}$),
there exists $i\in\left[h\right]$ and a constant $s=s\left(r,d,h\right)$, such that the family $\left(\f_{i}\right)_{A'}^{A_{i}^{'}}$
is $\left(s,\left(\frac{k}{n}\right)^{r}\right)$-capturable.

On the other hand, as $A'$ is $(1/3)$-fair for $\f_i$,
by Proposition~\ref{lem:random and fair imply ultra uncapturable} (which can be applied provided $c$ is sufficiently small; note that this is the place
where we use the $(dh+1,\cdot)$-quasiregularity of $\f_i$), the family $\left(\f_{i}\right)_{A'}^{A'_{i}}$ is $\left(s,\frac{2}{3}\mu(\f_i)\right)$-uncapturable.

Therefore, we have $\frac{2}{3}\mu\left(\f_i\right)<\left(\frac{k_i}{n}\right)^{r}$.
This completes the proof of the proposition.
\end{proof}

The following corollary is immediate, using the fact that for $k \gg \log n$, we have $e^{-\Omega(k)} \ll (k/n)^{O(1)}$.
\begin{cor}\label{Cor:aux-1}
For any constants $d,h,r>0$, there exist constants $C,c$ that depend only on $d,h,r$ such that the
following holds. Let $C \log n < k_{1},\ldots,k_{h} < n/C$, and let $\f_{1}\subseteq{{[n]}\choose{k_1}},\ldots,\f_{h}\subseteq {{[n]}\choose{k_{h}}}$.
Suppose that each family $\f_{i}$ is $\left(dh+1,c\frac{n}{k_{i}}\right)$-quasiregular.
If for all $i \in [h]$ we have $\mu(\f_i) \ge 2\left(\frac{k_i}{n}\right)^r$, then $\f_1,\ldots,\f_h$ cross contain any
$d$-expanded hypergraph $\h$ of size $h$.
\end{cor}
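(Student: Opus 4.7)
The plan is to derive this corollary by contrapositive directly from Proposition \ref{prop: pseudorandomness}. I will take the constant $c$ in the corollary to be the constant produced by that proposition (call it $c_0$) for the parameters $(d,h,r)$, so that the hypotheses of the proposition — namely, $(dh+1,c_0 n/k_i)$-quasiregularity and $k_i\le c_0 n$ — match those imposed in the corollary (the second follows from $k_i < n/C$ once $C\ge 1/c_0$).

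Supposing for contradiction that $\f_1,\ldots,\f_h$ are cross free of some $d$-expanded hypergraph $\h$ of size $h$, Proposition \ref{prop: pseudorandomness} will yield an index $i \in [h]$ such that
\[
\mu(\f_i) \;\le\; \min\!\Big(e^{-c_0 k_i},\ \tfrac{3}{2}(k_i/n)^r\Big).
\]
The desired contradiction will then be $\mu(\f_i) < 2(k_i/n)^r$, which violates the hypothesis $\mu(\f_i) \ge 2(k_i/n)^r$. The key point is that the lower bound $k_i > C\log n$ is used to dominate the exponential quantity $e^{-c_0 k_i}$ by the polynomial quantity $(k_i/n)^r$. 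Specifically, choosing $C$ so that $c_0 C \ge r$ guarantees $e^{-c_0 k_i} \le n^{-c_0 C}\le n^{-r} \le (k_i/n)^r$ throughout the regime $k_i > C\log n$. This reduces the minimum to (at most) $\tfrac{3}{2}(k_i/n)^r$, which is strictly less than $2(k_i/n)^r$, giving the contradiction.

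Since the bulk of the argument is performed by Proposition \ref{prop: pseudorandomness}, there is no real obstacle here: the proof amounts to choosing constants correctly and using the $k_i > C\log n$ assumption to dominate the exponential term in the conclusion of the proposition. This is exactly the content of the informal remark that for $k \gg \log n$ one has $e^{-\Omega(k)} \ll (k/n)^{O(1)}$; the corollary is therefore genuinely immediate once the quantitative comparison of $e^{-c_0 k}$ and $(k/n)^r$ in the logarithmic regime is made.
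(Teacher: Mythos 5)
Your proof is correct and coincides with the paper's, which offers no argument beyond the remark that the corollary is immediate from Proposition \ref{prop: pseudorandomness} because $e^{-\Omega(k)}\ll (k/n)^{O(1)}$ once $k\gg\log n$; your choice of $C$ with $c_0C\ge r$ is exactly the quantitative content of that remark. It is also worth noting that your reliance on the $k_i>C\log n$ comparison makes the deduction robust to the fact that the proof of Proposition \ref{prop: pseudorandomness} really establishes the weaker disjunction ``$\mu(\f_i)\le e^{-c_0k_i}$ for some $i$, or $\mu(\f_i)\le\tfrac{3}{2}(k_i/n)^r$ for some $i$'' rather than the stated minimum.
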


Now we are ready to prove Proposition~\ref{prop:uncap cross k large}.

%Since any family $\f\subseteq{{[n]}\choose{k}}$ that satisfies
%$\mu\left(\f_{i}\right)\ge C\frac{k}{n}$ is $\left(\frac{n}{k_{i}C},b\right)$-quasiregular
%for any $b>0$. We obtain the following corollary of Proposition \ref{prop: pseudorandomness}.
\begin{proof}[Proof of Proposition \ref{prop:uncap cross k large}]
Let $\h$ be a $d$-expanded ordered hypergraph of size $h$. Let $C,s$ be large constants to be determined below,
let $C\log n< k_{1},\ldots,k_{h}< n/C$, and let
$\f_{1}\subseteq{{[n]}\choose{k_1}},\ldots,\f_{h}\subseteq{{[n]}\choose{k_h}}$
be families such that for any $i \in [h]$, $\f_{i}$
is $\left(s,\left(\frac{k_{i}}{n}\right)^{r}\right)$-uncapturable.
%Suppose on the contrary that
%$\f_{1}\subseteq{{[n]}\choose{k_1}},\ldots,\f_{h}\subseteq{{[n]}\choose{k_h}}$
%are cross free of $\h$.
%Write $\h=\left(A_{1},\ldots,A_{h}\right)^{+}$,
%for sets $A_{1},\ldots,A_{h}$ of size at most $d$.

Let $c>0$ be a small constant to be determined below. Proposition~\ref{lem:Step 2 uncap}, that can be applied if $s,C$ are sufficiently large (as function of
$d,h,r,c$), assures that there exist pairwise disjoint sets $T_{1},\ldots,T_{h}$,
such that each family $\left(\f_{i}\right)_{T_{1}\cup\cdots\cup T_{h}}^{T_{i}}$ is
$\left(dh+1,c\frac{n}{k_{i}}\right)$-quasiregular, and such that
\begin{align}\label{Eq:Aux-0}
\mu\left(\left(\f_{i}\right)_{T_{1}\cup\cdots\cup T_{h}}^{T_{i}}\right)\ge\frac{1}{2}\left(\frac{k_{i}}{n}\right)^{r}> 4 \left(\frac{k_{i}}{n}\right)^{r+1} > 2\left(\frac{k_{i}-|T_i|}{n-|T_1 \cup \ldots \cup T_h|}\right)^{r+1},
\end{align}
(where the two last inequalities hold, assuming $C$ is sufficiently large as function of $r,d,h$).
%In particular, all the families $\left(\f_{i}\right)_{T_{1}\cup\cdots\cup T_{h}}^{T_{i}}$ are `not very small'.

Thus, we may apply Corollary~\ref{Cor:aux-1} to the families
$\left(\f_{1}\right)_{T_{1}\cup\cdots\cup T_{h}}^{T_{1}},\ldots,\left(\f_{h}\right)_{T_{1}\cup\cdots\cup T_{h}}^{T_{h}}$
(with the parameters $(d,h,r+1)$; note that this requires $c$ to be sufficiently small as function of $d,h,r$, and in turn, by
the previous paragraph requires $s,C$ to be sufficiently large as function of $d,h,r$) to deduce that these families cross
contain $\h$.

Let $B_1 \in \left(\f_{1}\right)_{T_{1}\cup\cdots\cup T_{h}}^{T_{1}}, \ldots, B_h \in \left(\f_{h}\right)_{T_{1}\cup\cdots\cup T_{h}}^{T_{h}}$ be
a copy of $\h$ cross contained in the families $\left(\f_{1}\right)_{T_{1}\cup\cdots\cup T_{h}}^{T_{1}}, \ldots, \left(\f_{h}\right)_{T_{1}\cup\cdots\cup T_{h}}^{T_{h}}$.
Then the sets $B_{1}\cup T_{1}\in\f_{1},\ldots,B_{h}\cup T_h\in\f_{h}$ form a copy of $\h$ cross contained in $\f_1,\ldots,\f_h$. This completes the proof.
\end{proof}

\subsection{The case of a small $k$ -- proof of Proposition~\ref{prop:uncap single k small}}
\label{sec:sub:uncap-proof-small}

In this subsection we present the proof of Proposition~\ref{prop:uncap single k small}. The proof is by induction on $|K(\h)|$. We first prove the assertion for $t=1$ (i.e., the case of empty kernel), which requires most
of the work, and then present the easier
induction step. Throughout this section, $\f\subseteq{{[n]}\choose{k}}$ denotes a family that is free of a $d$-expanded hypergraph $\h$
of size $h$. Note that since the case $k \gg \log n$ was covered by Proposition~\ref{prop:uncap cross k large}, we may assume that $k < C'\log n$
for a sufficiently large constant $C'$.
\begin{prop}
\label{prop:uncap k small t=00003D1} For any constants $d,h,r$, there exist constants $C,s$ that depend only on $d,h,r$,
such that the following holds. Let $\h$ be a $d$-expanded hypergraph of size $h$. Let $C < k< n/C$, and let $\f\subseteq{{[n]}\choose{k}}$ be an $\left(s,\epsilon \frac{k}{n}\right)$-uncapturable family, where
$\epsilon=\max \left(e^{-k/C},C (k/n)^r \right)$. Then $\f$ contains a copy of $\h$.
\end{prop}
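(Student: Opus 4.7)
The plan is to argue by contradiction: assume $\f$ is $\h$-free and $\left(s, \epsilon \tfrac{k}{n}\right)$-uncapturable with $s,C$ chosen sufficiently large as functions of $d,h,r$, and exhibit a set $S \subseteq [n]$ of constant size with $\mu(\f_S^{\emptyset}) \leq \epsilon \tfrac{k}{n}$. Set $\a_i := \f_{\{i\}}^{\{i\}} \subseteq \binom{[n]\setminus\{i\}}{k-1}$ and reorder so that $\mu(\a_1) \geq \mu(\a_2) \geq \cdots$. The proof then follows the three-stage outline sketched at the start of Section~\ref{sec:uncap-contains}.

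\emph{Stage 1 (tail slices are small).} Proposition~\ref{Prop:color the shadow} supplies constants $s_0, v_0$ depending only on $d,h$ such that $\partial(\f)$ is free of rainbow copies of $\h' := \left(s_0 \cdot \binom{[v_0]}{d}\right)^{+}$, a $d$-expanded hypergraph with $m := s_0 \binom{v_0}{d}$ edges. For any $m$ distinct indices $i_1,\dots,i_m$, the families $\a_{i_1},\dots,\a_{i_m}$ are then cross free of $\h'$: a cross copy would, upon picking a witness $i_j$ for each edge, lift to a rainbow copy in $\partial(\f)$. Applying Proposition~\ref{Prop:turan for cross} to the top $m$ slices $\a_1,\dots,\a_m$ forces $\mu(\a_m) < \eta$ with $\eta := \max\bigl(e^{-(k-1)/C_1}, C_1(k-1)/n\bigr)$, and by the ordering $\mu(\a_i) < \eta$ for all $i \geq m$.

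\emph{Stage 2 (union of tail slices is small).} Fix a coloring $\chi:\partial(\f) \to [n]$ satisfying $E \cup \{\chi(E)\} \in \f$, and preferring $\chi(E) > m$ whenever possible. The sets $\c_i := \chi^{-1}(i)$ are pairwise disjoint with $\c_i \subseteq \a_i$. For any disjoint $V_1,\dots,V_q \subseteq \{m+1,\dots,n\}$, the unions $\bigcup_{i\in V_j} \c_i$ are cross free of $\h'$ (selecting a color from each $V_j$ converts a cross copy into a rainbow copy, forbidden by Stage~1). Applying Proposition~\ref{lem:Step 2 shadows} to the disjoint pieces $\{\c_i\}_{i>m}$ with a threshold tuned so that every resulting group has measure in $[\eta, 2\eta]$, and invoking Proposition~\ref{Prop:turan for cross} once more to prevent more than $m-1$ of them from simultaneously exceeding $\eta$, we obtain $q \leq m$, hence $\mu\bigl(\bigcup_{i>m} \c_i\bigr) \leq 2m\eta$.

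\emph{Stage 3 (from shadow back to family).} The choice of $\chi$ guarantees $\partial(\f_{[m]}^{\emptyset}) \subseteq \bigcup_{i>m} \c_i$, so $\mu(\partial(\f_{[m]}^{\emptyset})) = O_{d,h}(\eta)$. Proposition~\ref{lem:kostochka-mubayi-verstraete} then yields $|\f_{[m]}^{\emptyset}| \leq kh \cdot |\partial(\f_{[m]}^{\emptyset})|$, which after the binomial-ratio conversion $\binom{n-m}{k-1}/\binom{n-m}{k} \sim k/n$ amounts to $\mu(\f_{[m]}^{\emptyset}) = O\bigl(k^2 \eta / n\bigr)$. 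For $r$ small this already lies below $\epsilon \tfrac{k}{n}$ (with $C$ taken large relative to $C_1$), contradicting the assumed uncapturability with $S = [m]$.

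\emph{Main obstacle.} The one-shot bound $O(k^2 \eta / n)$ falls short of the target $\epsilon \tfrac{k}{n} = \max\bigl(e^{-k/C}\tfrac{k}{n},\, C(k/n)^{r+1}\bigr)$ once $r$ is large or $k$ grows polynomially: a factor of order $(k/n)^{r-1}$ is still missing. I expect to close this gap by iteration — reapplying the three-stage analysis to the still-$\h$-free family $\f_{[m]}^{\emptyset}$, so that each round peels off an extra $k/n$ factor, and/or combining the one-step-shadow estimate with Lemma~\ref{lem:Kruskal Katona going down} to propagate the bound through deeper iterated shadows and produce an $(\text{something})^r$-type estimate on $\mu(\f_{[m]}^{\emptyset})$. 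The delicate point will be to carry the exponential tail $e^{-k/C}$ cleanly through the iteration and keep all constants functions of $d,h,r$ alone.
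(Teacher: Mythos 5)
Your three-stage skeleton (tail slices are small via rainbow-freeness of $\partial(\f)$, union of tail slices is small via an even partition, then pull back through Proposition~\ref{lem:kostochka-mubayi-verstraete}) is exactly the paper's argument for this proposition. However, the ``main obstacle'' you flag is real, and the fix you propose is not the right one; the actual resolution is a case split you have omitted.

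The paper's first move is to invoke Proposition~\ref{prop:uncap cross k large} (already proved via the quasiregularity machinery) to dispose of the entire range $k > C'\log n$: an $\left(s,\epsilon\frac{k}{n}\right)$-uncapturable family is in particular $\left(s,(k/n)^{r+1}\right)$-uncapturable, so for $k>C'\log n$ it contains $\h$ by that proposition, and one may assume $k< C'\log n$ from the outset. In that remaining regime the quantity $\eta$ from Proposition~\ref{Prop:turan for cross} equals $e^{-(k-1)/C_1}$ (the term $C_1 k/n$ is negligible when $k=O(\log n)$), the target $\epsilon\frac{k}{n}$ is at least $e^{-k/C}\frac{k}{n}$, and the one-shot bound $O(k^2\eta/n)$ does lie below the target because, for $C\gg C_1$ and $k\ge C$, the exponential gap $e^{-k/C_1}=e^{-k/C}\cdot e^{-k(1/C_1-1/C)}$ absorbs the polynomial factor in $k$. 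No factor of $(k/n)^{r-1}$ ever needs to be recovered. By contrast, your plan tries to cover all $C<k<n/C$ with the shadow argument alone, and there it genuinely fails (e.g.\ for $k=\sqrt{n}$ and $r=1$ the achieved bound $O(k^3/n^2)$ vastly exceeds the target $C(k/n)^{2}$).

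Moreover, the proposed iteration would not close this gap. Re-running the three stages on $\f_{[m]}^{\emptyset}$ produces a new set $S'$ and a bound on $\mu\bigl((\f_{[m]}^{\emptyset})_{S'}^{\emptyset}\bigr)$, i.e.\ a capturability statement about $\f_{[m]}^{\emptyset}$; it does not improve the bound on $\mu(\f_{[m]}^{\emptyset})$ itself, so no extra factor of $k/n$ is peeled off per round. Similarly, Lemma~\ref{lem:Kruskal Katona going down} needs control of a \emph{constant-level} shadow of $\f_{[m]}^{\emptyset}$, which Stage~3 does not provide. The lesson is that the shadow route is intrinsically a ``small $k$'' (hence exponential-$\epsilon$) argument, and the polynomial regime must be handled by the separate quasiregularity argument of Proposition~\ref{prop:uncap cross k large}.
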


The proof of the proposition uses the results of Section~\ref{sec:Random-sampling} so let us briefly recall the relevant notions.

The shadow $\partial(\f)$ is said to contain a rainbow copy of a hypergraph $\h^1=\left\{ A_{1},\ldots,A_{s}\right\} $
if there is a copy $\{B_{1},\ldots,B_{s}\}$ of $\h^1$ and distinct elements $v_{1},\ldots,v_{s}$, such that $B_1,\ldots,B_s \in \partial(\f)$ and $B_{1}\cup\left\{ v_{1}\right\} ,\ldots,B_{s}\cup\left\{ v_{s}\right\} \in \f$. Proposition~\ref{Prop:color the shadow} asserts
that if $\f$ is $\h$-free, then the shadow $\partial(\f)$ is free of a rainbow copy of the hypergraph $\h'=l \cdot {{\left[m\right]}\choose{d}}^{+}$,
provided that $l$ and $m$ are sufficiently large constants. This implies the following:
\begin{claim}\label{Cl:Aux-2}
Let $s=l{{m}\choose{d}}$. For any pairwise disjoint sets $U_{1},U_{2},\ldots,U_{s}\subseteq\left[n\right]$, the families
\begin{equation}\label{Eq:Aux-2}
\g_{1}:=\bigcup_{j\in U_{1}}\f_{\left\{ j\right\} }^{\left\{ j\right\} },\ldots,\g_{s}:=\bigcup_{j\in U_{s}}\f_{\left\{ j\right\} }^{\left\{ j\right\} }
\end{equation}
(which are subsets of $\partial(\f)$) are cross free of the hypergraph $\h'=l \cdot {{\left[m\right]}\choose{d}}^{+}$ as well.
\end{claim}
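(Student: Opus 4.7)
The plan is to deduce this claim directly from Proposition~\ref{Prop:color the shadow} (applied with the same parameters $m,l$, so that $\h'=l\cdot\binom{[m]}{d}^{+}$), by arguing contrapositively: a cross copy of $\h'$ in the families $\g_1,\ldots,\g_s$ would immediately assemble into a rainbow copy of $\h'$ inside $\partial(\f)$, contradicting the conclusion of that proposition.

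Concretely, I would suppose toward a contradiction that there exist edges $F_1\in\g_1,\ldots,F_s\in\g_s$ forming a (cross) copy of $\h'$. Each $F_i$ belongs to $\bigcup_{j\in U_i}\f_{\{j\}}^{\{j\}}$, so I can pick $j_i\in U_i$ with $F_i\in\f_{\{j_i\}}^{\{j_i\}}$; by the very definition of the slice $\f_{\{j_i\}}^{\{j_i\}}$, this means $j_i\notin F_i$ and $F_i\cup\{j_i\}\in\f$. Because the sets $U_1,\ldots,U_s$ are pairwise disjoint, the chosen $j_1,\ldots,j_s$ are pairwise distinct. Therefore the tuple $(F_1,\ldots,F_s)$, together with the distinct vertices $j_1,\ldots,j_s$, witnesses a rainbow copy of $\h'=(l\cdot\binom{[m]}{d})^{+}$ inside $\partial(\f)$. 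Since $\f$ is assumed $\h$-free, Proposition~\ref{Prop:color the shadow} rules this out, yielding the desired contradiction.

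I do not anticipate a genuine obstacle here: the only small point to check carefully is that the structural requirements of a ``copy of $\h'$'' are preserved when we pass from the cross copy $(F_1,\ldots,F_s)$ in the shadow to the rainbow copy — that is, that the added colors $j_i$ are distinct (handled by disjointness of the $U_i$) and, if needed, disjoint from the vertices of the $F_i$'s in the right way (handled by the slice definition, which forces $j_i\notin F_i$). The definition of rainbow copy given just before Proposition~\ref{Prop:color the shadow} asks only for distinctness of the $j_i$ and for $F_i\cup\{j_i\}\in\f$, both of which we have, so the contradiction is immediate.
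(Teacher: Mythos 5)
Your proof is correct and is exactly the argument the paper intends: the claim is stated as an immediate consequence of Proposition~\ref{Prop:color the shadow}, and your expansion (picking $j_i\in U_i$ with $F_i\in\f_{\{j_i\}}^{\{j_i\}}$, noting the $j_i$ are distinct by disjointness of the $U_i$, and reading off a rainbow copy) is the intended one-line deduction, with the right care taken that $j_i\notin F_i$ and $F_i\cup\{j_i\}\in\f$ follow from the slice definition.
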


\begin{proof}[Proof of Proposition \ref{prop:uncap k small t=00003D1}]
Let $r>0$ and let $\h$ be a $d$-expanded hypergraph of size $h$. Let $C,s$ be sufficiently large constants to be determined below.
Let $C < k< n/C$, and let $\f\subseteq{{[n]}\choose{k}}$ be an $\h$-free family. We want to show that $\f$ is
 $\left(s,\epsilon \frac{k}{n}\right)$-capturable, where
$\epsilon=\max \left(e^{-k/C},C (\frac{k}{n})^r \right)$.
%, and $c_{1},c_{2}$ be sufficiently
%small constants depending on $d,h,r$ for the proof to work. Let $C<k<c_{1}n$,
%let $\epsilon=\max\left\{ e^{-c_{2}\frac{k}{n}},\left(k/n\right)^{r}\right\} $,
%and suppose on the contrary that there exists some family $\f\subseteq{{[n]}\choose{k}}$
%that is $\left(s,\epsilon\frac{k}{n}\right)$-uncapturable.
By Proposition~\ref{prop:uncap cross k large}, we may assume that $k< C'\log n$, for some
$C'=C'\left(d,h,r\right)$. By Proposition \ref{Prop:color the shadow}, there exist constants
$m=m\left(d,h\right),l=l\left(d,h\right)$, such that $\partial(\f)$ is free of a rainbow copy of the hypergraph $\h' = l \cdot {{[m]}\choose{d}}^{+}$.
Set $s:=l{{m}\choose{d}}$. Denote $\a_i=\f_{\{i\}}^{\{i\}}$ for any $i \in [n]$ and assume
without loss of generality that $\mu(\a_1) \geq \mu(\a_2) \geq \ldots \geq \mu(\a_n)$.

The proof consists of four steps:

\mn \textbf{Step 1: An upper bound on the size of families that are cross free of the hypergraph $\h'=l \cdot {{[m]}\choose{d}}^{+}$.}
We show that if $\b_1,\ldots,\b_{s} \subseteq {{\left[n\right]}\choose{k-1}}$ are families that are cross free of $\h'$ then there exists $i \in [s]$ such that $\mu(\b_i) \le e^{-k/C''}$, for a sufficiently large constant $C''$. This is established by the following claim.
%The following claim establishes Step~1 of the proof.
\begin{claim}
\label{Claim: Step 1 Proposition uncap t=00003D1 k small}There exists a constant $C''=C''\left(d,h,r\right)$ such that the following
holds. Let $\b_{1},\ldots,\b_{s}\subseteq {{\left[n\right]}\choose{k-1}}$ be families that are cross free of the hypergraph $\h' = l \cdot {{[m]}\choose{d}}^{+}$.
Then $\min_{i=1}^{s}\left\{ \mu\left(\b_{i}\right)\right\} \le e^{-k/C''}$.
\end{claim}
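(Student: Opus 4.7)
The plan is to prove Claim~\ref{Claim: Step 1 Proposition uncap t=00003D1 k small} by contradiction, with Proposition~\ref{Prop:turan for cross} doing essentially all the work. Suppose every family in the tuple satisfies $\mu(\b_i) > e^{-k/C''}$, where $C'' = C''(d,h,r)$ is a constant to be fixed at the end. Since $\h' = l \cdot \binom{[m]}{d}^{+}$ is a $d$-expanded hypergraph of size $s = l\binom{m}{d}$, and both $l$ and $m$ depend only on $d,h$, the parameter $s$ depends only on $d,h$ as well. Fix an arbitrary ordering of its $s$ edges, turning $\h'$ into an ordered $d$-expanded hypergraph of size $s$.

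Now invoke Proposition~\ref{Prop:turan for cross} for the parameters $(d,s)$ to obtain a constant $C_0 = C_0(d,h)$; it guarantees that if $C_0 < k-1 < n/C_0$ and $\mu(\b_i) \geq \max\bigl(e^{-(k-1)/C_0},\, C_0(k-1)/n\bigr)$ for every $i \in [s]$, then $\b_1,\ldots,\b_s$ cross contain a copy of $\h'$, which would contradict the hypothesis. So it suffices to arrange these two inequalities.

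We are working within the proof of Proposition~\ref{prop:uncap k small t=00003D1}, where (via Proposition~\ref{prop:uncap cross k large}) we have already reduced to the regime $k < C'\log n$ for a constant $C' = C'(d,h,r)$. Set $C'' := 2\max(C_0, C')$. Then for $k \geq 2$ one has $k/C'' \leq (k-1)/C_0$, so $e^{-k/C''} \geq e^{-(k-1)/C_0}$. Also
\[
e^{-k/C''} \,\geq\, e^{-(C'\log n)/C''} \,=\, n^{-C'/C''} \,\geq\, n^{-1/2},
\]
whereas $C_0(k-1)/n \leq C_0 C'(\log n)/n$, which is much smaller than $n^{-1/2}$ for $n$ sufficiently large. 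Hence both required measure bounds hold, Proposition~\ref{Prop:turan for cross} fires, and we reach a contradiction.

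The argument is essentially bookkeeping: the substance is entirely contained in Proposition~\ref{Prop:turan for cross}, and the only thing to verify is that $e^{-k/C''}$ dominates \emph{both} thresholds demanded by that proposition. The exponential threshold costs a factor of $2$ in $C''$ relative to $C_0$, and the linear threshold $C_0 k/n$ is harmless because in our regime $k = O(\log n)$, so any $C'' > C'$ already buries it. There is no real obstacle; the only point worth care is remembering that this claim is used inside Proposition~\ref{prop:uncap k small t=00003D1} where the reduction $k < C'\log n$ is available — without that reduction the claim would fail once $k$ is linear in $n$, since then $e^{-k/C''}$ becomes far smaller than the unavoidable $\Theta(k/n)$ density lower bound coming from Proposition~\ref{Prop:turan for cross}.
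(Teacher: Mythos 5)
Your proof is correct and takes essentially the same route as the paper: the paper's own argument is the one-liner ``follows immediately from Proposition~\ref{Prop:turan for cross} using $k < C'\log n$,'' and you have simply unpacked the bookkeeping — fixing $C'' = 2\max(C_0, C')$, checking that $e^{-k/C''}$ dominates both $e^{-(k-1)/C_0}$ and $C_0(k-1)/n$ in the regime $k < C'\log n$, and correctly observing that the claim would be false if $k$ could be linear in $n$.
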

\begin{proof}
The claim follows immediately from Proposition~\ref{Prop:turan for cross}, using the assumption $k< C'\log n$.
%Indeed, Proposition~\ref{prop: pseudorandomness} implies that there exists $c'$ that depends only on $s,d,r$ (and thus, only on $h,d,r$) such that if each
%$\b_i$ is $(ds+1,c'\frac{n}{k})$-quasiregular then there exists $i \in [s]$ such that
%\begin{equation}\label{Eq:Aux-3}
%\mu(\b_i) \leq \min \left(e^{-c'k},\frac{3}{2} \left(\frac{k-1}{n}\right)^r \right).
%\end{equation}
%Let $c$ be a small constant to be determined below, and suppose on the contrary that $\mu\left(\b_{i}\right)> e^{-ck}$
%for all $i\in\left[s\right]$. This implies that each family $\b_{i}$ is $\left(\ell,e^{ck}\right)$-quasiregular for any $\ell\in\mathbb{N}$.
%In particular, it is $\left(ds+1,e^{ck}\right)$-quasiregular. Furthermore, we have $e^{ck}\le e^{c \cdot C'\log n}<c'\frac{n}{k}$,
%provided that $c$ is small enough. Hence, each family $\b_{i}$ is $(ds+1,c'\frac{n}{k})$-quasiregular. Applying Proposition~\ref{prop: pseudorandomness}, we
%obtain that there exists $i \in [s]$ such that $\b_i$ satisfies~\eqref{Eq:Aux-3}. However, provided that $c'$ is sufficiently small, we have
%\[
%\frac{3}{2} \left(\frac{k-1}{n}\right)^r \leq \frac{3}{2} \frac{C'\log n}{n} \leq e^{-ck},
%\]
%and thus,~\eqref{Eq:Aux-3} implies $\mu(\b_i) \leq \min(e^{-c'k},e^{-ck}) \le e^{-ck}$ (provided that $c \leq c'$), a contradiction. This completes the proof.
\end{proof}

\mn \textbf{Step 2: An upper bound on $\mu(\a_{s})$.}
%We proceed now with Steps~2 and~3.
Let $C''$ be the constant of Claim \ref{Claim: Step 1 Proposition uncap t=00003D1 k small}.
Applying Claim \ref{Claim: Step 1 Proposition uncap t=00003D1 k small} to the families $\a_1,\ldots,\a_{s}$ (which satisfy its assumptions by Claim~\ref{Cl:Aux-2}), we get
\begin{equation}\label{Eq:Aux-3.5}
\mu(\a_{s})= \min_{i \in [s]} \mu(\a_i) \le e^{-k/C''}.
\end{equation}

\mn \textbf{Step 3: An upper bound on $\mu \left( \cup_{j > s} \a_j \right)$.}
Now, we partition the indices $\{s+1,s+2,\ldots,n\}$ into $s$ disjoint sets $U_1,\ldots,U_{s}$  such that
$\max_j \mu \left(\cup_{i \in U_j} \a_i \right) - \min_j \mu \left(\cup_{i \in U_j} \a_i \right)$ is minimal. As for all $i > s$ we have $\mu(\a_i) \leq \mu(\a_{s}) \leq e^{-k/C''}$, the `most even' partition satisfies
\begin{equation}\label{Eq:Aux-4}
\max_j \mu \left(\bigcup_{i \in U_j} \a_i \right) - \min_j \mu \left(\bigcup_{i \in U_j} \a_i \right) \leq 2e^{-k/C''}.
\end{equation}
Applying Claim \ref{Claim: Step 1 Proposition uncap t=00003D1 k small} to the families $\g_1= \cup_{i \in U_1} \a_i,\ldots,\g_{s}=\cup_{i \in U_{s}} \a_i$ (which satisfy its assumptions by Claim~\ref{Cl:Aux-2}), we get $\min_{j \in [s]} \mu(\g_j) \leq e^{-k/C''}$. Hence, by~\eqref{Eq:Aux-4},
$\max_{j \in [s]} \mu(\g_j) \leq 3e^{-k/C''}$. Therefore,
\begin{equation}\label{Eq:Aux-5}
\mu \left(\bigcup_{i=s+1}^n \a_i \right) \leq \sum_{j \in [s]} \mu(\g_j) \leq 3se^{-k/C''}.
\end{equation}

\mn \textbf{Step 4: Deducing that $\f$ is capturable.}
%To establish Step~4 and thus complete the proof of the proposition,
We observe that
\[
\partial\left(\f_{\left[s\right]}^{\emptyset}\right)\subseteq\bigcup_{i=s+1}^{n}\left(\f_{\left\{ i\right\} }^{\left\{ i\right\} }\right) = \bigcup_{i>s} \a_i.
\]
Hence, $\mu \left(\partial \left(\f_S^{\emptyset} \right) \right) \leq 3s e^{-k/C''}$.
Since $\f_{[s]}^{\emptyset}$ is free of $\h$, Proposition~\ref{lem:kostochka-mubayi-verstraete} implies that
\[
\mu\left(\f_{[s]}^{\emptyset}\right)\le\frac{k^{2}h}{n-s-k+1}\mu\left(\partial\left(\f_{[s]}^{\emptyset}\right)\right) \le\frac{k^{2}h}{n} \cdot 4se^{-k/C''} \le e^{-k/C} \cdot \frac{k}{n},
\]
provided that $C$ is sufficiently large. Therefore, $\f$ is $(s,\epsilon \frac{k}{n})$-capturable, as asserted.
\end{proof}

Now we are ready to present the proof of Proposition \ref{prop:uncap single k small} for all $t$.
\begin{proof}[Proof of Proposition \ref{prop:uncap single k small}]
Let $r>0$ and let $\h$ be a $d$-expanded hypergraph of size $h$ with $|K(\h)|=t-1$. Let $C,s$ be sufficiently large
constants to be determined below. Let $C<k<n/C$ and let $\f \subseteq {{[n]}\choose{k}}$ be an $\h$-free family. We have to show that
$\f$ is $\left(s,\epsilon\left(\frac{k}{n}\right)^{t}\right)$-capturable, where $\epsilon=\max \left(e^{-k/C},C\left(\frac{k}{n}\right)^{r}\right)$.

The proof is by induction on $t$. The case $t=1$ was proved in Proposition~\ref{prop:uncap k small t=00003D1}. Thus, let $t_0>1$, assume that the statement holds for all $t<t_0$, and assume that $|K(\h)|=t_0-1$.

By Proposition~\ref{prop:uncap cross k large}, we may assume that $k< C'\log n$ for some constant $C'=C'\left(d,h,r\right)$.
Thus,
\[
\epsilon = \max\left\{ e^{-k/C},C\left(\frac{k}{n}\right)^{r}\right\} = e^{-k/C},
\]
provided that $C$ is sufficiently large.

By Proposition~\ref{prop:t-shadow}, there exist constants $u,\ell$ depending only on $d,h,t_0$, such that $\partial(\f)$ is free
of the hypergraph ${{[u]}\choose{\ell}}^{+}\oplus\left[t_0-2\right]$. Applying the induction hypothesis to $\partial(\f)$ we
obtain that there exist constants $s,c$ which depend only on $d,h,t_0$, such that the family $\partial\left(\f\right)$ is $\left(s,e^{-ck} \left(\frac{k}{n}\right)^{t_0-1}\right)$-capturable.

Let $S$ be a set of size $s$, such that
$\mu\left(\left(\partial\left(\f\right)\right)_{S}^{\emptyset}\right)\le e^{-ck} \left(\frac{k}{n}\right)^{t_0-1}$.
Since $\f_{S}^{\emptyset}$ is free of $\h$, Proposition~\ref{lem:kostochka-mubayi-verstraete} implies that
\begin{align*}
\mu\left(\left(\f\right)_{S}^{\emptyset}\right) & \le\frac{k^{2}h}{n}\mu\left(\partial\left(\f_{S}^{\emptyset}\right)\right)
\le\frac{k^{2}h}{n}\mu\left(\left(\partial\left(\f\right)\right)_{S}^{\emptyset}\right)\\
 & \le khe^{-ck}\left(\frac{k}{n}\right)^{t_0}\le e^{-k/C}\left(\frac{k}{n}\right)^{t_0},
\end{align*}
provided that $C$ is sufficiently large. Therefore, $\f$ is $(s,\epsilon \left(\frac{k}{n} \right)^{t_0})$-capturable, as asserted.
\end{proof}

\section{Cross Containment when Almost All Families are Large}
\label{sec:bootstrapping}

In this section we present the `bootstrapping' step which essentially asserts that if some families $\f_{1} \subseteq {{[n]}\choose{k_1}},\ldots,\f_{h} \subseteq {{[n]}\choose{k_h}}$ are cross free of a fixed $d$-expanded ordered hypergraph $\h$ of size $h$ and the families $\f_{1},\ldots,\f_{h-1}$ are `very large', then the family $\f_h$ must be `very small'.

In order to make the results we prove more intuitive, we begin in Section~\ref{sec:sub:boot:motivation} with a short motivation that explains the place of this bootstrapping step in the `large picture' of the proof. We then present the results and outline the proofs in Section~\ref{sec:sub:boot:results}. The detailed proofs are presented in Section~\ref{sec:sub:boot:proofs}.

\subsection{Motivation}
\label{sec:sub:boot:motivation}

To demonstrate how the problem of `cross containment when almost all families are large' fits into our proof strategy, we consider the special case of determining how large can a family $\f\subseteq{{[n]}\choose{k}}$ be, given that it does not contain a special $d-$simplex (i.e., expansion of the hypergraph $\{\{2,3,\ldots,d+1\},\{1,3,\ldots,d+1\},\ldots,\{1,2,\ldots,d\}\}$). As mentioned in the introduction, our goal here is to show that under some restrictions on $n,k$, we have $\left|\f\right|\le{{n-1}\choose{k-1}}$, with equality if and only if $\f$ is a $(1,1)$-star.

In this special case, our proof strategy translates into the following:
\begin{enumerate}
\item We first show that any family $\f$ that is free of a special $d$-simplex can be approximated by a junta that is free of a special $d$-simplex.

\item We then show that any `sufficiently large' junta that is free of a special $d$-simplex is actually a $(1,1)$-star.

\mn Steps 1,2 together imply that if $\f$ is a `sufficiently large' family that is free of a special $d$-simplex then it is essentially contained in a $(1,1)$-star. In other words, there exists $i\in\left[n\right]$ such that $\mu\left(\f_{\left\{ i\right\} }^{\left\{ i\right\} }\right)=1-\epsilon$, for some `small' $\epsilon$.

\item The third step is to bootstrap the above `stability' result and show that $\left|\f\right|\le{{n-1}\choose{k-1}}$, with equality if and only if $\epsilon=0$.
\end{enumerate}
To accomplish Step~(3), we consider the families $(\f_1,\ldots,\f_{d+1})$, where  $\f_{1}=\f_{2}=\cdots=\f_{d}:=\f_{\left\{ i\right\} }^{\left\{ i\right\} }$ and $\f_{d+1}:=\f_{\left\{ i\right\} }^{\emptyset}$. We observe that since $\f$ is free of a special $d$-simplex, then these families are cross free of the ordered hypergraph
\[
\left(\left\{2,3,\ldots,d\right\} , \{1,3,4,\ldots,d\},\ldots,\left\{ 1,2,\ldots,d-1\right\} ,\left\{ 1,2,\ldots,d\right\} \right).
\]
As we have
\begin{equation}\label{Eq:Aux8-1}
\mu\left(\f\right)=\frac{k}{n}\mu\left(\f_{\left\{ i\right\} }^{\left\{ i\right\} }\right)+\left(1-\frac{k}{n}\right)\mu\left(\f_{\left\{ i\right\} }^{\emptyset}\right),
\end{equation}
it is sufficient to show that if $\mu(\f_{\{i\}}^{\{i\}})=1-\epsilon$ then $\mu(\f_{\{i\}}^{\emptyset})< \frac{k}{n} \epsilon$. This will follow once we show that if some families $\a_1,\ldots,\a_h$ are cross free of a fixed hypergraph $\h$, and $\a_1,\ldots,\a_{h-1}$ are `very large' (i.e., have measure $\geq 1-\epsilon$), then $\a_h$ must be very small (i.e., satisfy $\mu(\a_h) < \frac{k}{n} \epsilon$). We prove results of this kind in this section.

The meaning of the result in the case described above is that if we start with $\epsilon=0$ (in which the family is equal to the $(1,1)$-star $\{S: i \in S\}$) and allow to add elements to $\f_{\left\{ i\right\} }^{\emptyset}$, then the `gain' from adding these elements is smaller than the {}`cost' that we have to pay by removing elements from $\f_{\left\{ i\right\} }^{\left\{ i\right\} }$ (so that the family will remain free of a special $d$-simplex). We note that when the special $d$-simplex is replaced with a general forbidden hypergraph $\h$ with kernel of size $t-1$, we will have to show that $\mu\left(\a_{h}\right)$ is even smaller -- specifically, is small relatively to $\epsilon \left(\frac{k}{n}\right)^{t}.$

\subsection{Results and proof overview}
\label{sec:sub:boot:results}

We prove three propositions, applicable to different ranges of $k_1,\ldots,k_h$ and different assumptions on the sizes of the families. For sake of clarity, in the informal statements we write `$k$' instead of $k_1,\ldots,k_h$.

\subsubsection{The families $\f_1,\ldots,\f_{h-1}$ are extremely large} The first proposition applies for any $C < k < n/C$ but requires that the measures of $\f_1,\ldots,\f_{h-1}$ are very close to 1. It asserts that $\f_h$ must be `very small'.
\begin{prop}
\label{prop:unbalanced cross free of an hypergraph} For any constants $d,h,r$, there exists a constant $C=C(d,h,r)$ such that the following holds. Let $C < k_{1},\ldots,k_{h} < n/C$, write $k_{\min}=\min\left\{ k_{1},\ldots,k_{h}\right\} $, and let $\h$ be a $d$-expanded ordered hypergraph of size $h$.

Let $\f_{1}\subseteq{{[n]}\choose{k_1}},\ldots,\f_{h}\subseteq{{[n]}\choose{k_h}}$ be families that are cross free of $\h$, and suppose that
\[
\mu\left(\f_{1}\right),\ldots,\mu\left(\f_{h-1}\right)\ge 1-\epsilon,
\]
for some $\epsilon \leq \left(\frac{k_{\min}}{n}\right)^{2d}$. Then $\mu\left(\f_{h}\right)\le C\epsilon^{r}$.
\end{prop}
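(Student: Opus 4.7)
I plan to mimic the structure of the matching case (Proposition~3.11), adapted to a general $d$-expanded hypergraph $\h = \h_1^+$ with $\h_1 = (H_1,\ldots,H_h)$ and vertex support $H = \bigcup_{i\le h} H_i$. First I would establish a linear bound $\mu(\f_h)\le (h-1)\epsilon$ by a direct coupling: sample a uniformly random copy $(\mathbf{E_1},\ldots,\mathbf{E_h})$ of $\h$ in $[n]$ by picking a uniform injection $\phi\colon H\hookrightarrow[n]$ and uniform disjoint sprouts $\mathbf{D_i}\subseteq [n]\setminus\phi(H)$ with $|\mathbf{D_i}| = k_i - |H_i|$, and set $\mathbf{E_i} = \phi(H_i)\cup\mathbf{D_i}$. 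By symmetry each $\mathbf{E_i}$ is marginally uniform on $\binom{[n]}{k_i}$, so cross-freeness of $(\f_1,\ldots,\f_h)$ w.r.t.\ $\h$ together with a union bound yields $\mu(\f_h) = \Pr[\mathbf{E_h}\in\f_h]\le (h-1)\epsilon$. This is the analogue of the coupling step in Proposition~3.11 but only yields a linear bound in $\epsilon$.

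To amplify the linear bound to $\epsilon^r$, I plan to bound a constant-sized shadow of $\f_h$ and invoke Lemma~2.2 (Kruskal--Katona going down). Setting $d^* := |H_h|\le d$, I want to show $\mu\bigl((\f_{h,\downarrow})^{(d^*)}\bigr) = O(\epsilon)$ by a \emph{seeded} version of the coupling above: sample $\mathbf{A}\sim\binom{[n]}{d^*}$ uniformly, declare $\phi(H_h) = \mathbf{A}$, and then extend $\phi$ to $H$ and sample the sprouts $\mathbf{D_1},\ldots,\mathbf{D_{h-1}}$ uniformly subject to disjointness. By symmetry $\mathbf{A}$ is marginally uniform on $\binom{[n]}{d^*}$. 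If $\mathbf{A}\in\partial^{k_h - d^*}(\f_h)$, there exists $B\in\f_h$ with $\mathbf{A}\subseteq B$; I will argue that, with probability bounded below by a constant over the random completion, the specific $B\setminus\mathbf{A}$ is disjoint from $V' := (\phi(H)\setminus\mathbf{A})\cup\bigcup_{i<h}\mathbf{D_i}$, in which case $B$ serves as a valid $\mathbf{E_h}$ and $(\mathbf{E_1},\ldots,\mathbf{E_{h-1}},B)$ is a copy of $\h$. Cross-freeness then rules out the joint event ``the completion succeeds and $\mathbf{E_i}\in\f_i$ for all $i<h$'', and a union bound identical to the one in the linear step gives $\mu\bigl((\f_{h,\downarrow})^{(d^*)}\bigr)\le O(\epsilon)$. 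Applying Lemma~2.2 with this constant $d^*$ and the given exponent $r$ then yields $\mu(\f_h)\le C\epsilon^r$, as required.

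The main obstacle will be the ``constant probability'' disjointness claim at the heart of the seeded step: I need the set $B\setminus\mathbf{A}$, of size $k_h - d^*$, to miss the random set $V'$, of size $O(hk_{\max})$, with probability bounded below by a constant. A direct hypergeometric estimate gives this disjointness probability as roughly $\bigl(1 - k_h/n\bigr)^{|V'|}$, which is bounded below precisely when $k_h \cdot k_{\max}/n$ is bounded; this is exactly the role played by the hypothesis $\epsilon\le (k_{\min}/n)^{2d}$, which confines us to the regime in which the calculation survives. In the extremal range of $k$ closest to $n/C$, where the disjointness bound would otherwise deteriorate, I expect to complement the above by replacing $\f_h$ with an enlargement $(\f_h^\uparrow)^{(k')}$ for $k'$ linear in $n$ and applying Lemma~2.3 (Kruskal--Katona going up), in direct analogy with the closing step of the proof of Proposition~3.11.
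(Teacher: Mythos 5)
Your amplification step contains a genuine gap: the intermediate claim $\mu\bigl((\f_{h,\downarrow})^{(d^*)}\bigr)=O(\epsilon)$ is false in general. If $\f_h\neq\emptyset$, then its $(k_h-d^*)$-shadow already contains all $d^*$-subsets of a single member, so $\mu\bigl((\f_{h,\downarrow})^{(d^*)}\bigr)\geq\binom{k_h}{d^*}/\binom{n}{d^*}=\Omega\bigl((k_h/n)^{d^*}\bigr)$, whereas the hypothesis places no \emph{lower} bound on $\epsilon$. Concretely, take $d\geq 1$, $k_1=\cdots=k_h=\alpha n$ with $\alpha=1/(2C)$, fix $B\in\binom{[n]}{k_h}$, and set $\f_h=\{B\}$ and $\f_i=\{E:|E\cap B|\geq d+1\}$ for $i<h$. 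These families are cross free of $\h$ (the $i$-th edge of any copy whose $h$-th edge is $B$ meets $B$ in at most $d$ vertices), and $1-\mu(\f_i)\leq e^{-\Omega(\alpha^2 n)}$ by Lemma~\ref{lem:Chernoff bound}, so one may take $\epsilon=e^{-\Omega(n)}\leq(k_{\min}/n)^{2d}$; yet the shadow in question has measure $\Omega(\alpha^{d^*})$, a constant. The conclusion of the proposition still holds here only because $C$ is allowed to depend on $r$, but your route through Lemma~\ref{lem:Kruskal Katona going down} cannot reach it. Relatedly, your ``constant probability of disjointness'' estimate $(1-k_h/n)^{|V'|}$ with $|V'|=\Theta(\sum_i k_i)$ is exponentially small once the $k_i$ exceed $\sqrt{n}$; the hypothesis $\epsilon\leq(k_{\min}/n)^{2d}$ does not restrict the $k_i$ to that regime (they range up to $n/C$), and monotonizing $\f_h$ up to a linear level makes $V'$ no smaller and destroys the prescribed intersection pattern with the other edges. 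The enlargement trick in Proposition~\ref{prop: unbalanced matching lemma} works only because for a \emph{matching} one can couple via a random permutation so that the sampled sets are always disjoint; no such exact coupling exists for a base hypergraph with overlapping edges. (A smaller point: in your linear step $\mathbf{E_i}$ is not marginally uniform, since it is forced to avoid $\phi(H\setminus H_i)$.)

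The paper's proof avoids all of this by reducing to the matching case rather than to a shadow bound. By a one-line averaging argument one finds a fixed frame $B$ and a $d$-set $B_h\subseteq B$ with $\mu\bigl((\f_h)_B^{B_h}\bigr)\geq\mu(\f_h)$; choosing $B_1,\ldots,B_{h-1}\subseteq B$ so that $(B_1,\ldots,B_h)$ is a copy of the base hypergraph, the slices $(\f_1)_B^{B_1},\ldots,(\f_h)_B^{B_h}$ are cross free of a \emph{matching}. The hypothesis $\epsilon\leq(k_{\min}/n)^{2d}$ is used exactly once, to absorb the $(n/k_i)^d$ loss incurred when passing from $\mu(\f_i)\geq 1-\epsilon$ to $\mu\bigl((\f_i)_B^{B_i}\bigr)\geq 1-\epsilon^{1/3}$ (its role is not to restrict the range of the $k_i$, as you suggest). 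The amplification from $\epsilon^{1/3}$ to $\epsilon^{r}$ is then delegated entirely to Proposition~\ref{prop: unbalanced matching lemma} applied to the slices, where the Kruskal--Katona and exact-disjointness arguments are legitimate. I would redirect your proof along these lines.
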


The proof of Proposition~\ref{prop:unbalanced cross free of an hypergraph} is a rather simple reduction to the case where the `forbidden' hypergraph $\h$ is a matching that was already dealt with in Section~\ref{sec:Baby Case}, as follows:

Let $\h=\h_1^+$ for some $d$-uniform ordered hypergraph $\h_1$. Using the extremely large size of $\f_1,\ldots,\f_{h-1}$, we show that one can find a copy $(B_1,\ldots,B_h)$ of $\h_1$ such that (denoting $B=\cup_{i=1}^h B_i$) the measure of each of the families $(\f_i)_B^{B_i}$ is not much smaller than $\mu(\f_i)$. Since the families $(\f_1)_B^{B_1},\ldots,(\f_h)_B^{B_{h}}$ are cross free of a matching, we can use Proposition~\ref{prop: unbalanced matching lemma} to deduce that $(\f_h)_B^{B_{h}}$ is `very small', and hence, $\f_h$ is `very small' as well.

\subsubsection{$k$ is large} The second proposition applies when $k>C\log n$, for a sufficiently large constant $C$. In this case, it is sufficient to assume that $\f_1,\ldots,\f_{h-1}$ are `moderately large' to deduce that $\f_h$ is `very small'.
\begin{prop}
\label{prop:unbalanced cross free of hypergraph all negligible}
For any constants $d,h,r$, there exists a constant $C=C(d,h,r)$ such that the following holds. Let $C\log n < k_{1},\ldots,k_{h} < n/C$ and let $\h$ be a $d$-expanded ordered hypergraph of size $h$.

Let $\f_{1}\subseteq{{[n]}\choose{k_1}},\ldots,\f_{h}\subseteq{{[n]}\choose{k_h}}$ be families that are cross free of $\h$, and suppose that for each $i \in [h-1]$, we have $\mu(\f_i) \geq C \frac{k_i}{n}$. Then $\mu\left(\f_{h}\right)\le C\left(\frac{k_h}{n}\right)^{r}$.
\end{prop}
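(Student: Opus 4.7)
Proof plan: The plan is to argue by induction on $r$, with the induction hypothesis being the statement of the proposition for some constant $C_r = C_r(d,h)$. The base case $r = 1$ follows directly from the contrapositive of Proposition~\ref{Prop:turan for cross}: in the range $k_i > C_1 \log n$ the lower bound $\max(e^{-k_i/C}, Ck_i/n)$ simplifies to $Ck_i/n$, so taking $C_1$ at least the constant in Proposition~\ref{Prop:turan for cross} would make $\mu(\f_h) > C_1 k_h/n$ yield cross-containment of $\h$, contradicting cross-freeness.

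For the inductive step $r \to r+1$, let $s = s(d,h,r+1)$ be the constant from Proposition~\ref{prop:uncap cross k large}, and assume $\mu(\f_h) > C_{r+1}(k_h/n)^{r+1}$ for contradiction, with $C_{r+1}$ to be chosen. The elementary inequality $\mu(\mathcal{G}) \leq \epsilon + sk/n$ for $(s,\epsilon)$-capturable families shows that each $\f_i$ with $i<h$ is $(s, (k_i/n)^{r+1})$-uncapturable whenever $C_{r+1} > s+1$. The contrapositive of Proposition~\ref{prop:uncap cross k large} then forces $\f_h$ to be $(s, (k_h/n)^{r+1})$-capturable, producing $S \subseteq [n]$ with $|S| \leq s$ and $\mu((\f_h)_S^\emptyset) \leq (k_h/n)^{r+1}$.

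I would then decompose $\mu(\f_h) = \sum_{B \subseteq S} \Pr[\mathbf{A} \cap S = B] \cdot \mu((\f_h)_S^B)$. The $B = \emptyset$ term contributes at most $(k_h/n)^{r+1}$. For each nonempty $B \subseteq S$, the sliced families $(\f_1)_S^\emptyset, \ldots, (\f_{h-1})_S^\emptyset, (\f_h)_S^B$ are cross free of the $(k_1, \ldots, k_{h-1}, k_h - |B|)$-cross-expansion of the base multi-hypergraph $\h_1$ (where $\h = \h_1^+$): since the slices live on $[n]\setminus S$ while $B \subseteq S$, any cross-contained copy in the slices lifts to a copy of $\h$ in the originals by absorbing $B$ as external vertices $D_h$ of the $h$-th edge. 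This cross-expansion is again $d$-expanded of size $h$. The sliced measures satisfy $\mu((\f_i)_S^\emptyset) \geq (C_{r+1} - s - O(1)) k_i/n \geq C_r k_i/(n-|S|)$ for $i<h$ once $C_{r+1} \geq C_r + s + O(1)$, so the induction hypothesis applies to the sliced families and yields $\mu((\f_h)_S^B) \leq O_r(C_r)(k_h/n)^r$. Using $\Pr[\mathbf{A} \cap S = B] = O_s((k_h/n)^{|B|})$ and summing over the $O_s(1)$ nonempty $B \subseteq S$ gives $\mu(\f_h) \leq O_{r,s}(C_r)(k_h/n)^{r+1}$, which contradicts the assumption on $\mu(\f_h)$ as soon as $C_{r+1}$ exceeds the implied constant. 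This closes the induction.

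The main obstacle I anticipate is the cross-expansion bookkeeping in the third paragraph, namely verifying that the sliced families are cross free of a hypergraph that is still $d$-expanded of size $h$ with the same base $\h_1$ (so that the induction hypothesis applies with the same constants). This hinges on the observation that $B \subseteq S$ while the other slices sit in $[n] \setminus S$, so $B$ can play the role of external vertices $D_h$ without disturbing either the pairwise disjointness of the $D_i$ or the intersection pattern of the embedded copy of $\h_1$. Beyond this, threading the chain $C_1 \leq C_2 \leq \cdots$ through the induction and checking that the ranges $C_r \log n < k_i < n/C_r$ remain valid for the sliced families is routine.
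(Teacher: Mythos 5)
Your proof is correct, and it reaches the conclusion by a route that is recognizably different from the paper's. The paper's proof applies the junta approximation lemma (Proposition~\ref{lem:associated junta lemma}) to $\f_h$ in one shot, obtaining a junta $\j$ that captures $\f_h$ up to an $O((k_h/n)^r)$ error and whose slices $(\f_h)_B^B$ are uncapturable; it then shows $\j=\emptyset$ because any $B\in\j$ would make $(\f_1)_B^{\emptyset},\ldots,(\f_{h-1})_B^{\emptyset},(\f_h)_B^B$ an uncapturable cross-free tuple, contradicting Proposition~\ref{prop:uncap cross k large}. You instead induct on $r$, using Proposition~\ref{prop:uncap cross k large} contrapositively at each level to extract a single capturing set $S$ for $\f_h$, and handling the nonempty slices $(\f_h)_S^B$ by applying the inductive statement to $(\f_1)_S^{\emptyset},\ldots,(\f_{h-1})_S^{\emptyset},(\f_h)_S^B$, which is cross free of the cross-expansion of the same base hypergraph with $k_h$ replaced by $k_h-|B|$; your lifting argument (that $B\subseteq S$ can be absorbed as external vertices of the $h$-th edge without disturbing disjointness) is exactly the right verification, and the base case via Proposition~\ref{Prop:turan for cross} matches how the regime $k>C\log n$ makes $Ck/n$ the dominant threshold. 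In effect you have unrolled the recursion hidden inside the junta lemma and interleaved it with the cross-freeness argument: this buys a self-contained proof with fully explicit constant-tracking ($C_{r+1}$ a suitable function of $C_r$ and $s$, with no circularity since the implied constants do not depend on $C_{r+1}$), at the cost of re-verifying at each level that the sliced configuration is still a $d$-expanded ordered hypergraph of size $h$ over $[n]\setminus S$ and that the ranges and measure lower bounds transfer to the slices — all of which you address correctly.
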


Note that the hypothesis $k> C\log n$ is necessary. Indeed, if $S_{1}\sqcup\cdots\sqcup S_{h}=n$ is an even partition of $n$ then the families ${{S_{1}}\choose{k}},\ldots,{{S_{h}}\choose{k}}$ are cross free of any hypergraph of size $h$ except for the $h$-matching, while $\mu\left({{S_{h}}\choose{k}}\right)$ is `not very small' when $k=o\left(\log n\right)$.

\medskip The proof of Proposition~\ref{prop:unbalanced cross free of hypergraph all negligible} uses the results on uncapturable families obtained in Section~\ref{sec:uncap-contains}. The families $\f_1,\ldots,\f_{h-1}$ are trivially uncapturable due to their large size, and for $\f_h$, we use Proposition~\ref{lem:associated junta lemma} to approximate it by a junta $\j$ such that for each $B \in \j$, the family $(\f_h)_B^B$ is uncapturable. Since for each such $B$, the families
\[
(\f_1)_B^{\emptyset},\ldots,(\f_{h-1})_B^{\emptyset},(\f_h)_B^B
\]
are uncapturable and cross free of $\h$, this contradicts Proposition~\ref{prop:uncap cross k large}, unless the junta $\j$ is empty (and so, there are no such $B$'s). Since $\j$ approximates $\f_h$, this implies that $\f_h$ is very small, as asserted.

The place in the proof where the assumption $k_{min} > C\log n$ is used is the application of Proposition~\ref{prop:uncap cross k large}; indeed, as noted in Section~\ref{sec:uncap-contains}, this proposition does not hold when $k=o(\log n)$.

\subsubsection{$k$ is small and $\f_h$ is free of a (possibly another) $d'$-expanded hypergraph} The most complex case is where $k=O(\log n)$ and the families $\f_1,\ldots,\f_{h-1}$ are not extremely large. In this case, one cannot guarantee that the inequality $\mu\left(\f_{h}\right) \leq O(k/n)$ (which is the assertion we will need in view of Equation~\eqref{Eq:Aux8-1}) holds without additional assumptions, even if the measures of $\f_1,\ldots,\f_{h-1}$ are close to 1. For example, if $\h$ consists of two edges that intersect in a single element then a counterexample of the form $\f_{1}={{S}\choose{k}},\f_{2}={{\left[n\right]\backslash S}\choose{k}}$ can be easily found. What we show is that we can deduce $\mu\left(\f_{h}\right) \leq O(k/n)$ (and even stronger bounds) under the additional assumption that $\f_h$ is free of some $d'$-expanded hypergraph $\h'$ (which possibly differs from $\h$).

To see why the additional assumption on $\f_{h}$ may make sense, let's return to the case discussed in Section~\ref{sec:sub:boot:motivation} where $\f$ is a family that is free of a special $d$-simplex, such that $\mu\left(\f_{\left\{ i\right\} }^{\left\{ i\right\} }\right)\ge1-\epsilon$. In that case, we consider the families $\f_1,\ldots,\f_{d+1}$, where $\f_{1}=\cdots=\f_{d}=\f_{\left\{ i\right\} }^{\left\{ i\right\} }$ and $\f_{d+1}=\f_{\left\{ i\right\} }^{\emptyset}$, and want to apply to them our technique since they are cross free of the hypergraph $\left(\left\{2,3,\ldots,d\right\} , \{1,3,4,\ldots,d\},\ldots,\left\{ 1,2,\ldots,d-1\right\} ,\left\{ 1,2,\ldots,d\right\} \right)$. It is clear that in this case, the family $\f_{d+1}$ is in itself free of a special $d$-simplex. As we shall see in Section~\ref{sec:proof}, such situation occurs for general forbidden hypergraphs as well.
\begin{prop}
\label{prop: unbalnced cross free of hypergraph k small} For any constants $d,d',h,h',r$, there exists $C=C(d,d',h,h',r)$ such that
the following holds. Let $C < k_{1},\ldots,k_{h} < n^{1/3}/C$, let $\h$ be a $d$-expanded ordered hypergraph of size $h$, and let $\h'$ be a $d'$-expanded hypergraph of size $h'$ whose kernel is of size $t-1$.

Let $\f_{1}\subseteq{{[n]}\choose{k_1}},\ldots,\f_{h}\subseteq{{[n]}\choose{k_h}}$ be families that are cross free of $\h$, and suppose in addition that $\f_h$ is free of $\h'$. If for some $\epsilon>0$,
\[
\mu\left(\f_{1}\right),\ldots,\mu\left(\f_{h-1}\right)\ge1-\epsilon,
\]
then
\[
\mu\left(\f_{h}\right)\le C\frac{k_{h}^{2t}}{n^{t}}\epsilon^{r}.
\]
\end{prop}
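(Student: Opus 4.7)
The strategy follows the junta method: use Proposition~\ref{lem:associated junta lemma} to approximate $\f_h$ by a junta $\j_h$, then combine the $\h'$-freeness of $\f_h$ with the cross-freeness with $\h$ to force $\j_h$ to be empty; the desired bound will then follow from the error term of the junta approximation. Compared to Proposition~\ref{prop:unbalanced cross free of hypergraph all negligible}, the new difficulty is that $k_h$ may be as small as a constant, so Proposition~\ref{prop:uncap cross k large} is unavailable and both the $\h'$-freeness and the cross-freeness must be used in tandem.

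I apply Proposition~\ref{lem:associated junta lemma} to $\f_h$ with an uncapturability threshold of the form
\[
\epsilon_0:=C_0(k_h/n)^t\bigl(\epsilon^r+(k_h/n)^{r'}+e^{-k_h/C_2}\bigr),
\]
for large constants $C_0,C_2,r'$ to be tuned. This produces a constant-sized set $J\subseteq[n]$ and $\j_h\subseteq\p(J)$ such that $\mu(\f_h\setminus\langle\j_h^{\uparrow}\rangle)=O(\epsilon_0)$ and, for each $B\in\j_h$, the slice $(\f_h)_B^B$ is $(s,\epsilon_0(n/k_h)^{|B|})$-uncapturable. The goal is to rule out every $B\in\j_h$.

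For $B\in\j_h$ with $|B|<t$, the slice $(\f_h)_B^B$ is free of the $d'$-expanded hypergraph obtained from $\h'$ by deleting $|B|$ kernel vertices, which has kernel of size $t-1-|B|$. Proposition~\ref{prop:uncap single k small} then renders $(\f_h)_B^B$ to be $(s,\max(e^{-k_h/C_2},C(k_h/n)^{r'})(k_h/n)^{t-|B|})$-capturable, directly contradicting the uncapturability of the junta slice once $C_0$ is large enough. For $B\in\j_h$ with $|B|\ge t$, the slices $(\f_1)_B^{\emptyset},\ldots,(\f_{h-1})_B^{\emptyset},(\f_h)_B^B$ are cross free of the $d$-expanded ordered hypergraph obtained from $\h$ by placing $B$ in the private part of the $h$-th edge. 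Since $|B|$ is constant and $k_h<n^{1/3}/C$, one has $\binom{n}{k_i}/\binom{n-|B|}{k_i}=1+o(1)$, so $\mu((\f_i)_B^{\emptyset})\ge 1-2\epsilon$ for every $i<h$. Proposition~\ref{prop:unbalanced cross free of an hypergraph} then yields $\mu((\f_h)_B^B)\le C_1\epsilon^r$, while the uncapturability (taking the empty capturing set) forces $\mu((\f_h)_B^B)\ge\epsilon_0(n/k_h)^{|B|}\ge C_0\epsilon^r$; choosing $C_0>C_1$ produces the required contradiction. Consequently $\j_h=\emptyset$, yielding $\mu(\f_h)\le O(\epsilon_0)$.

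The main obstacle is parameter calibration, which is intricate for two reasons. First, Proposition~\ref{prop:unbalanced cross free of an hypergraph} requires the measure deficit $2\epsilon$ to satisfy $2\epsilon\le(k_h/n)^{2d}$; in the complementary regime the cross-freeness input must be replaced by a direct argument leveraging only the $\h'$-freeness of $\f_h$, and one must verify that the resulting bound still fits inside $C(k_h^{2t}/n^t)\epsilon^r$ (the factor $k_h^t$ in the target, which looks wasteful a priori, provides precisely the slack needed here). Second, the auxiliary exponent $r'$ must be chosen large enough relative to $r,d,t$ so that the term $C_0(k_h/n)^{t+r'}$ in $\epsilon_0$ also fits inside the target bound throughout the allowed range of $\epsilon$. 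Performing this balancing carefully is the most technical part of the proof.
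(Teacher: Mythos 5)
Your route is genuinely different from the paper's (which bounds $\mu(\partial^{k_h-d}(\f_h))$ by $O(\epsilon)$ via a coupling argument, boosts this to $\mu(\partial^{t}(\f_h))=O(\epsilon^r)$ with Lemma~\ref{lem:Kruskal Katona going down}, and only then invokes the $\h'$-freeness through Proposition~\ref{cor:t-shadows}), and the pieces you do carry out are sound: the junta decomposition of $\f_h$, the elimination of slices $B$ with $|B|<t$ via Proposition~\ref{prop:uncap single k small}, and the elimination of slices with $|B|\ge t$ via Proposition~\ref{prop:unbalanced cross free of an hypergraph} all work \emph{when that last proposition applies}. But the regime you flag as "the main obstacle" is a genuine gap, and the patch you propose for it cannot work. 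Proposition~\ref{prop:unbalanced cross free of an hypergraph} requires $\epsilon\le(k_{\min}/n)^{2d}$, so for $\epsilon$ above roughly $(k_h/n)^{2d}$ you have no tool to eliminate the slices with $|B|\ge t$, and you suggest falling back on "a direct argument leveraging only the $\h'$-freeness of $\f_h$." No such argument can reach the target bound: the $(t,t)$-star is free of every hypergraph with kernel of size $t-1$, so $\h'$-freeness alone is consistent with $\mu(\f_h)=\Theta\left((k_h/n)^{t}\right)$, whereas the target is $C(k_h/n)^{t}\cdot k_h^{t}\epsilon^{r}$. The $k_h^{t}$ slack you point to only covers $\epsilon\gtrsim k_h^{-t/r}$; for, say, $k_h=n^{1/4}$ the uncovered window $(k_h/n)^{2d}\lesssim\epsilon\lesssim k_h^{-t/r}$ is a polynomially wide range of $\epsilon$ in which the cross-freeness hypothesis is indispensable but your argument discards it.

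This uncovered window is precisely where the real content of the proposition lies, and it is what the shadow machinery of Section~\ref{sec:Random-sampling} exists for. The paper's coupling argument (choose a random placement of the base hypergraph, expand $h-1$ edges by uniformly random disjoint sets into $\f_1,\ldots,\f_{h-1}$, and expand the $h$-th by a fixed completion function into $\f_h$) yields $\mu(\partial^{k_h-d}(\f_h))=O(\epsilon)$ for \emph{all} $\epsilon\ge n^{-1/3}$ -- i.e., it is effective exactly in the large-$\epsilon$ regime where your bootstrapping tool fails -- and the hypothesis $k_h<n^{1/3}/C$ is what makes the union bound over collisions of the random expansions cost only $O(n^{-1/3})=O(\epsilon)$. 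The $\h'$-freeness is then used only at the very end, via Proposition~\ref{lem:kostochka-mubayi-verstraete} iterated $t$ times, to convert the $t$-shadow bound into the factor $(k_h^2/n)^{t}$. To complete your proof you would need to import essentially this entire mechanism for the case $|B|\ge t$, $\epsilon>(k_h/n)^{2d}$; as written, that case is open.
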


The proof of Proposition~\ref{prop: unbalnced cross free of hypergraph k small} uses the results on the relation between the size of an $\h'$-free family and the size of its shadows obtained in Section~\ref{sec:Random-sampling}, as follows:

First, we use Proposition~\ref{prop:unbalanced cross free of an hypergraph} to show that we can assume w.l.o.g. that $\epsilon \geq n^{-1/3}$ (as otherwise, the measures of $\f_1,\ldots,\f_{h-1}$ are `sufficiently close to 1' for applying Proposition~\ref{prop:unbalanced cross free of an hypergraph}). We then use a probabilistic coupling argument to deduce that $\mu(\partial^{k_h-d}(\f_h)) = O(\epsilon)$. By Lemma~\ref{lem:Kruskal Katona going down}, this implies $\mu(\partial^{t}(\f_h)) = O(\epsilon^r)$. Finally, we use Proposition~\ref{cor:t-shadows} (which exploits the assumption that $\f_h$ is free of $\h'$) to deduce that $\mu(\f_h) \leq C \frac{k_h^{2t}}{n^t} \epsilon^r$, as asserted.

\subsection{Proofs}
\label{sec:sub:boot:proofs}

%\subsubsection{Proof of Proposition \ref{prop:unbalanced cross free of an hypergraph}}

In the proof of Proposition \ref{prop:unbalanced cross free of an hypergraph} we use the following simple observation.
\begin{claim}\label{Cl:Simple}
Let $n,k \in \mathbb{N}$ be such that $k<n$. For any family $\f \subseteq {{[n]}\choose{k}}$, and for any $d_1>d_2$, we have
\[
\mu(\f) = \mathbb{E}_{\mathbf{B_1} \sim {{\left[n\right]}\choose{d_1}}, \mathbf{B_2}\sim {{\mathbf{B_1}}\choose{d_2}}} \mu \left(\f_{\mathbf{B_1}}^{\mathbf{B_2}} \right).
\]
\end{claim}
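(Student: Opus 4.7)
The plan is to prove the claim by a coupling/symmetry argument, which is the cleanest way to handle such identities about slice measures. The claim being labeled \emph{simple} matches my expectation that no substantive difficulty is hidden here; the only thing to do is to pick the right probability space.

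First, I would define a joint distribution on triples $(\mathbf{B_1}, \mathbf{B_2}, \mathbf{A})$ as follows: sample $\mathbf{B_1} \sim \binom{[n]}{d_1}$ uniformly, then $\mathbf{B_2} \sim \binom{\mathbf{B_1}}{d_2}$ uniformly, and independently of $\mathbf{B_2}$ (given $\mathbf{B_1}$) sample $\mathbf{A} \sim \binom{[n]\setminus \mathbf{B_1}}{k-d_2}$ uniformly. Set $\mathbf{C} := \mathbf{A} \sqcup \mathbf{B_2}$; this is well-defined as a disjoint union because $\mathbf{A} \subseteq [n]\setminus \mathbf{B_1}$ and $\mathbf{B_2} \subseteq \mathbf{B_1}$, and clearly $|\mathbf{C}| = k$.

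The key observation is that the joint distribution of $(\mathbf{B_1}, \mathbf{B_2}, \mathbf{A})$ is invariant under any permutation of $[n]$, so the induced distribution of $\mathbf{C}$ is also permutation-invariant on $\binom{[n]}{k}$, hence uniform. Consequently $\Pr[\mathbf{C} \in \f] = \mu(\f)$. On the other hand, for any fixed outcome $(B_1, B_2)$, the construction ensures $\mathbf{C} \cap B_1 = B_2$, so by the definition of the slice (viewed as residing in $\binom{[n]\setminus B_1}{k-d_2}$) we have $\mathbf{C} \in \f \iff \mathbf{A} \in \f_{B_1}^{B_2}$. Since $\mathbf{A}$ is uniform on $\binom{[n]\setminus B_1}{k-d_2}$ given $(B_1,B_2)$, this yields
\[
\Pr\bigl[\mathbf{C} \in \f \,\bigm|\, \mathbf{B_1}=B_1,\,\mathbf{B_2}=B_2\bigr] = \mu\bigl(\f_{B_1}^{B_2}\bigr).
\]
The claim then follows from the tower property: $\mu(\f) = \Pr[\mathbf{C} \in \f] = \mathbb{E}\bigl[\mu(\f_{\mathbf{B_1}}^{\mathbf{B_2}})\bigr]$.

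There is no real obstacle here; the only thing to watch is the bookkeeping that $\f_{B_1}^{B_2}$ is being treated as a family in $\binom{[n]\setminus B_1}{k-|B_2|}$ per the conventions of Section~\ref{sec:prelim}, so that $\mu(\f_{B_1}^{B_2}) = |\f_{B_1}^{B_2}|/\binom{n-d_1}{k-d_2}$ aligns correctly with the conditional probability above. A purely combinatorial alternative, if preferred, would be to double-count pairs $(A, (B_1, B_2))$ with $A \in \f$, $B_2 \subseteq A$, and $B_1 \setminus B_2 \subseteq [n]\setminus A$, which gives the same identity after simplifying $\binom{n}{k}\binom{k}{d_2}\binom{n-k}{d_1-d_2} = \binom{n-d_1}{k-d_2}\binom{n}{d_2}\binom{n-d_2}{d_1-d_2}$, but the coupling argument is more conceptual and matches the style used elsewhere in the paper.
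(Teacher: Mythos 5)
Your proof is correct, and it is exactly the ``simple coupling argument'' that the paper invokes when it omits the proof of this claim: the triple $(\mathbf{B_1},\mathbf{B_2},\mathbf{A})$ is permutation-invariant, so $\mathbf{C}=\mathbf{A}\sqcup\mathbf{B_2}$ is uniform on $\binom{[n]}{k}$, and conditioning on $(\mathbf{B_1},\mathbf{B_2})$ recovers the slice measure with the correct normalization $\binom{n-d_1}{k-d_2}$. The double-counting alternative you sketch is the other route the paper alludes to, and your binomial identity checks out.
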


\noindent The proof of the observation, using a simple coupling argument or direct counting, is omitted.

\begin{proof}[Proof of Proposition \ref{prop:unbalanced cross free of an hypergraph}]
 Let $\h,\f_{1},\ldots,\f_{h},$ and $\epsilon$ be as in the hypothesis
of the proposition and let $C$ be a large constant to be defined below.
%Throughout the proof we suppose that $c$ is a
%sufficiently small constant, and that $C$ is a sufficiently large one.
Write $\h=\left(A_{1},\ldots,A_{h}\right)^{+}$ for some $d$-sets $A_{1},\ldots,A_{h}$, and denote $A=A_{1}\cup\cdots\cup A_{h}$.
By Claim~\ref{Cl:Simple}, there exist sets $B,B_h$ with $|B|=|A|$ and $|B_h|=|A_h|$ such that
\begin{equation}\label{Eq:Aux8-2}
\mu \left((\f_h)_{B}^{B_h} \right) \geq \mu(\f_h).
\end{equation}
Choose $d$-sets $B_{1},\ldots,B_{h-1}\in {{B}\choose{d}}$ in such a way that the ordered hypergraph $\left( B_{1},\ldots,B_{h}\right)$ is isomorphic to  $\left( A_{1},\ldots,A_{h}\right)$.

Consider the families $\left(\f_{1}\right)_{B}^{B_{1}},\ldots\left(\f_{h}\right)_{B}^{B_{h}}$. These families are cross free of a matching, since if $(D_1,\ldots,D_h)$ was a matching cross contained in $\left(\f_{1}\right)_{B}^{B_{1}},\ldots\left(\f_{h}\right)_{B}^{B_{h}}$, then the hypergraph $\left( B_{1}\cup D_{1},\ldots,B_{h}\cup D_{h}\right)$ would be a copy of $\h$ cross contained in $\f_1,\ldots,\f_h$. The following claim asserts that the measures $\mu\left(\f_{i}\right)_{B}^{B_{i}}$ are `large', which will allow us to apply Proposition~\ref{prop: unbalanced matching lemma} to these families.
\begin{claim}
\label{Claim: silly 2}For each $i\in\left[h-1\right]$, we have
\[
\mu\left(\f_{i}\right)_{B}^{B_{i}}\ge1-\epsilon^{\frac{1}{3}}.
\]
\end{claim}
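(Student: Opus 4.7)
The claim reduces to a Markov-type estimate via Claim~\ref{Cl:Simple}. Applied to the complement family $\f_i^c$ with $(d_1, d_2) = (|A|, |A_i|)$, the identity gives
\[
\mu(\f_i^c) = \mathbb{E}_{\mathbf{B} \sim \binom{[n]}{|A|},\ \mathbf{C} \sim \binom{\mathbf{B}}{|A_i|}} \mu\bigl((\f_i^c)_{\mathbf{B}}^{\mathbf{C}}\bigr) \le \epsilon.
\]
By symmetry, if $\pi \colon A \to \mathbf{B}$ is a uniform random bijection (for $\mathbf{B}$ uniform in $\binom{[n]}{|A|}$), then the pair $(\mathbf{B}, \pi(A_i))$ has the same marginal distribution as $(\mathbf{B}, \mathbf{C})$. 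Markov's inequality then yields
\[
\Pr_{\mathbf{B}, \pi}\bigl[\mu((\f_i^c)_{\mathbf{B}}^{\pi(A_i)}) > \epsilon^{1/3}\bigr] \le \epsilon^{2/3},
\]
and a union bound over $i \in [h-1]$ shows the event $G$ on which $\mu((\f_i)_{\mathbf{B}}^{\pi(A_i)}) \ge 1 - \epsilon^{1/3}$ for every $i \in [h-1]$ has probability at least $1 - (h-1)\epsilon^{2/3}$, which exceeds $1/2$ under the hypothesis $\epsilon \le (k_{\min}/n)^{2d}$ together with $k_{\min} < n/C$ for large $C$.

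For the claim to apply to the specific $(B, B_i)$ selected in the proposition's proof, I would recast the selection so that both conditions are obtained in one probabilistic step: rather than picking $(B, B_h)$ first and then completing to $(B_1, \ldots, B_{h-1})$, I would pick $(B, \pi)$ jointly. Since $\mathbb{E} \mu((\f_h)_B^{\pi(A_h)}) = \mu(\f_h)$ and $\Pr[G] > 1/2$, one has $\mathbb{E}[\mu((\f_h)_B^{\pi(A_h)}) \mid G] \ge \mu(\f_h) - (h-1)\epsilon^{2/3}$, so there exists $(B, \pi) \in G$ satisfying $\mu((\f_h)_B^{\pi(A_h)}) \ge \mu(\f_h) - (h-1)\epsilon^{2/3}$. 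Setting $B_i := \pi(A_i)$, the claim holds by membership in $G$, and the slightly weakened substitute $\mu((\f_h)_B^{B_h}) \ge \mu(\f_h) - (h-1)\epsilon^{2/3}$ still suffices for the outer proof: invoking Proposition~\ref{prop: unbalanced matching lemma} with a larger exponent produces an upper bound on $\mu((\f_h)_B^{B_h})$ of order $\epsilon^{r'}$ with $r' > 2/3$, from which $\mu(\f_h) \le C\epsilon^r$ follows for any prescribed $r$.

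The only real obstacle is this minor reconciliation between the two selection criteria, handled by performing both selections as a single probabilistic choice. The core content of the claim is nothing beyond Markov's inequality; none of the fairness, quasiregularity, or shadow machinery built elsewhere in the paper is needed here, because the bound $\mu(\f_i) \ge 1 - \epsilon$ is strong enough that a plain averaging argument does the job.
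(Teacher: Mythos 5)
Your Markov/averaging computation is correct as far as it goes, but it proves a different statement from the claim, and the reconciliation you propose breaks the outer proof. The claim must hold for the \emph{specific} pair $(B,B_h)$ already fixed so that $\mu((\f_h)_B^{B_h})\ge\mu(\f_h)$ with no loss, together with an \emph{arbitrary} completion $B_1,\ldots,B_{h-1}\subseteq B$ forming a copy of $(A_1,\ldots,A_h)$. Your version only guarantees the conclusion on a good event $G$ of probability $1-(h-1)\epsilon^{2/3}$, so after conditioning on $G$ you can only salvage $\mu((\f_h)_B^{B_h})\ge\mu(\f_h)-(h-1)\epsilon^{2/3}$. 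That additive term is fatal: the proposition's conclusion is $\mu(\f_h)\le C\epsilon^{r}$ for an \emph{arbitrary prescribed} constant $r$ (it is invoked elsewhere with $3r$, with $r+\lceil t/d\rceil$, etc.), whereas your chain yields only $\mu(\f_h)\le O(\epsilon^{r'})+(h-1)\epsilon^{2/3}=O(\epsilon^{2/3})$. No choice of Markov threshold helps: thresholding at $\epsilon^{\beta}$ gives a loss of $\epsilon^{1-\beta}$, and you would need $1-\beta\ge r$ with $\beta>0$. Your closing sentence, that the bound follows ``for any prescribed $r$'', is false.

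The missing idea is that the hypothesis $\epsilon\le(k_{\min}/n)^{2d}$ --- which you use only to verify $\Pr[G]>1/2$ --- is exactly what makes a \emph{pointwise}, rather than averaged, bound work for every slice simultaneously: for any $d$-set $B'\subseteq B$,
\[
\epsilon\ge 1-\mu(\f_i)\ge\Pr\left[\mathbf{A}\cap B=B'\right]\cdot\left(1-\mu\left((\f_i)_B^{B'}\right)\right)=\Omega_{d,h}\left(\left(\tfrac{k_i}{n}\right)^{d}\right)\left(1-\mu\left((\f_i)_B^{B'}\right)\right),
\]
using $k_i<n/C$ to bound $\Pr[\mathbf{A}\cap B=B']$ from below. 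Hence $1-\mu((\f_i)_B^{B'})=O(\epsilon(n/k_i)^{d})\le O(\epsilon^{1/2})\le\epsilon^{1/3}$. This holds for \emph{every} choice of $B'$, so your event $G$ is in fact the whole probability space, the original loss-free selection of $(B,B_h)$ stands, and no reconciliation between the two selection criteria is needed.
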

\begin{proof}
The proof is a simple calculation. We have
\begin{align*}
\epsilon\ge1-\mu\left(\f_{i}\right) & =\Pr_{\mathbf{A}\sim {{\left[n\right]}\choose{k_i}}}\left[\mathbf{A}\notin\f_{i}\right]\ge\Pr\left[\mathbf{A}\cap B=B_{i}\right]\Pr_{\mathbf{A} \setminus B \sim {{\left[n\right]\backslash B}\choose{k_i-d}}}\left[\mathbf{A} \setminus B \notin (\f_{i})_B^{B_i}\right]\\
 & =\Omega_{h,d}\left(\left(\frac{k_i}{n}\right)^{d}\right)\left(1-\mu\left(\left(\f_{i}\right)_{B}^{B_{i}}\right)\right).
\end{align*}
 Rearranging, we obtain
\[
\mu\left(\left(\f_{i}\right)_{B}^{B_{i}}\right)=1-O_{d,h}\left(\epsilon\left(\frac{n}{k_i}\right)^{d}\right).
\]
Since $\epsilon\le\left(\frac{k_i}{n}\right)^{2d}$, this implies $\mu\left(\left(\f_{i}\right)_{B}^{B_{i}}\right) \ge1-\epsilon^{\frac{1}{3}}$,
provided that $C$ is sufficiently large.
\end{proof}
Since $\left(\f_{1}\right)_{B}^{B_{1}},\ldots\left(\f_{h}\right)_{B}^{B_{h}}$ are cross free of a matching and satisfy $\mu\left(\left(\f_{i}\right)_{B}^{B_{i}}\right) \ge1-\epsilon^{\frac{1}{3}}$ for all $i \in [h-1]$,
Proposition \ref{prop: unbalanced matching lemma} (applied with $3r$ instead of $r$) implies that
\[
\mu\left(\f_{h}\right)_{B}^{B_{h}} \leq O_{h,r}\left((\epsilon^{1/3})^{3r}\right) = O_{h,r}\left(\epsilon^{r}\right).
\]
Finally, plugging into~\eqref{Eq:Aux8-2} and taking $C$ sufficiently large, this yields
\[
\mu(\f_h) \leq \mu \left((\f_h)_{B}^{B_h} \right) \leq O_{h,r}\left(\epsilon^{r}\right) \leq C\epsilon^r,
\]
as asserted.
\end{proof}

%\subsection{Proof of Proposition \ref{prop:unbalanced cross free of hypergraph all negligible}}

\begin{proof}[Proof of Proposition \ref{prop:unbalanced cross free of hypergraph all negligible}]
Let $\f_{1},\ldots,\f_{h}$ be families as in the hypothesis of the proposition and let $C,s$ be sufficiently large constants (depending on $d,h,r$) to be defined below. Write $\h=\left(A_{1},\ldots,A_{h}\right)^{+}$ for some $d$-sets $A_{1},\ldots,A_{h}$.

For each $i\in\left[h-1\right]$, the family $\f_{i}$ is $\left(s,\frac{k_{i}}{n}\right)$-uncapturable, for otherwise we would have
\[
\mu\left(\f_{i}\right)\le (s+1) \frac{k_{i}}{n},
\]
contradicting the hypothesis $\mu\left(\f_{i}\right)\ge C \frac{k_{i}}{n}$ (provided $C>s+1$).

By Proposition~\ref{lem:associated junta lemma}, there exists a family $\j\subseteq\p\left(J\right)$, where $|J|=(2s)^r$, such that:
\begin{itemize}
\item $\mu\left(\f_{h}\backslash\j^{\uparrow}\right)=O_{s,r}\left(\frac{k_{h}}{n}\right)^{r}$;
\item All the sets in $\j$ are of size at most $r-1$;
\item For each set $B\in\j$, the family $\left(\f_{h}\right)_{B}^{B}$
is $\left(s,\left(\frac{k_{h}}{n}\right)^{r-\left|B\right|}\right)$-uncapturable.
\end{itemize}
(Note that the second condition does not appear in Proposition~\ref{lem:associated junta lemma}. However, it follows from the third condition, as no family is $(s,\beta)$-uncapturable for $\beta \geq 1$.) We claim that the family $\j$
is empty. This will imply that
\[
\mu\left(\f_{h}\right)=\mu\left(\f_{h}\backslash\j^{\uparrow}\right)\le C\left(\frac{k_{h}}{n}\right)^{r}
\]
for a sufficiently large $C$, completing the proof.

Suppose on the contrary that $\j \neq \emptyset$, and let $B \in \j$. The families
\[
\left(\f_{1}\right)_{B}^{\emptyset},\ldots,\left(\f_{h-1}\right)_{B}^{\emptyset},\left(\f_{h}\right)_{B}^{B}
\]
are cross free of the hypergraph $\left(A_{1},\ldots,A_{h}\right)^{+}$. In addition, each family $\left(\f_{i}\right)_{B}^{\emptyset}$ is $\left(s-\left|B\right|,\frac{k_{i}}{n}\right)$-uncapturable, and the family $\left(\f_{h}\right)_{B}^{B}$ is $\left(s,\left(\frac{k_{h}}{n}\right)^{r-|B|}\right)$-uncapturable.
Provided that $s$ is sufficiently large, this contradicts Proposition \ref{prop:uncap cross k large}. This completes the proof.
\end{proof}

%\subsection{Proof of Proposition \ref{prop: unbalnced cross free of hypergraph k small}}
\begin{proof}[Proof of Proposition \ref{prop: unbalnced cross free of hypergraph k small}]

Let $\h,\h',\f_{1},\ldots,\f_{h}$ be as in
the hypothesis of the proposition, and let $C$ be a sufficiently large constant to be determined below.
Recall that for any family $\f \subseteq {{[n]}\choose{k}}$, the $(k-C)$-shadow $\partial^{k-C}\left(\f\right)$ is the family of all $C$-sets that are contained in some element of $\f$. The proof of the proposition consists of four steps.
\begin{enumerate}
\item We first present a simple reduction to the case $\epsilon=\Omega\left(n^{-1/3}\right)$.
\item We then present a probabilistic argument that yields the upper bound $\mu\left(\partial^{k_h-d}\left(\f_{h}\right)\right)=O\left(\epsilon\right)$.
\item Lemma \ref{lem:Kruskal Katona going down} then tells us that $\mu\left(\partial^{t}\left(\f_{h}\right)\right)=O\left(\epsilon^{r}\right)$.
\item Finally, we apply Proposition~\ref{cor:t-shadows} to deduce that $\mu\left(\f_{h}\right)\le O\left(\frac{k_{h}^{2t}}{n^{t}}\epsilon^{r}\right)$.
\end{enumerate}

We begin with the reduction step.
\begin{claim}
Suppose that the proposition holds under the additional hypothesis $\epsilon\ge n^{-1/3}$. Then it holds for all $\epsilon>0$.
\end{claim}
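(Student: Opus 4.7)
My plan is to treat the regime $\epsilon < n^{-1/3}$ by a case split at the secondary threshold $\epsilon_{0}:=(k_{\min}/n)^{2d}$, which is the threshold appearing in Proposition~\ref{prop:unbalanced cross free of an hypergraph}. Note that since $k_{i}<n^{1/3}/C$, we have $\epsilon_{0}\le n^{-4d/3}/C^{2d}$, which is strictly smaller than $n^{-1/3}$ for $d\ge 1$. Thus $(0,n^{-1/3})$ splits cleanly as $(0,\epsilon_{0}]\cup(\epsilon_{0},n^{-1/3})$. Throughout the argument, the target exponent $r$ in the conclusion of the proposition is fixed, and we are allowed to invoke either Proposition~\ref{prop:unbalanced cross free of an hypergraph} or the assumed hypothesis (the current proposition for $\epsilon\ge n^{-1/3}$) with an inflated exponent $r'$ of our choosing, since both statements are quantified over any constant exponent.

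For the sub-case $\epsilon\le\epsilon_{0}=(k_{\min}/n)^{2d}$, Proposition~\ref{prop:unbalanced cross free of an hypergraph} applies directly and yields $\mu(\f_{h})\le C_{0}\epsilon^{r'}$ for the exponent $r'$ of our choice. I want to bound this by $C\frac{k_{h}^{2t}}{n^{t}}\epsilon^{r}$, so I need $\epsilon^{r'-r}\le (C/C_{0})\frac{k_{h}^{2t}}{n^{t}}$. Using $\epsilon\le (k_{\min}/n)^{2d}\le (k_{h}/n)^{2d}\cdot(k_{\min}/k_{h})^{2d}$ and a direct manipulation of exponents (comparing $2d(r'-r)$ to $t$ and using $k_{h}\le n$), it suffices to take $r'\ge r+\lceil t/d\rceil+1$; the constant $C$ swallows any multiplicative factors depending on $d,h,r,t$.

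For the sub-case $\epsilon_{0}<\epsilon<n^{-1/3}$, I use monotonicity: the families $\f_{1},\ldots,\f_{h-1}$ still satisfy $\mu(\f_{i})\ge 1-\epsilon\ge 1-n^{-1/3}$, so the assumed hypothesis applies with $\epsilon$ replaced by $n^{-1/3}$. Applied with an inflated exponent $r'$, this yields $\mu(\f_{h})\le C_{0}\frac{k_{h}^{2t}}{n^{t}}n^{-r'/3}$. Since $\epsilon>\epsilon_{0}\ge (C/n)^{2d}$ (using $k_{\min}\ge C$), we have $\epsilon^{r}\ge C^{2dr}/n^{2dr}$, so taking $r'\ge 6dr+O(1)$ forces $n^{-r'/3}\le \epsilon^{r}/(C_{0}/C)$ for $n$ sufficiently large, giving the required bound.

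I do not expect a genuine obstacle: the argument is a routine case analysis, and the only subtlety is the choice of the inflated exponents. The point worth emphasizing is that neither tool on its own covers the whole range $(0,n^{-1/3})$: Proposition~\ref{prop:unbalanced cross free of an hypergraph} only applies once $\epsilon$ is below the polynomial threshold $(k_{\min}/n)^{2d}$, while the assumed hypothesis, applied at $\epsilon'=n^{-1/3}$, gives a bound that decays only polynomially in $n$ and so is too weak once $\epsilon$ becomes super-polynomially small. Splitting exactly at $(k_{\min}/n)^{2d}$ lets each tool operate in the regime where it is sharp.
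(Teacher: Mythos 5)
Your proof is correct and follows essentially the same route as the paper: split at a polynomial threshold, invoke Proposition~\ref{prop:unbalanced cross free of an hypergraph} with an inflated exponent below it, and invoke the assumed case $\epsilon\ge n^{-1/3}$ with an inflated exponent above it (the paper does the latter by substituting $\epsilon^{1/6d}$ for $\epsilon$ with exponent $6dr$, which is the same computation as yours phrased slightly differently). Your choice of threshold $(k_{\min}/n)^{2d}$ rather than the paper's $(k_h/n)^{2d}$ is a minor but welcome refinement, since it is the hypothesis actually required by Proposition~\ref{prop:unbalanced cross free of an hypergraph}.
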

\begin{proof}
If $\epsilon\le\left(\frac{\min_i (k_i)}{n}\right)^{2d}$, then Proposition \ref{prop:unbalanced cross free of an hypergraph} (applied with $r+\lceil t/d \rceil$ in place of $r$) yields
\[
\mu\left(\f_{h}\right)\le O\left(\epsilon^{r+t/d}\right)=O\left(\left(\frac{\min_i (k_{i})}{n}\right)^{2d \cdot (t/d)}\epsilon^{r}\right)=O\left(\frac{k_{h}^{2t}}{n^{t}}\epsilon^{r}\right),
\]
as asserted. To prove the assertion in the case $\epsilon\ge\left(\frac{\min_i (k_{i})}{n}\right)^{2d}$, we note that in this case, we have $\epsilon^{1/6d} \geq n^{-1/3}$, and so the assertion follows by applying the proposition with $\epsilon^{\frac{1}{6d}}$ in place of $\epsilon$ and with $6dr$ in place of $r$.
\end{proof}
%We shall now prove the proposition under the additional hypothesis
%that $\epsilon\ge\frac{k_{h}^{2}}{n}$.
We now establish Step~2, under the additional assumption $\epsilon \geq n^{-1/3}$. We use a probabilistic coupling argument.
\begin{claim}
$\mu\left(\partial^{k_h-d}\left(\f_{h}\right)\right)=O\left(\epsilon\right)$.
\end{claim}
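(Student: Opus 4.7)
The plan is to bound $\mu(\partial^{k_h-d}(\f_h))$ by a probabilistic coupling that builds a random copy of $\h$ in $[n]$ having uniform marginals on each coordinate together with a distinguished $d$-subset identified with the base of the last edge. Write $\h=(A_1,\ldots,A_h)^+$ with $|A_i|=d$, fix a canonical copy $(\tilde C_1,\ldots,\tilde C_h)$ of $\h$ inside $[n]$ (which fits since $\sum k_i\le hk<n$) with bases $(\tilde B_1,\ldots,\tilde B_h)$ forming a copy of $(A_1,\ldots,A_h)$, and let $\sigma$ be a uniformly random permutation of $[n]$. Set $\mathbf{C_i}:=\sigma(\tilde C_i)$, $\mathbf{B_i}:=\sigma(\tilde B_i)$, and $\mathbf{E}:=\mathbf{B_h}$. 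Then $(\mathbf{C_1},\ldots,\mathbf{C_h})$ is always a copy of $\h$, the marginal distribution of each $\mathbf{C_i}$ is uniform on $\binom{[n]}{k_i}$, and the marginal of $\mathbf{E}$ is uniform on $\binom{[n]}{d}$.

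For each $E\in\partial^{k_h-d}(\f_h)$ fix, arbitrarily, some $A_E\in\f_h$ with $E\subseteq A_E$. The core observation is that if $\mathbf{C_i}\cap(A_\mathbf{E}\setminus\mathbf{E})=\emptyset$ for every $i<h$, then the tuple $(\mathbf{C_1},\ldots,\mathbf{C_{h-1}},A_\mathbf{E})$ is also a copy of $\h$. Indeed, setting $\mathbf{B}=\mathbf{B_1}\cup\cdots\cup\mathbf{B_h}$, one has $\mathbf{C_i}\cap\mathbf{B}=\mathbf{B_i}$ and $\mathbf{E}\subseteq\mathbf{B}$, hence $\mathbf{C_i}\cap\mathbf{E}=\mathbf{B_i}\cap\mathbf{E}=\sigma(A_i\cap A_h)$; combined with $\mathbf{C_i}\cap(A_\mathbf{E}\setminus\mathbf{E})=\emptyset$, this yields the correct intersection pattern $\mathbf{C_i}\cap A_\mathbf{E}=\sigma(A_i\cap A_h)$, and the disjointness of the expansions follows automatically.

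Consequently the event
\[
G:=\{\mathbf{E}\in\partial^{k_h-d}(\f_h)\}\cap\bigcap_{i<h}\{\mathbf{C_i}\in\f_i\}\cap\bigcap_{i<h}\{\mathbf{C_i}\cap(A_\mathbf{E}\setminus\mathbf{E})=\emptyset\}
\]
would exhibit a copy of $\h$ cross-contained in $\f_1,\ldots,\f_h$, which is forbidden by hypothesis. Thus $\Pr[G]=0$, and by inclusion-exclusion,
\[
\mu(\partial^{k_h-d}(\f_h))\le\sum_{i<h}\Pr[\mathbf{C_i}\notin\f_i]+\sum_{i<h}\Pr[\mathbf{E}\in\partial^{k_h-d}(\f_h)\text{ and }\mathbf{C_i}\cap(A_\mathbf{E}\setminus\mathbf{E})\ne\emptyset].
\]
The first sum is at most $(h-1)\epsilon$ by the uniform marginal of $\mathbf{C_i}$. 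For the second, condition on $\mathbf{E}=E$: then $A_E\setminus E$ is a fixed $(k_h-d)$-subset of $[n]\setminus E$, while $\mathbf{C_i}\setminus E$ is a uniform random subset of $[n]\setminus E$ of size at most $k_i$, so the conditional probability of intersection is $O(k_ik_h/n)=O(k^2/n)$.

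Since $k\le n^{1/3}/C$, we have $k^2/n\le 1/C^2$, which can be made smaller than $1/(2h)$ by choosing $C$ large; the second sum therefore absorbs at most half of $\mu(\partial^{k_h-d}(\f_h))$, and rearranging gives $\mu(\partial^{k_h-d}(\f_h))\le 2(h-1)\epsilon=O(\epsilon)$, as required. The main obstacle is purely organizational: carefully justifying that the modified tuple $(\mathbf{C_1},\ldots,\mathbf{C_{h-1}},A_\mathbf{E})$ is indeed a copy of $\h$, which reduces to the identity $\mathbf{C_i}\cap\mathbf{E}=\mathbf{B_i}\cap\mathbf{E}$ guaranteed by the canonical coupling; once this is in hand, the quantitative estimates are immediate from the size assumption on $k$ and the reduction $\epsilon\ge n^{-1/3}$.
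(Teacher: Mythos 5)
Your proof is correct and follows essentially the same coupling strategy as the paper: build a random copy of $\h$ with uniform marginals on each coordinate, replace the last edge by an element of $\f_h$ chosen via a fixed selection function on $\partial^{k_h-d}(\f_h)$, use cross-freeness to force the ``good'' event to have probability zero, and finish with a union bound. Two implementation details differ, both to your advantage: you realize the coupling as a single uniform permutation of a fixed copy of $\h$ (so the tuple $(\mathbf{C_1},\ldots,\mathbf{C_h})$ is automatically a valid copy and only the collisions with $A_{\mathbf{E}}\setminus\mathbf{E}$ can spoil it), and you bound the collision events conditionally on $\mathbf{E}\in\partial^{k_h-d}(\f_h)$, which lets you absorb them into the left-hand side and makes the argument independent of the reduction $\epsilon\ge n^{-1/3}$ that the paper invokes at this point.
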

\begin{proof}
Write $\h=\left(A_{1},\ldots,A_{h}\right)^{+}$ for $d$-sets $A_{1},\ldots,A_{h}$, and denote $A=A_{1}\cup\cdots\cup A_{h}$. Let $f\colon {{\left[n\right]}\choose{d}}\to{{[n]}\choose{k_h}}$ be a function that assigns to each set $A\in\partial^{k_h-d}\left(\f_{h}\right)$
a set $B\in\f_{h}$ that contains it (the values of $f$ on sets not in $\partial^{k_h-d}\left(\f_{h}\right)$ can be arbitrary). We define random sets $\mathbf{B_{1}},\ldots,\mathbf{B_{h}}$ in the following way.
\begin{itemize}
\item We choose a random set $\mathbf{A'}\sim {{\left[n\right]}\choose{\left|A\right|}}$ and a random bijection $\mathbf{\pi}\colon A\to \mathbf{A'}$ (i.e., $\mathbf{\pi}$ is chosen uniformly at random among the bijections $g:A \rightarrow \mathbf{A'}$), and set $\mathbf{A_{1}'}=\mathbf{\pi}\left(A_{1}\right),\ldots,\mathbf{A_{h}'}=\mathbf{\pi}\left(A_{h}\right)$.

Note that since the families $\f_{1},\ldots,\f_{h}$ are cross free of $\h$, then there are no pairwise disjoint sets $D_{1},\ldots,D_{h}$
such that $D_{i}\cup \mathbf{A_{i}'}\in\f_{i}$ for all $i \in [h]$.

\item We choose random sets $\mathbf{E_{1}}\sim {{\left[n\right]\backslash \mathbf{A'}}\choose{k_{1}-d}},\ldots,\mathbf{E_{h-1}}\in {{\left[n\right]\backslash \mathbf{A'}}\choose{k_{h-1}-d}}$.
\item We set
\[
\mathbf{B_{1}}:=\mathbf{A_{1}'}\cup \mathbf{E_{1}},\ldots,\mathbf{B_{h-1}}:=\mathbf{A_{h-1}'}\cup \mathbf{E_{h-1}},\mathbf{B_{h}}=f\left(\mathbf{A'_{h}}\right).
\]
\end{itemize}
It is clear that the families $(\mathbf{B_1},\ldots,\mathbf{B_h})$ satisfy
\[
\mathbf{B_{1}}\sim{{[n]}\choose{k_1}},\ldots,\mathbf{B_{h-1}}\sim {{\left[n\right]}\choose{k_{h-1}}},
\]
and
\[
\Pr\left[\mathbf{B_{h}}\in\f_{h}\right]\ge\Pr\left[\mathbf{A_{h}'}\in\partial^{k_{h}-d}\left(\f_{h}\right)\right]=\mu\left(\partial^{k_h-d}
\left(\f_{h}\right)\right).
\]
(This is the `coupling' element of our argument.) Since $\f_1,\ldots,\f_h$ are cross free of $\h$, a union bound implies that
\begin{align}\label{Eq:Aux8-3}
\begin{split}
\Pr &\left[\mbox{The ordered hypergraph }\left( \mathbf{B_{1}},\ldots,\mathbf{B_{h}}\right) \mbox{ is isomorphic to }\h\right] \le\\
&\le \sum_{i=1}^{h}\Pr\left[\mathbf{B_{i}}\notin\f_{i}\right]\le\left(h-1\right)\epsilon+\left(1-\mu\left(\partial^{k_{h}-d}\left(\f_{h}\right)\right)\right).
\end{split}
\end{align}
Note that the hypergraph $(\mathbf{B_{1}},\ldots,\mathbf{B_{h}})$ is a copy of $\h$ if and only if the sets $\mathbf{E_{1}},\ldots,\mathbf{E_{h-1}},\mathbf{B_{h}}\backslash \left(\mathbf{A_{h}'}\right)$ are pairwise disjoint. As the total number of elements in these sets (including possible multiplicities) is $k_1+\ldots+k_h-dh$ and all but one of the sets are chosen at random from the elements of $[n]\setminus \mathbf{A'}$, a union bound implies
\begin{align*}
\Pr&\left[\mbox{The sets } \mathbf{E_{1}},\ldots,\mathbf{E_{h-1}},\mathbf{B_{h}}\backslash \left(\mathbf{A_{h}'}\right) \mbox{ are pairwise disjoint}\right] \\
&\geq 1- {{k_1+\ldots+k_h-dh}\choose{2}} \cdot \frac{1}{n-|A|} \geq 1- O(n^{-1/3}) \geq 1-O(\epsilon),
\end{align*}
where the second inequality holds since $k_1,\ldots,k_h \leq n^{-1/3}/C$. Substituting into~\eqref{Eq:Aux8-3}, we obtain
\begin{align*}
1+(h-1)\epsilon-\mu\left(\partial^{k_{h}-d}\left(\f_{h}\right)\right) \ge\Pr\left[\mbox{The hypergraph }\left( \mathbf{B_{1}},\ldots,\mathbf{B_{h}}\right) \mbox{ is a copy of }\h\right]\ge 1-O(\epsilon).
\end{align*}
Rearranging yields
\[
\mu\left(\partial^{k_{h}-d}\left(\f_{h}\right)\right)=O\left(\epsilon\right),
\]
as asserted.
\end{proof}

Now we are ready to complete the proof of Proposition \ref{prop: unbalnced cross free of hypergraph k small}. Write $\b=\partial^{t}\left(\f_{h}\right)$. By Lemma~\ref{lem:Kruskal Katona going down}, we obtain
\[
\mu\left(\b\right)\le O\left(\mu\left(\partial^{k_{h}-t-d}\left(\b\right)\right)^{r}\right) \leq O \left(\mu\left(\partial^{k_{h}-d}\left(\f_{h}\right)\right) \right)^r \leq O\left(\epsilon^{r}\right),
\]
provided that $C$ is sufficiently large. Finally, since the family $\f_{h}$ is $\h'$-free, Proposition~\ref{cor:t-shadows} yields
\[
\mu\left(\f_h\right)\le O\left(\frac{k_h^{2t}}{n^{t}}\right)\mu\left(\b\right)=O\left(\frac{k_h^{2t}}{n^{t}}\epsilon^{r}\right).
\]
This completes the proof of the proposition.
\end{proof}

\section{Proof of the Main Theorems}
\label{sec:proof}

We are finally ready to present the proof of our main theorems. In Section~\ref{sec:sub:proof:junta} we prove the `junta approximation theorem' (i.e., Theorem~\ref{thm:Junta-approx-theorem}) which asserts that any family $\f \subseteq {{[n]}\choose{k}}$ that is free of a $d$-expanded hypergraph $\h$, can be approximated by an $\h$-free junta. We then compare our theorem with previously known results of Frankl and F\"{u}redi~\cite{frankl1987exact} and of Dinur and Friedgut~\cite{dinur2009intersecting}. In Section~\ref{sec:sub:proof:star} we prove Theorem~\ref{thm:exact solution for t-stars} which characterizes all forbidden hypergraphs $\h$, for which the extremal $\h$-free families are the $(t,t)$-stars. We conclude in Section~\ref{sec:sub:proof:porcupine} with proving Theorem~\ref{thm:OR} which gives sufficient conditions (on $\h$) for the $(t,1)$-stars to be the extremal $\h$-free families.

\subsection{Proof of the Junta approximation theorem}
\label{sec:sub:proof:junta}

We prove the following precise version of Theorem~\ref{thm:Junta-approx-theorem}.
\begin{thm}
\label{thm:main junta approximation theorem}For any constants $d,h\in\mathbb{N}$, there exist constants $C,j$ which depend only on $d,h$, such that the following holds. Set $\epsilon=\max \left(Ce^{-k/C},C\frac{k}{n}\right),$ and let $\h\subseteq{{[n]}\choose{k}}$
be a $d$-expanded hypergraph of size $h$. Let $\f\subseteq{{[n]}\choose{k}}$ be an $\h$-free family. Then there exists an $\h$-free $j$-junta
$\j$, such that
\begin{equation}
\mu\left(\f\backslash\j\right)\le\epsilon\mu\left(\j\right).
\label{eq:f essentially contained  in J}
\end{equation}
\end{thm}

\begin{proof}
Denote $|K(\h)|:=t-1$, let $C,C',s$ be sufficiently large constants to be determined below, and set $\epsilon'=\epsilon/C'$. Applying Proposition~\ref{lem:associated junta lemma} to the family $\f$, with the parameters $(t+1,s,\epsilon' \left(\frac{k}{n}\right)^t)$ in place of $(r,s,\epsilon)$, respectively, we obtain that there exists a set $J$ with $|J|=(2s)^t$, and a family $\j'\subseteq\p\left(J\right),$
such that:
\begin{enumerate}
\item For each set $B\in\j'$, the family $\f_{B}^{B}$ is $\left(s,\epsilon'\left(\frac{k}{n}\right)^{t-\left|B\right|}\right)$-uncapturable;

\item We have
\[
\mu\left(\f\backslash(\mathcal{\j'})^{\uparrow}\right)\le O_{s,t}\left(\epsilon'\right)\left(\frac{k}{n}\right)^{t}.
\]
\end{enumerate}
We set
\[
\j=\begin{cases}
\left\langle \j'\right\rangle :=\left\{ A\in{{[n]}\choose{k}}\,:\, A\cap J\in\j'\right\} , & \j'\ne\emptyset;\\
\s_{[t]}:= \left\{A\,:\,\left\{ 1,\ldots,t\right\} \subseteq A\right\} , & \j'=\emptyset.
\end{cases}
\]
We claim that $\j$ is the desired approximating $\h$-free junta.

This clearly holds in the case $\j'=\emptyset$. Indeed, in this case we have $\mu(\f) = \mu\left(\f\backslash(\mathcal{\j'})^{\uparrow}\right)\le O_{s,t}\left(\epsilon'\right)\left(\frac{k}{n}\right)^{t}$. Since $\mu(\s_{[t]})=\Theta \left(\left(\frac{k}{n}\right)^t \right)$, we have $\mu\left(\f\backslash\j\right)\le\epsilon\mu\left(\j\right)$, provided that $C'$ is sufficiently large (as function of $s,t$). As $|K(\h)|=t-1$, $\s_{[t]}$ is free of $\h$. Hence, $\j = \s_{[t]}$ satisfies the assertion of the theorem. (Of course, there is nothing specific about $\s_{[t]}$ here; any other $(t,t)$-star would be an equally good `approximation').

Suppose now $\j' \neq \emptyset$, and so $\j=\langle \j' \rangle$. We first show that $\j'$ is $t$-uniform. Let $B \in \j'$. By Condition~(1) above, the family $\f_{B}^{B}$ is $\left(s,\epsilon'\left(\frac{k}{n}\right)^{t-\left|B\right|}\right)$-uncapturable. Provided that $C>C'$, we have $\epsilon' \cdot \frac{n}{k}>1$, and thus, no family can be $(s,\epsilon' \frac{n}{k})$-uncapturable. Hence, we must have $|B| \leq t$. On the other hand, if $|B| \leq t-1$, then $\f_B^B$ is free of the hypergraph $\h'$ obtained from $\h$ by removing $|B|$ elements out of its kernel. However, this contradicts Proposition~\ref{prop:uncap single k small} which says that an $\left(s,\epsilon'\left(\frac{k}{n}\right)^{t-\left|B\right|}\right)$-uncapturable family contains a copy of any fixed-size hypergraph with kernel of size $t-1-|B|$, provided $s,C$ are sufficiently large. (Note that in order to apply Proposition~\ref{prop:uncap single k small}, we need the additional assumption $C_0 \leq k \leq n/C_0$, for the constant $C_0$ mentioned in the proposition, which in our case depends on $d,h$. We take $C$ sufficiently large, as function of $d,h$, so that this assumption is satisfied.)  Hence, the only remaining possibility is $|B|=t$, and so $\j'$ is $t$-uniform.

By Lemma~\ref{lem:measures of juntas}, we have $\mu(\j)=\mu(\langle \j' \rangle)=\Theta \left(\left(\frac{k}{n} \right)^t \right)$. As by Condition~(2) above, \[
\mu\left(\f\backslash(\mathcal{\j'})^{\uparrow}\right)\le O_{s,t}\left(\epsilon'\right)\left(\frac{k}{n}\right)^{t},
\]
it follows that~\eqref{eq:f essentially contained  in J} holds, provided that $C'$ is sufficiently large.

We now complete the proof by showing that $\j$ is $\h$-free.
Suppose on the contrary that $\j$ contains a copy $(A_1,\ldots,A_h)$ of $\h$. For each $i \in [h]$, denote $B_i=A_i \cap J$ and $E_i = A_i \setminus B_i$. By the definition of $\j$, we have $B_i \in \j'$, and in particular, $|B_i|=t$. Thus, by Condition~(1) above, each family $\f_{B_i}^{B_i}$ is $\left(s,\epsilon'\right)$-uncapturable. Therefore, the families
\[
\a_{1}:=\f_{B_{1}\cup\cdots\cup B_{h}}^{B_{1}},\ldots,\a_{h}:=\f_{B_{1}\cup\cdots\cup B_{h}}^{B_{h}}
\]
are $\left(s-(h-1)t,\epsilon'\right)$-uncapturable, and in particular, satisfy $\mu(\a_i) > \epsilon'$ (provided that $s$ is sufficiently large). Therefore, by Proposition~\ref{Prop:turan for cross}, these families cross contain a copy of any $d$-expanded ordered hypergraph of size $h$ (provided that $C$ is large enough).

However, since $\f$ is free of $\h$, the families $\a_1,\ldots,\a_h$ are cross free of the ordered hypergraph $\left(E_{1},\ldots,E_{h}\right)$, a contradiction. This completes the proof.
\end{proof}

\begin{rem}\label{Rem:Uniform}
Note that in Theorem~\ref{thm:main junta approximation theorem}, one can further require the approximating junta to be $t$-uniform. Indeed, if $\mu(\f) \leq \epsilon (\frac{k}{n})^t$, then the assertion holds trivially for $\j$ being any $(t,t)$-star. Otherwise, $\j$ must contain an element of size $\leq t$, provided $C$ is sufficiently large. In such a case, the assertion of the theorem remains true if we remove from $\j$ all elements of size $>t$, as they contribute to the measure of $\langle \j \rangle$ at most $O((\frac{k}{n})^{t+1})$, which is negligible for a sufficiently large $C$. On the other hand, $\j$ cannot contain any element of size $<t$, since otherwise, the approximating junta would contain a copy of any fixed hypergraph with kernel of size $\leq t-1$, and in particular, would contain a copy of $\h^+$, contradicting the assertion of the theorem.
\end{rem}

\medskip As mentioned in the introduction, Theorem~\ref{thm:main junta approximation theorem} can be viewed as a generalization of the following fundamental theorem of Frankl and F\"{u}redi~\cite[Theorem~5.3]{frankl1987exact}.
\begin{thm}[Frankl-F\"{u}redi, 1987]
\label{thm:Frankl-Furedi}
For any constants $t,s,d\in\mathbb{N}$, $\epsilon>0$, and any fixed $d$-expanded hypergraph $\h$ with kernel of size $t-1$ and center of size $s$, the following holds.

For any $k \geq s+2t$ and any sufficiently large $n$ (as function of $k$ and $\h$), there exists an $\h$-free $t$-expanded $O_{\h}\left(1\right)$-junta $\j\subseteq{{[n]}\choose{k}}$ such that any $\h$-free family $\f\subseteq{{[n]}\choose{k}}$ satisfies $\left|\f\right|\le\left|\j\right|\left(1+\epsilon\right)$.
\end{thm}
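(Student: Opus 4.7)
The plan is to derive Theorem~\ref{thm:Frankl-Furedi} directly from the just-proved Junta Approximation Theorem (Theorem~\ref{thm:main junta approximation theorem}). First, I would observe that for $n$ sufficiently large as a function of $k$ and $\h$, the error term $\max(Ce^{-k/C}, Ck/n)$ appearing in Theorem~\ref{thm:main junta approximation theorem} is dominated by $Ck/n$ and can be made smaller than the prescribed $\epsilon$. Applying that theorem to an $\h$-free family $\f \subseteq \binom{[n]}{k}$ yields an $\h$-free $O_\h(1)$-junta $\j$ with $|\f \setminus \j| \leq \epsilon |\j|$, and therefore $|\f| \leq (1+\epsilon)|\j|$. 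So the theorem reduces to identifying the largest $\h$-free $O_\h(1)$-junta and showing it is $t$-expanded.

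Next, I would show that the extremal junta is $t$-expanded. Writing $\j = \langle \j' \rangle$ with $\j' \subseteq \p(J)$, and letting $\ell$ be the minimum size of a set in $\j'$, Lemma~\ref{lem:measures of juntas} gives $|\j| = |\j'^{(\ell)}|\binom{n-|J|}{k-\ell} + O_\h((k/n)^{\ell+1})\binom{n}{k}$, so $|\j| = \Theta_\h((k/n)^\ell \binom{n}{k})$. On one hand, $\ell \geq t$: if some $T \in \j'$ had $|T| = \ell < t$, then $\j$ would contain the $(\ell,\ell)$-star $\s_T$, which for large $n$ contains a copy of $\h$ with its kernel embedded on $T$, contradicting the $\h$-freeness of $\j$. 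On the other hand, since $|K(\h)| = t-1 < t$, the $(t,t)$-star is $\h$-free, providing an $\h$-free junta of order $(k/n)^t \binom{n}{k}$; by extremality $\ell \leq t$. Hence $\ell = t$, and only the $t$-sets in $\j'$ matter asymptotically—larger sets contribute only $O_\h((k/n)^{t+1})\binom{n}{k}$, which is absorbed into the $\epsilon$ factor.

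The hard part will be the finite combinatorial problem of explicitly identifying the largest $\h$-free $t$-expanded $j$-junta, i.e.\ the largest $\j' \subseteq \binom{J}{t}$ (with $|J| = j = O_\h(1)$) such that $\langle \j' \rangle$ contains no copy of $\h$. This is a purely finite problem depending only on $\h$, and the hypothesis $k \geq s+2t$ enters precisely here: it guarantees that once $\j'$ contains a configuration of $t$-sets isomorphic to the `kernel skeleton' of $\h$, there is enough room in $[n]\setminus J$ (specifically, at least $k-t \geq s+t$ free vertices per edge, enough to accommodate the center of $\h$ and pairwise-disjoint expansion vertices) to extend it into a genuine copy of $\h \subseteq \binom{[n]}{k}$, thereby violating $\h$-freeness. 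A case analysis or greedy-extremal argument then yields the canonical extremal $\j'$, whose $k$-uniform junta matches the Frankl--F\"uredi quantity. Combined with the bound $|\f| \leq (1+\epsilon)|\j|$ from the first step, this completes the proof.
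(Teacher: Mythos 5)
This statement is not proved in the paper at all: it is quoted verbatim from Frankl and F\"{u}redi's 1987 paper, where it is established by the delta-system method, and the present paper explicitly notes that the two results hold in \emph{incomparable} ranges (Frankl--F\"{u}redi for fixed $k$ and $n\to\infty$, Theorem~\ref{thm:main junta approximation theorem} for $C<k<n/C$). Your attempt to rederive it from Theorem~\ref{thm:main junta approximation theorem} therefore cannot work as stated, and the failure is concrete.

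The gap is in your very first step. Theorem~\ref{thm:main junta approximation theorem} gives the error bound $\epsilon=\max\left(Ce^{-k/C},C\tfrac{k}{n}\right)$. You claim that for $n$ sufficiently large (as a function of $k$ and $\h$) this maximum is dominated by $C\tfrac{k}{n}$ and can be made smaller than the prescribed $\epsilon$. This is backwards: the term $Ce^{-k/C}$ does not depend on $n$ at all, so for a \emph{fixed} $k$ and $n\to\infty$ the term $C\tfrac{k}{n}$ tends to $0$ while $Ce^{-k/C}$ stays put, and the maximum equals $Ce^{-k/C}$. For $k$ a constant this is a constant bounded away from $0$, and it is below the prescribed $\epsilon$ only when $k\geq C\log(C/\epsilon)$. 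Consequently your argument covers only $k$ larger than a constant depending on $\h$ and $\epsilon$, whereas the Frankl--F\"{u}redi theorem is asserted for all $k\geq s+2t$ --- and the small-constant-$k$ regime (say $k=s+2t$) is exactly the regime in which that theorem is typically invoked. Theorem~\ref{thm:main junta approximation theorem} itself also requires $k>C$ for a constant $C(d,h)$ that need not be comparable to $s+2t$, so even the approximation step is unavailable there. A secondary remark: your ``hard part'' of explicitly identifying the extremal junta is not actually required by the statement --- once one knows the approximating junta is $t$-expanded, one may simply take $\j$ to be a maximum-size $\h$-free $t$-expanded $j$-junta --- but this does not rescue the argument, since the range-of-$k$ obstruction above is fatal for the theorem as stated.
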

In words, the Frankl-F\"{u}redi theorem asserts that asymptotically, the largest $\h$-free families are juntas. Theorem~\ref{thm:main junta approximation theorem} extends Theorem \ref{thm:Frankl-Furedi} in two directions. Firstly, we remove the hypothesis that $k$ is a constant and instead, we allow $k$ to be up to linear in $n$. Secondly, we strengthen the numerical statement that the $t$-expanded $O\left(1\right)$-juntas are the largest extremal families into the stronger structural statement that any $\h$-free family is essentially contained in a $t$-expanded $O\left(1\right)$-junta.

\medskip Another related result is a theorem of Dinur and Friedgut~\cite{dinur2009intersecting}, who established (a stronger version of) Theorem~\ref{thm:main junta approximation theorem} in the special case where $\h$ consists of two disjoint edges, and thus, an $\h$-free family is simply an intersecting family.
\begin{thm}[Dinur and Friedgut, 2009]
\label{thm:Dinur Friedgut}
For any $r>0$, there exist constants $j\left(r\right),C\left(r\right)$ such that the following holds.
	
Let $n,k$ be such that $k<n/C$, and let $\f\subseteq{{[n]}\choose{k}}$ be an intersecting family. Then there exists an intersecting $j$-junta
$\j$, such that $\left|\f\backslash\j\right|\le O_{r}\left(\left(\frac{k}{n}\right)^{r}\right)\left|\j\right|$.
\end{thm}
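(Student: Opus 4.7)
The plan is to specialize the junta method developed in this paper to the case where the forbidden hypergraph $\h$ is a $2$-matching (two disjoint edges), for which $\h$-free families coincide with intersecting families. The kernel of $\h$ is empty, so in the notation of Theorem~\ref{thm:main junta approximation theorem} the relevant parameter is $t=1$.

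First I would apply Proposition~\ref{lem:associated junta lemma} to $\f$ with parameter $r$ and a sufficiently large constant $s=s(r)$. This yields a set $J$ of size $O_r(1)$ and a family $\j'\subseteq\p(J)$ such that (i) for each $B\in\j'$, the slice $\f_B^B$ is $(s,(k/n)^{r-|B|})$-uncapturable, and (ii) $\mu(\f\setminus(\j')^{\uparrow})\le O_r((k/n)^r)$. By Lemma~\ref{lem:measures of juntas}, one may remove from $\j'$ all sets of size $\ge r$ while losing only $O_r((k/n)^r)$ in measure, so every member of $\j'$ has size strictly less than $r$.

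Next I would show that $\j'$ is pairwise intersecting. Suppose for contradiction that disjoint $B_1,B_2\in\j'$ existed. Passing to further slices, the families $\f_{B_1\cup B_2}^{B_1}$ and $\f_{B_1\cup B_2}^{B_2}$ remain $(s',(k/n)^{r-|B_j|})$-uncapturable with $s'=s-|B_1|-|B_2|$ still large, so by Proposition~\ref{prop:uncapturable} they cross contain a matching $(D_1,D_2)$. But then $B_1\sqcup D_1$ and $B_2\sqcup D_2$ would be disjoint members of $\f$, contradicting the intersecting hypothesis. Setting $\j:=(\j'^{\uparrow})^{(k)}$, any two $k$-sets $A_1,A_2\in\j$ contain members $B_1,B_2\in\j'$ with $B_1\cap B_2\ne\emptyset$, so $A_1\cap A_2\ne\emptyset$ and $\j$ is intersecting. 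By~(ii) we also get $\mu(\f\setminus\j)\le O_r((k/n)^r)$.

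The main obstacle lies in converting this absolute bound into the ratio bound $|\f\setminus\j|\le O_r((k/n)^r)|\j|$. By Lemma~\ref{lem:measures of juntas}, $\mu(\j)$ is of order $(k/n)^\ell$, where $\ell$ is the minimum size of a member of $\j'$; since $\ell$ can in principle be as large as $r-1$, the naive estimate yields only $|\f\setminus\j|/|\j|=O(k/n)$, far from the desired $(k/n)^r$. The remedy is to apply Proposition~\ref{lem:associated junta lemma} with a much larger parameter $R\gg r$, and then use Proposition~\ref{prop:uncap single k small} (with $\h$ equal to the $2$-matching, so $t=1$) to force the minimum element size $\ell$ in the refined $\j'$ to be bounded by a constant independent of $R$. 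This calibration---exploiting the special intersecting structure to pin down $\ell$---is the heart of the argument and the main technical difficulty.
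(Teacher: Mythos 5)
First, a remark on provenance: the paper does not prove this statement. It is quoted from Dinur and Friedgut \cite{dinur2009intersecting}, and in Section~\ref{sec:sub:proof:junta} the authors explicitly concede that their own machinery yields only the weaker \emph{absolute} approximations -- $\left|\f\backslash\j\right|\le\epsilon\left|\j\right|$ with $\epsilon=\max(Ce^{-k/C},Ck/n)$ in Theorem~\ref{thm:main junta approximation theorem}, and $\mu(\f\backslash\j)=O_r((k/n)^r)$ for matchings in Theorem~\ref{thm:matching junta approximation}. Your first three paragraphs are correct and essentially reproduce the $t=2$ case of Theorem~\ref{thm:matching junta approximation}: Proposition~\ref{lem:associated junta lemma} plus Proposition~\ref{prop:uncapturable} do give an intersecting junta $\j$ with $\mu(\f\backslash\j)\le O_r((k/n)^r)$ (you should also rule out $\emptyset\in\j'$, which follows from the same matching argument applied to $\f_{\emptyset}^{\emptyset}=\f$).

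The conversion to the relative bound, which you rightly flag as the crux, is a genuine gap, and the remedy you sketch does not work. Proposition~\ref{prop:uncap single k small} applied to the $2$-matching constrains only $\f$ itself (an intersecting family is $(s,\epsilon\frac{k}{n})$-capturable); it says nothing about the slices $\f_B^B$ for $B\ne\emptyset$, because those slices carry no forbidden-configuration structure at all: two sets of $\f$ containing $B$ already meet inside $B$, so $\f_B^B$ may be an arbitrary, maximally uncapturable family. Hence nothing forces the recursion of Proposition~\ref{lem:associated junta lemma} to stop at bounded depth. Moreover, the minimal generator size $\ell$ genuinely cannot be bounded by an absolute constant: take $\f=\{A:|A\cap[2r-1]|\ge r\}$. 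Any intersecting junta $\j$ whose generating family has a minimal element $B_0$ with $|B_0|=\ell\le r-1$ satisfies $\j\subseteq\{A:A\cap B_0\ne\emptyset\}$ (else $\j$ contains two disjoint sets), so $|\j|\le\ell\frac{k}{n}\binom{n}{k}$, while $|\f\backslash\j|\ge|\{A\in\f:A\cap B_0=\emptyset\}|=\Omega_r\bigl((k/n)^{r}\bigr)\binom{n}{k}$; the ratio is then $\Omega_r\bigl((k/n)^{r-1}\bigr)$, which violates the claimed bound once $k/n$ is small. So $\ell\ge r$ is forced for this family. Even if one could show $\ell\le g(r)$ for some function of $r$ alone, you would still have to prove that the junta produced with the enlarged parameter $R$ actually attains such a small $\ell$ -- a priori the construction only guarantees $\ell<R$, which loses the entire gain -- and no such argument is given. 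Closing this gap requires measuring the error relative to the slices from the start, as in Dinur and Friedgut's original argument; it does not follow from the propositions you cite.
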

In the special case $r=1$, the assertion of Theorem~\ref{thm:Dinur Friedgut} was proved already in 1987 by Frankl~\cite{frankl1987erdos}, who also showed that in that case, the junta $\j$ may be taken to be a $(1,1)$-star.

In the case of intersecting families to which it applies, Theorem \ref{thm:Dinur Friedgut} is stronger than our Theorem~\ref{thm:main junta approximation theorem} in two senses. Firstly, in Theorem~\ref{thm:Dinur Friedgut} the hypothesis that $k$ is larger than some constant is removed, and secondly,
Theorem~\ref{thm:Dinur Friedgut} allows to deduce that $\left|\f\backslash\j\right|\le O_{r}\left(\frac{k}{n}\right)^{r}\left|\j\right|$,
while our Theorem \ref{thm:main junta approximation theorem} only gives us the weaker approximation $\left|\f\backslash\j\right|\le\epsilon\left|\j\right|$ for $\epsilon=\max \left(Ce^{\left(-k/C\right)},C\frac{k}{n}\right)$. However, in the special case where the edges of $\h$ are pairwise disjoint, a more general result is given in our Theorem~\ref{thm:matching junta approximation} which shows that the assertion of the Dinur-Friedgut theorem holds for forbidden matchings of an arbitrary fixed size, and not only in the `single-family' setting, but also in the `cross' setting.

\subsection{Forbidden hypergraphs for which the extremal families are the $(t,t)$-stars}
\label{sec:sub:proof:star}

In this section we prove Theorem~\ref{thm:exact solution for t-stars} which characterizes all forbidden expanded hypergraphs $\h$ for which the extremal $\h$-free families are the $(t,t)$-stars, along with a stability version (Theorem~\ref{thm:Stab-for-Stars}).

Let $\h$ be a $d$-expanded hypergraph of size $h$. In order for the $(t,t)$-stars to be the extremal $\h$-free families, it is necessary that the $(t,t)$-star is $\h$-free, and that no hypergraph that properly contains a $(t,t)$-star is $\h$-free. We first show that these two necessary conditions are equivalent to the following intrinsic property of $\h$:

\medskip

\begin{description}
\item [{Condition}] ({*}) $\h$ is a $d$-expanded hypergraph of size $h$, $|K(\h)|=t-1$, and there exists a set of size $2t-1$ that is contained in $h-1$ of the edges of $\h$.
\end{description}

\medskip

We then show that this trivially necessary condition is also sufficient; namely, that for any forbidden hypergraph $\h$ that satisfies (*), the extremal $\h$-free families are the $(t,t)$-stars. The proof consists of three steps:
\begin{enumerate}
\item We show that if $\h$ satisfies condition (*) and $\f$ is an $\h$-free family, then the approximating junta of $\f$ given by Theorem~\ref{thm:main junta approximation theorem} is a $(t,t)$-star. Hence, any `large' $\h$-free family is a small perturbation of some $(t,t)$-star $\s_T$.

\item We prove a bootstrapping lemma which asserts that if $\h$ satisfies condition (*), and $\f$ is an $\h$-free family that satisfies $\mu(\f_{T}^{T}) \geq 1-\epsilon$ for some set $T$ of size $t$, then $\mu(\f \setminus \s_T)$ is much smaller than $\epsilon$.

\item We combine Steps 1 and 2 to deduce that if $\h$ satisfies condition (*) then an $\h$-free family $\f$ cannot be larger than the $(t,t)$-star, hence proving Theorem~\ref{thm:exact solution for t-stars}. Furthermore, we deduce a `stability version' which asserts that if $\f$ is `sufficiently large' then it is essentially contained in a $(t,t)$-star, hence proving Theorem~\ref{thm:Stab-for-Stars}.
\end{enumerate}

We begin with proving the equivalence between the conditions on $\h$.
\begin{lem}
\label{lem:inclusion maximal-> intrinsic}
For any constants $d,h,t$, there exists a constant $C,$ such that the following holds. For any $C< k< n/C$, a $d$-expanded hypergraph $\h \subseteq {{[n]}\choose{k}}$ of size $h$ satisfies Condition~({*}) if and only if the following two conditions hold:
\begin{enumerate}
\item The $\left(t,t\right)$-star is $\h$-free;
\item No family that properly contains a $\left(t,t\right)$-star is $\h$-free.
\end{enumerate}
\end{lem}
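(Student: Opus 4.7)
The statement is a biconditional, so the plan is to establish the two implications separately. Both directions reduce to explicit embedding constructions: one constructs embeddings of $\h$ into specified set families, and the other uses the absence of an embedding to produce witnesses. The guiding observation is that an embedding $\phi\colon V(\h)\to[n]$ whose image lies in a $(t,t)$-star $\s_T$ forces $\phi^{-1}(T)\subseteq K(\h)$, and an embedding in which exactly one image edge is a prescribed set $A$ is controlled by the size of $S_j := \bigcap_{i\ne j} E_i$.

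For the forward direction ($(*)\Rightarrow(1)\wedge(2)$), condition (1) is immediate: if $\h$ embedded into $\s_T$ then every image edge would contain $T$, forcing $|K(\h)|\ge |T|=t$, contradicting $|K(\h)|=t-1$ from $(*)$. For (2), fix $A\in\binom{[n]}{k}$ with $|A\cap T|\le t-1$, and let $E_h$ be the unique edge of $\h$ not containing the $(2t-1)$-set $S$ promised by $(*)$; so $K(\h)\subseteq S\subseteq\bigcap_{i\ne h} E_i$ and $S\cap E_h=K(\h)$. I will build $\phi$ in stages: first inject $K(\h)$ into $A$ so that $\phi(K(\h))\supseteq T\cap A$ (possible since $|K(\h)|=t-1\ge|T\cap A|$ and $|A|=k$ is large); then inject $S\setminus K(\h)$ into $[n]\setminus A$ so that $\phi(S\setminus K(\h))\supseteq T\setminus A$ (possible since $|S\setminus K(\h)|=t \ge |T\setminus A|$); then bijectively map $E_h\setminus K(\h)$ onto $A\setminus\phi(K(\h))$ to ensure $\phi(E_h)=A$; and finally extend $\phi$ injectively on the remaining constant-sized vertex set of $\h$ into the unused coordinates of $[n]$, which is possible because $n/C>k$ is large. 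Then $\phi(S)\supseteq T$ implies $T\subseteq\phi(E_i)$ for all $i\ne h$, so $\phi$ realizes a copy of $\h$ inside $\s_T\cup\{A\}$.

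For the reverse direction ($(1)\wedge(2)\Rightarrow(*)$), I first pin down $|K(\h)|=t-1$ by two-sided squeeze. If $|K(\h)|\ge t$, embedding $\h$ so that some $t$ kernel vertices map to $T$ places $\h$ inside $\s_T$, contradicting (1); if $|K(\h)|\le t-2$, then taking $T'\subsetneq T$ with $|T'|=t-1$, the $(t-1,t-1)$-star $\s_{T'}\supsetneq\s_T$ is itself $\h$-free by the same argument, contradicting (2). Next, assume for contradiction that $|S_j|\le 2t-2$ for every $j\in[h]$, and pick any $A\in\binom{[n]\setminus T}{k}$ (which exists since $n\ge k+t$). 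Then $\s_T\cup\{A\}$ is $\h$-free: any copy would, by (1), have to use $A$ as some image edge $\phi(E_j)$, forcing $T\subseteq\bigcap_{i\ne j}\phi(E_i)=\phi(S_j)$ and, since $A\cap T=\emptyset$, also $T=T\setminus A\subseteq\phi(S_j\setminus E_j)=\phi(S_j\setminus K(\h))$; but $|S_j\setminus K(\h)|\le t-1<t=|T|$ gives a contradiction. This violates (2), so some $|S_j|\ge 2t-1$ and $(*)$ holds.

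The main obstacle is the simultaneous bookkeeping in the embedding for direction (2): the map $\phi$ must respect the partition $V(\h)=K(\h)\sqcup(S\setminus K(\h))\sqcup(E_h\setminus K(\h))\sqcup(\text{rest})$, keep $\phi(K(\h))\subseteq A$, $\phi(S\setminus K(\h))\cap A=\emptyset$, and $\phi(E_h)=A$, while covering $T\cap A$ and $T\setminus A$ in the correct blocks and preserving global injectivity. The constant-sized nature of $V(\h)$ (depending only on $d,h$) together with $n/C>k>C$ provides enough room, which is why the conclusion requires $C$ to be chosen large as a function of $d,h,t$.
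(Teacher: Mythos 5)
Your proposal is correct and takes essentially the same route as the paper's proof: condition (1) from the kernel size, condition (2) by explicitly embedding $\h$ into $\s_T\cup\{A\}$ with the exceptional edge mapped to $A$ and the $(2t-1)$-set mapped onto a superset of $T$, and the converse by analyzing the copy of $\h$ forced inside $\s_T\cup\{A\}$ for $A$ disjoint from $T$ (the paper's $\f_0$), plus a separate squeeze for $|K(\h)|=t-1$. One assertion needs a line of justification: condition (*) does not literally give a $(2t-1)$-set $S$ with $K(\h)\subseteq S$ and $S\cap E_h=K(\h)$, but since $S\cap E_h\subseteq K(\h)$ forces $|S\setminus E_h|\ge t$, you may replace $S$ by $K(\h)$ together with $t$ elements of $S\setminus E_h$; also, the ``remaining vertex set'' in your embedding has size of order $hk$ rather than constant, though $n>Ck$ still leaves enough room for the injective extension.
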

\begin{proof}
Suppose that $\h$ satisfies ({*}). Then (1) holds, since the intersection of all the edges of $\h$ is of size $t-1$, while the intersection of any set of elements of a $(t,t)$-star is of size $\geq t$. To see that~(2) holds, let $\f\subseteq{{[n]}\choose{k}}$ be a family that properly contains a $\left(t,t\right)$-star $\s_T$, and assume w.l.o.g. that $T=[t]$. Let $E\in\f\backslash\s_{[t]}$. Since $\f_{[t]}^{[t]}$ contains the `entire universe' ${{[n-t]}\choose{k-t}}$ and $\f_{[t]}^{[t] \cap E}$ is non-empty, it is clear that the $h$ hypergraphs
\[
\f_{[t]}^{[t]},\ldots,\f_{[t]}^{[t]},\f_{[t]}^{[t] \cap E}
\]
cross contain \emph{any} ordered hypergraph of size $h$ (with edges of appropriate sizes). We shall use this right away.

\mn The hypergraph $\h$ can be written in the form
\[
\h=\left\{ K\sqcup K'\sqcup E_{1},\ldots,K\sqcup K'\sqcup E_{h-1},K\sqcup E_{h}\right\} ,
\]
where $K=K(\h)$, $K'$ is of size $t$, and $E_1,\ldots,E_h$ are disjoint from $K\sqcup K'$. Denoting $i:=|E \cap [t]|$, letting $K_{i}\subseteq K$ be a set of size $i$, and letting $K'_{t-i} \subseteq K'$ be a set of size $t-i$, we can write $\h$ in the form
\[
\h=\left\{ K_{i}\sqcup K'_{t-i}\sqcup F_{1},\ldots,K_{i}\sqcup K'_{t-i}\sqcup F_{h-1},K_{i}\sqcup F_{h}\right\},
\]
where $F_1,\ldots,F_h$ are disjoint from $K_{i}\sqcup K'_{t-i}$. By the above argument, the families
$\f_{\left[t\right]}^{\left[t\right]},\ldots,\f_{\left[t\right]}^{\left[t\right]},\f_{\left[t\right]}^{\left[t\right]\cap E}$
cross contain a copy $(B_1,\ldots,B_h)$ of the ordered hypergraph $\left(F_{1},\ldots,F_{h}\right)$. The sets $(B_1 \cup [t],\ldots, B_{h-1} \cup [t], B_h \cup ([t] \cap E))$ constitute a copy of $\h$ in $\f$. This shows that~(2) holds.

\medskip

In the converse direction, let $\h$ be a hypergraph that satisfies~(1) and (2). Since the $\left(t,t\right)$-star is free of $\h$, we must have $|K(\h)|\leq t-1$ (as the $(t,t)$-star contains a copy of \emph{any} hypergraph with kernel of size $\geq t$). We want to show that $|K(\h)|=t-1$ and that there exists a set of size $2t-1$ that is contained in all edges of $\h$ except for one.

Let $E_{0}$ be a set that is disjoint from $\left[t\right]$, and let $E_{t-1}$ be a set whose intersection with $[t]$ is of size $t-1$. Denote $\f_0=\s_{[t]} \cup E_0$ and $\f_{t-1}=\s_{[t]} \cup E_{t-1}$. By (2), $\f_{t-1}$ contains a copy of $\h$. Hence, we cannot have $|K(\h)|\leq t-2$ (as the intersection of any set of elements of $\f_{t-1}$ is of size $\geq t-1$), which means that $|K(\h)|=t-1$.

Let $H=\{K\sqcup A_{1},\ldots,K\sqcup A_{h-1},K\sqcup A_{h}\}$ be a copy of $\h$ in $\f_{0}$, where $K$ corresponds to $K(\h)$. As $\s_{[t]}$ is free of $\h$, one of the edges of $H$ must be $E_0$, and hence, $K \cap [t] = \emptyset$. Furthermore, as $E_0$ is the only element of $\f_0$ that is not contained in $\s_{[t]}$, all the remaining edges of $H$ are contained in $\s_{[t]}$, and thus, exactly $h-1$ of the sets $A_{1},\ldots,A_{h}$ contain $[t]$. Therefore, the set $K\cup [t]$ in the `copy' $H$ corresponds to a $(2t-1)$-element set contained in $h-1$ of the edges of $\h$. This completes the proof.
\end{proof}

We now show that if the forbidden hypergraph $\h$ satisfies (*) and if $\f$ is $\h$-free then the junta which approximates $\f$ according to Theorem~\ref{thm:main junta approximation theorem} can be taken to be a $(t,t)$-star.
\begin{lem}
\label{lem:rough stability t stars} For any constants $d,h$, there exists a constant $C$ such that the following holds. Let $C<k<n/C$,
and set $\epsilon=\max \left(C\frac{k}{n},e^{\left(-k/C\right)} \right)$.

For any hypergraph $\h$ that satisfies ({*}) and any $\h$-free family $\f\subseteq{{[n]}\choose{k}}$, there exists a $(t,t)$-star $\s_T$
such that $\mu\left(\f\backslash\s_T\right) \leq \epsilon\left(\frac{k}{n}\right)^{t}$.
\end{lem}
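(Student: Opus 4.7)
The plan is to combine Theorem~\ref{thm:main junta approximation theorem} with a structural analysis showing that the approximating junta must essentially \emph{be} a single $(t,t)$-star. I first apply Theorem~\ref{thm:main junta approximation theorem} to $\f$ to obtain an $\h$-free junta $\j$ satisfying $\mu(\f\setminus\j)\le\epsilon_{0}\mu(\j)$ with $\epsilon_{0}=\max(C'e^{-k/C'},C'k/n)$. Tracing through its proof, either $\j'=\emptyset$ and $\j=\s_{[t]}$, or $\j=\langle\j'\rangle$ for a $t$-uniform family $\j'\subseteq\p(J)$ with $|J|$ bounded by a constant depending only on $d,h$; in either case $\mu(\j)=\Theta((k/n)^{t})$ by Lemma~\ref{lem:measures of juntas}.

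The key step is to show that whenever $\j'\ne\emptyset$ one must have $|\j'|=1$. Suppose, for contradiction, that $B_{1}\ne B_{2}$ are two distinct $t$-sets in $\j'$; then $\s_{B_{1}}\cup\s_{B_{2}}$ is contained in $\j$ up to a set of measure $O((k/n)^{t+1})$. I aim to embed a copy of $\h$ in $\s_{B_{1}}\cup\s_{B_{2}}$, contradicting $\h$-freeness. By condition~$(*)$, $\h$ has kernel $K$ of size $t-1$ and a $(2t-1)$-set $S\supseteq K$ contained in $h-1$ of its edges $H_{1},\ldots,H_{h-1}$, with the remaining edge $H_{h}$ containing $K$ but not $S$. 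I would construct an injection $\phi\colon V(\h)\to[n]$ with $\phi(H_{i})\supseteq B_{1}$ for $i<h$ and $\phi(H_{h})\supseteq B_{2}$. Setting $\ell=|B_{1}\cap B_{2}|\le t-1$, membership in the right edges forces $\phi^{-1}(B_{1}\cap B_{2})\subseteq K$, $\phi^{-1}(B_{1}\setminus B_{2})\subseteq\bigcap_{i<h}H_{i}\setminus K$, and $\phi^{-1}(B_{2}\setminus B_{1})\subseteq H_{h}\setminus K$; since $|K|=t-1\ge\ell$, $|\bigcap_{i<h}H_{i}\setminus K|\ge|S\setminus K|=t\ge t-\ell$, and $|H_{h}\setminus K|=k-(t-1)$ is large, the three preimages can be chosen disjoint. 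The remaining $O(1)$ vertices of $V(\h)$ are then mapped to fresh vertices in $[n]\setminus(J\cup\mathrm{image})$, which exists since $n$ is large and $|J|$ is constant. This yields the desired copy of $\h$ in $\j$, a contradiction.

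Therefore $\j$ is essentially contained in a single $(t,t)$-star: either $\j=\s_{[t]}$ directly, or $\j=\langle\{T\}\rangle\subseteq\s_{T}$, in which case $\mu(\s_{T}\setminus\j)=O((k/n)^{t+1})$ by Lemma~\ref{lem:measures of juntas} and $\f\setminus\s_{T}\subseteq\f\setminus\j$. In both cases
\[
\mu(\f\setminus\s_{T})\le\mu(\f\setminus\j)\le\epsilon_{0}\mu(\j)\le O(\epsilon_{0})(k/n)^{t},
\]
which absorbs into $\epsilon(k/n)^{t}$ after enlarging $C$.

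The main obstacle is the explicit embedding in the key step. One might naively expect that when $B_{1},B_{2}$ have small intersection the kernel of any copy of $\h$ would be forced to lie in $B_{1}\cap B_{2}$, giving an obstruction when $|B_{1}\cap B_{2}|<t-1$. The point is that the kernel of the copy is the intersection of the \emph{image edges}, which is free to live outside $B_{1}\cup B_{2}$ entirely; the asymmetric structure guaranteed by condition~$(*)$ — one exceptional edge lacking the $(2t-1)$-set $S$ — is precisely what allows $B_{2}$ to be pushed into that single edge while $B_{1}$ sits inside $S$ in the other $h-1$ edges. Getting this embedding to work uniformly across all values of $|B_{1}\cap B_{2}|$ is the only delicate piece of the argument.
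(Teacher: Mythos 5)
Your proof is correct and follows essentially the same route as the paper's: apply the junta approximation theorem to get a $t$-uniform generating family $\j'$, then rule out two distinct $t$-sets $B_{1}\ne B_{2}\in\j'$ by constructing a copy of $\h$ with $h-1$ edges through $B_{1}$ and one through $B_{2}$, exactly as the paper does (the paper packages the embedding as cross-containment by the full-universe slices $\left\langle \j'\right\rangle _{J}^{B_{1}},\left\langle \j'\right\rangle _{J}^{B_{2}}$, while you write the injection explicitly). One small point: since the contradiction must come from a copy of $\h$ lying in $\left\langle \j'\right\rangle$ itself (not merely in $\s_{B_{1}}\cup\s_{B_{2}}$), you should place $\phi^{-1}(B_{2}\setminus B_{1})$ inside $H_{h}\setminus\bigcup_{i<h}H_{i}$ (the private, expanded part of $H_{h}$, which has size at least $k-d$) rather than merely $H_{h}\setminus K$; otherwise some edge $\phi(H_{i})$ with $i<h$ could pick up a vertex of $B_{2}\setminus B_{1}\subseteq J$, so that $\phi(H_{i})\cap J\supsetneq B_{1}$ and the edge falls outside the junta -- also, the ``remaining'' vertices number $\Theta(hk)$ rather than $O(1)$, which is still fine since $k<n/C$.
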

\begin{proof}
By Theorem~\ref{thm:main junta approximation theorem} and Remark~\ref{Rem:Uniform}, there exists an $O_{d,h}\left(1\right)$-sized set $J$ and a $t$-uniform set $\mathcal{\j}\subseteq {{J}\choose{t}}$, such that the junta $\left\langle \j\right\rangle $ is $\h$-free and
\[
\mu\left(\f\backslash\left\langle \j\right\rangle \right)\le\epsilon \mu(\langle \j \rangle).
\]
Our proof will be accomplished once we show that $\j$ contains at most one element. Suppose on the contrary that there exist sets $B_{1}\ne B_{2}\in\j$,
and write $i:= |B_1 \cap B_2|$. As in the proof of Lemma~\ref{lem:inclusion maximal-> intrinsic}, we can write
\[
\h=\left\{ K_{i}\sqcup K'_{t-i}\sqcup F_{1},\ldots,K_{i}\sqcup K'_{t-i}\sqcup F_{h-1},K_{i}\sqcup F_{h}\right\},
\]
where $K_{i}\subseteq K(\h)$ is a set of size $i$, $K'_{t-i} \subseteq K'$ is a set of size $t-i$, and $F_1,\ldots,F_h$ are disjoint from $K_{i}\sqcup K'_{t-i}$. Since $\h$ is $d$-expanded and $k$ is larger than $d+(t-i)$, we can write $F_h := F'_h \sqcup E_h$, where $|E_h|=t-i$ and $E_h$ is disjoint from all other edges of $\h$.

As $B_1,B_2 \in \j$, both hypergraphs $\left\langle \j\right\rangle _{J}^{B_{1}}$ and $\left\langle \j\right\rangle _{J}^{B_{2}}$ consist of the `entire universe' ${{\left[n\right]\backslash J}\choose{k-t}}$. Hence, the $h$ families
\[
\left\langle \j\right\rangle _{J}^{B_{1}},\ldots,\left\langle \j\right\rangle _{J}^{B_{1}},\left\langle \j\right\rangle _{J}^{B_{2}}
\]
cross contain \emph{any} ordered hypergraph of size $h$ with edges of appropriate sizes. In particular, they cross contain a copy $(A_1,\ldots,A_h)$ of the hypergraph $(F_1,\ldots,F_{h-1},F'_h)$. Thus, the hypergraph $(A_1 \cup B_1,\ldots,A_{h-1} \cup B_1,A_h \cup B_2)$ is a copy of $\h$ in $\langle \j \rangle$, contradicting the assumption that $\langle \j \rangle$ is $\h$-free. This completes the proof.
\end{proof}

We now present the bootstrapping step which asserts that the theorem `holds locally', i.e., if $\h$ satisfies condition (*), and $\f$ is an $\h$-free family which is a small perturbation of a $(t,t)$-star $\s_T$, then $\mu(\f \setminus \s_T)$ is much smaller than $1-\mu(\f_{T}^{T})$. The proof uses the results of Section~\ref{sec:bootstrapping}, and so different arguments (and even slightly different statements) are needed for different ranges of $k$.
\begin{lem}
\label{lem:small-k  Bootstrapping} For any constants $d,h,r$, there exists a constant $C=C(d,h,r)$ such that the following holds. Let $C<k<n/C$, and denote
\[
\epsilon_0(k):=\begin{cases}
1/C , & k>C\log n;\\
1/(kC), & k \leq C \log n.
\end{cases}
\]
Let $\h$ be a hypergraph that satisfies ({*}) and let $\f \subseteq {{[n]}\choose{k}}$ be an $\h$-free family that satisfies $\mu\left(\f_{\left[t\right]}^{\left[t\right]}\right)\ge1-\epsilon$, for some $\epsilon \leq \epsilon_0(k)$.
Then
\[
\mu\left(\f\backslash\s_{[t]}\right)\le\mu\left(\s_{[t]}\right)\epsilon^{r}.
\]
 \end{lem}
\begin{proof}
The main observation we use is that as shown in the proof of Lemma~\ref{lem:inclusion maximal-> intrinsic}, if $\f$ satisfies the hypothesis then for any $A \subsetneq [t]$, the $h$ families
\[
\f_{[t]}^{[t]},\ldots,\f_{[t]}^{[t]},\f_{[t]}^{A}
\]
are cross free of some $d$-expanded ordered hypergraph with $h$ edges. (That hypergraph was denoted by $(F_1,\ldots,F_h)$ in the proof of Lemma~\ref{lem:inclusion maximal-> intrinsic}.) Since the family $\f_{[t]}^{[t]}$ is `very large', this allows us to apply the results of Section~\ref{sec:bootstrapping} to deduce that $\f_{[t]}^{A}$ is `very small'. The assertion will then follow, using the relation
\[
\mu\left(\f\backslash\s_{[t]}\right)=\sum_{A\subsetneq\left[t\right]}\Pr_{\mathbf{B}\sim{{[n]}\choose{k}}}\left[\mathbf{B}\cap\left[t\right]=A\right]
\mu\left(\f_{\left[t\right]}^{A}\right).
\]
We consider three cases.

\mn \textbf{Case 1: $k > C\log n$ and $\epsilon\le\left(\frac{k}{n}\right)^{2d}$.} For any $A\subsetneq [t]$, by Proposition \ref{prop:unbalanced cross free of an hypergraph} (applied with $3r$ in place of $r$), there exists $C'=C'(d,h,r)$ such that we have
\[
\mu\left(\f_{\left[t\right]}^{A}\right)\le C'\epsilon^{3r}\le2^{-t}\mu\left(\s_{[t]}\right)\epsilon^{r},
\]
where the inequalities hold provided that $C$ is sufficiently large. Thus,
\begin{align*}
\mu\left(\f\backslash\s_{[t]}\right) & =\sum_{A\subsetneq\left[t\right]}\Pr_{\mathbf{B}\sim{{[n]}\choose{k}}}\left[\mathbf{B}\cap\left[t\right]=A\right]
\mu\left(\f_{\left[t\right]}^{A}\right)\le\sum_{A\subsetneq\left[t\right]}2^{-t}\mu\left(\s_{[t]}\right)\epsilon^{r}\le\mu\left(\s_{[t]}\right)\epsilon^{r},
\end{align*}
as asserted.

\mn \textbf{Case 2: $k > C\log n$ and $\epsilon\ge\left(\frac{k}{n}\right)^{2d}$.} For any $A\subsetneq [t]$, by Proposition~\ref{prop:unbalanced cross free of hypergraph all negligible} (applied with $2dr+t+1$ in place of $r$), we have
\[
\mu\left(\f_{\left[t\right]}^{A}\right)=O_{d,h,r}\left(\frac{k}{n}\right)^{2dr+t+1}\le 2^{-t}\mu\left(\s_{[t]}\right)\epsilon^{r},
\]
provided that $C$ is sufficiently large. The assertion follows like in Case~1.

\mn \textbf{Case 3: $k \leq C\log n$.} Let $A\subsetneq\left[t\right]$. We observe that the family $\f_{\left[t\right]}^{A}$ is free of the hypergraph $\h'$ obtained from $\h$ by removing $\left|A\right|$ vertices out of its kernel (which is a $(d-|A|)$-expanded hypergraph with kernel of size $t-1-\left|A\right|$). Hence,
Proposition~\ref{prop: unbalnced cross free of hypergraph k small} (applied to the families $\left(\f_{[t]}^{[t]},\ldots,\f_{[t]}^{[t]},\f_{[t]}^{A} \right)$ with $r+2t$ in place of $r$) implies
\[
\mu\left(\f_{\left[t\right]}^{A}\right)\le O\left(\frac{k^{2\left(t-\left|A\right|\right)}}{n^{t-\left|A\right|}}\right)\epsilon^{r+2t},
\]
provided that $C$ is sufficiently large. Using again the appropriate choice of $C$, along with the assumption $\epsilon \leq \epsilon_0(k)=1/(Ck)$, this implies
\begin{align*}
\mu\left(\f\backslash\s_{[t]}\right) & =\sum_{A\subsetneq\left[t\right]}\Pr_{\mathbf{B}\sim{{[n]}\choose{k}}}\left[\mathbf{B}\cap [t]=A\right]\mu\left(\f_{[t]}^{A}\right)=\sum_{A\subsetneq [t]} O\left(\frac{k}{n}\right)^{\left|A\right|} \left(\frac{k^{2\left(t-\left|A\right|\right)}}{n^{t-\left|A\right|}}\right) \epsilon^{r+2t}\\
 & \le 2^t \cdot O\left(\frac{k^{2t}}{n^{t}}\right)\left(\frac{1}{Ck}\right)^{2t}\epsilon^{r}\le\mu\left(\s_{[t]}\right)\epsilon^{r}.
\end{align*}
This completes the proof.
\end{proof}

Now we are ready to prove Theorems~\ref{thm:exact solution for t-stars} and~\ref{thm:Stab-for-Stars}.
\begin{thm}
\label{thm:Solution for t-stars}For any constants $d,h,r$, there exists a constant $C$ such that the following holds.

Let $\h$ be a hypergraph that satisfies (*). For each $C<k<n/C$, any $\h$-free family $\f\subseteq{{[n]}\choose{k}}$ has
at most ${{n-t}\choose{k-t}}$ elements, and equality holds if and only if $\f$ is a $(t,t)$-star.

Moreover, let $\epsilon\in\left(0,1/C\right)$ and suppose that $\left|\f\right|\ge \left(1-\epsilon\right) {{n-t}\choose{k-t}}$.
Then there exists a $(t,t)$-star $\s$ such that $\mu\left(\f\backslash\s\right)\le\mu\left(\s\right)\epsilon^{r}$.
\end{thm}
It is clear that Theorem~\ref{thm:Solution for t-stars} (together with Lemmas~\ref{lem:inclusion maximal-> intrinsic} and~\ref{lem:measures of juntas}) implies Theorems~\ref{thm:exact solution for t-stars} and~\ref{thm:Stab-for-Stars}.

\begin{proof}[Proof of Theorem~\ref{thm:Solution for t-stars}]
Let $\f$ be $\h$-free for a hypergraph $\h$ that satisfies (*), and let $\epsilon$ be such that $\left|\f\right|\ge \left(1-\epsilon\right){{n-t}\choose{k-t}}$. (Note that `theoretically', $|\f|$ may be larger than ${{n-t}\choose{k-t}}$.) We first show that we may assume that $\epsilon$ is small. By Lemma~\ref{lem:rough stability t stars}, there exists a $(t,t)$-star $\s_T$ and a constant $C_{1}$, such that
\[
\mu\left(\f\backslash\s_{T}\right)\le \max\left(e^{-k/C_{1}},C_1\frac{k}{n}\right)\mu\left(\s_{T}\right).
\]
This proves the `Moreover...' assertion if $\mu\left(\s_{T}\right)\epsilon^{r}\ge \max\left(e^{-k/C_{1}},C_1\frac{k}{n}\right)\mu\left(\s_{T}\right)$. Hence,
it remains to prove the theorem in the case where
\[
\epsilon\le\left(e^{-k/C_{1}}+\frac{C_{1}k}{n}\right)^{1/r}.
\]
Suppose without loss of generality that $T=[t]$ and let $\epsilon_1 \geq 0$ be such that  $\mu\left(\f_{\left[t\right]}^{\left[t\right]}\right)=1-\epsilon_{1}$. We now show that we may assume that $\epsilon_1$ satisfies the hypothesis of Lemma
\ref{lem:small-k  Bootstrapping}.

If $\epsilon_1=0$, then $\f_{[t]}^{[t]}$ is the `entire universe' ${{[n] \setminus [t]}\choose{k-t}}$. Thus, for any $A \subsetneq [t]$, either the family $\f_{[t]}^{A}$ is empty, or the $h$ families $\f_{[t]}^{[t]},\ldots,\f_{[t]}^{[t]},\f_{[t]}^{A}$ cross contain a copy of \emph{any} fixed ordered  hypergraph of size $h$ (with edges of appropriate sizes). The latter cannot hold since, as shown in the proof of Lemma~\ref{lem:inclusion maximal-> intrinsic}, these families are cross free of some $d$-expanded hypergraph with $h$ edges (that was denoted by $(F_1,\ldots,F_h)$ in the proof of Lemma~\ref{lem:inclusion maximal-> intrinsic}.) Therefore, $\f_{[t]}^{A} = \emptyset$ for any $A \subsetneq [t]$, which means that $\f=\s_{[t]}$, as asserted.

On the other hand, we have
\begin{align*}
1-\epsilon_{1}=\mu\left(\f_{\left[t\right]}^{\left[t\right]}\right) & =\frac{\mu\left(\f\right)-\mu\left(\f\backslash\s_{[t]}\right)}{\mu\left(\s_{[t]}\right)}\ge1-\epsilon- \max\left(e^{-k/C_{1}},C_1\frac{k}{n}\right),
\end{align*}
and hence,
\[
\epsilon_{1}\le \max \left(e^{-k/C_{1}},C_1\frac{k}{n} \right)+\epsilon\le 2 \max \left(e^{-k/C_{1}},C_1\frac{k}{n}\right)^{1/r},
\]
where the last inequality uses the assumption on $\epsilon$. Provided that $C$ is sufficiently large, this implies that $\epsilon_1$ satisfies the hypothesis of Lemma~\ref{lem:small-k  Bootstrapping}. Applying Lemma~\ref{lem:small-k  Bootstrapping} (with $2r$ in place of $r$), we obtain
\[
\mu\left(\f\backslash\s_{[t]}\right)\le\epsilon_{1}^{2r}\mu\left(\s_{[t]}\right).
\]
Thus,
\begin{align*}
\mu\left(\s_{[t]}\right)\left(1-\epsilon\right) & \le\mu\left(\f\right)=\mu\left(\f_{\left[t\right]}^{\left[t\right]}\right)\mu\left(\s_{[t]}\right)+\mu\left(\f\backslash\s_{[t]}\right)\le
\left(1-\epsilon_{1}+\epsilon_{1}^{2r}\right)\mu\left(\s_{[t]}\right).
\end{align*}
Rearranging, we obtain $\epsilon>0$ (and in particular, $|\f| < {{n-t}\choose{k-t}}$ which proves the `uniqueness' in the first part of the theorem), and $\epsilon_{1}=O\left(\epsilon\right)$. Therefore,
\[
\mu\left(\f\backslash\s_{[t]}\right)\le\epsilon_{1}^{2r}\mu\left(\s_{[t]}\right)=O\left(\epsilon^{2r}\right)\mu\left(\s_{[t]}\right)\le
\epsilon^{r}\mu\left(\s_{[t]}\right).
\]
This completes the proof of the theorem.
\end{proof}

\subsection{Forbidden hypergraphs for which the extremal families are the $(t,1)$-stars}
\label{sec:sub:proof:porcupine}

In this section we present the proof of Theorem~\ref{thm:OR} which gives sufficient conditions (on $\h$) for the $(t,1)$-stars to be the extremal $\h$-free families.

As in the case of $(t,t)$-stars considered in Section~\ref{sec:sub:proof:star}, obviously necessary conditions for the $(t,1)$-stars to be the extremal $\h$-free families are that the $(t,1)$-star is free of $\h$ and that no hypergraph which properly contains a $(t,1)$-star is $\h$-free. However, it turns our that these conditions are not sufficient, as demonstrated by the following example.
\begin{example}
\label{or s extremal} Let $C$ be a sufficiently large constant, let $C<k<n/C$, and let $\h$ be the $k$-expansion of the
hypergraph $H=\left\{ \left\{ 1,2\right\} , \left\{ 1,4\right\}, \left\{ 1,5\right\} ,\left\{ 2,6\right\} , \left\{ 2,7\right\}, \left\{ 3\right\} \right\}$.
Then it is easy to see that the $\left(2,1\right)$ star $\s'_{[2]} = \left\{ A\in{{[n]}\choose{k}}\,:\,A\cap\left\{ 1,2\right\}
\ne\emptyset\right\} $ is $\h$-free and is maximal under inclusion among the $\h$-free families.

However, we claim that the family $\f=\left\{ A\in{{[n]}\choose{k}}\,:\,\left|A\cap\left\{ a,b,c\right\} \right|=1\right\}$ (for arbitrary distinct $a,b,c \in [n]$), which is larger than $\s'_{[2]}$ by Lemma~\ref{lem:measures of juntas}, is $\h$-free. Indeed, suppose on the contrary that $H'$ is a copy of $\h$ in $\f$. Then, without loss of generality, the edges of $H'$ that correspond to the expansions of $\{1,4\},\{1,5\},\{2,6\},\{2,7\},\{3\}$ are of the form $\{a\} \cup A_1, \{a\} \cup A_2, \{b\} \cup A_3,\{b\} \cup A_4,\{c\} \cup A_5$, respectively, where $A_1,\ldots,A_5$ are pairwise disjoint. As the edge of $H'$ that corresponds to the expansion of $\{1,2\}$ must intersect the first four of these edges, it must contain both $a$ and $b$, a contradiction.
\end{example}
To avoid Example~\ref{or s extremal} and its relatives, we bound our discussion to hypergraphs $\h$ that satisfy the following stronger conditions:
\begin{itemize}
\item The $\left(t,1\right)$-star $\s'_{[t]} = \{A: A \cap [t] \neq \emptyset\}$ is free of $\h$;
\item No family that contains the family $\left\{ A\,:\,\left|A\cap\left[t\right]\right|=1\right\} $ and is not contained in $\s'_{[t]}$, is $\h$-free.
\end{itemize}
We show below that these conditions are equivalent to the following intrinsic property of $\h$.
\begin{description}
\item [{Condition}] ({*}{*}) $\h$ is a $d$-expanded hypergraph with $h$ edges, there exists a set $T$ of size $t$ such that $\left|\mathrm{Span}_{\h}^{1}\left(T\right)\right|=h-1$, and there is no set $T'$ of size $t$ such that $\left|\mathrm{Span}_{\h}\left(T'\right)\right|=h.$
\end{description}
We then show that this condition is sufficient; namely, that for any forbidden hypergraph $\h$ that satisfies (**), the extremal $\h$-free families are the $(t,1)$-stars. The proof is very similar to the proof in the case of $(t,t)$-stars presented in Section~\ref{sec:sub:proof:star}. Hence, we only state the corresponding lemmas and the required changes with respect to the proof in the `$(t,t)$-stars' case.

\medskip

We begin with proving the equivalence between the conditions on $\h$.
\begin{lem}
\label{lem:inclusion maximal-> intrinsic-1}For any constants $d,h,t$, there exists a constant $C$ such that the following holds.

For any $C< k <n/C$, a $d$-expanded hypergraph $\h \subseteq {{[n]}\choose{k}}$ of size $h$ satisfies ({*}{*})
if and only if the following two conditions hold:
\begin{enumerate}
\item The $\left(t,1\right)$-star $\s'_{[t]}:=\left\{ A\,:\, A\cap\left[t\right] \neq \emptyset \right\}$ is $\h$-free.
\item No family that contains the family $\left\{ A\,:\,\left|A\cap\left[t\right]\right|=1\right\} $ and is not contained in $\s'_{[t]}$, is $\h$-free.
\end{enumerate}
\end{lem}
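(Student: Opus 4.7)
Plan:

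The lemma asserts an equivalence between the intrinsic condition ({*}{*}) on the hypergraph $\h$ and the two natural conditions (1) (the $(t,1)$-star $\s'_{[t]}$ is $\h$-free) and (2) (no proper extension of the specified type is $\h$-free). The plan is to prove the two directions separately, with the forward direction handled by an explicit embedding and the reverse direction requiring a delicate case analysis.

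For the forward direction ({*}{*})$\Rightarrow$(1) and (2): for (1), if $\s'_{[t]}$ contained a copy of $\h$, pulling back the intersection of every edge with $[t]$ along the isomorphism would produce a set of size at most $t$ hitting every edge of $\h$; extending it to size exactly $t$ inside $V(\h)$ (possible since $|V(\h)|=\Theta(hk) > t$ when $k$ is large enough) contradicts the second clause of ({*}{*}). For (2), given the set $T$ from ({*}{*}) and any $E_0 \in \f$ with $E_0\cap[t]=\emptyset$, I would construct an explicit embedding $\psi\colon V(\h)\to[n]$ realizing a copy of $\h$ in $\f$. The key observation is that the two clauses of ({*}{*}) together force the unique edge $E_*$ outside $\mathrm{Span}_\h^1(T)$ to satisfy $E_*\cap T=\emptyset$. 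I would then bijectively send $T$ onto $[t]$, bijectively send $E_*$ onto $E_0$ (compatible since $T\cap E_*=\emptyset$ and $[t]\cap E_0=\emptyset$), and inject the remaining vertices into $[n]\setminus([t]\cup E_0)$. Every non-$E_*$ edge $E$ satisfies $|E\cap T|=1$, so $|\psi(E)\cap[t]|=1$, placing $\psi(E)$ inside $\{A:|A\cap[t]|=1\}\subseteq\f$.

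For the reverse direction (1) and (2)$\Rightarrow$({*}{*}): from (1) the same embedding trick, applied contrapositively, immediately yields the second clause of ({*}{*}), i.e.\ no $t$-subset of $V(\h)$ meets every edge. For the first clause, I would apply (2) to the specific family $\f:=\{A:|A\cap[t]|=1\}\cup\{E_0\}$ for an arbitrary $E_0$ disjoint from $[t]$. Condition (2) gives a copy of $\h$ in $\f$, and (1) forces this copy to use $E_0$, as the image of some edge $E_*\in\h$. Pulling back $[t]$ along the isomorphism produces a candidate $T'\subseteq V(\h)$ with $|T'\cap E_i|=1$ for each non-$*$ edge, $T'\cap E_*=\emptyset$, and $|T'|\le t$. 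I would then try to promote $|T'|$ to exactly $t$ by padding with vertices from the ``private part'' $P:=E_*\setminus(E_1\cup\cdots\cup E_{h-1})$, which contains the $\ge k-m$ added vertices of the expansion of $E_*$ and is therefore abundant.

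The main obstacle is that the padding must not raise $|T\cap E_*|$ to exactly $1$, since doing so would place $E_*$ into $\mathrm{Span}_\h^1(T)$ and give $|\mathrm{Span}_\h^1(T)|=h$ instead of $h-1$. The delicate case is therefore $|T'|=t-1$, which would force adding exactly one vertex from $P$. The resolution, and the crucial observation that makes the argument close up, is that if $|T'|=t-1$ occurred, padding by one vertex from $P$ would produce a $t$-set intersecting every edge of $\h$, contradicting the second clause of ({*}{*}) that was just derived from (1). Hence the problematic case is impossible, leaving only $|T'|=t$ (no padding needed) or $|T'|\le t-2$ (padding by at least two vertices, which safely keeps $E_*$ out of $\mathrm{Span}_\h^1$).
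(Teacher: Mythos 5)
Your proof is correct and follows essentially the same route as the paper's: the forward direction rests on the observation that the two clauses of ({*}{*}) force the exceptional edge $E_*$ to be disjoint from $T$, after which one maps $T$ onto $[t]$, $E_*$ onto $E_0$, and the remaining vertices into the complement; the reverse direction applies condition (2) to the minimal family $\left\{ A:|A\cap[t]|=1\right\}\cup\{E_0\}$ and pulls $[t]$ back along the resulting copy of $\h$. You are in fact more careful than the paper at the one delicate point: the paper simply asserts that the pullback yields a set of size exactly $t$ meeting $h-1$ edges in exactly one element each, whereas you confront the possibility $|T'|<t$. One remark on your final case split, though: the case $|T'|\le t-2$ is ruled out by exactly the same argument you use for $|T'|=t-1$ --- any padding from the private part $P$ of $E_*$ produces a $t$-set meeting \emph{every} edge of $\h$, contradicting the second clause of ({*}{*}) that you have already derived from (1). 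Your ``safe'' padded set does satisfy $|\mathrm{Span}^1_{\h}(T)|=h-1$ and so formally witnesses the first clause, but it also satisfies $|\mathrm{Span}_{\h}(T)|=h$, so its existence is itself contradictory; the honest conclusion is that $|T'|=t$ in every case and no padding is ever performed. This does not invalidate the proof (the offending case is vacuous), but the asymmetric treatment of $t-1$ versus $\le t-2$ obscures what is actually going on.
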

\begin{proof}
Suppose that $\h=\left\{ A_{1},\ldots,A_{h}\right\} $ satisfies ({*}{*}). Condition~(1) clearly follows from the assumption that there is no set $T$
of size $t$, such that $|\mathrm{Span}_{\h}\left(T\right)|=h.$ To see that~(2) holds, let $\f$ be a family that contains the family $\left\{ A\,:\,\left|A\cap\left[t\right]\right|=1\right\} $ and is not contained in $\s'_{[t]}$. Note that without loss of generality, $\h$ can be written in the form
\[
\h=\left\{ \left\{ 1\right\} \cup B_{1},\ldots,\left\{ 1\right\} \cup B_{i_{1}},\left\{ 2\right\} \cup B_{i_{1}+1},\ldots,\left\{ 2\right\} \cup B_{i_{2}},\ldots,\left\{ t\right\} \cup B_{i_{t-1}+1}\cdots,\left\{ t\right\} \cup B_{i_{t}},B_{h}\right\},
\]
where $B_{j}\cap\left[t\right]=\emptyset$ for all $[h]$. Consider the families
\[
\underset{i_{1}}{\underbrace{\f_{\left[t\right]}^{\left\{ 1\right\} },\ldots\f_{\left[t\right]}^{\left\{ 1\right\} }}},\ldots,\underset{i_{t}-i_{t-1}}{\underbrace{\f_{\left[t\right]}^{\left\{ t\right\} },\ldots,\f_{\left[t\right]}^{\left\{ t\right\} }}},\f_{\left[t\right]}^{\emptyset}.
\]
Since $\f \supseteq \left\{ A\,:\,\left|A\cap\left[t\right]\right|=1\right\} $, the first $h-1$ of these families consist of the `entire universe' ${{[n] \setminus [t]}\choose{k-1}}$, and since $\f \nsubseteq \s'_{[t]}$, the last family $\f_{\left[t\right]}^{\emptyset}$ is non-empty. Hence, these $h$ families cross contain \emph{any} ordered hypergraph (with edges of appropriate sizes), and in particular, cross contain a copy $(C_1,\ldots,C_h)$ of the ordered hypergraph $\left( B_{1},\ldots,B_{h}\right).$ Therefore, the sets
\[
\left\{ \left\{ 1\right\} \cup C_{1},\ldots,\left\{ 1\right\} \cup C_{i_{1}},\left\{ 2\right\} \cup C_{i_{1}+1},\ldots,\left\{ 2\right\} \cup C_{i_{2}},\ldots,\left\{ t\right\} \cup C_{i_{t-1}+1}\cdots,\left\{ t\right\} \cup C_{i_{t}},C_{h}\right\}
\]
constitute a copy of $\h$ in $\f$. This proves that~(2) holds.

\medskip

In the converse direction, suppose that (1) and (2) hold. It clearly follows from (1) that there is no set $T$ of size $t$ such that $|\mathrm{Span}_{\h}\left(T\right)|=h$. By~(2), the family $\left\langle \left\{ 1\right\} ,\ldots,\left\{ t\right\} \right\rangle \cup\left\{ \left\{ t+1,\ldots,t+k\right\} \right\} $ contains a copy of $\h$. At least $h-1$ edges in this copy belong to $\left\langle \left\{ 1\right\} ,\ldots,\left\{ t\right\} \right\rangle$, and thus, there exists a set of size $t$ that intersects at least $h-1$ edges of $\h$ in exactly one element. This completes the proof.
\end{proof}

The next step is to show that if the forbidden hypergraph satisfies (**), then the junta which approximates $\f$ according to Theorem~\ref{thm:main junta approximation theorem} can be taken to be a $(t,1)$-star.
\begin{lem}
\label{lem:rough stability t porcupines} For any constants $d,h$, there exists a constant $C$ such that the following holds. Let $C<k<n/C$,
and set $\epsilon= \max \left(e^{-k/C},C\frac{k}{n} \right)$.

For any hypergraph $\h$ that satisfies ({**}) and any $\h$-free family $\f\subseteq{{[n]}\choose{k}}$, there exists a $(t,1)$-star $\s'_T$
such that $\mu\left(\f\backslash\s'_T\right) \leq \epsilon \cdot \frac{k}{n}$.
\end{lem}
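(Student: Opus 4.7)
My plan is to mirror the proof of Lemma~\ref{lem:rough stability t stars}, replacing the final structural step by one that exploits Condition~(**). First, I would apply Theorem~\ref{thm:main junta approximation theorem} to obtain an $\h$-free junta $\mathcal{J}$ satisfying $\mu(\f\setminus\mathcal{J}) \leq \epsilon_0\, \mu(\mathcal{J})$ with $\epsilon_0 = \max(C_1 e^{-k/C_1}, C_1 k/n)$ for some $C_1 = C_1(d,h)$. A key preliminary observation is that Condition~(**) forces $K(\h) = \emptyset$: if some $v$ belonged to every edge, any $T'$ of size $t$ containing $v$ would satisfy $|\mathrm{Span}_\h(T')| = h$, contradicting the second clause of~(**). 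Consequently, inspecting the proof of Theorem~\ref{thm:main junta approximation theorem} (with the kernel parameter $|K(\h)|+1 = 1$) reveals that either $\mathcal{J}$ is a $(1,1)$-star (the fallback case $\j' = \emptyset$), or $\mathcal{J} = \langle \j' \rangle$ for some $\j' \subseteq \binom{J}{1}$ that is $1$-uniform, i.e., a family of singletons from the coordinate set $J$. The $(1,1)$-star case is immediate since any $(1,1)$-star is contained in a $(t,1)$-star.

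The heart of the argument is to show that if $\mathcal{J} = \langle \j' \rangle$ with $\j'$ consisting of $m$ singletons, then $m \leq t$. I would suppose $m \geq t+1$ for contradiction. By~(**), fix $T^* = \{v_1, \ldots, v_t\} \subseteq V(\h)$ with $|\mathrm{Span}_\h^1(T^*)| = h-1$; the remaining edge $\mathcal{E}_h$ is then necessarily disjoint from $T^*$ (else $|\mathrm{Span}_\h(T^*)| = h$, violating~(**)). Write $T^* \cap \mathcal{E}_j = \{v_{\pi(j)}\}$ for $j \in [h-1]$. Since $\h = \h_1^+$ is $d$-expanded and $k > d$, the edge $\mathcal{E}_h$ contains $k-d$ `expansion vertices' belonging to no other edge; let $u$ be any one of them. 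Picking distinct singletons $\{i_1\}, \ldots, \{i_{t+1}\} \in \j'$, I would define an injective map $\phi \colon V(\h) \to [n]$ by $\phi(v_j) = i_j$ for $j \leq t$, $\phi(u) = i_{t+1}$, and sending all remaining vertices into $[n] \setminus J$ (possible since $n$ is large). For each $j \leq h-1$, the set $\mathcal{E}_j$ meets $T^* \cup \{u\}$ only at $v_{\pi(j)}$ (by the exclusivity of $u$ to $\mathcal{E}_h$), so $\phi(\mathcal{E}_j) \cap J = \{i_{\pi(j)}\} \in \j'$; likewise $\phi(\mathcal{E}_h) \cap J = \{i_{t+1}\} \in \j'$. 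Hence $\phi$ embeds $\h$ into $\mathcal{J}$, contradicting its $\h$-freeness.

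Thus $m \leq t$, and $\mathcal{J} \subseteq \s'_T$ for any $T$ of size $t$ containing all vertices appearing in $\j'$. Using Lemma~\ref{lem:measures of juntas}, $\mu(\mathcal{J}) = m \cdot (k/n)(1 + O(k/n))$, so
\[
\mu(\f \setminus \s'_T) \leq \mu(\f \setminus \mathcal{J}) \leq \epsilon_0\, \mu(\mathcal{J}) \leq \epsilon_0 \cdot t \cdot \frac{k}{n}(1 + O(k/n)),
\]
which is at most $\epsilon \cdot (k/n)$ once $C$ is chosen large enough so that $\epsilon = \max(e^{-k/C}, Ck/n)$ dominates $\epsilon_0 \cdot t \cdot (1 + O(k/n))$. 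The main obstacle is the copy-construction in the second paragraph: because $\j'$ is $1$-uniform, each $\phi(\mathcal{E}_j) \cap J$ must have size \emph{exactly} $1$, not merely be non-empty. This is precisely where the expansion structure of $\h$ must be invoked to locate a vertex $u$ of $\mathcal{E}_h$ that belongs to no other edge, so that the added coordinate $i_{t+1}$ does not inadvertently enlarge $\phi(\mathcal{E}_j) \cap J$ for some $j < h$.
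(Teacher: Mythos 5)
Your proposal is correct and follows essentially the same route as the paper: junta approximation yields a $1$-uniform junta (the paper likewise implicitly uses that (**) forces $K(\h)=\emptyset$), and one rules out $t+1$ singletons by exhibiting a copy of $\h$ inside the junta via the decomposition of $\h$ guaranteed by (**), then finishes with the same measure computation. Your explicit injection $\phi$ (routing an expansion vertex of $\mathcal{E}_h$ to the extra singleton and everything else outside $J$) just spells out the step the paper dismisses with ``it is clear that.''
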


\begin{proof}
By Theorem~\ref{thm:main junta approximation theorem} and Remark~\ref{Rem:Uniform}, there exists an $O_{d,h}\left(1\right)$-sized set $J$ and a $1$-uniform set $\mathcal{\j} \subseteq {{[n]}\choose{k}}$, such that the junta $\left\langle \j\right\rangle $ is $\h$-free and
\[
\mu\left(\f\backslash\left\langle \j\right\rangle \right)\le\epsilon \mu(\langle \j \rangle).
\]
Our proof will be accomplished once we show that $\j$ contains at most $t$ elements. (Formally, we should apply Theorem~\ref{thm:main junta approximation theorem} with $\epsilon'=\epsilon/t$, to obtain the upper bound $\epsilon' \mu(\langle \j \rangle) = \epsilon' \cdot t\frac{k}{n} = \epsilon \frac{k}{n}$. This can be done, assuming $C$ is sufficiently large.) Hence, it is sufficient to show that the family $\g=\langle \{\{1\},\{2\},\ldots,\{t+1\} \}\rangle$ contains a copy of $\h^+$.

Note that $\g = \{A \in {{[n]}\choose{k}}: \exists 1 \leq i \leq t+1, A \cap J = \{i\}\}$. Let $\h^+ = \{E_1,E_2,\ldots,E_h\}$ and denote $V=E_1 \cup \ldots \cup E_h$. Since $\h$ satisfies (**), we can choose a set $T''$ of size $t+1$, such that each edge of $\h^+$ intersects $T''$ in a single element. Choose an embedding $\phi:V \to [n]$ that sends $T''$ to $\{1,2,\ldots,t+1\}$ and $V \setminus T''$ into $[n]\setminus J$. Then $(\phi(E_1),\ldots,\phi(E_h))$ is a copy of $\h^+$ in $\g$, since each of its edges intersects $J$ in a singleton from $\{1,2,\ldots,t+1\}$. This completes the proof.
%Recall that since $\h$ satisfies (**), it can be written in the form
%\[
%\left\{ \left\{ 1\right\} \cup B_{1},\ldots,\left\{ 1\right\} \cup B_{i_{1}},\left\{ 2\right\} \cup B_{i_{1}+1},\ldots,\left\{ 2\right\} \cup B_{i_{2}},\ldots,\left\{ t\right\} \cup B_{i_{t-1}+1}\cdots,\left\{ t\right\} \cup B_{i_{t}},B_{h}\right\},
%\]
%where $B_{j}\cap\left[t\right]=\emptyset$ for all $[h]$. It is clear that for a sufficiently large $k$, there exists a copy $(C_1,\ldots,C_{h-1})$ of the first $h-1$ of these edges in the family $\langle \{\{1\},\{2\},\ldots,\{t\} \}\rangle \cap {{[n] \setminus \{t+1,\ldots,t+k\}}\choose{k}}$. Then, the hypergraph $\{C_1,\ldots,C_{h-1},\{t+1,\ldots,t+k\}\}$ is the desired copy of $\h$ in $\f$. This completes the proof.
\end{proof}

The next step is to show that the theorem `holds locally', i.e., if $\h$ satisfies condition (**), and $\f$ is an $\h$-free family which is a small perturbation of a $(t,1)$-star $\s'_T$, then $\mu(\f \setminus \s'_T)$ is much smaller than $\min_{i \in T}(1 - \mu(\f_{T}^{\{i\}}))$.
\begin{lem}
\label{lem:small-k  Bootstrapping-porcupine} For any constants $d,h$, there exists a constant $C=C(d,h)$ such that the following holds. Let $C<k<n/C$, and denote
\[
\epsilon_0(k):=\begin{cases}
1/C , & k>C\log n;\\
1/(kC), & k \leq C \log n.
\end{cases}
\]
Let $\h$ be a hypergraph that satisfies ({**}) and let $\f \subseteq {{[n]}\choose{k}}$ be an $\h$-free family that satisfies $ \min_{i \in [t]}(1 - \mu(\f_{T}^{\{i\}})) \leq \epsilon$, for some $\epsilon \leq \epsilon_0(k)$.
Then
\[
\mu\left(\f\backslash\s'_{[t]}\right)\le\mu\left(\s'_{[t]}\right)\epsilon^{r}.
\]
\end{lem}
\begin{proof}
The proof is almost exactly the same as that of Lemma~\ref{lem:small-k  Bootstrapping}. The only non-negligible difference is the following. In  Lemma~\ref{lem:small-k  Bootstrapping} we use the fact that if $\f$ is $\h$-free for some hypergraph $\h$ that satisfies (*), then for any $A \subsetneq [t]$, the $h$ families
\[
\f_{[t]}^{[t]},\ldots,\f_{[t]}^{[t]},\f_{[t]}^{A}
\]
are cross free of some $d$-expanded ordered hypergraph with $h$ edges. Here, we use instead the fact (proved in Lemma~\ref{lem:inclusion maximal-> intrinsic-1}) that if $\f$ is $\h$-free for some hypergraph $\h$ that satisfies (**), then the $h$ families
\[
\underset{i_{1}}{\underbrace{\f_{\left[s\right]}^{\left\{ 1\right\} },\ldots\f_{\left[s\right]}^{\left\{ 1\right\} }}},\ldots,\underset{i_{s}-i_{s-1}}{\underbrace{\f_{\left[s\right]}^{\left\{ s\right\} },\ldots,\f_{\left[s\right]}^{\left\{ s\right\} }}},\f_{\left[s\right]}^{\emptyset}
\]
are cross free of some $d$-expanded hypergraph with $h$ edges.
\end{proof}

Now we are ready to prove Theorem~\ref{thm:OR}, along with its stability version.
\begin{thm}
\label{thm:Solution for t-porcupines}For any constants $d,h,r$, there exists a constant $C$ such that the following holds.

Let $\h$ be a hypergraph that satisfies (**). For any $C<k<n/C$, any $\h$-free family $\f\subseteq{{[n]}\choose{k}}$ has
at most ${{n}\choose{k}}-{{n-t}\choose{k}}$ elements, and equality holds if and only if $\f$ is a $(t,1)$-star.

Moreover, let $\epsilon\in\left(0,1/C\right)$ and suppose that $\left|\f\right|\ge \left(1-\epsilon\right) \left({{n}\choose{k}}-{{n-t}\choose{k}}\right)$.
Then there exists a $(t,1)$-star $\s'$ such that $\mu\left(\f\backslash\s'\right)\le\mu\left(\s'\right)\epsilon^{r}$.
\end{thm}

\begin{proof}
The proof follows the proof of Theorem~\ref{thm:Solution for t-stars} with straightforward alterations, replacing Lemmas~\ref{lem:rough stability t stars} and~\ref{lem:small-k  Bootstrapping} with Lemmas~\ref{lem:rough stability t porcupines} and~\ref{lem:small-k  Bootstrapping-porcupine}, respectively.
\end{proof}

\section{Proof of the Erd\H{o}s-Chv\'{a}tal Simplex Conjecture for $\frac{n}{C} \leq k\leq \frac{d-1}{d}n$}
\label{sec:Chvatal}

Theorem~\ref{thm:Solution for t-stars} implies that for any $d$, there exists a constant $C=C\left(d\right)$, such that for all $C<k<n/C$, any family $\f\subseteq{{[n]}\choose{k}}$ that does not contain a special simplex satisfies $\left|\f\right|\le{{n-1}\choose{k-1}}.$ This implies that the Erd\H{o}s-Chv\'{a}tal simplex conjecture holds for all $C<k<n/C.$ As mentioned in the introduction, several previous works proved the conjecture for other ranges of $k$: Frankl~\cite{frankl1987erdos} proved it for any $k>\frac{d-1}{d}n$, Frankl and F\"{u}redi~\cite{frankl1987exact} proved it for all $k\le C$ (provided that $n\ge n_{0}\left(C\right)$), and Keevash and Mubayi~\cite{keevash2010set} proved it for $n/C<k<n/2-O_d(1)$. Hence, the only remaining range is $\frac{n}{2}-O_d(1) \leq k \leq \frac{d-1}{d}n$.

In this section, we prove the following theorem:
\begin{thm}
\label{thm:main-simplex}For any constants $d,\zeta$, there exist $\epsilon_0,n_0$ which depend only on $d,\zeta$ such that the following holds.

Suppose that $n>n_{0}$, that $\zeta n\le k\le\frac{d-1}{d}n$, and that $\f\subseteq{{[n]}\choose{k}}$ is a family that is free of a $d$-simplex and satisfies $|\f| \geq (1-\epsilon_0) {{n-1}\choose{k-1}}$. Then $\f$ is included in a $(1,1)$-star.
\end{thm}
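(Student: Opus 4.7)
My plan is a shifting-plus-direct-construction argument. First, left-shifting preserves the property of being $d$-simplex-free (a classical compression fact I would verify), does not decrease $|\f|$, and (after restoring coordinates at the end) turns a containment in any $(1,1)$-star into a containment in the star centered at $1$. Thus I may assume that $\f$ is left-shifted, and reduce to showing that any such $\f$ with $|\f|\ge(1-\epsilon_{0})\binom{n-1}{k-1}$ satisfies $\f\subseteq\s_{1}:=\{A:1\in A\}$.

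Assume for contradiction that $\f\not\subseteq\s_{1}$, and pick any $A\in\f$ with $1\notin A$. A standard property of shifted families forces $A_{0}:=\{2,3,\ldots,k+1\}\in\f$, since $A_{0}$ is dominated by $A$ in the shifting partial order (in the sense that the $i$-th smallest element of $A_{0}$ does not exceed the $i$-th smallest element of $A$). I will derive a contradiction by exhibiting $A_{1},\ldots,A_{d}\in\f$ so that $(A_{0},A_{1},\ldots,A_{d})$ forms a $d$-simplex.

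For the combinatorial template, fix an equitable partition $\{d+2,\ldots,k+1\}=S_{1}\sqcup\cdots\sqcup S_{d}$ with $|S_{j}|\le\lceil(k-d)/d\rceil$, set $B_{j}:=\{1,\ldots,d+1\}\setminus\{j+1\}$ for $j\in[d]$, and for any sets $T_{j}\subseteq[n]\setminus\{1,\ldots,k+1\}$ with $|T_{j}|=|S_{j}|$ (chosen later) define
\[
A_{j}:=B_{j}\cup\bigl(\{d+2,\ldots,k+1\}\setminus S_{j}\bigr)\cup T_{j}.
\]
A direct check confirms that $\{A_{0},A_{1},\ldots,A_{d}\}$ is always a $d$-simplex: the element $1$ lies in every $A_{j}$ for $j\ge1$ but not in $A_{0}$; for each $j\in[d]$, the element $j+1$ lies in $A_{0}$ and in every $A_{i}$ with $i\in\{0,\ldots,d\}\setminus\{j\}$, but not in $A_{j}$; and no element lies in all of $A_{0},\ldots,A_{d}$, since elements of $\{2,\ldots,d+1\}$ are removed by the corresponding $A_{j}$, elements of $\{d+2,\ldots,k+1\}$ by their unique $A_{j}$ (this is where the partition is used), and $T_{j}$-elements together with $1$ lie outside $A_{0}$. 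The hypothesis $k\le(d-1)n/d$ ensures, for $n>n_{0}(d)$, that $n-k-1\ge\lceil(k-d)/d\rceil$, so the $T_{j}$'s exist inside $[n]\setminus\{1,\ldots,k+1\}$ (and may freely overlap).

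The main obstacle is to find a choice of $(T_{1},\ldots,T_{d})$ for which every $A_{j}$ lies in $\f$. As $T_{j}$ varies, $A_{j}$ ranges over the slice $\h_{j}:=\{A\in\binom{[n]}{k}:A\cap[k+1]=F_{j}\}$ with $F_{j}:=[k+1]\setminus(\{j+1\}\cup S_{j})$, of size $\binom{n-k-1}{|S_{j}|}$. Since $k=\Theta(n)$, this is exponentially smaller than $\binom{n-1}{k-1}$, so a naive union bound using only $|\s_{1}\setminus\f|\le O(\epsilon_{0})\binom{n-1}{k-1}$ does not suffice. The plan to overcome this is to exploit the shifted structure of $\f$: each $F_{j}$ consists of the $k-|S_{j}|$ lex-earliest elements of a carefully chosen subset of $[k+1]$, and a standard shifted-family slice comparison (of the kind underlying the Keevash--Mubayi argument in the range $\zeta n\le k\le n/2-O_{d}(1)$) should yield $|\f\cap\h_{j}|\ge(1-O_{d,\zeta}(\epsilon_{0}))|\h_{j}|$ for every $j$. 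A union bound over $j\in[d]$ then delivers a tuple $(T_{1},\ldots,T_{d})$ with every $A_{j}\in\f$, producing the forbidden $d$-simplex and completing the proof. The delicate point, which I expect to be the main technical work, is making this slice-comparison quantitative uniformly throughout the range $\zeta n\le k\le(d-1)n/d$: the upper bound $k\le(d-1)n/d$ is exactly what simultaneously permits the partition of $\{d+2,\ldots,k+1\}$ into $d$ pieces while leaving room in $[n]\setminus[k+1]$ to host the $T_{j}$'s, so the construction degenerates precisely at the boundary predicted by the conjecture.
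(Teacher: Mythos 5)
Your argument rests on the opening reduction, and that reduction is false: the $(i,j)$-compression does \emph{not} preserve the property of being free of a $d$-simplex. Here is an explicit counterexample for $d=2$. Let $\f=\bigl\{\{2,3,4\},\{1,5,6\},\{3,5,7\}\bigr\}$. This family contains no $2$-simplex, since its only triple contains the disjoint pair $\{2,3,4\},\{1,5,6\}$. Applying $S_{12}$ replaces $\{2,3,4\}$ by $\{1,3,4\}$ (which was not in $\f$) and leaves the other two sets unchanged, so $S_{12}(\f)=\bigl\{\{1,3,4\},\{1,5,6\},\{3,5,7\}\bigr\}$; the pairwise intersections are $\{1\},\{3\},\{5\}$ and the triple intersection is empty, i.e.\ the compressed family \emph{contains} a $2$-simplex. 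The standard ``swap back one set'' argument that works for purely negative conditions (intersecting families, matchings, $t$-wise intersecting families) breaks down here because the simplex involves positive intersection requirements: compressing can destroy a common element of all $d+1$ sets while keeping every $d$-wise intersection nonempty. This is precisely why shifting only handles the range $k\ge\frac{d-1}{d}n$ (where simplex-freeness coincides with $(d+1)$-wise intersecting, as in Frankl's 1976 result) and why the conjecture resisted compression techniques; your plan cannot assume $\f$ is shifted, and everything downstream (the inference $\{2,\dots,k+1\}\in\f$, the slice comparison) depends on that assumption.

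Even setting this aside, the step you yourself flag as ``the main technical work'' is not a routine shifted-family fact. You need each slice $\h_{j}=\{A:A\cap[k+1]=F_{j}\}$, which has size $\binom{n-k-1}{|S_{j}|}$ and is exponentially small relative to $\binom{n-1}{k-1}$ when $k=\Theta(n)$, to meet $\f$; global density $1-\epsilon_{0}$ plus monotonicity in the domination order does not deliver this, since shiftedness only makes $\f\cap\h_{j}$ a down-set inside the slice and says nothing about it being nonempty. So the proposal defers the entire analytic content of the theorem to an unproved lemma. For contrast, the paper's proof avoids both issues: it uses the fairness proposition to find a $(d+1)$-set $D$ all of whose top slices are large, observes that these slices are $(d+1)$-wise cross-intersecting, applies a stability version of the Frankl--Tokushige theorem to locate a common $(1,1)$-star, and then uses a bootstrapping proposition (families cross free of a $d$-simplex with $d$ of them of measure close to $1$ force the last to be empty) together with a ``Sudoku'' elimination of slices to conclude $\f\subseteq\s_{\{d+2\}}$ exactly.
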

Theorem~\ref{thm:main-simplex} proves the Erd\H{o}s-Chv\'{a}tal conjecture in the range $\frac{n}{C}\le k\le\frac{d-1}{d}n$, provided that $n\ge n_{0}\left(d\right).$ (Note that this range includes the range considered in~\cite{keevash2010set}). Combining with the above results, this proves the conjecture for all $k$, provided $n\ge n_{0}\left(d\right).$
%Our result does
%not rely on the result of Keevash and Mubayi \cite{keevash2010set}
%mentioned in the introduction.

\subsection{Proof overview}
\label{sec:sub:Chvatal:overview}

Recall that a family $\f\subseteq\p\left(\left[n\right]\right)$ is said to be \emph{$s$-wise intersecting} if for any $A_{1},\ldots,A_{s}\in\f$, we have $A_{1}\cap\cdots\cap A_{s}\ne\emptyset$. Families $\f_{1},\ldots,\f_{s}$ are called \emph{$s$-wise cross intersecting} if $A_{1}\cap\cdots\cap A_{s}\ne\emptyset$ for any $A_{1}\in\f_{1},\ldots,A_{s}\in\f_{s}.$

It is clear that any $\left(d+1\right)$-wise intersecting family does not contain a $d$-simplex. On the other hand, if $k>\frac{d-1}{d}n$ then
any $d+1$ sets whose intersection is empty constitute a $d$-simplex. Hence, for such $k$, any family $\f\subseteq{{[n]}\choose{k}}$
that does not contain a $d$-simplex is $\left(d+1\right)$-wise intersecting.

A main idea behind our proof is to reduce the problem of understanding families that do not contain a $d$-simplex to the problem of understanding
$\left(d+1\right)$-wise cross-intersecting families. The reduction is based on the following simple observation.
\begin{obs}\label{obs:simplex}
Let $\f\subseteq{{[n]}\choose{k}}$ be a family that does not contain a $d$-simplex. Then:
\begin{enumerate}
\item If $\left\{ B_{1},\ldots,B_{d+1}\right\} \subseteq B$ is a $d$-simplex, then the families $\f_{B}^{B_{1}},\ldots,\f_{B}^{B_{d+1}}$ are $\left(d+1\right)$-wise cross intersecting.
\item If $\left\{ B_{1},\ldots,B_{d+1}\right\} \subseteq B$ are sets whose intersection is empty, then the families $\f_{B}^{B_{1}},\ldots,\f_{B}^{B_{d+1}}$
are cross free of a $d$-simplex.
\end{enumerate}
\end{obs}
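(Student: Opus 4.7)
The plan is to prove both parts by contraposition, constructing a forbidden $d$-simplex inside $\f$ whenever the cross-condition on the slices fails. The central tool is the standard disjoint-union decomposition of the slices: every $C \in \f$ with $C \cap B = B_i$ factors uniquely as $C = B_i \sqcup A$ with $A := C \setminus B \subseteq [n] \setminus B$, and by definition $A \in \f_B^{B_i}$. Conversely, given $A_i \in \f_B^{B_i}$, the lift $C_i := B_i \sqcup A_i$ lies in $\f$. Because $B_i \subseteq B$ and $A_i \subseteq [n] \setminus B$, and $B, [n]\setminus B$ partition $[n]$, one obtains the key identity
\[
\bigcap_{i \in I} C_i \;=\; \Bigl(\bigcap_{i \in I} B_i\Bigr) \sqcup \Bigl(\bigcap_{i \in I} A_i\Bigr)
\]
for any $I \subseteq [d+1]$. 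Both parts will follow immediately by tracking the two sides of this identity against the two defining properties of a $d$-simplex (non-empty $d$-wise intersections and empty $(d+1)$-wise intersection).

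For part (1), suppose toward contradiction that $\f_B^{B_1},\ldots,\f_B^{B_{d+1}}$ fail to be $(d+1)$-wise cross intersecting; pick a witness $(A_1,\ldots,A_{d+1})$ with $A_i \in \f_B^{B_i}$ and $\bigcap_i A_i = \emptyset$, and lift to $C_i := B_i \sqcup A_i \in \f$. Applying the identity with $I = [d+1]$, the full intersection $\bigcap_i C_i$ equals $\bigcap_i B_i \sqcup \bigcap_i A_i = \emptyset$ (the $B$-part vanishes because the $B_i$'s form a $d$-simplex). For any $d$-subset $I$, the identity gives $\bigcap_{i \in I} C_i \supseteq \bigcap_{i \in I} B_i$, which is non-empty by the $d$-simplex property of the $B_i$'s. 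Hence $\{C_1,\ldots,C_{d+1}\}$ is a $d$-simplex in $\f$, contradicting the hypothesis on $\f$.

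For part (2), suppose toward contradiction that $\f_B^{B_1},\ldots,\f_B^{B_{d+1}}$ cross contain a $d$-simplex $(A_1,\ldots,A_{d+1})$, and again lift to $C_i := B_i \sqcup A_i \in \f$. The full intersection $\bigcap_i C_i = \bigcap_i B_i \sqcup \bigcap_i A_i$ is empty because $\bigcap_i B_i = \emptyset$ (the present hypothesis on the $B_i$'s) and $\bigcap_i A_i = \emptyset$ (the simplex property of the $A_i$'s). For any $d$-subset $I$, the identity gives $\bigcap_{i \in I} C_i \supseteq \bigcap_{i \in I} A_i \neq \emptyset$ by the simplex property of the $A_i$'s. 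Again $\{C_1,\ldots,C_{d+1}\}$ is a $d$-simplex in $\f$, a contradiction.

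There is really no serious obstacle here: the observation is essentially bookkeeping, and the only thing to be careful about is invoking the right hypothesis for each of the two parts at each step — in (1) we exploit the simplex property of the $B_i$'s to force the $B$-side of the intersection to behave, while in (2) we exploit the simplex property of the $A_i$'s to force the complementary side to behave. In both cases the empty $(d+1)$-wise intersection is guaranteed by \emph{both} hypotheses (on the $B$'s and on the $A$'s) combining correctly.
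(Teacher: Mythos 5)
Your proof is correct, and it is exactly the bookkeeping the paper has in mind: the paper states this as an Observation with no proof at all, treating the slice-decomposition identity $\bigcap_{i\in I}(B_i\sqcup A_i)=\bigl(\bigcap_{i\in I}B_i\bigr)\sqcup\bigl(\bigcap_{i\in I}A_i\bigr)$ as self-evident. Your write-up supplies the intended verification cleanly, including the correct bookkeeping of which hypothesis (on the $B_i$'s versus on the $A_i$'s) drives each of the two defining properties of the lifted simplex.
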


Suppose that $\f\subseteq{{[n]}\choose{k}}$ is free of a $d$-simplex and satisfies $\left|\f\right|\ge(1-\epsilon){{n-1}\choose{k-1}}$. We want to prove that $\f$ is included in a $(1,1)$-star. Our proof consists of four steps:
\begin{enumerate}
\item \textbf{Fairness step.} We apply Proposition~\ref{prop:Fairness} to find a $\left(d+1\right)$-sized set $D=\left\{ i_{1},\ldots,i_{d+1}\right\} $ that is $\epsilon$-fair for the family $\f$. Using Observation~\ref{obs:simplex}(1), we deduce that
    \[
    \f_{D}^{D\backslash\left\{ i_{1}\right\} },\ldots,\f_{D}^{D\backslash\left\{ i_{d+1}\right\} } \subseteq {{[n] \setminus D}\choose{k-d+1}}
    \]
    are $\left(d+1\right)$-wise cross intersecting families whose measures cannot be much smaller than $\frac{k-d+1}{n-d-1}$.
\item \textbf{Stability step.} We show that if $\b_{1},\ldots,\b_{s} \subseteq {{[n]}\choose{k}}$ are $s$-wise cross intersecting families whose measure is not significantly smaller than $\frac{k}{n}$, then all these families are essentially contained in the same $\left(1,1\right)$-star.

    Applying this step to the families $\f_{D}^{D\backslash\left\{ i_{1}\right\} },\ldots,\f_{D}^{D\backslash\left\{ i_{d+1}\right\} }$,  we obtain that there exists some $i_{d+2}$, such that the measures of the families $\f_{D\cup\left\{ i_{d+2}\right\} }^{D\cup\left\{ i_{d+2}\right\} \backslash\left\{ i_{1}\right\} },\ldots,\f_{D\cup\left\{ i_{d+2}\right\} }^{D\cup\left\{ i_{d+2}\right\} \backslash\left\{ i_{d+1}\right\} }$ are close to $1$.
\item \textbf{Bootstrapping step.} We prove that if $\b_{1},\ldots,\b_{d+1}$ are families that are cross free of a $d$-simplex, such that the measures of $\b_{1},\ldots,\b_{d}$ are very close to 1, then the family $\b_{d+1}$ must be \emph{empty}.
\item \textbf{`Sudoku' step.} We perform a sequence of applications of the bootstrapping proposition, exploiting also Observation~\ref{obs:simplex}(2),
to deduce that various slices of the form $\f_{D\cup\left\{ i_{d+2}\right\} }^{B}$ are empty. This will eventually imply that the family $\f$ is contained
in the $\left(1,1\right)$-star $\s_{\{i_{d+2}\}}$.

The motivation behind the name `Sudoku' is as follows. We start with a number of subsets $B_i$ such that each slice $\f_{D\cup\left\{ i_{d+2}\right\} }^{B_i}$ is almost full, which we can label by `1's.  We can deduce from this that for other subsets $B'_j$, the slices $\f_{D\cup\left\{ i_{d+2}\right\} }^{B'_j}$ are empty, which can be labelled by `0's. The procedure of deducing the places of the `0's from the places of the `1's reminds of solving `sudoku' puzzles.
\end{enumerate}

This section is organized as follows. We begin in Section~\ref{sec:sub:Chvatal:preliminaries} with several results that will be used in the proof of Theorem~\ref{thm:main-simplex}. The stability proposition, which is the main step of the proof and which also may be of independent interest, is presented in Section~\ref{sec:sub:Chvatal:stability}. The bootstrapping proposition is presented in Section~\ref{sec:sub:Chvatal:boot}, and in Section~\ref{sec:sub:Chvatal:Sudoku} we present the `Sudoku' step and combine all components into a proof of Theorem~\ref{thm:main-simplex}.

\subsection{Preliminaries}
\label{sec:sub:Chvatal:preliminaries}

We begin with citing two previous results that will be used in the proof of Theorem~\ref{thm:main-simplex}.

\medskip The first result, which is an immediate corollary of a theorem of Frankl and Tokushige~\cite{frankl2011r}, asserts that $s$-wise cross-intersecting families cannot be `too large'.
\begin{thm}[Frankl and Tokushige, 2011]
\label{Thm:Frankl-Tokushige} Let $s\in\mathbb{N}$, let $k\le\frac{s-1}{s}n$, and let $\f_{1},\ldots,\f_{s}\subseteq{{[n]}\choose{k}}$
be $s$-wise cross-intersecting families. Then $\min\left\{ \left|\f_{1}\right|,\ldots,\left|\f_{s}\right|\right\} \le{{n-1}\choose{k-1}}$,
with equality if and only if all the families $\f_{1},\ldots,\f_{s}$ are equal to the same $\left(1,1\right)$-star.
\end{thm}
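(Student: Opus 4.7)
The plan is to combine the classical shifting technique with a Katona-type cyclic averaging argument. First, I would apply simultaneous left-compressions $\sigma_{ab}$ (for $b<a$) to all $s$ families $\f_1,\ldots,\f_s$. A short case analysis shows that the $s$-wise cross-intersection property is preserved: if after shifting some $s$-tuple $(A_1',\ldots,A_s')$ had empty intersection, one can lift it to pre-images $(A_1,\ldots,A_s)$ in the original families whose intersection is also empty, contradicting the hypothesis. Since shifting preserves each $|\f_i|$, we may assume every $\f_i$ is left-shifted. The equality case will also be tracked through this reduction, since shifting preserves isomorphism to a star.

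Second, for shifted $s$-wise cross-intersecting families in the regime $k \le (s-1)n/s$, I would prove that at least one family is contained in the star $\s_{\{1\}}$ via a Katona cycle argument. Fix a uniformly random cyclic ordering $\pi$ of $[n]$ and let $X_i(\pi)$ count the cyclic arcs of length $k$ (with respect to $\pi$) that lie in $\f_i$; averaging gives $\mathbb{E}[X_i]=n\,\mu(\f_i)$. Two arcs of length $k$ on a cycle of length $n$ have nonempty intersection iff their starting positions lie within a common cyclic window of length $k$; $s$ arcs share a common point iff \emph{all} their starting positions lie in such a window. The arithmetic condition $sk \le (s-1)n$ is precisely what allows a greedy/pigeonhole selection: if every $X_i$ exceeds $k$, one can pick starting positions $p_i$, one from each family, that do not fit into any cyclic window of length $k$, producing $s$ arcs with empty common intersection and contradicting $s$-wise cross-intersection. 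Hence for every $\pi$ we have $\min_i X_i(\pi) \le k$; averaging yields $\min_i \mu(\f_i) \le k/n$, i.e.\ $\min_i |\f_i| \le \binom{n-1}{k-1}$.

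Third, for the equality/uniqueness clause, I would perform a stability analysis. If $|\f_j| = \binom{n-1}{k-1}$, then the cyclic inequality must be tight for a positive fraction of $\pi$'s, which combined with the shifted structure forces $\f_j$ to equal $\s_{\{1\}}$. To propagate the star to every other family, suppose some $\f_i$ ($i \ne j$) contained a set $B$ avoiding $1$. Using $\f_j = \s_{\{1\}}$ and the shifted structure of the remaining $s-2$ families, one could then select, for each of the other indices, a set either equal to $\{2,3,\ldots,k+1\}$ or contained in $\s_{\{1\}}$ but avoiding a prescribed element, and together with $B$ obtain an $s$-tuple with empty intersection, a contradiction. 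Hence every $\f_i \subseteq \s_{\{1\}}$, and the size equality forces $\f_i = \s_{\{1\}}$ for all $i$.

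The main obstacle will be the combinatorial selection step inside the cyclic argument for $s \ge 3$. The pairwise Katona argument is essentially trivial (any two arcs of length $k$ in a cycle of length $\le 2k$ intersect), but for the $s$-wise version one has to use the arithmetic $sk \le (s-1)n$ delicately to guarantee the greedy choice of $s$ arcs that do not fit in a common window, and the shifted structure must be leveraged to ensure these arcs can be chosen inside the respective $\f_i$'s. The uniqueness step will also require care, as one must rule out near-extremal mixed configurations where different families cluster around different singletons.
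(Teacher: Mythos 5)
This statement is not proved in the paper at all: it is quoted as a known external result, ``an immediate corollary of a theorem of Frankl and Tokushige''~\cite{frankl2011r}, so there is no in-paper argument to compare with. Judged on its own terms, your proposal has a genuine gap at its central step, independent of the selection lemma you flag as the main obstacle.

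The problem is the averaging. Suppose you do establish the pointwise claim that for every cyclic order $\pi$ one has $\min_i X_i(\pi)\le k$. Taking expectations gives $\e\left[\min_i X_i\right]\le k$, and since $\e\left[\min_i X_i\right]\le \min_i \e\left[X_i\right]$, this inequality points the wrong way: it yields no upper bound on $\min_i \e[X_i]=\min_i n\mu(\f_i)$, which is what you need. It is perfectly consistent with ``$\min_i X_i(\pi)\le k$ for all $\pi$'' that every $\e[X_i]$ exceeds $k$ (different indices achieving the minimum for different $\pi$). Katona's double counting works for a \emph{single} family precisely because the pointwise bound holds for that one family uniformly in $\pi$. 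The natural repairs also fail here: a pointwise bound $X_i(\pi)\le k$ for \emph{all} $i$ is false (take $\f_2=\cdots=\f_s=\emptyset$ and $\f_1=\binom{[n]}{k}$, which is vacuously $s$-wise cross intersecting), and for the same reason the summed form $\sum_i|\f_i|\le s\binom{n-1}{k-1}$ of the conclusion is false when $k<n/s$, so no ``prove a symmetric pointwise inequality and average'' scheme can recover a genuinely min-type conclusion. This is why the actual Frankl--Tokushige proof goes through the product bound $\prod_i|\f_i|\le\binom{n-1}{k-1}^s$ via shifting and the random-walk method rather than a cycle argument; the product form is what survives the asymmetry between the families.

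Two further, smaller issues: the selection lemma (that $X_i>k$ for all $i$ forces a transversal of arcs with empty common intersection) is asserted but not proved, and while it appears to hold at the extreme $k=\frac{s-1}{s}n$ by an inclusion--exclusion count of starting positions, the general case needs a real argument; and the uniqueness step is hand-waved, in particular the transfer of the equality characterization back through the compressions is not addressed. But even granting both, the proof does not close because of the averaging direction above.
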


The second result is a lemma of Friedgut~\cite[Claim~3.1]{friedgut2008measure} which allows us to translate bounds on the size of a family
$\f \subset {{[n]}\choose{k}}$ into bounds on the biased measure of its monotonization $\f^{\uparrow}$.
\begin{lem}[Friedgut, 2008]
\label{lem:Friedgut}There exists an absolute constant $m_{0}$ such that the following holds. Let $k\le n$ be natural numbers,
let $\delta>0$, and write $p=\frac{k}{n}+m_{0}\sqrt{\frac{\log\left(1/\delta\right)}{n}}$. For any family  $\f\subseteq{{[n]}\choose{k}}$, we have $\mu_{p}\left(\f^{\uparrow}\right)\ge\mu\left(\f\right)-\delta$.
\end{lem}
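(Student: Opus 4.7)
The plan is to prove the inequality via a natural two-stage coupling between the $p$-biased measure and the uniform measure on $\binom{\left[n\right]}{k}$. Sample $\mathbf{B}\subseteq\left[n\right]$ according to $\mu_{p}$ (that is, include each element independently with probability $p$). Conditional on $\left|\mathbf{B}\right|=m$, the set $\mathbf{B}$ is uniform on $\binom{\left[n\right]}{m}$, so when $m\ge k$ we may additionally draw $\mathbf{A}$ uniformly from $\binom{\mathbf{B}}{k}$. A short direct count, using $\binom{n-k}{m-k}/(\binom{n}{m}\binom{m}{k})=1/\binom{n}{k}$, shows that conditional on the event $\left\{ \left|\mathbf{B}\right|\ge k\right\} $ the resulting $\mathbf{A}$ is uniform on $\binom{\left[n\right]}{k}$. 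On the complementary event $\left\{ \left|\mathbf{B}\right|<k\right\} $ we may define $\mathbf{A}$ to be an independent uniform $k$-subset of $\left[n\right]$, so that unconditionally $\mathbf{A}\sim\binom{\left[n\right]}{k}$.

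The key observation is now that whenever $\mathbf{A}\in\f$ and $\left|\mathbf{B}\right|\ge k$ hold simultaneously, the coupling guarantees $\mathbf{A}\subseteq\mathbf{B}$ and therefore $\mathbf{B}\in\f^{\uparrow}$. By a union bound,
\[
\mu_{p}\left(\f^{\uparrow}\right)=\Pr\left[\mathbf{B}\in\f^{\uparrow}\right]\ge\Pr\left[\mathbf{A}\in\f\right]-\Pr\left[\left|\mathbf{B}\right|<k\right]=\mu\left(\f\right)-\Pr\left[\left|\mathbf{B}\right|<k\right].
\]
So the entire lemma reduces to showing that, for an appropriate absolute constant $m_{0}$, one has $\Pr\left[\left|\mathbf{B}\right|<k\right]\le\delta$.

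Since $\left|\mathbf{B}\right|$ is a sum of $n$ independent Bernoulli$\left(p\right)$ variables with mean $pn=k+m_{0}\sqrt{n\log\left(1/\delta\right)}$, Hoeffding's inequality for the left tail of a binomial yields
\[
\Pr\left[\left|\mathbf{B}\right|<k\right]=\Pr\!\left[\left|\mathbf{B}\right|-\mathbb{E}\!\left[\left|\mathbf{B}\right|\right]<-m_{0}\sqrt{n\log\left(1/\delta\right)}\right]\le\exp\!\left(-2m_{0}^{2}\log\left(1/\delta\right)\right)=\delta^{2m_{0}^{2}}.
\]
Choosing any absolute constant $m_{0}$ with $2m_{0}^{2}\ge1$ (e.g.\ $m_{0}=1$) gives the desired bound $\delta^{2m_{0}^{2}}\le\delta$, which combined with the previous display completes the proof.

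There is no real obstacle here: the whole argument is a one-line coupling followed by a standard concentration bound. The only points that require a moment of care are the verification that the coupled sample $\mathbf{A}$ really has the uniform law on $\binom{\left[n\right]}{k}$ (a two-line binomial-coefficient identity) and the bookkeeping of the constant $m_{0}$ coming out of Hoeffding's inequality.
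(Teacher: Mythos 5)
Your proof is correct: the coupling (draw $\mathbf{B}\sim\mu_{p}$, and on the event $\left|\mathbf{B}\right|\ge k$ take $\mathbf{A}$ uniform in $\binom{\mathbf{B}}{k}$, which is then uniform on $\binom{\left[n\right]}{k}$) together with the Hoeffding bound on $\Pr\left[\left|\mathbf{B}\right|<k\right]$ gives exactly $\mu_{p}\left(\f^{\uparrow}\right)\ge\mu\left(\f\right)-\delta$. The paper itself does not prove this lemma -- it quotes it from Friedgut's work -- and your argument is essentially the standard proof of that claim, so there is nothing further to compare.
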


In addition to these previous results, we will need three more propositions. The first proposition obtains an upper bound on the average of  $\frac{s-1}{s}$-biased measures of $s$-wise cross intersecting families, using a simple coupling argument.
\begin{prop}
\label{prop:average measure}Let $s,n\in\mathbb{N}$ be some integers, and suppose that $\f_{1},\ldots,\f_{s}\subseteq\pn$ are $s$-wise cross
intersecting families. Then
\[
\frac{1}{s}\sum_{i\in\left[s\right]}\mu_{\frac{s-1}{s}}\left(\f_{i}\right)\le\frac{s-1}{s}.
\]
\end{prop}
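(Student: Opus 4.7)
The plan is to construct a coupling of $s$ random sets $\mathbf{A}_1, \ldots, \mathbf{A}_s$, each distributed according to $\mu_{(s-1)/s}$, whose intersection is identically empty. Then the $s$-wise cross intersection hypothesis will preclude all of them lying simultaneously in $\f_1, \ldots, \f_s$, and a union bound will give the desired inequality.

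Concretely, first I would assign to each coordinate $i \in [n]$ an independent uniformly random ``missing index'' $\boldsymbol{\sigma}(i) \in [s]$, and define
\[
\mathbf{A}_j := \{i \in [n] : \boldsymbol{\sigma}(i) \ne j\}, \qquad j \in [s].
\]
For each fixed $j$ the events $\{i \in \mathbf{A}_j\}$ are independent across $i$ and each has probability $(s-1)/s$, so $\mathbf{A}_j$ is distributed according to $\mu_{(s-1)/s}$. In particular $\Pr[\mathbf{A}_j \in \f_j] = \mu_{(s-1)/s}(\f_j)$.

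On the other hand, for every coordinate $i$ the index $\boldsymbol{\sigma}(i) \in [s]$ excludes $i$ from $\mathbf{A}_{\boldsymbol{\sigma}(i)}$, so $\bigcap_{j=1}^{s}\mathbf{A}_j = \emptyset$ with probability $1$. Since the families are $s$-wise cross intersecting, this forces $\Pr[\mathbf{A}_1\in\f_1,\ldots,\mathbf{A}_s\in\f_s] = 0$. A union bound then yields
\[
0 \;=\; \Pr\!\left[\bigcap_{j=1}^{s}\{\mathbf{A}_j \in \f_j\}\right] \;\ge\; 1 - \sum_{j=1}^{s}\Pr[\mathbf{A}_j \notin \f_j] \;=\; \sum_{j=1}^{s}\mu_{(s-1)/s}(\f_j) - (s-1),
\]
and dividing by $s$ gives the claim. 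There is essentially no obstacle here; the only thing to be careful about is verifying that the marginal of each $\mathbf{A}_j$ under the coupling is exactly $\mu_{(s-1)/s}$, which is immediate from the independence of the $\boldsymbol{\sigma}(i)$'s across coordinates.
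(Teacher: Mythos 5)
Your proof is correct and is essentially identical to the paper's argument: the paper also couples the $s$ sets by assigning each coordinate an independent uniform label (realized there as a uniform variable in $(0,1]$ partitioned into $s$ intervals, which is the same as your uniform ``missing index'') so that the intersection is always empty, and then applies the same union bound. No issues.
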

\begin{proof}
For each $i\in\left[n\right]$, let the random variable $\mathbf{X_i}$ be uniformly distributed in $(0,1]$, and for each $j\in\left[s\right]$, let
$\mathbf{A_{j}}:=\{i \in [n]: \mathbf{X_i} \not \in \left(\frac{i-1}{s},\frac{i}{s}\right]\}$. It is clear that each of the sets $\mathbf{A_{1}},\ldots,\mathbf{A_{s}}$ is distributed like a random set drawn from $[n]$ according to the $\frac{s-1}{s}$-biased measure, and on the other hand, that $\mathbf{A_{1}}\cap\cdots\cap \mathbf{A_{s}}=\emptyset$. Since $\f_{1},\ldots,\f_{s}\subseteq\pn$ are $s$-wise cross
intersecting, we have
\[
1=\Pr\left[\mathbf{A_{j}}\notin\f_{j}\mbox{ for some }j\in\left[s\right]\right]\le\sum_{j=1}^{s}\Pr\left[\mathbf{A_{j}}\notin\f_{j}\right]=\sum_{j=1}^{s}\left(1-\mu_{\frac{s-1}{s}}\left(\f_{j}\right)\right).
\]
The assertion follows by rearranging.
\end{proof}

The second proposition compares the measures of different slices of a family.
\begin{lem}
\label{lem:Technical computation} For any constants $\zeta>0,s\in\mathbb{N}$, there exists a constant $n_{0}\left(\zeta,s\right)$ such that the
following holds.

Let $n,k \in \mathbb{N}$ be such that $n>n_0$ and $\zeta\le\frac{k}{n}\le1-\zeta$, let $S \subseteq [n]$ be a set of size $s$, and let $i \in S$.
For any $\epsilon>0$, if a family $\f\subseteq{{[n]}\choose{k}}$ satisfies $\mu\left(\f\right)\ge\frac{k}{n}-\epsilon$ and
$\mu\left(\f_{\left\{ i\right\} }^{\emptyset}\right)\le\epsilon$, then
%\begin{enumerate}
%\item $\mu\left(\f_{\left\{ i\right\} }^{\left\{ i\right\} }\right)\ge1-O_{\zeta,s}\left(\epsilon\right)$, and
\[
\mu\left(\f_{S}^{\left\{ i\right\} }\right)\ge1-O_{\zeta,s}\left(\epsilon\right).
\]
%\end{enumerate}
\end{lem}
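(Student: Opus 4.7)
The plan is to prove the lemma by a short two-step decomposition of the measure of $\f$, exploiting the fact that all slices have measure at most $1$.

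First, I will split $\f$ according to whether a random $k$-set $\mathbf{A}\sim\binom{[n]}{k}$ contains $i$. This gives
\[
\mu(\f) \;=\; \frac{k}{n}\,\mu(\f_{\{i\}}^{\{i\}}) \;+\; \frac{n-k}{n}\,\mu(\f_{\{i\}}^{\emptyset}).
\]
Plugging in the hypotheses $\mu(\f)\ge\frac{k}{n}-\epsilon$ and $\mu(\f_{\{i\}}^{\emptyset})\le\epsilon$, and rearranging, I obtain
\[
\mu(\f_{\{i\}}^{\{i\}}) \;\ge\; 1 \;-\; \frac{(2n-k)\,\epsilon}{k} \;=\; 1 \;-\; O_{\zeta}(\epsilon),
\]
where the bound $k/n\ge\zeta$ converts the prefactor into an $O_\zeta(1)$ constant.

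Next, set $\g := \f_{\{i\}}^{\{i\}} \subseteq \binom{[n]\setminus\{i\}}{k-1}$ and $T := S\setminus\{i\}$, noting $|T|=s-1$ and $\f_S^{\{i\}} = \g_T^{\emptyset}$. I will split $\mu(\g)$ according to whether $\mathbf{A}\cap T=\emptyset$:
\[
\mu(\g) \;=\; q\,\mu(\g_T^{\emptyset}) \;+\; (1-q)\,\mu\bigl(\g \setminus \{A : A\cap T=\emptyset\}\bigr)/(1-q),
\]
where $q=\Pr_{\mathbf{A}\sim\binom{[n]\setminus\{i\}}{k-1}}[\mathbf{A}\cap T=\emptyset] = \binom{n-s}{k-1}/\binom{n-1}{k-1}$. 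An elementary ratio-of-binomials computation, using $\zeta \le k/n \le 1-\zeta$ and $n\ge n_0(\zeta,s)$, shows that $q \ge c(\zeta,s)>0$. Upper-bounding the second slice by $1$ and rearranging yields
\[
\mu(\g_T^{\emptyset}) \;\ge\; 1 - \frac{1-\mu(\g)}{q} \;\ge\; 1 - \frac{O_{\zeta}(\epsilon)}{c(\zeta,s)} \;=\; 1 - O_{\zeta,s}(\epsilon),
\]
which is exactly the claim. There is no real obstacle here; the only thing to check carefully is that the constants coming out of the two decomposition steps depend only on $\zeta$ and $s$, which is immediate from the assumption $\zeta\le k/n\le 1-\zeta$ and the choice of $n_0$ large enough so that $\binom{n-s}{k-1}/\binom{n-1}{k-1}$ is within a factor $2$ of $\bigl(1-\tfrac{k-1}{n-1}\bigr)^{s-1}\ge \zeta^{s-1}$.
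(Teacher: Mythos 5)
Your proof is correct and follows essentially the same route as the paper's: first the singleton case via the decomposition $\mu(\f)=\frac{k}{n}\mu(\f_{\{i\}}^{\{i\}})+(1-\frac{k}{n})\mu(\f_{\{i\}}^{\emptyset})$, then conditioning on $\mathbf{A}\cap(S\setminus\{i\})=\emptyset$ and lower-bounding that probability by a constant depending on $\zeta,s$. The paper phrases the second step in terms of the complement families $\b$ and $\c$, but the computation is identical to yours.
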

\begin{proof}
First we prove the claim for $|S|=1$, namely, we show that $\mu\left(\f_{\left\{ i\right\} }^{\left\{ i\right\} }\right)\ge1-O_{\zeta,s}\left(\epsilon\right)$. We have
\begin{align*}
\frac{k}{n}-\epsilon&\le\mu\left(\f\right) =\Pr_{\mathbf{A}\sim{{[n]}\choose{k}}}\left[\mathbf{A}\in\f\right]
  =\Pr_{\mathbf{A}\sim{{[n]}\choose{k}}}\left[i\in \mathbf{A}\right]\mu\left(\f_{\left\{ i\right\} }^{\left\{ i\right\} }\right)+\Pr_{\mathbf{A}\sim{{[n]}\choose{k}}}\left[i\notin \mathbf{A}\right]\mu\left(\f_{\left\{ i\right\} }^{\emptyset}\right)\\
 & \le\frac{k}{n}\mu\left(\f_{\left\{ i\right\} }^{\left\{ i\right\} }\right)+\left(1-\frac{k}{n}\right)\epsilon,
\end{align*}
and hence, $\mu\left(\f_{\left\{ i\right\} }^{\left\{ i\right\} }\right) \geq 1-\epsilon \left(\frac{2n}{k}-1\right) \geq 1- O_{\zeta,s}\left(\epsilon\right)$.
Now, suppose that $S=\{i_1,\ldots,i_s\}$ and $\mu(\f_{\{i_1\}}^{\emptyset}) \leq \epsilon$. Let
\[
\b={{[n] \setminus S}\choose{k-1}} \setminus \f_S^{\{i_1\}} \qquad \mbox{ and } \qquad \c={{[n]\setminus \{i_1\}}\choose{k-1}} \setminus \f_{\{i_1\}}^{\{i_1\}}.
\]
%be the complement of the family $\f_{S}^{\left\{ i_{1}\right\} }$ in $\binom{\left[n\right]\backslash S}{k-1}$, and let $\c$ be the complement of $\f_{\left\{ i_{1}\right\} }^{\left\{ i_{1}\right\} }$ in $\binom{\left[n\right]\backslash\left\{ i_{1}\right\} }{k-1}$.
Provided that $n_{0}$ is sufficiently large, we have
\begin{align*}
O_{\zeta,s}\left(\epsilon\right)=\mu\left(\c\right) & =\Pr_{\mathbf{A}\sim {{\left[n\right]\backslash\left\{ i_{1}\right\} }\choose{k-1}}}\left[\mathbf{A}\in\c\right]=\sum_{T\subseteq S \setminus \{i_1\}}\Pr_{\mathbf{A}\sim {{\left[n\right]\backslash\left\{ i_{1}\right\} }\choose{k-1}}}\left[\mathbf{A}\cap (S \setminus \{i_1\})=T\right]\mu\left(\c_{S \setminus \{i_1\}}^{T}\right)\\
 & \ge\Pr_{\mathbf{A}\sim {{\left[n\right]\backslash\left\{ i_{1}\right\} }\choose{k-1}}} \left[\mathbf{A}\cap (S \setminus \{i_1\})=\emptyset \right]\mu\left(\c_{S \setminus \{i_1\}}^{\emptyset }\right)\\
 & =\Pr_{\mathbf{A}\sim {{\left[n\right]\backslash\left\{ i_{1}\right\} }\choose{k-1}}} \left[\mathbf{A}\cap (S \setminus \{i_1\})=\emptyset \right]\mu\left(\b\right)=\Omega_{s,\zeta}\left(\mu\left(\b\right)\right).
\end{align*}
Thus, $\mu\left(\f_S^{\left\{ i_1\right\} }\right) =1-\mu(\b) \geq 1- O_{\zeta,s}\left(\epsilon\right)$, as asserted.
\end{proof}

The third proposition provides a relation between biased measures of a monotone family with respect to different biases, and allows to deduce that if these measures satisfy a certain condition then the family can be approximated by a $(1,1)$-star.
\begin{prop}
\label{thm:stability for monotone families}For any constant $0<\zeta<1$, there exists a constant $M=M\left(\zeta\right)>1$ such that the following
holds. Let $p_{0},p_{1}\in\left(\zeta,1-\zeta\right)$ be such that $p_{0}<p_{1}-\zeta$, let $\epsilon>0$, and let $\f\subseteq\pn$ be a monotone
family.
\begin{enumerate}
\item If $\mpo\left(\f\right)\ge p_{0}\left(1-\epsilon\right),$ then $\mu_{p_{1}}\left(\f\right)\ge p_{1}\left(1-\epsilon^{M}\right)$.
\item If, in addition, we have $\mu_{p_{1}}\left(\f\right)\le p_{1}\left(1+\epsilon\right)$, then there exists a $\left(1,1\right)$-star $\s$ such that $\mpo\left(\f\backslash\s\right)\le O_{\zeta}(\epsilon^{M})$.
\end{enumerate}
\end{prop}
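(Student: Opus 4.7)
The plan is to prove both parts by combining the Russo--Margulis identity with a quantitative stability theorem for monotone families whose biased measure is close to that of a $\left(1,1\right)$-star.

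For Part~(1), my starting point would be the Russo--Margulis formula $\frac{d}{dp}\mu_p(\f)=I_p(\f)$, where $I_p(\f)$ is the total influence of $\f$ at bias $p$. For a $\left(1,1\right)$-star this derivative equals $1$ identically. The key technical ingredient is the following quantitative stability claim: for any $q\in(\zeta,1-\zeta)$ and any monotone family $\f$ with $\mu_q(\f)\le q(1-\delta)$, one has $I_q(\f)\ge 1-\delta^{c(\zeta)}$ for some $c(\zeta)>0$. I would establish this by a shifting/compression reduction — monotone shifts preserve $\mu_q$ and weakly increase total influence — followed by a direct analysis of left-compressed families, where "slightly below the star" translates into essentially all the mass sitting on sets containing coordinate~$1$, making that coordinate pivotal with probability $\approx 1$. (Alternatively one could invoke a stability form of Bourgain's sharp threshold theorem for biased product measures.)

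Granting this structural claim, Part~(1) follows by contradiction. If $\mu_{p_1}(\f)<p_1(1-\epsilon^M)$ were to hold, then together with $\mu_{p_0}(\f)\ge p_0(1-\epsilon)$, the monotonicity of $p\mapsto\mu_p(\f)$ would force a bound $\mu_q(\f)\le q(1-\delta_q)$ throughout a subinterval of $[p_0,p_1]$ with $\delta_q$ polynomially controlled by $\epsilon$. The influence estimate then gives $I_q(\f)\ge 1-O(\delta_q^{c(\zeta)})$, and integrating Russo--Margulis over this subinterval yields a measure increment incompatible with the assumed bound at $p_1$, provided $M=M(\zeta)$ is chosen large enough.

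For Part~(2), combining the lower bound from Part~(1) with the hypothesis $\mu_{p_1}(\f)\le p_1(1+\epsilon)$ pins $\mu_{p_1}(\f)$ into the narrow window $p_1\bigl[1-\epsilon^{M},\,1+\epsilon\bigr]$. A $p_1$-biased junta-approximation theorem — either the Dinur--Friedgut theorem or a biased variant of Theorem~\ref{thm:Junta-approx-theorem} — then identifies $\f$ as essentially a monotone $1$-junta of matching $p_1$-measure; the only candidate is a $\left(1,1\right)$-star $\s=\s_{\{i\}}$, since any other $1$-junta (empty, full, or the complement of a dictator) has $p_1$-measure far from $p_1$. Since $\f\setminus\s$ consists entirely of sets avoiding $i$, a direct coupling between $\mu_{p_0}$ and $\mu_{p_1}$ on the monotone family $(\f\setminus\s)^{\uparrow}$ (the $q$-biased measure of a family disjoint from $\{i\}$ scales smoothly in $q$) transfers the tight $p_1$-bound on $\f\setminus\s$ to the desired estimate $\mu_{p_0}(\f\setminus\s)\le O_{\zeta}(\epsilon^{M})$.

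The main obstacle is the quantitative influence lower bound underlying Part~(1): a qualitative version is immediate from standard sharp threshold theory, but extracting the polynomial-in-$\delta$ dependence that survives integration is delicate. I would handle this via compression, where biased measures and pivotalities admit transparent coordinatewise decompositions and the extremality of the $\left(1,1\right)$-star can be tracked explicitly.
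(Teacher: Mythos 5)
Your Part~(1) has a genuine gap, and it is structural rather than cosmetic. First, the influence claim you rely on --- that any monotone $\f$ with $\mu_q(\f)\le q(1-\delta)$ satisfies $I_q(\f)\ge 1-\delta^{c(\zeta)}$ --- is false as stated (take $\f=\emptyset$, or more relevantly $\f=\s_{\{1,2\}}$ at small $q$, where $I_q=2q$), and the compression reduction you sketch goes the wrong way: monotone compressions \emph{decrease} total influence, they do not increase it. But even granting an influence bound of the form $I_q(\f)\ge 1-o(1)$, integrating Russo--Margulis over $[p_0,p_1]$ can only show that the \emph{absolute} deficit $q-\mu_q(\f)$ does not grow: you get $p_1-\mu_{p_1}(\f)\le \bigl(p_0-\mu_{p_0}(\f)\bigr)+\int(1-I_q)\,dq$, which starting from a deficit of order $\epsilon$ at $p_0$ yields a deficit of order $\epsilon$ at $p_1$ --- never the claimed $p_1\epsilon^{M}$ with $M>1$. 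To shrink the deficit by a \emph{power} you would need $I_q$ strictly greater than $1$ by a quantity depending on the current deficit, which no influence lower bound of the form $1-\delta^c$ provides. The actual mechanism for the power gain is different: one first shows (via a stability theorem) that $\f$ essentially contains a star $\s_{\{i\}}$, writes the deficit as $1-\mu_q(\f_{\{i\}}^{\{i\}})=\mu_{1-q}\bigl((\f_{\{i\}}^{\{i\}})^{\dagger}\bigr)$, i.e.\ as a \emph{small} biased measure of a monotone family, and then applies the monotonicity of $p\mapsto\log_p\mu_p$ (Lemma~\ref{lem:2.7}), which raises a small measure to the power $\ln(1-p_1)/\ln(1-p_0)>1$ when the bias drops from $1-p_0$ to $1-p_1$.

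Your Part~(2) has the right skeleton --- locate the star at the higher bias, then transfer the bound on $\f\setminus\s=\f_{\{i\}}^{\emptyset}$ down to $p_0$ using monotonicity of the biased measure, which is exactly how the paper finishes --- but the identification step is not supplied by the tools you name. Dinur--Friedgut and Theorem~\ref{thm:Junta-approx-theorem} concern $k$-uniform intersecting (resp.\ $\h^+$-free) families, not arbitrary monotone $\f\subseteq\pn$ with $\mu_{p_1}(\f)\approx p_1$; and Friedgut's biased junta theorem does not by itself give the polynomial-in-$\epsilon$ error $O_{\zeta}(\epsilon^{M})$ required here. The paper instead imports ready-made quantitative stability results for monotone families from~\cite{ellis2016stability}: Proposition~\ref{thm:stability for monotone families1}, its dual form (Corollary~\ref{cor:Chvatal1}), and Lemma~\ref{lem:2.7}, combined with a case split on whether $\mu_{\bar p}(\f)\le\bar p$ at the midpoint $\bar p=(p_0+p_1)/2$. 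If you want a self-contained route, the missing ingredient you must prove (or cite) is precisely such a polynomial stability theorem for monotone families whose $p$-biased measure is close to $p$; Russo--Margulis alone will not substitute for it.
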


To prove Proposition~\ref{thm:stability for monotone families}, we need the two following results. The first is a special case of~\cite[Theorem~3.1]{ellis2016stability}.
\begin{prop}
\label{thm:stability for monotone families1} For any $\zeta>0$, there exist $M\left(\zeta\right),C\left(\zeta\right)>1$, such that the
following holds.

Let $p_{0},p'\in\left(\zeta,1-\zeta\right)$ be such that $p_{0}<p'-\frac{\zeta}{2}$. For any monotone family $\f\subseteq\pn$ that satisfies $\mu_{p'}\left(\f\right)\le p'$ and $\mu_{p_{0}}\left(\f\right)\ge p_{0}-\epsilon$, there exists a $(1,1)$-star $\s$ such that $\mu_{p_{0}}\left(\f\backslash\s\right)\le C\epsilon^{M}.$
\end{prop}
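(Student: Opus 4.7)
This proposition is cited as a special case of~\cite[Theorem 3.1]{ellis2016stability}, so the plan is not to reprove the Ellis--Keller--Lifshitz stability theorem from scratch but rather to outline how its machinery yields the stated conclusion. The approach has three natural stages: extract a bias at which the total influence of $\f$ is close to $1$; invoke a biased Kindler--Safra-type junta stability theorem to conclude that $\f$ is close to a $(1,1)$-star at that bias; and finally transfer the approximation back to the bias $p_0$ at which we want to measure the symmetric difference.

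For the first stage, I would reduce to the monotone case (which is all that is needed in the intended application to Proposition~\ref{thm:stability for monotone families}) and apply Russo's formula $\frac{d}{dp}\mu_p(\f) = I^{(p)}(\f)$. Integrating from $p_0$ to $p'$ and using the two hypotheses gives
\[
\int_{p_0}^{p'} I^{(q)}(\f)\,dq \;=\; \mu_{p'}(\f) - \mu_{p_0}(\f) \;\le\; (p'-p_0) + \epsilon.
\]
Since $p' - p_0 \ge \zeta/2$, the average of $I^{(q)}(\f)$ over $[p_0,p']$ is at most $1 + O_\zeta(\epsilon)$, so by Markov's inequality there is a bias $q^\ast \in [p_0, (p_0+p')/2] \subseteq [\zeta,1-\zeta]$ at which $I^{(q^\ast)}(\f) \le 1 + O_\zeta(\epsilon)$ and the measure $\mu_{q^\ast}(\f)$ is trapped between $p_0 - \epsilon$ and $q^\ast$.

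For the second stage, I would invoke the biased Kindler--Safra stability theorem (the technical core of~\cite{ellis2016stability}): a monotone family with $q$-biased total influence at most $1 + \delta$ and with $q$-biased measure bounded in some interval $[\zeta',1-\zeta']$ must be $O(\delta^{M})$-close in $q$-biased measure to a single dictator, i.e., a $(1,1)$-star $\s$, for some $M = M(\zeta')>1$. Applying this at $q^\ast$ with $\delta = O_\zeta(\epsilon)$ yields such a star $\s$ with $\mu_{q^\ast}(\f \triangle \s) \le O_\zeta(\epsilon^M)$.

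For the third stage, I would transfer this approximation from bias $q^\ast$ to bias $p_0$ using a routine comparison lemma for monotone families: since $\s$ is monotone and $p_0, q^\ast \in [\zeta,1-\zeta]$, one has $\mu_{p_0}(\f \setminus \s) \le C(\zeta) \cdot \mu_{q^\ast}(\f \setminus \s)^{c(\zeta)}$ for some $c(\zeta)>0$ (absorbed into a redefined $M$). This yields the claimed bound $\mu_{p_0}(\f \setminus \s) \le C \epsilon^{M}$. The main obstacle is the second stage: the biased Kindler--Safra theorem requires $p$-biased hypercontractivity (Beckner--Bonami) and a delicate analysis of Fourier coefficients of monotone Boolean functions with near-minimal influence, all of which are imported as black boxes from~\cite{ellis2016stability}. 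The rest of the argument is a short reduction using Russo's formula and monotonicity.
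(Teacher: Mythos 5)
You are right that the paper does not prove this statement internally: it is imported verbatim as a special case of \cite[Theorem~3.1]{ellis2016stability} and used as a black box, so ``cite rather than reprove'' is exactly the paper's own treatment. One caveat on the statement itself: as quoted it tacitly assumes that $\f$ is monotone (this is what the label and the application in Proposition~\ref{thm:stability for monotone families} supply). For a general family such as $\{A:\,\bigl||A|-p_0n\bigr|\le\sqrt{n}\}$ the hypotheses hold with $\epsilon$ arbitrarily small, while $\mu_{p_0}(\f\setminus\s)$ is bounded below for every $(1,1)$-star $\s$; so your phrase ``reduce to the monotone case'' is really ``add the monotonicity hypothesis'' --- there is no reduction.

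The genuine gap is in your second stage. The black box you invoke --- a monotone family with $q$-biased total influence at most $1+\delta$ and $q$-biased measure merely bounded away from $0$ and $1$ is $O(\delta^{M})$-close to a single $(1,1)$-star --- is false. At bias $q=0.35$ the union of five $(1,1)$-stars has total influence $5(1-q)^{4}<1$ and measure $\approx 0.88$, yet $\mu_q(\f\setminus\s)>1/2$ for every $(1,1)$-star $\s$. (Such a family violates $\mu_{p'}(\f)\le p'$ --- which is precisely the information your stage~1 fails to carry to the point $q^{\ast}$.) Influence $\le 1+\delta$ is a near-minimality condition only when the measure is close to $q^{\ast}$; the correct input to the isoperimetric stability theorem is that the boundary is within $O(\epsilon)$ of the minimum for the family's actual measure together with $\mu_{q^{\ast}}(\f)=q^{\ast}-O(\epsilon^{\theta})$. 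Your averaging step delivers neither: Russo's formula only gives some $q^{\ast}\in[p_0,p']$ with $I^{(q^{\ast})}(\f)\le 1+O_{\zeta}(\epsilon)$ (confining $q^{\ast}$ to the left half, as you do, would already need the pointwise lower bound $I^{(q)}\ge 1-o(1)$ on the right half, which itself presupposes $\mu_q(\f)\approx q$), and the measure at $q^{\ast}$ is pinned only to $[p_0-\epsilon,\,q^{\ast}]$ --- a window of constant width, and even its upper endpoint uses Lemma~\ref{lem:2.7}, which you never invoke. The missing ingredient is the argument that $\mu_q(\f)$ cannot drop a fixed constant below $q$ anywhere in $[p_0,p']$: if $\mu_{q}(\f)\le q^{x}$ for some $x>1$ bounded away from $1$, then Lemma~\ref{lem:2.7} pushes this down to $\mu_{p_0}(\f)\le p_0^{x}\le p_0-\Omega_{\zeta}(1)$, contradicting $\mu_{p_0}(\f)\ge p_0-\epsilon$ once $\epsilon$ is small. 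With that supplied you are essentially rerunning the proof of \cite[Theorem~3.1]{ellis2016stability}; without it, the chain ``low influence at some bias $\Rightarrow$ close to a dictatorship'' does not go through. (Your stage~3 transfer is fine in the downward direction, provided it is applied to the monotone family $\f_{\{i\}}^{\emptyset}$ as in the paper's proof of Proposition~\ref{thm:stability for monotone families}.)
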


The second result is also taken from~\cite{ellis2016stability}: Its first part is Lemma~2.7(1) in~\cite{ellis2016stability}, and its second part is Lemma~3.6 in~\cite{ellis2016stability}.
\begin{lem}
\label{lem:2.7}Let $0<p_{0}<p_{1}<1$, let $x,t>0$, and let $\f\subseteq\p\left(\left[n\right]\right)$ be a monotone family.
\begin{enumerate}
\item Suppose that $\mu_{p_{1}}\left(\f\right)\le p_{1}^{t}$. Then $\mu_{p_{0}}\left(\f\right)\le p_{0}^{t}.$

\item Suppose that $\mu_{p_{1}}\left(\f\right)\le p_{1}^{t}(1-(1-p_1)^x)$. Then $\mu_{p_{0}}\left(\f\right)\le p_{0}^{t}(1-(1-p_0)^x).$
\end{enumerate}
\end{lem}

Recall that the \emph{dual family} of $\f \subseteq \pn$ is defined by $\f^{\dagger}=\left\{ A\,:\,\left[n\right]\backslash A\notin\f\right\}$.
It is easy to see that for any $0<p<1$ we have $\mu_{1-p}\left(\f^{\dagger}\right)=1-\mu_{p}\left(\f\right),$ and that for any $(1,1)$-star $\s$ we have  $\pn \setminus \left(\f\backslash\s\right)^{\dagger}=\s\backslash\f^{\dagger}$.
Applying Proposition~\ref{thm:stability for monotone families1} to the dual of a family $\f$, with $(1-p_1,1-p)$ in place of $(p_0,p')$, we obtain:
\begin{cor}\label{cor:Chvatal1}
For any $\zeta>0$, there exist $M\left(\zeta\right),C\left(\zeta\right)>1$, such that the following holds.

Let $p_{1},p\in\left(\zeta,1-\zeta\right)$ be such that $p_{1}>p+\frac{\zeta}{2}$. For any monotone family $\f\subseteq\pn$ that satisfies $\mu_{p}\left(\f\right)\ge p$ and $\mu_{p_{1}}\left(\f\right)\le p_{1}+\epsilon$, there exists a $(1,1)$-star $\s$ such that $\mu_{p_{1}}\left(\s\backslash\f\right)\le C\epsilon^{M}.$ Consequently,
\[
\mu_{p_1}\left(\f\backslash\s\right)=\mu_{p_1}\left(\f\right)-\mu_{p_1}\left(\s\right)+\mu_{p_1}\left(\s\backslash\f\right)\le (p_{1}+\epsilon)-p_1+C\epsilon^{M} = \epsilon+C\epsilon^M.
\]
\end{cor}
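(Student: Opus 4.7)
The plan is to deduce the corollary from Proposition~\ref{thm:stability for monotone families1} by passing to the dual family. Recall that the dual of $\f \subseteq \pn$ is $\f^{\dagger} = \{A : [n]\setminus A \notin \f\}$, that $\f$ is monotone iff $\f^{\dagger}$ is monotone, that $\mu_q(\f^{\dagger}) = 1 - \mu_{1-q}(\f)$ for all $q \in (0,1)$, and that every $(1,1)$-star $\s = \{A : i \in A\}$ is self-dual, with the further identity
\[
\mu_q(\f^{\dagger} \setminus \s) \;=\; \mu_{1-q}(\s \setminus \f)
\]
(obtained by the change of variables $B = [n]\setminus A$, as sketched in the excerpt just before the corollary). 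These duality identities are the only inputs I will need beyond Proposition~\ref{thm:stability for monotone families1}.

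First I will set $p_0 := 1-p_1$ and $p' := 1-p$. Since $p,p_1 \in (\zeta,1-\zeta)$ and $p_1 > p + \zeta/2$, these satisfy $p_0,p' \in (\zeta,1-\zeta)$ and $p_0 < p' - \zeta/2$, which is the assumption required by Proposition~\ref{thm:stability for monotone families1}. Next I translate the two hypotheses on $\f$ to hypotheses on $\f^{\dagger}$: the assumption $\mu_p(\f) \ge p$ becomes $\mu_{p'}(\f^{\dagger}) = 1 - \mu_p(\f) \le 1-p = p'$, and the assumption $\mu_{p_1}(\f) \le p_1 + \epsilon$ becomes $\mu_{p_0}(\f^{\dagger}) = 1 - \mu_{p_1}(\f) \ge 1 - p_1 - \epsilon = p_0 - \epsilon$. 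Applying Proposition~\ref{thm:stability for monotone families1} to $\f^{\dagger}$ with these choices yields constants $M(\zeta),C(\zeta) > 1$ and a $(1,1)$-star $\s$ such that $\mu_{p_0}(\f^{\dagger} \setminus \s) \le C\epsilon^M$. By the dualization identity displayed above, this is exactly the bound $\mu_{p_1}(\s \setminus \f) \le C\epsilon^M$ (which is the intended content of the first displayed claim in the corollary).

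Finally, the ``Consequently'' chain of equalities is just the additive decomposition
\[
\mu_{p_1}(\f\setminus \s) \;=\; \mu_{p_1}(\f) - \mu_{p_1}(\f\cap\s) \;=\; \mu_{p_1}(\f) - \mu_{p_1}(\s) + \mu_{p_1}(\s \setminus \f),
\]
into which I substitute the upper bound $\mu_{p_1}(\f) \le p_1 + \epsilon$, the exact value $\mu_{p_1}(\s) = p_1$ (valid for a $(1,1)$-star), and the stability bound $\mu_{p_1}(\s \setminus \f) \le C\epsilon^M$ from the previous step, to obtain $\mu_{p_1}(\f \setminus \s) \le \epsilon + C\epsilon^M$. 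There is no real obstacle here: the whole content of the corollary is a bookkeeping translation, and the only point requiring a moment of care is verifying that the correspondence $\s \leftrightarrow \s$ between a $(1,1)$-star in the dual world and the same $(1,1)$-star in the original world is compatible with the symmetric-difference identity, which follows from the explicit self-duality of single-coordinate stars.
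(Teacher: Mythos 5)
Your proposal is correct and follows essentially the same route as the paper: dualize $\f$, apply Proposition~\ref{thm:stability for monotone families1} with $(1-p_1,1-p)$ in place of $(p_0,p')$, and translate back via $\mu_{p_0}\left(\f^{\dagger}\backslash\s\right)=\mu_{p_1}\left(\s\backslash\f\right)$, then conclude with the additive decomposition. You also correctly read the first displayed bound as controlling $\mu_{p_1}\left(\s\backslash\f\right)$, which is indeed what the dual application yields and what the ``Consequently'' chain uses.
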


Now we are ready to prove Proposition~\ref{thm:stability for monotone families}.

\begin{proof}[Proof of Proposition~\ref{thm:stability for monotone families}]
The first part of the proposition follows instantly from Lemma~\ref{lem:2.7}(2). To prove the second part, write $\bar{p}=\frac{p_{0}+p_{1}}{2}.$ If $\mu_{\bar{p}}\left(\f\right)\le \bar{p},$ then the assertion follows from Proposition~\ref{thm:stability for monotone families1} (applied with $\bar{p}$ in place of $p'$). If $\mu_{\bar{p}}\left(\f\right)\ge \bar{p}$, then by Corollary~\ref{cor:Chvatal1} (applied with $\bar{p}$ in place of $p$), there exists a $(1,1)$-star $\s=\s_{\{i\}}$ such that $\mu_{p_1}\left(\f\backslash\s\right) \leq \epsilon+C'\epsilon^{M'}$ for some constants $C'(\zeta),M'(\zeta)>1$. This, in turn, implies
\[
\mu_{p_{1}}\left(\f_{\left\{ i\right\} }^{\emptyset}\right)\le O_{\zeta}\left(\epsilon\right).
\]
Note that the family $\f_{\left\{ i\right\} }^{\emptyset}$ is monotone. Hence, we can apply to it Lemma~\ref{lem:2.7}(1), to obtain
\[
\mu_{p_{0}}\left(\f\backslash\s\right)=\left(O_{\zeta}\left(\mu_{p_1}\left(\f_{\left\{ i\right\} }^{\emptyset}\right)\right)\right)^M \le C\epsilon^{M},
\]
 for some constants $C(\zeta),M(\zeta)>1.$ This completes the proof.
\end{proof}

\subsection{Stability result for the upper bound on the sizes of $s$-wise cross intersecting families}
\label{sec:sub:Chvatal:stability}

In this subsection we establish Step~2 of the proof of Theorem~\ref{thm:main-simplex} which asserts that any $s$-wise cross intersecting families $\f_{1},\ldots,\f_{s}$ whose measures are not must smaller than $\frac{k}{n}$, are all essentially contained in the same $\left(1,1\right)$-star. This can be viewed as a stability result for Theorem~\ref{Thm:Frankl-Tokushige}.
\begin{prop}
\label{prop:stability for s-wise intersecting families} For any
$\zeta,\epsilon>0$ and $s\in\mathbb{N}$, there exist constants $M=M\left(\zeta,s\right)>1$ and $n_{0}=n_{0}\left(\zeta,s,\epsilon\right)\in\mathbb{N}$ such
that the following holds.

Let $n>n_{0}$, let $k\in\left(\zeta n,\left(\frac{s-1}{s}-\zeta\right)n\right)$, and let $\f_{1},\ldots,\f_{s}\subseteq{{[n]}\choose{k}}$ be $s$-wise cross intersecting families such that
\[
\min\left\{ \left|\f_{1}\right|,\ldots,\left|\f_{s}\right|\right\} \ge \left(1-\epsilon\right) {{n-1}\choose{k-1}}.
\]
Then there exists a $\left(1,1\right)$-star $\s$ such that $\mu\left(\f_{i}\backslash\s\right)\le O_{s,\zeta}\left(\epsilon^{M}\right)$
for any $i\in\left[s\right]$.
\end{prop}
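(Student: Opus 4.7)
My approach will be to transport the problem to the $p$-biased setting via monotonizations, prove a stability statement there, and translate back. First, the monotonizations $\f_i^\uparrow$ remain $s$-wise cross-intersecting: given $A_i \in \f_i^\uparrow$, choose $B_i \subseteq A_i$ with $B_i \in \f_i$, and note that $\bigcap_i A_i \supseteq \bigcap_i B_i \neq \emptyset$. Choosing $\delta = \epsilon^2$ and setting $p_0 := k/n + m_0\sqrt{2\log(1/\epsilon)/n}$, Friedgut's Lemma~\ref{lem:Friedgut} gives $\mu_{p_0}(\f_i^\uparrow) \geq \mu(\f_i) - \delta \geq (1-\epsilon)(k/n) - \epsilon^2$, which, using $k/n \geq \zeta$ and taking $n_0$ large enough in terms of $\epsilon$, rearranges to $\mu_{p_0}(\f_i^\uparrow) \geq p_0(1 - O_\zeta(\epsilon))$.

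Setting $p_1 := (s-1)/s$, Proposition~\ref{thm:stability for monotone families}(1) will upgrade this to $\mu_{p_1}(\f_i^\uparrow) \geq p_1(1 - O(\epsilon^M))$ for some $M = M(\zeta,s) > 1$. Combining with the matching upper bound $\sum_i \mu_{p_1}(\f_i^\uparrow) \leq s-1$ from Proposition~\ref{prop:average measure} gives $\mu_{p_1}(\f_i^\uparrow) = p_1(1 \pm O_s(\epsilon^M))$ for every $i$. Proposition~\ref{thm:stability for monotone families}(2), applied with parameter $O(\epsilon)$, then produces $(1,1)$-stars $\s_{\{j_i\}}$ with $\mu_{p_0}(\f_i^\uparrow \setminus \s_{\{j_i\}}) \leq O(\epsilon^M)$. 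A second application of Proposition~\ref{thm:stability for monotone families}(1) to the monotone family $\f_i^\uparrow \cap \s_{\{j_i\}}$ (whose $p_0$-biased measure is at least $p_0(1 - O(\epsilon))$) yields $\mu_{p_1}(\s_{\{j_i\}} \setminus \f_i^\uparrow) \leq O(\epsilon^M)$; together these give the symmetric-difference bound $\mu_{p_1}(\f_i^\uparrow \triangle \s_{\{j_i\}}) \leq O(\epsilon^M)$.

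The hard part will be to show that all the centres $j_i$ coincide, so that a single star works uniformly in $i$. For this I plan to reuse the coupling from the proof of Proposition~\ref{prop:average measure}: draw i.i.d.\ uniforms $X_1,\dots,X_n \in (0,1]$ and set $\mathbf{A}_l := \{i : X_i \notin ((l-1)/s, l/s]\}$, so each $\mathbf{A}_l$ is $p_1$-biased and $\bigcap_l \mathbf{A}_l = \emptyset$. Cross-intersection forces $\Pr[\forall l : \mathbf{A}_l \in \f_l^\uparrow] = 0$, while a union bound combined with the symmetric-difference estimates gives $|\Pr[\forall l : \mathbf{A}_l \in \f_l^\uparrow] - \Pr[\forall l : j_l \in \mathbf{A}_l]| \leq O_s(\epsilon^M)$. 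A direct calculation then shows $\Pr[\forall l : j_l \in \mathbf{A}_l] = \prod_j (1 - n_j/s)$, where $n_j := |\{l : j_l = j\}|$; this product vanishes precisely when some $n_j = s$ (all centres equal), and otherwise is bounded below by $(s-1)/s^2$. For $\epsilon$ sufficiently small in terms of $s$ this forces a common centre $j$, and I set $\s := \s_{\{j\}}$.

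It remains to translate $\mu_{p_0}(\f_i^\uparrow \setminus \s) \leq O(\epsilon^M)$ back to the uniform measure. The naive route through the $k$-uniform family $\f_i \setminus \s$ loses a factor of order $\sqrt{n}$ (since $\binom{n}{k}p_0^k(1-p_0)^{n-k} = \Theta(1/\sqrt{n})$), so I will instead pass through the monotone family $(\f_i \setminus \s)^\uparrow$. A short computation (noting that $(\f_i \setminus \s)^\uparrow$ differs from $\f_i^\uparrow \setminus \s$ only by sets containing $j$) yields $\mu_{p_0}((\f_i \setminus \s)^\uparrow) = \mu_{p_0}(\f_i^\uparrow \setminus \s)/(1-p_0) = O(\epsilon^M)$. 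Since $(\f_i \setminus \s)^\uparrow$ is monotone, $l \mapsto \mu(((\f_i \setminus \s)^\uparrow)^{(l)})$ is non-decreasing, so $\mu(\f_i \setminus \s) \leq \mu_{p_0}((\f_i \setminus \s)^\uparrow)/\Pr[\mathrm{Bin}(n,p_0) \geq k]$; since $p_0 > k/n$, the denominator is bounded below by $1/2$ once $n_0$ is taken large enough in terms of $\epsilon$, which gives $\mu(\f_i \setminus \s) \leq O_{s,\zeta}(\epsilon^M)$, as required.
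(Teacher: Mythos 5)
Your proof is correct, and it deviates from the paper's argument in two of the six steps in a genuinely interesting way.

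Steps 1--4 coincide with the paper: pass to $p$-biased measures of the monotonizations via Friedgut's Lemma~\ref{lem:Friedgut}, upgrade the lower bound from $p_0$ to $p_1=\frac{s-1}{s}$ via Proposition~\ref{thm:stability for monotone families}(1), cap from above via the coupling of Proposition~\ref{prop:average measure}, and extract a star $\s_{\{j_i\}}$ for each $i$ via Proposition~\ref{thm:stability for monotone families}(2). (Your extra observation that feeding the upper bound for $\f_j^\uparrow$, $j\ne i$, back into Proposition~\ref{prop:average measure} tightens $\mu_{p_1}(\f_i^\uparrow)$ to $p_1(1+O_s(\epsilon^M))$ is correct, which is what makes your symmetric-difference bound at $p_1$ genuinely $O(\epsilon^M)$ and not merely $O(\epsilon)$; either exponent would in fact suffice for what follows.)

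The uniqueness of the centre (Step~5) is where you part ways with the paper. The paper returns to the uniform world, slices the $\f_i$'s on the set $S=\{j_1,\ldots,j_s\}$, and invokes Theorem~\ref{Thm:Frankl-Tokushige} together with Lemma~\ref{lem:Technical computation} to derive a contradiction when $|S|>1$. You instead stay in the $p_1$-biased world and re-use the partition coupling $\mathbf{A}_l=\{i: X_i\notin ((l-1)/s,l/s]\}$: the cross-intersection forces $\Pr[\forall l:\mathbf{A}_l\in\f_l^\uparrow]=0$, the symmetric-difference bounds transfer this to $\Pr[\forall l:j_l\in\mathbf{A}_l]\le O_s(\epsilon^M)$, and the exact evaluation $\prod_j(1-n_j/s)$ with lower bound $(s-1)/s^2$ when the $j_l$'s are not all equal gives the contradiction. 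This is more self-contained (it avoids Frankl--Tokushige entirely and does not need the slice computation of Lemma~\ref{lem:Technical computation}) and arguably cleaner, at the price of needing the exact product formula. Your translation back to the uniform measure (Step~6) is also a different route: instead of re-applying Friedgut to $\g=(\f_i)_{\{j_i\}}^\emptyset$, you use the fact that $(\f_i\setminus\s)^\uparrow$ ignores coordinate $j$, giving the exact identity $\mu_{p_0}((\f_i\setminus\s)^\uparrow)=\mu_{p_0}(\f_i^\uparrow\setminus\s)/(1-p_0)$, and then use monotonicity of level-wise densities plus the fact that the binomial median at bias $p_0>k/n$ lies above $k$. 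This sidesteps a slight parameter mismatch that the paper's route silently absorbs (Friedgut for $\g$ lives in a universe of size $n-1$, which perturbs the relevant threshold $p$). Both approaches are valid; yours is tighter in its bookkeeping.
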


\iffalse
The proof of Proposition~\ref{prop:stability for s-wise intersecting families} consists of five steps. Let $\f_1,\ldots,\f_s$ be families that satisfy the hypothesis of the proposition, and let $i \in [s]$.
\begin{enumerate}
\item We take $p$ to be slightly larger than $\frac{k}{n}$ and use Lemma~\ref{lem:Friedgut} to show that $\mu_p \left(\f_{i}^{\uparrow} \right) \geq p-\epsilon$.
\item We use Proposition~\ref{thm:stability for monotone families}(1) to deduce from Step~1 that $\mu_{\frac{s-1}{s}}\left(\f_{i}^{\uparrow}\right)\ge\frac{s-1}{s}-\epsilon$.
\item We use Proposition~\ref{prop:average measure} to deduce that $\mu_{\frac{s-1}{s}}\left(\f_{i}^{\uparrow}\right) \leq (1+O_s(\epsilon))\frac{s-1}{s}$. %which means that $\mu_{\frac{s-1}{s}}\left(\f_{i}^{\uparrow}\right)$ is very close to $\frac{s-1}{s}$.
\item We use Proposition~\ref{thm:stability for monotone families}(2) to deduce that $\f_{i}$ is essentially contained in a $\left(1,1\right)$-star, denoted by $\s_i$.
\item We use Theorem~\ref{Thm:Frankl-Tokushige} to show that the stars $\s_1,\ldots,\s_s$ obtained in Step~4 are actually equal.
\end{enumerate}
\fi

\begin{proof}
Throughout the proof we assume that $\epsilon$ is smaller than a sufficiently small constant depending only on $s,\zeta$, for otherwise the proposition holds trivially. We shall also assume that $n_{0}$ is sufficiently large.

Let $\f_1,\ldots,\f_s$ be families that satisfy the hypothesis of the proposition, and let $i \in [s]$.
Let $m_{0}$ be as in Lemma~\ref{lem:Friedgut}, and write $p_{0}=\frac{k}{n}+\sqrt{\frac{m_{0}\log n}{n}}$ and $p_{1}=\frac{s-1}{s}$.
By Lemma \ref{lem:Friedgut} (applied with $\delta=1/n$), we have
\[
\mpo\left(\f_{i}^{\uparrow}\right)\ge\frac{k}{n}\left(1-\epsilon\right)-\frac{1}{n}\ge p_{0}\left(1-2\epsilon\right),\label{eq:Step 1}
\]
provided that $n_{0}$ is sufficiently large.

Using again the assumption that $n_{0}$ is sufficiently large, we have $p_{0}<p_{1}-\frac{\zeta}{2}$. Applying Proposition~\ref{thm:stability for monotone families} with $\zeta/2$ in place of $\zeta$ and choosing $M$ appropriately, we obtain
\[
\mu_{p_{1}}\left(\f_{i}^{\uparrow}\right)\ge p_{1}\left(1-\left(2\epsilon\right)^{M}\right)=\frac{s-1}{s}\left(1-\left(2\epsilon\right)^{M}\right)>\frac{s-1}{s}-\epsilon=p_{1}-\epsilon,\label{eq:low}
\]
provided that $\epsilon$ is small enough. By Proposition~\ref{prop:average measure}, this implies
\[
\frac{s-1}{s}\ge\frac{1}{s}\sum_{j=1}^{s}\mu_{p_{1}}\left(\f_{j}^{\uparrow}\right)\ge\frac{1}{s}\mu_{p_{1}}\left(\f_{i}^{\uparrow}\right)+
\frac{s-1}{s}\left(p_{1}-\epsilon\right).\label{eq:up}
\]
Rearranging, we obtain
\[
\mu_{p_{1}}\left(\f_{i}^{\uparrow}\right)\le\frac{s-1}{s}+O_{s}\left(\epsilon\right).
\]
By Proposition \ref{thm:stability for monotone families}(2), this implies that
there exist a $\left(1,1\right)$-star $\s_{\{j_i\}}:=\left\{ A\in\pn:\, j_{i}\in A\right\} $,
and $M=M\left(\zeta,s\right)>1$ such that
\[
\mpo\left(\f_{i}^{\uparrow}\backslash\s_{\{j_i\}}\right)=O_{\zeta,s}\left(\epsilon^{M}\right).\label{eq:mup small}
\]
Write $\g:=\left(\f_{i}\right)_{\left\{ j_{i}\right\} }^{\emptyset}$. By Lemma \ref{lem:Friedgut}, we have
\begin{equation}
\mu\left(\f_{i}\backslash\s_{\{j_i\}}\right)=O\left(\mu\left(\g\right)\right)\le O\left(\mu_{p_{0}}\left(\g^{\uparrow}\right)\right)+O \left(\frac{1}{n}\right)=O\left(\mu_{p_{0}}
\left(\f_{i}^{\uparrow}\backslash\s_{\{j_i\}}\right)\right)+O\left(\frac{1}{n}\right)=O_{\zeta,s}\left(\epsilon^{M}\right),\label{eq:fi esentially contained in a dictatorship}
\end{equation}
provided that $n_{0}$ is large enough.

It now only remains to show that the $\left(1,1\right)$-stars $\{\s_{\{j_i\}}\}_{i=1,\ldots,s}$ are all equal. Let $S=\left\{ j_{1},\ldots,j_{s}\right\} $, and suppose on the contrary that $\left|S\right|>1$. Since $\f_1,\ldots,\f_s$ are $s$-wise cross-intersecting, this implies that the families
$\left(\f_{i}\right)_{S}^{\left\{ j_{i}\right\} }$ are $s$-wise cross intersecting as well. Hence, by Theorem~\ref{Thm:Frankl-Tokushige}, there exists $\ell$ such that
\begin{equation}\label{Eq:Chvatal-Aux1}
\mu \left(\left(\f_{\ell}\right)_{S}^{\left\{ j_{\ell}\right\} }\right) \leq \frac{k-1}{n-|S|}.
\end{equation}
However, since $\mu(\f_{\ell}) \geq \frac{k}{n} -O(\epsilon)$ by assumption and $\mu \left(\left(\f_{\ell}\right)_{j_{\ell}}^{\emptyset }\right) \leq
O_{\zeta,s}\left(\epsilon^{M}\right)$ by~\eqref{eq:fi esentially contained in a dictatorship}, Lemma~\ref{lem:Technical computation} implies that
$\mu\left(\f_{S}^{\left\{ j_{\ell}\right\} }\right)\ge1-O_{\zeta,s}\left(\epsilon\right)$, which contradicts~\eqref{Eq:Chvatal-Aux1} if $n_0$ is sufficiently large. This completes the proof.
\end{proof}

\subsection{The bootstrapping proposition}
\label{sec:sub:Chvatal:boot}

In this subsection we establish Step~3 of the proof of Theorem~\ref{thm:main-simplex}, namely, we show that if some families $\f_{1},\ldots,\f_{d+1}\subseteq{{[n]}\choose{k}}$ are cross free of a $d$-simplex and $\mu\left(\f_{i}\right)$ is close to 1 for any
$i\in\left[d\right]$, then the family $\f_{d+1}$ must be empty.
%Note that several bootstrapping propositions were already proved in this paper (e.g., Propositions~\ref{prop:unbalanced cross free of an hypergraph}--\ref{prop: unbalnced cross free of hypergraph k small}). In all these propositions the assertion was that the measure of the `last' family is very small. Here, this is not sufficient for our purposes and we have to show that the family $\f_{d+1}$ must be empty.
\begin{prop}
\label{Prop:Bootstrapping proposition}For any $d \in\mathbb{N}$ and $0 < \zeta <1$,
there exist constants $\epsilon_0,n_0$ that depend only on $d,\zeta$, such that the following holds.

Let $n>n_{0}$ and let $k_{1},k_{2} \in \left(\zeta n,\frac{d-1}{d}n+d+1\right)$.
Let $\f_{1}\subseteq {{[n]}\choose{k_1}},\ldots,\f_{d}\subseteq{{[n]}\choose{k_1}},\f_{d+1}\subseteq {{\left[n\right]}\choose{k_{2}}}$
be families that are cross free of a $d$-simplex, and suppose that $\mu\left(\f_{i}\right)\ge 1-\epsilon_{0}$
for all $i\in\left[d\right]$. Then $\f_{d+1}= \emptyset$.
\end{prop}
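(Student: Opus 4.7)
The plan is to argue by contradiction: assume some $A_{d+1} \in \f_{d+1}$, and construct $A_j \in \f_j$ for $j \in [d]$ such that $(A_1, \ldots, A_{d+1})$ is a $d$-simplex, contradicting the cross-freeness of $\f_1, \ldots, \f_{d+1}$.

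The first move is a slicing reduction. Choose $b_1, \ldots, b_d \in A_{d+1}$ and $b_{d+1} \in [n] \setminus A_{d+1}$ (the latter exists since $k_2 < n$ in our range), set $B = \{b_1, \ldots, b_{d+1}\}$, and for $i \in [d+1]$ let $B_i = B \setminus \{b_i\}$. The $B_i$'s form a $d$-simplex inside $B$, so by the cross-setting version of Observation~\ref{obs:simplex}(1), the slices $\g_i = (\f_i)_B^{B_i}$ over $[n] \setminus B$ are $(d+1)$-wise cross intersecting: any tuple $(E_1 \in \g_1, \ldots, E_{d+1} \in \g_{d+1})$ with empty common intersection would combine with the $B_i$'s into a $d$-simplex cross contained in $\f_1, \ldots, \f_{d+1}$. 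Moreover $A_{d+1} \setminus B_{d+1} \in \g_{d+1}$, so $\g_{d+1} \neq \emptyset$. Proposition~\ref{prop:Fairness} (applied with the $b_j$'s randomized over $A_{d+1}$, which has measure $\ge \zeta$ in $[n]$) together with Lemma~\ref{lem:Technical computation} guarantees that the $b_j$'s can be chosen so that $\mu(\g_i) \ge 1 - O_{d,\zeta}(\epsilon_0)$ for each $i \in [d]$.

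The structural step exploits the non-emptiness of $\g_{d+1}$: picking any $A^* \in \g_{d+1}$ and appending it to any tuple $(E_1, \ldots, E_d) \in \prod_{i=1}^d \g_i$, the $(d+1)$-wise cross intersection forces $\cap_{i=1}^d E_i \cap A^* \ne \emptyset$, hence $\cap E_i \neq \emptyset$; so $\g_1, \ldots, \g_d$ are already $d$-wise cross intersecting among themselves. Proposition~\ref{prop:stability for s-wise intersecting families} (with $s = d$, parameters $\epsilon = O(\epsilon_0)$, applied in the slice universe $[n]\setminus B$ where $k/n$ is bounded away from $(d-1)/d$) then yields a common $(1,1)$-star $\s_{\{x\}}$ with $x \in [n]\setminus B$ such that $\mu(\g_i \setminus \s_{\{x\}}) \le O_{d,\zeta}(\epsilon_0^M)$ for each $i \in [d]$, for some exponent $M > 1$. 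Because the sets in $\g_i \setminus \s_{\{x\}}$ are few, one can still exhibit many tuples $(E_1, \ldots, E_d) \in \prod \g_i \cap \s_{\{x\}}^{\,d}$ with $\cap_i E_i = \{x\}$ (by taking the "tails" $E_i \setminus \{x\}$ pairwise nearly disjoint, which is possible since the slices have measure $\approx 1$); then the $(d+1)$-wise cross intersection with any $A \in \g_{d+1}$ forces $x \in A$. Hence $\g_{d+1} \subseteq \s_{\{x\}}$ and in particular $x \in A_{d+1}\setminus B_{d+1} \subseteq A_{d+1}$.

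The closing step is a descent by varying the slice. Keeping $b_1, \ldots, b_d$ fixed and letting $b_{d+1}$ range over $[n]\setminus A_{d+1}$ produces, for each choice, a dictator $x(b_{d+1}) \in A_{d+1}$; by pigeonhole, two distinct choices $b_{d+1}, b_{d+1}'$ yield the same $x$. Applying Observation~\ref{obs:simplex}(2) to a configuration containing $x$ and these two elements with empty overall intersection lets us pass to a strictly smaller cross-simplex-free instance (smaller $n$, $k_1$, $k_2$) in which the hypothesis on the first $d$ measures is preserved up to the fairness loss. A short induction on $k_1 + k_2$, with trivial base case $k = 1$ (where $\f_{d+1}$ must be empty by a direct counting argument), then closes the argument. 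The main obstacle is quantitative bookkeeping: each iteration of the stability step loses a factor of $\epsilon_0 \mapsto O(\epsilon_0^M)$, and each re-slicing via Observation~\ref{obs:simplex}(2) must preserve the near-extremal measure of the remaining $\f_i$'s. Because $M > 1$, the errors contract across a bounded number of iterations; choosing $\epsilon_0 = \epsilon_0(d,\zeta)$ small enough makes the descent terminate with a strict contradiction, giving $\f_{d+1} = \emptyset$.
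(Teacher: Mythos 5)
Your opening moves match the paper's: slice along a small simplex configuration so that Observation~\ref{obs:simplex}(1) makes the slices $(d+1)$-wise cross intersecting, use the non-emptiness of the $(d+1)$-st slice to conclude that the first $d$ slices are $d$-wise cross intersecting among themselves, and note that each of these has measure $1-O_{d,\zeta}(\epsilon_0)$ (this last point needs only the direct averaging $\epsilon_0 \ge \Pr[\mathbf{A}\cap B = B_i]\,(1-\mu(\g_i))$, not Proposition~\ref{prop:Fairness}, whose conclusion is too weak here). But from that point you take a long detour through Proposition~\ref{prop:stability for s-wise intersecting families} and a descent, and both halves of that detour fail. First, the stability proposition requires the sliced uniformity to satisfy $k' \le \left(\frac{d-1}{d}-\zeta'\right)n'$, and your claim that slicing off the $(d+1)$-set $B$ leaves $k/n$ "bounded away from $(d-1)/d$" is false: the hypothesis allows $k_1$ up to $\frac{d-1}{d}n+d+1$, and then $\frac{k_1-d}{n-d-1}$ is not below $\frac{d-1}{d}$ at all (it can exceed it). This is exactly why the paper slices with a set $S$ of size $l=O_d(1)$ strictly larger than $d+1$, chosen as the smallest $l$ with $\frac{k_1-l+1}{n-l}\le\frac{d-1}{d}$. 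Second, the closing "descent by varying the slice" is not a proof: it is unclear what the strictly smaller instance is, the induction on $k_1+k_2$ has no meaningful base case in the regime $k\ge\zeta n$, reducing $n$ by a constant per step would require $\Omega(n)$ iterations during which the measure hypotheses degrade, and containment of all families in a common star is not by itself a contradiction.

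The irony is that once you have $d$ families that are $d$-wise cross intersecting, each of measure $1-O_{d,\zeta}(\epsilon_0)$, you are one line from the end and need no stability and no descent: the exact Frankl--Tokushige bound (Theorem~\ref{Thm:Frankl-Tokushige}) forces the minimum of their measures to be at most $\frac{k_1-l+1}{n-l}\le\frac{d-1}{d}<1-O_{d,\zeta}(\epsilon_0)$, an immediate contradiction for $\epsilon_0$ small. That is the paper's argument. So the proposal has the right reduction but a broken application of stability in a range where its hypotheses fail, plus an invalid final step, where a direct appeal to the extremal theorem (after enlarging the slicing set to fix the uniformity ratio) finishes the proof.
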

\begin{proof}
Let $\f_{1},\ldots,\f_{d+1}$ be as in the hypothesis of the proposition. Suppose on the contrary that $\f_{d+1} \neq \emptyset$ and let $E \in \f_{d+1}$. Let
$S \subseteq [n]$ be a set of size $l>d$ (to be determined below) that satisfies $|S\cap E|=d$, and write $S\cap E=\left\{ i_{1},\ldots,i_{d}\right\} $.

As the sets $\{S \setminus \{i_1\}, \ldots, S \setminus \{i_d\}, S \cap E\}$ constitute a $d$-simplex, it follows from Observation~\ref{obs:simplex}(1)
that the families
\[
\left(\f_{1}\right)_{S}^{S\backslash\left\{ i_{1}\right\} },\left(\f_{2}\right)_{S}^{S\backslash\left\{ i_{2}\right\} },\ldots,\left(\f_{d}\right)_{S}^{S\backslash\left\{ i_{d}\right\} },\left(\f_{d+1}\right)_{S}^{S\cap E}
\]
are $\left(d+1\right)$-wise cross intersecting. Since the family $\left(\f_{d+1}\right)_{S}^{S\cap E}$ is non-empty, this implies that the families
\[
\left(\f_{1}\right)_{S}^{S\backslash\left\{ i_{1}\right\} },\left(\f_{2}\right)_{S}^{S\backslash\left\{ i_{2}\right\} },\ldots,\left(\f_{d}\right)_{S}^{S\backslash\left\{ i_{d}\right\} }\subseteq {{\left[n\right]\backslash S}\choose{k_{1}-l+1}}
\]
are $d$-wise cross intersecting as well.

We now would like to apply Theorem~\ref{Thm:Frankl-Tokushige} to the families $\left(\f_{1}\right)_{S}^{S\backslash\left\{ i_{1}\right\} },\ldots, \left(\f_{d}\right)_{S}^{S\backslash\left\{ i_{d}\right\}}$. To do so, we should have $\frac{k_{1}-l+1}{n-l}\le\frac{d-1}{d}$, and hence we choose $l$ to be  the smallest integer for which this condition holds. (Since $k_1 \leq \frac{d-1}{d}n+d+1$, we clearly have $l=O_{d}\left(1\right)$). By Theorem~\ref{Thm:Frankl-Tokushige}, there exists $j \in [d]$ such that
\begin{equation}\label{Eq:Chvatal-Aux2}
\mu\left(\left(\f_{j}\right)_{S}^{S \setminus \{i_{j}\}}\right)\le\frac{k_{1}-l+1}{n-l}.
\end{equation}
On the other hand, we have
\begin{align*}
\epsilon_{0}&\ge1-\mu\left(\f_{j}\right)\ge \Pr_{\mathbf{A}\sim{{[n]}\choose{k}}}\left[\left(\mathbf{A}\cap S= S \setminus \{i_{j}\}\right) \wedge (\mathbf{A} \not \in \f_j) \right] \\
&= \Pr_{\mathbf{A}\sim{{[n]}\choose{k}}}\left[\mathbf{A}\cap S= S \setminus \{i_{j}\}\right] \Pr_{\mathbf{A}\sim{{[n]}\choose{k}}} \left[\mathbf{A} \not \in \f_j | \mathbf{A}\cap S= S \setminus \{i_{j}\} \right] \\
&=\Pr_{\mathbf{A}\sim{{[n]}\choose{k}}}\left[\mathbf{A}\cap S=S \setminus \left\{ i_{j}\right\} \right]\left(1-\mu\left(\left(\f_{j}\right)_{S}^{S \setminus\left\{ i_{j}\right\} }\right)\right)=\Omega_{s,\zeta}\left(1-\mu\left(\left(\f_{j}\right)_{S}^{S \setminus \left\{ i_{j}\right\} }\right)\right),
\end{align*}
and thus, $\mu\left(\left(\f_{j}\right)_{S}^{S \setminus \{i_{j}\}}\right)\ge 1-O_{s,\zeta}(\epsilon_0)$, which contradicts~\eqref{Eq:Chvatal-Aux2} provided $\epsilon_{0}$ is sufficiently small. This completes the proof.
\end{proof}

\subsection{Proof of the Erd\H{o}s-Chv\'{a}tal simplex conjecture for all $n>n_0(d)$}
\label{sec:sub:Chvatal:Sudoku}

In this subsection we combine the components presented in the previous subsections to prove Theorem~\ref{thm:main-simplex}, which completes the proof of the Erd\H{o}s-Chv\'{a}tal conjecture for all $n>n_0(d)$.

Let $d,\zeta$ be fixed constants, $n,k$ be natural numbers satisfying $\zeta n\le k\le\frac{d-1}{d}n$, and $\f\subseteq{{[n]}\choose{k}}$ be a family that is free of a $d$-simplex and satisfies $|\f| \geq (1-\epsilon_0){{n-1}\choose{k-1}}$. We would like to show that $\f$ is contained in a $(1,1)$-star, provided that $n$ is sufficiently large and $\epsilon_0$ is sufficiently small.

\begin{proof}[Proof of Theorem~\ref{thm:main-simplex}]
Set $\epsilon_0$ to be a small constant to be determined below. By Proposition~\ref{prop:Fairness}, there exists a set $S$ of size $d+1$ that is $\epsilon_{0}$-fair for $\f$. Assume without loss of generality that $S=\left[d+1\right]$. For each $i \in [d+1]$, we have
\begin{equation}\label{Eq:Chvatal-Aux3}
\mu\left(\f_{\left[d+1\right]}^{\left[d+1\right]\backslash\left\{ i\right\} }\right)\ge\mu\left(\f\right)-\epsilon_{0}\ge\frac{k}{n}-2\epsilon_{0}.
\end{equation}
The families $\{\f_{\left[d+1\right]}^{\left[d+1\right]\backslash\left\{ i\right\} }\}$
are $\left(d+1\right)$-wise cross intersecting. By Proposition~\ref{prop:stability for s-wise intersecting families} (applied with $s=d+1$), this implies that
there exists $M>1$ and a $\left(1,1\right)$-star $\s\subseteq\p\left(\left[n\right]\backslash\left[d+1\right]\right)$,
such that for any $i \in [d+1]$,
\begin{equation}\label{Eq:Chvatal-Aux4}
\mu\left(\f_{\left[d+1\right]}^{\left[d+1\right]\backslash\left\{ i\right\} }\backslash\s\right)=O_{d,\zeta}\left(\epsilon_{0}^{M}\right)=O_{d,\zeta}\left(\epsilon_{0}\right).
\end{equation}
Suppose w.l.o.g. that $\s=\s_{\{d+2\}}$.  Then for any $i\in\left[d+1\right]$, we have
\begin{equation}
\label{Eq:Chvatal-Aux5}
\mu\left(\f_{\left[d+2\right]}^{\left[d+2\right]\backslash\left\{ i\right\} }\right)=\frac{\mu\left(\f_{\left[d+1\right]}^{\left[d+1\right]\backslash\left\{ i\right\} }\right)-\mu\left(\f_{\left[d+1\right]}^{\left[d+1\right]\backslash\left\{ i\right\} }\backslash\s\right)}{\mu\left(\s\right)} \geq 1-O_{d,\zeta}\left(\epsilon_0\right),
\end{equation}
where the last inequality follows from the assumption on $\f$ and Equations~\eqref{Eq:Chvatal-Aux3} and~\eqref{Eq:Chvatal-Aux4}.

\medskip This establishes Step~2 of the proof of Theorem~\ref{thm:main-simplex}. Step~3 -- a bootstrapping argument -- was established in Proposition~\ref{Prop:Bootstrapping proposition} and will be used soon. Now we present the last step of the proof -- a `Sudoku step' in which we consider slices of the form $\f_{[d+2]}^B$ sequentially, and show that for any $B$ that does not contain $d+2$, the slice $\f_{[d+2]}^B$ is empty. This will show that $\f \subseteq \s$, completing the proof.

The first part of the `Sudoku step' asserts that for each element $E\in\f$ we either have $d+2\in E$ or $\left\{ 1,\ldots,d+1\right\} \subseteq E$.
\begin{claim}
\label{Claim 2} Provided that $\epsilon_{0}$ is sufficiently small and that $n_{0}$ is sufficiently large, we have
\[
\f_{\left\{ i,d+2\right\} }^{\emptyset}=\emptyset
\]
for any $i\in\left[d+1\right]$. In particular, $\mu\left(\f_{\left[d+2\right]}^{\left\{ d+2\right\} }\right)\ge1-O_{d,\zeta}\left(\epsilon_{0}\right).$ \end{claim}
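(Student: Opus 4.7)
The plan is to prove the main equation $\f_{\{i,d+2\}}^{\emptyset}=\emptyset$ by contradiction, using Observation~\ref{obs:simplex}(2) to set up an application of the bootstrapping Proposition~\ref{Prop:Bootstrapping proposition}, and then to derive the ``in particular'' statement by restricting attention to sets disjoint from $[d+1]$ and invoking Lemma~\ref{lem:Technical computation}.

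For the main claim, suppose for contradiction that some $E\in\f$ is disjoint from $\{i,d+2\}$; we may assume $i=1$ without loss of generality and set $B_{0}:=E\cap[d+2]\subseteq\{2,\ldots,d+1\}$. I would then consider the $d+1$ subsets of $[d+2]$ given by $B_{j}:=[d+2]\setminus\{j+1\}$ for $j=1,\ldots,d$, together with $B_{d+1}:=B_{0}$. An elementary computation gives $\bigcap_{j=1}^{d}B_{j}=\{1,d+2\}$, which is disjoint from $B_{d+1}$, so $\bigcap_{j=1}^{d+1}B_{j}=\emptyset$. By Observation~\ref{obs:simplex}(2) applied with $B=[d+2]$, the slices $\f_{[d+2]}^{B_{1}},\ldots,\f_{[d+2]}^{B_{d+1}}$, viewed in the universe $\binom{[n]\setminus[d+2]}{\cdot}$, are cross free of a $d$-simplex. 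Equation~(\ref{Eq:Chvatal-Aux5}) shows that each of the first $d$ of them has measure at least $1-O_{d,\zeta}(\epsilon_{0})$, while the last contains the set $E\setminus B_{0}$ and hence is non-empty. After checking that the sizes $k-d-1$ and $k-|B_{0}|$ lie in the range required by Proposition~\ref{Prop:Bootstrapping proposition} (with $n-d-2$ in place of $n$ and, say, $\zeta'=\zeta/2$), applying the proposition yields a contradiction, provided $\epsilon_{0}$ is sufficiently small as a function of $d,\zeta$ and $n_{0}$ is sufficiently large.

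For the ``in particular'' statement, I would set $\tilde{\f}:=\f_{[d+1]}^{\emptyset}=\{A\in\f\,:\,A\cap[d+1]=\emptyset\}$, regarded as a family in the universe $\binom{[n]\setminus[d+1]}{k}$. By the $\epsilon_{0}$-fairness of $[d+1]$ for $\f$ together with the bound $\mu(\f)\geq (1-\epsilon_{0})k/n$, one obtains $\mu(\tilde{\f})\geq k/(n-d-1)-O_{d,\zeta}(\epsilon_{0})$. The main claim, applied to every $i\in[d+1]$, forces every element of $\tilde{\f}$ to contain $d+2$, so $\tilde{\f}_{\{d+2\}}^{\emptyset}=\emptyset$. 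Applying Lemma~\ref{lem:Technical computation} to $\tilde{\f}$ with $S=\{d+2\}$ inside the universe $\binom{[n]\setminus[d+1]}{k}$ gives $\mu(\tilde{\f}_{\{d+2\}}^{\{d+2\}})\geq 1-O_{d,\zeta}(\epsilon_{0})$, and a direct unwinding of the definitions shows $\tilde{\f}_{\{d+2\}}^{\{d+2\}}=\f_{[d+2]}^{\{d+2\}}$, completing the proof.

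The main obstacle is the choice of the $d+1$ sets $B_{1},\ldots,B_{d+1}$ in the first step: they must simultaneously have empty total intersection (to activate Observation~\ref{obs:simplex}(2)), force the first $d$ slices $\f_{[d+2]}^{B_{j}}$ to be nearly full (which, via equation~(\ref{Eq:Chvatal-Aux5}), requires each $B_{j}$ for $j\leq d$ to be of the form $[d+2]\setminus\{\text{element of }[d+1]\}$), and contain a slice guaranteed to be non-empty. The key insight making this possible is that $E$ avoids both $1$ and $d+2$, so $E\cap[d+2]$ is automatically disjoint from the intersection $\{1,d+2\}$ of the first $d$ sets, regardless of the cardinality of $B_{0}$.
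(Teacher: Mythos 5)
Your proof is correct and follows essentially the same route as the paper: for the main claim you use exactly the same $d+1$ slices of $[d+2]$ (the $d$ sets $[d+2]\setminus\{j\}$ together with $E\cap[d+2]$), Observation~\ref{obs:simplex}(2), equation~\eqref{Eq:Chvatal-Aux5}, and Proposition~\ref{Prop:Bootstrapping proposition}. The only (minor) divergence is in the ``in particular'' part, where you invoke Lemma~\ref{lem:Technical computation} on $\f_{[d+1]}^{\emptyset}$ instead of the paper's direct counting estimate; both arguments rest on the same observation that part one forces every set of $\f$ avoiding $[d+1]$ to contain $d+2$.
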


\begin{proof}
To prove the first part of the claim, we show that the family $\f_{\left[d+2\right]}^{B}$
is empty for any $B\subseteq\left[d+2\right]\backslash\left\{ 1,d+2\right\} $.
Let $B\subseteq\left[d+2\right]\backslash\left\{ 1,d+2\right\} $.
Since the intersection of the sets $B,\left[d+2\right]\backslash\left\{ 2\right\} ,\ldots,\left[d+2\right]\backslash\left\{ d+1\right\} $
is empty, by Observation~\ref{obs:simplex}(2) the families
\[
\f_{\left[d+2\right]}^{B},\f_{\left[d+2\right]}^{\left[d+2\right]\backslash\left\{ 2\right\} },\ldots,\f_{\left[d+2\right]}^{\left[d+2\right]\backslash\left\{ d+1\right\} }
\]
are cross free of a $d$-simplex. By Proposition~\ref{Prop:Bootstrapping proposition} (which can applied due to~\eqref{Eq:Chvatal-Aux5}), this implies that the family $\f_{\left[d+2\right]}^{B}$ is empty, provided that $\epsilon_{0}$ is sufficiently small and $n_{0}$ is sufficiently large.

The `in particular' part holds since
\[
1-\mu\left(\f_{\left[d+2\right]}^{\left\{ d+2\right\} }\right)\le O\left(1-\mu\left(\f_{\left\{ 1,d+2\right\} }^{\left\{ d+2\right\} }\right)\right)=O\left(1-\frac{n-2}{k-1}\mu\left(\f_{\left\{ 1\right\} }^{\emptyset}\right)\right)\le O_{d,\zeta}\left(\epsilon_{0}\right),
\]
where the equality holds as all sets in $\f_{\{1\}}^{\emptyset}$ contain the element $d+2$ (by the first part of the claim), and the last inequality holds since $[d+1]$ is $\epsilon_0$-fair for $\f$. This completes the proof of the claim.
\end{proof}

We now complete the `Sudoku step' by showing that $\f_{[d+2]}^{B}=\emptyset$ for any $B$ such that $d+2 \not \in B$.

Since the intersection of the sets $B,\{d+2\},\ldots,\{d+2\}$ is empty, by Observation~\ref{obs:simplex}(2) the families
\[
\f_{\left[d+2\right]}^{B},\f_{\left[d+2\right]}^{\{d+2\}},\ldots,\f_{\left[d+2\right]}^{\{d+2\}}
\]
are cross free of a $d$-simplex. By Proposition~\ref{Prop:Bootstrapping proposition} (which can applied due to Claim~\ref{Claim 2}), this implies that $\f_{\left[d+2\right]}^{B}=\emptyset$, provided that $\epsilon_{0}$ is sufficiently small and $n_{0}$ is sufficiently large. This implies that $\f \subseteq \s_{\{d+2\}}$, and thus completes the proof of Theorem~\ref{thm:main-simplex}.
\end{proof}

\section*{Acknowledgements}

The authors are grateful to Gil Kalai and to David Ellis for valuable discussions and suggestions.

\renewcommand{\baselinestretch}{0.9}\normalsize
\bibliographystyle{plain}
\bibliography{Refs}
\renewcommand{\baselinestretch}{1.0}\normalsize

\end{document}